\documentclass[leqno,12pt]{article}

\voffset=-37pt
\topmargin = 0pt
\headsep = 25pt
\footskip = 50pt
\headheight = 12pt
\textwidth=13.5cm
\textheight=21cm
\baselineskip=16pt
\parskip1pt
\oddsidemargin=0pt
\evensidemargin=0pt
\overfullrule=5pt
\def\today{${\scriptscriptstyle\number\day-\number\month-\number\year}$}
\usepackage{graphicx}
\usepackage{fancyhdr}
\usepackage{amsmath}
\usepackage{amsthm}
\usepackage{amsfonts}
\newtheorem{theorem}{Theorem}[section]
\newtheorem*{maina}{Theorem A}
\newtheorem*{mainb}{Theorem B}

\newtheorem{lemma}[theorem]{Lemma}
\newtheorem{corollary}[theorem]{Corollary}
\theoremstyle{definition}

\newtheorem{remark}{Remark}[section]
\newtheorem{notation}[theorem]{Notation}

\def\address#1{{\center{#1}}}
\date{}
\def\m@th{\mathsurround=0pt}
\def\eqal#1{\null\,\vcenter{\openup\jot\m@th
 \ialign{\strut\hfil$\displaystyle{##}$&&$\displaystyle{{}##}$\hfil
 \crcr#1\crcr}}\,}
\def\matrix#1{\null\,\vcenter{\normalbaselines\m@th
 \ialign{\hfil$##$\hfil&&\quad\hfil$##$\hfil\crcr
 \mathstrut\crcr\noalign{\kern-\baselineskip}
 #1\crcr\mathstrut\crcr\noalign{\kern-\baselineskip}}}\,}
\def\N{{\Bbb N}}
\def\R{{\Bbb R}}
\def\divv{{\rm div}\,}
\def\rot{{\rm rot}\,}
\def\const{{\rm const}}
\def\diagin{\hbox{---}\hskip-14pt\intop}
\def\diagint{\hbox{--}\hskip-9pt\intop}
\def\diagintop{\mathop{\mathchoice
{{\diagin}}%
{{\diagint}}%
{{\diagint}}%
{{\diagint}}%
}\limits}

\pagestyle{fancy}
\fancyfoot[R]{${\scriptscriptstyle\rm\jobname}$\ \ \today} 
\fancyhead{}

\numberwithin{equation}{section}

\title{Global regular periodic solutions to equations of weakly compressible barotropic fluid motions}
\author{Wojciech M. Zaj\c{a}czkowski}

\begin{document}
\input amssym.def
\input amssym.tex
\maketitle
\thispagestyle{fancy}

\address{$^1$  Institute of Mathematics, Polish Academy of Sciences,\\
\'Sniadeckich 8, 00-656 Warsaw, Poland\\
e-mail:wz@impan.gov.pl\\
Institute of Mathematics and Cryptology, 
Cybernetics Faculty, \\
Military University of Technology,\\
S. Kaliskiego 2, 00-908 Warsaw, Poland\\}

\begin{abstract}
We consider barotropic motions described by the compressible Navier-Stokes equations in a box with periodic boundary conditions. We are looking for density $\varrho$ in the form $\varrho=a+\eta$, where $a$ is a constant and $\eta|_{t=0}$ is sufficiently small in $H^2$-norm. We assume existence of potentials $\varphi$ and $\psi$ such that $v=\nabla\varphi+\rot\psi$. Next we assume that $\nabla\varphi|_{t=0}$ is sufficiently small in $H^2$-norm too. Finally, we assume that the second viscosity coefficient $\nu$ is sufficiently large. Then we prove long time existence of solutions such that $v\in L_\infty(0,T;H^2(\Omega))\cap L_2(0,T;H^3(\Omega))$, $v_{,t}\in L_\infty(0,T;H^1(\Omega))\cap L_2(0,T;H^2(\Omega))$, where the existence time $T$ is proportional to $\nu$. Next for $T$ sufficiently large we obtain that $v(T)$ is correspondingly small so global existence is proved using the methods appropriate for problems with small data.

\noindent
{\bf Mathematical Subject Classification (2010):} 35A01, 35B10, 76N10

\noindent
{\bf Key words and phrases:} compressible Navier-Stokes equations, periodic solutions, density close to a constant, gradient part of velocity small, large second viscosity, existence of global regular solutions
\end{abstract}

\section{Introduction}\label{s1}

We are looking for existence of global regular periodic solutions to the following problem
\begin{equation}\eqal{
&\varrho v_t+\varrho v\cdot\nabla v-\mu\Delta v-\nu\nabla\divv v+\nabla p= \varrho f\quad &{\rm in}\ \ \Omega\times\R_+,\cr
&\varrho_t+\divv(\varrho v)=0\quad &{\rm in}\ \ \Omega\times\R_+,\cr
&v|_{t=0}=v_0,\ \ \varrho|_{t=0}=\varrho_0\quad &{\rm in}\ \ \Omega,\cr}
\label{1.1}
\end{equation}
where $v=v(x,t)=(v_1(x,t),v_2(x,t),v_3(x,t))\in\R^3$ is the velocity of the fluid, $\varrho=\varrho(x,t)\in\R_+$ is density, $f=f(x,t)=(f_1(x,t),f_2(x,t),f_3(x,t))\in\R^3$ is the external force field, $p=p(\varrho)$ and $\mu$, $\nu$ are constant positive viscosity coefficients. $\Omega\subset\R^3$ is a box and the periodic boundary conditions are assumed on $\partial\Omega$.

Looking for weakly compressible motions we assume that
\begin{equation}
\varrho=a+\eta,
\label{1.2}
\end{equation}
where $a$ is a positive constant, $\eta_0=\eta|_{t=0}$ is sufficiently small and the second viscosity coefficient $\nu$ is sufficiently large.

In view of (\ref{1.2}) we write (\ref{1.1}) in the form
\begin{equation}\eqal{
&(a+\eta)(v_t+v\cdot\nabla v)-\mu\Delta v-\nu\nabla\divv v+a_0\nabla\eta\cr
&=(p_\varrho(a))-p_\varrho(a+\eta)\nabla\eta+(a+\eta)f,\cr
&v|_{t=0}=v_0,\cr}
\label{1.3}
\end{equation}
where $p_\varrho={dp\over d\varrho}$, $a_0=p_\varrho(a)$ and
\begin{equation}\eqal{
&\eta_t+v\cdot\nabla\eta+a\divv v+\eta\divv v=0,\cr
&\eta|_{t=0}=\eta_0.\cr}
\label{1.4}
\end{equation}
Sometimes it is convenient to consider (\ref{1.3}) in the form
\begin{equation}\eqal{
&v_t+v\cdot\nabla v-{\mu\over a}\Delta v-{\nu\over a}\nabla\divv v+{a_0\over a}\nabla\eta\cr
&=-{\mu\over a}{\eta\over a+\eta}\Delta v-{\nu\over a}{\eta\over a+\eta}\nabla\divv v+{a_0\over a}{\eta\over a+\eta}\nabla\eta\cr
&\quad+{1\over a+\eta}(p_\varrho(a)-p_\varrho(a+\eta))\nabla\eta+f,\cr
&v|_{t=0}=v_0.\cr}
\label{1.5}
\end{equation}
Since we are looking for solutions to (\ref{1.1}) with large $\nu$ it is natural to introduce periodic potentials $\varphi$ and $\psi$ such that
\begin{equation}
v=\nabla\varphi+\rot\psi+G,
\label{1.6}
\end{equation}
where $G={1\over|\Omega|}\intop_\Omega vdx$.

\noindent
From $(\ref{1.1})_{1,2}$ and assumption (\ref{1.2}) we obtain
$$
{d\over dt}\intop_\Omega(a+\eta)vdx=\intop_\Omega(a+\eta)fdx.
$$
Hence
$$
\intop_\Omega vdx={1\over a}\bigg[-\intop_\Omega\eta vdx+\intop_{\Omega^t}(a+\eta)fdxdt'+\intop_\Omega(a+\eta_0)v_0dx\bigg].
$$
Therefore,
|$$
G={1\over a|\Omega|}\bigg[-\intop_\Omega\eta vdx+\intop_{\Omega^t}(a+\eta)fdxdt'+\intop_\Omega(a+\eta_0)v_0dx\bigg].
$$
In this case equations (\ref{1.3}), (\ref{1.4}), (\ref{1.5}) take the form
\begin{equation}\eqal{
&(a+\eta)(\nabla\varphi_t+\rot\psi_t+G_t+(\nabla\varphi+\rot\psi+G)\cdot\nabla (\nabla\varphi+\rot\psi))\cr
&\quad-\mu\Delta(\nabla\varphi+\rot\psi)-\nu\nabla\Delta\varphi+a_0\nabla\eta\cr
&= (p_\varrho(a)-p_\varrho(a+\eta))\nabla\eta+(a+\eta)f,\cr
&\nabla\varphi|_{t=0}=\nabla\varphi_0,\ \ \rot\psi|_{t=0}=\rot\psi_0.\cr}
\label{1.7}
\end{equation}
Moreover, we assume that
\begin{equation}
f=f_r+f_g,
\label{1.8}
\end{equation}
where $f_r$ is divergence free and $f_g$ is the gradient part. Then $\divv f=\divv f_g$, $\rot f=\rot f_r$. Next we have
\begin{equation}\eqal{
&\eta_t+v\cdot\nabla\eta+a\Delta\varphi+\eta\Delta\varphi=0\cr
&\eta|_{t=0}=\eta_0.\cr}
\label{1.9}
\end{equation}
Finally, (\ref{1.5}) takes the form
\begin{equation}\eqal{
&\nabla\varphi_t+\rot\psi_t+G_t+(\nabla\varphi+\rot\psi+G)\cdot \nabla(\nabla\varphi+\rot\psi)\cr
&\quad-{\mu\over a}\Delta(\nabla\varphi+\rot\psi)-{\nu\over a}\nabla\Delta\varphi+{a_0\over a}\nabla\eta=-{\mu\over a}{\eta\over a+\eta}\Delta v\cr
&\quad-\nu{\eta\over a+\eta}\nabla\divv v+a_0{\eta\over a+\eta}\nabla\eta+{1\over a+\eta}(p_\varrho(a)-p_\varrho(a+\eta))\nabla\eta+f_r+f_g,\cr
&v|_{t=0}=v_0.\cr}
\label{1.10}
\end{equation}
In this paper the following barotropic motions are considered
\begin{equation}
p=A\varrho^\varkappa,\ \ \varkappa>1,\ \ A-\const.
\label{1.11}
\end{equation}
We need an equation for $\nabla\eta$. To derive it we multiply (\ref{1.7}) by $a\over\nu$, apply operator $\nabla$ to (\ref{1.9}) and sum up the results. Then we have
\begin{equation}\eqal{
&\nabla\eta_t+{a_0\over\nu}\nabla\eta=-\nabla(v\cdot\nabla\eta)-\nabla (\eta\Delta\varphi)-{a\over\nu}(\varrho v_t+\varrho v\cdot\nabla v)\cr
&\quad+{\mu a\over\nu}\Delta v+{a\over\nu}(p_\varrho(a)-p_\varrho(a+\eta)) \nabla\eta+{a\over\nu}\varrho f,\cr
&\eta|_{t=0}=\eta_0.\cr}
\label{1.12}
\end{equation}
The aim of this paper is to prove existence of global regular periodic solutions to problem (\ref{1.7})--(\ref{1.9}). To show existence of such solutions we assume that the initial density is close to a constant assuming that the norms $\|\eta(0)\|_{H^2(\Omega)}$, $\|\eta_t(0)\|_{H^1(\Omega)}$ are sufficiently small.

\noindent
Moreover, we assume that the second viscosity coefficient $\nu$ is sufficiently large. This implies that velocity $v$ must be considered in form (\ref{1.6}) because divergence free and potential parts have to be treated differently. Therefore we are looking for such solutions that $\nabla\varphi$ in some norms is small but $\rot\psi$ in these norms not.

\noindent
The natural way to derive necessary estimates is the energy method. The method was first applied to equations of viscous compressible heat-con\-duc\-ting fluids by Matsumura and Nishida in \cite{MN1, MN2, MN3}. Next by Valli and Zaj\c{a}czkowski in \cite{V, VZ}. The free boundary barotropic case was considered in \cite{Z1, Z2}. Finally, a free boundary viscous compressible heat-conducting case was considered by Zadrzy\'nska \cite{Za}. The method is natural in the problem because $v$ is considered in form (\ref{1.6}) and the second viscosity $\nu$ is very large comparing to $\mu$ so the anisotropic approach to velocity is necessary.

\noindent
Since $\|\eta(0)\|_{H^2(\Omega)}$ and $\|\eta_t(0)\|_{H^1(\Omega)}$ are small we denote the motion a weakly compressible. However, we consider the periodic problem the proof can be extended to motions with different boundary conditions.

\noindent
In Section \ref{s2} some preliminary results are formulated and proved. In Section~\ref{s3} the main estimates for large $\nu$ and time are shown. The estimates are of an a priori type. However, for the local solutions they are real estimates. The estimates are made without smallness assumptions on norms of $\rot\psi$. This is possible thanks to the following two estimates: $\|v\|_{L_{6,\infty}(\Omega^t)}$ (see Lemma \ref{l2.2}, Remark \ref{r2.3} and Remark \ref{r2.4}) and $\|v_t\|_{L_{2,\infty}(\Omega^t)}$ (see Lemma \ref{l2.8}). These estimates are crucial for proofs of Lemma \ref{l4.1}, Corollary \ref{c4.2}, Theorem \ref{t4.3} and Theorem \ref{t5.9}.

\noindent
In Section \ref{s4} we prove existence of long time solutions, where the existence time $T$ is proportional to $\nu$. The existence is proved in the following way. Having long time esimate proved in Corollary \ref{c4.2} we have existence by time extension of local solutions.

\noindent
The proof of Theorem \ref{t4.3} is based on the time extension of a local solution and the derived long time estimate. The proof is understandable but not explicit. To have an explicit proof we have to use the method of successive approximations in the time interval $[0,T]$. But looking for considerations in Section \ref{s3} such proof will be very complicated.

\noindent
However, the long time solutions are not global. Therefore in Section \ref{s5} we prove existence of global regular solutions. For this we need some decay estimates (see (\ref{5.51}), (\ref{5.52}))which imply smallness of data at time $T$ (see (\ref{5.53})). Then considering problem (\ref{1.7})--(\ref{1.9}) with small initial data at time $T$ (see (\ref{5.53})) we use the technique from \cite{BSZ, VZ, Z1, Z2} to prove existence of global regular solutions (see Theorem \ref{t5.9}).

\noindent
Now, we formulate the main results of this paper
\goodbreak

\begin{maina}[local existence]
Let $\nu>0$ be given sufficiently large. Let $v=\nabla\varphi+\rot\psi$, $\varrho=a+\eta$, $a$-positive constant. Let $\eta(0)\in L_\infty(\Omega)$, $\eta(0),\nabla\varphi(0),\rot\psi(0)\in\Gamma_1^2(\Omega)$, $f\in L_2(0,T;\Gamma_1^1(\Omega))$, $\|\nabla\varphi(0)\|_{\Gamma_1^2(\Omega)}\le{c\over\sqrt{\nu}}$,\break $\|\rot\phi(0)\|_{\Gamma_1^2(\Omega)}\le c$, $\|\eta(0)\|_{\Gamma_1^2(\Omega)}\le{c\over\nu}$, $|f_g|_{6/5,2,\Omega^t}\le{c\over\nu}$, $f\in L_2(0,T;\Gamma_1^1(\Omega))$, $f\in L_6(0,T;L_3(\Omega))\cap L_1(0,T;L_\infty(\Omega))$. Assume that there exist positive constants $\varphi_*$ and $c$ such that $c\nu^\varkappa\le\varphi_*\le\varphi(0)$, where $\varkappa\in(1/2,1)$. Then there exists a regular long time solution to problem (\ref{1.1}) expressed in the form (\ref{1.6})--(\ref{1.9}) such that $\sqrt{\nu}\nabla\varphi,\rot\psi\in L_\infty(0,T;\Gamma_1^2(\Omega))\cap L_2(0,T;\Gamma_1^3(\Omega))\,\!,\!$ 
$\nu\nabla\varphi\in L_2(0,T;\Gamma_1^3(\Omega))$, $\nu\eta\in L_\infty(0,T;\Gamma_1^2(\Omega))$, and $v\in{\frak N}(\Omega^t)$, $t\le T\le\nu$, where $T$ is the time of local existence and
\begin{equation}\eqal{
&\|v\|_{{\frak N}(\Omega^t)}\le\phi(\nu\|\eta(0)\|_{\Gamma_1^2(\Omega)},\quad \nu^{1/2}\|\nabla\varphi(0)\|_{\Gamma_1^2(\Omega)},\cr
&\|\rot\psi(0)\|_{\Gamma_1^2(\Omega)},\quad \nu\|\varphi(0)\|_1,\quad \nu|f_g|_{6/5,2,\Omega^t},\quad \|f\|_{L_2(0,t;\Gamma_1^1(\Omega))},\cr
&\|f\|_{L_6(0,t;L_3(\Omega))\cap L_1(0,t;L_\infty(\Omega))}),\cr}
\label{1.13}
\end{equation}
where $t\le T$, $\phi$ is an increasing positive function of its arguments and the space ${\frak N}(\Omega^t)$ is defined by
$$\eqal{
\|v\|_{{\frak N}(\Omega^t)}&=\nu^{1/2}\|\nabla\varphi\|_{L_\infty(0,t;\Gamma_1^2(\Omega))}+ \|\rot\psi\|_{L_\infty(0,t;\Gamma_1^2(\Omega))}\cr
&\quad+\|\rot\psi\|_{L_2(0,t;\Gamma_1^3(\Omega))}+\nu \|\nabla\varphi\|_{L_2(0,t;\Gamma_1^3(\Omega))}.\cr}
$$
For $T>\nu$ we have a global existence of such solutions that.\\
To prove global existence of solutions to problem (\ref{1.1}) we use the step by step in time extension.
\end{maina}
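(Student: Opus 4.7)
The plan is to combine a short-time local existence result for the full system (\ref{1.7})--(\ref{1.9}) with the long-time a priori estimate for $\|v\|_{{\frak N}(\Omega^t)}$ that is announced for Section~\ref{s3} and Corollary~\ref{c4.2}, and then to extend the solution step by step in time up to $T \sim \nu$. The short-time existence itself I would construct by a standard linearization/successive approximation scheme: the continuity equation (\ref{1.9}) is solved by the method of characteristics for given $v$, while the momentum equation for $\nabla\varphi+\rot\psi$ is solved as a parabolic system in which the Helmholtz decomposition makes the principal part $-\mu\Delta v-\nu\nabla\divv v$ parabolic in both components separately (with ellipticity constants $\mu$ for $\rot\psi$ and $\mu+\nu$ for $\nabla\varphi$), following the route of \cite{MN1, V, VZ}.

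The core of the argument is the closed a priori bound (\ref{1.13}). I would apply the energy method anisotropically: test the potential part of (\ref{1.7}) against $\nabla\varphi_t$ and $-\Delta\nabla\varphi$, test the rotational part against $\rot\psi_t$ and $-\Delta\rot\psi$, and use equation (\ref{1.12}) to control $\nabla\eta$ with the gain factor $1/\nu$ provided by the damping term $(a_0/\nu)\nabla\eta$. The smallness assumptions $\|\eta(0)\|_{\Gamma_1^2}\le c/\nu$ and $\|\nabla\varphi(0)\|_{\Gamma_1^2}\le c/\sqrt\nu$ are calibrated precisely so that the error terms generated by the coupling $\eta/(a+\eta)\cdot\Delta v$, by $(p_\varrho(a)-p_\varrho(a+\eta))\nabla\eta$, and by the convective term $v\cdot\nabla v$ are absorbable into the leading dissipation after multiplication by the appropriate powers of $\nu$ indicated in the definition of ${\frak N}(\Omega^t)$.

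The treatment of the convective nonlinearity is what makes smallness of $\rot\psi$ unnecessary. Splitting $v\cdot\nabla v$ according to (\ref{1.6}), the dangerous piece $\rot\psi\cdot\nabla\rot\psi$ is not small, so a naive $\|\rot\psi\|_{H^2}^2$ bound would not close; instead I would insert the auxiliary estimates $\|v\|_{L_{6,\infty}(\Omega^t)}$ and $\|v_t\|_{L_{2,\infty}(\Omega^t)}$ announced in Lemma~\ref{l2.2} (with Remarks \ref{r2.3}, \ref{r2.4}) and Lemma~\ref{l2.8}. These come from the basic energy identity and do not depend on smallness of $\rot\psi$, and when combined with the anisotropic distribution of norms $L_\infty(0,t;\Gamma_1^2)\cap L_2(0,t;\Gamma_1^3)$ in ${\frak N}(\Omega^t)$, they allow the convective integrals to be bounded by $\nu$-independent constants times the dissipative norms, closing a Gronwall-type inequality on $[0,T]$ with $T\le c\nu$.

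Finally, the local solution is extended by iteration: once (\ref{1.13}) holds on an interval $[0,T_0]$, the state at $T_0$ still satisfies the smallness hypotheses of the local existence step, so the construction can be restarted, and the uniform a priori bound guarantees that this process reaches any $T\le\nu$. The main obstacle, as emphasized in the introduction, is the third step: closing the $\rot\psi$ part of the estimate without a smallness assumption on $\rot\psi$. The whole reason the $L_{6,\infty}$ and $L_{2,\infty}$ bounds are singled out in Section~\ref{s2} is that they provide exactly the missing integrability of the rotational velocity needed to dominate its self-interaction in the momentum and $\nabla\eta$ equations once the large-$\nu$ gains on $\nabla\varphi$ and $\eta$ are factored in.
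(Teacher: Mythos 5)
Your outline matches the paper's strategy at the structural level: the Helmholtz decomposition $v=\nabla\varphi+\rot\psi+G$, anisotropic energy estimates weighted by powers of $\nu$, the damping $a_0/\nu$ acting on $\nabla\eta$, the key role of the $L_{6,\infty}$ and $L_{2,\infty}$ bounds on $v$ and $v_t$ in handling convection without smallness of $\rot\psi$, and closure by a priori estimate plus step-by-step extension of a local solution (Lemma~\ref{l4.1}, Corollary~\ref{c4.2}, Theorem~\ref{t4.3}).

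There is, however, a genuine gap in how you claim to obtain the $L_{6,\infty}$ bound on $v$. You write that it ``comes from the basic energy identity.'' It does not, and the reason it does not is the central technical difficulty of the paper. Multiplying $(\ref{1.1})_1$ by $v|v|^{r-2}$ (Lemma~\ref{l2.2}) produces inequality (\ref{2.25}), whose right-hand side contains $c\nu|\Delta\varphi|_{18/7,6,\Omega^t}$ with an explicit factor $\nu$, and for large $\nu$ this term is not dominated by anything available from the energy identity or the dissipation on the left. The paper absorbs it by the chain: interpolation (\ref{2.28}), $|\Delta\varphi|_{18/7,6,\Omega^t}\le c|\nabla\Delta\varphi|_{2,\infty,\Omega^t}^{2/3}|\nabla\varphi|_{2,\Omega^t}^{1/3}$, which shifts the weight onto $|\nabla\varphi|_{2,\Omega^t}$; then Lemma~\ref{l2.3}, which applies $\Delta^{-1}\divv$ to $(\ref{1.1})_1$ to get the scalar parabolic equation (\ref{2.30}) for $\varphi$, tests it against $\varphi$, and crucially uses the strictly positive lower bound $\varphi_*\le\varphi$ to convert the nonlinear term $\intop_\Omega D_1\varphi\,dx$ into a Gronwall factor, yielding $|\nabla\varphi|_{2,\Omega^t}\lesssim\nu^{-\alpha}$ with $\alpha>1$; then Lemma~\ref{l2.4} and Remark~\ref{r2.2}, a sub-level-set (De Giorgi/Ladyzhenskaya) iteration giving the $L_\infty$ bound for $\bar\varphi$ that makes $\varphi_*>0$ legitimate under the compatibility condition (\ref{2.60}) matching the hypothesis $c\nu^\varkappa\le\varphi_*\le\varphi(0)$. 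Only after this does (\ref{2.82}) deliver $|v|_6\le D_1$ with $D_1$ controlled in terms of $\Psi/\nu^\beta$. Your proposal never mentions $\varphi_*$, the $\Delta^{-1}$ lift, the interpolation trade, or why the hypothesis on $\varphi(0)$ appears in the theorem at all; as written it cannot close the estimate uniformly in $\nu$. A smaller imprecision: the closure step in Lemma~\ref{l4.1} is not a Gronwall inequality but a self-consistent absorption $\phi_{20}(\Psi/\nu,\chi_0/\sqrt\nu,\dots)+c\Phi_0^2(0)\le A$ valid because every argument of $\phi_{20}$ carries a negative power of $\nu$ --- and those negative powers themselves come from the $\varphi_*$ machinery you omitted.
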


The existence for $t\le T\le\nu$ is shown in Section \ref{s4} but for $t\ge T>\nu$ in Section~\ref{s5}.\\
\noindent
Therefore, we have

\begin{mainb}[Global existence]
Let the assumptions of the Theorem A hold. Let $\|f_g(t)\|_1\le ce^{-\alpha t}$, $\alpha>0$. Let $f\in L_2(kT,(k+1)T;\Gamma_1^2(\Omega))\cap L_6(kT,(k+1)T;L_3(\Omega))\cap L_1(kT,(k+1)T;L_\infty(\Omega))$.\\
Then $v\in{\frak N}(\Omega\times(kT,(k+1)T))$, $k\in\N_0$ and (\ref{1.13}) holds with interval $(0,T)$ replaced by $(kT,(k+1)T)$, $k\in\N_0=\N\cup\{0\}$.
\end{mainb}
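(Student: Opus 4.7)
The strategy is a step-by-step in time extension of the solution produced by Theorem A, applied on each interval $I_k=(kT,(k+1)T)$, $k\in\N_0$, with $T$ the time of local existence from Theorem A (of order $\nu$). The induction hypothesis at step $k$ is that the solution at $t=kT$ satisfies the smallness conditions that Theorem A requires of the initial data at time $0$, namely $\|\eta(kT)\|_{\Gamma_1^2(\Omega)}\le c/\nu$, $\|\nabla\varphi(kT)\|_{\Gamma_1^2(\Omega)}\le c/\sqrt{\nu}$, together with the non-small bound $\|\rot\psi(kT)\|_{\Gamma_1^2(\Omega)}\le c$. The integrability hypotheses on $f$ over $I_k$ needed by Theorem A are supplied directly by the assumptions of Theorem B. Granted the induction hypothesis, Theorem A applied on $I_k$ (after shifting time) yields a solution $v\in{\frak N}(\Omega\times I_k)$ together with the estimate (\ref{1.13}), written with $(0,T)$ replaced by $(kT,(k+1)T)$. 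This is the conclusion of Theorem B on the $k$-th interval; the induction base $k=0$ is exactly Theorem A.

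What remains is the inductive step: to show that the data at $t=(k+1)T$ again meet the smallness thresholds at the levels $c/\nu$ and $c/\sqrt{\nu}$. This is where the decay estimates (\ref{5.51})--(\ref{5.53}), alluded to in the introduction, enter. The mechanism is that the equation (\ref{1.12}) for $\nabla\eta$ carries the damping term $(a_0/\nu)\nabla\eta$ on the left-hand side, so integration along $I_k$ produces an exponential decay factor of order $e^{-a_0T/\nu}$, i.e.\ a fixed damping per step since $T\sim\nu$. The forcing on the right-hand side consists of quadratic nonlinearities in $v$, controlled by $\|v\|_{{\frak N}(\Omega\times I_k)}$ through the induction hypothesis and (\ref{1.13}), together with the gradient part $f_g$ of the external force, which decays like $e^{-\alpha t}$ by assumption. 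A Duhamel/Gronwall argument combines these into smallness of $\|\eta((k+1)T)\|_{\Gamma_1^2(\Omega)}$; the analogous bound for $\|\nabla\varphi((k+1)T)\|_{\Gamma_1^2(\Omega)}$ comes from the direct energy estimate for $\nabla\varphi$ together with the smallness already obtained for $\eta$ and with the same exponential-damping structure of (\ref{1.12}).

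The main obstacle is that the smallness constants must be \emph{uniform} in $k$; the induction would fail if each step degraded them. Ensuring this requires the per-step decay to dominate the quadratic production of $\nabla\varphi$ and $\eta$ coming from the $\rot\psi$-part of the velocity, which is not small, and to absorb the contribution of $f_g$ over $I_k$. Both points are reflected in the quantitative hypotheses of Theorem B: $\nu$ sufficiently large ensures that the low-frequency coupling between $\rot\psi$ and $\nabla\varphi$ remains perturbative in $1/\nu$, and the bound $\|f_g(t)\|_1\le ce^{-\alpha t}$ makes the forcing contributions summable across all $k$. Once uniformity is in place, $v\in{\frak N}(\Omega\times(kT,(k+1)T))$ with (\ref{1.13}) on each $I_k$ follows immediately from the per-step application of Theorem A, completing the induction and hence the proof.
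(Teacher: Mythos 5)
Your overall strategy — apply Theorem~A on successive intervals and close the loop with a decay estimate that regenerates the smallness thresholds — is on the right track, but the decay mechanism you invoke is not the one the paper uses, and as stated it is not quantitatively adequate. You attribute the damping to the $\frac{a_0}{\nu}\nabla\eta$ term in (\ref{1.12}), yielding a per--step contraction $e^{-a_0 T/\nu}\sim e^{-a_0}$, a \emph{fixed} factor independent of $\nu$. The paper instead derives (\ref{5.38}), whose damping coefficient is $a_0$ (not $a_0/\nu$), and couples it through the Poincar\'e inequality with the viscous dissipation $\mu\Delta v$, $\nu\nabla\Delta\varphi$ to obtain the differential inequality (\ref{5.47}) for $X_2^2=\nu|\nabla\varphi|_{2,1}^2+|\rot\psi|_{2,1}^2+\|\eta\|_2^2$ with an $O(1)$ rate $a_*=\min\{a_0,\mu/a\}$. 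Since $T\sim\nu$, the resulting factor $e^{-a_*T/2}$ is exponentially small in $\nu$, which is needed precisely because $X_2(0)=O(1)$ (owing to the non-small $\rot\psi(0)$). With your weaker $e^{-a_0}$ contraction you would have to show separately that the $O(1)$ quadratic production driven by $\rot\psi$ does not overwhelm the damping on each interval, and you give no mechanism for that.

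A second and related gap: you propagate the hypothesis $\|\rot\psi(kT)\|_{\Gamma_1^2}\le c$ (non-small) by induction, but the paper actually proves more: (\ref{5.53}) gives $X_2^2(T)\le c/\nu$, i.e.\ $|\rot\psi(T)|_{2,1}^2\lesssim 1/\nu$ as well. After $t=T$ the problem is therefore genuinely in a \emph{small-data} regime, and Theorems~\ref{t5.8}--\ref{t5.9} do not re-run Theorem~A there; they invoke the small-data global theory of \cite{BSZ,VZ,Z1,Z2,Z3}. You also do not mention the new differential inequalities of Section~\ref{s5} (Lemmas~\ref{l5.1}--\ref{l5.7}), which are essential: they hold only under the smallness furnished by Corollary~\ref{c4.2}, and it is exactly this that forces the quadratic terms onto the Gronwall factor (the $cG^2X_2^2$ in (\ref{5.47})) rather than leaving them as $O(1)$ forcing. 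In short, your plan omits the step that actually makes the induction close — the derivation of (\ref{5.46})--(\ref{5.53}) showing simultaneous $O(1)$-rate exponential decay of $\nabla\varphi$, $\rot\psi$, and $\eta$, and the resulting transition to small-data theory for $t>T$.
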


\noindent
Now we describe the idea of proofs of Theorems A and B.

Our aim is to derive a global estimate for regular solutions to (\ref{1.3}), (\ref{1.4}) using presentation (\ref{1.9}), (\ref{1.10}). By the regular velocity we mean such velocity that $v\in L_2(0,T;\Gamma_1^3(\Omega))$. Then we have a corresponding regularity for $\eta\in L_\infty(0,T;\Gamma_1^2(\Omega))$. This kind of regularity is necessary to estimate nonlinear terms.

First we derive the inequality for $v\in L_\infty(0,T;L_6(\Omega))$. Multiplying $(\ref{1.3})_1$ by $v|v|^{r-2}$, integrating over $\Omega^t$ we get (see Lemma \ref{l2.2})
\begin{equation}\eqal{
&|v(t)|_r+\bigg(\intop_0^t|\nabla|v|^{r/2}|^2dxdt'\bigg)^{1/r}\le c|\eta|_{{3r\over r+1},r,\Omega^t}+c\nu|\Delta\varphi|_{{3r\over r+1},r,\Omega^t}\cr
&\quad+c|f|_{{3r\over2r+1},r,\Omega^t}+c|\varrho_0|_\infty^{1/r}|v_0|_r,\cr}
\label{1.14}
\end{equation}
where $r\le 6$.

\noindent
The second term on the r.h.s. of (\ref{1.14}) is not controlled for large $\nu$. Hence to control it we use the interpolation in the case $r=6$
\begin{equation}
|\Delta\varphi|_{18/7,6,\Omega^t}\le c|\nabla\Delta\varphi|_{2,\infty,\Omega^t}^{2/3} |\nabla\varphi|_{2,\Omega^t}^{1/3},
\label{1.15}
\end{equation}
where the mean values of $\Delta\varphi$, $\nabla\Delta\varphi$, $\nabla\varphi$ are zero in view of periodic boundary conditions. To estimate the last factor we derive the equation obtained from $(\ref{1.3})_1$ by applying the div operator
\begin{equation}\eqal{
&a\Delta\varphi_t-(\mu+\nu)\Delta^2\varphi+a_0\Delta\eta=-a\divv(v\cdot\nabla v)+\divv[-\eta v_t\cr
&\quad-\eta v\cdot\nabla v+(p_\varrho(a)-p_\varrho(a+\eta))\nabla\eta+ (a+\eta)f].\cr}
\label{1.16}
\end{equation}
Applying operator $\Delta^{-1}$ to (\ref{1.16}) yields
\begin{equation}\eqal{
&a\varphi_t-(\mu+\nu)\Delta\varphi=-a\Delta^{-1}\partial_{x_i} \partial_{x_j}(v_iv_j)+a\Delta^{-1}\partial_{x_i}(\Delta\varphi v_i)\cr
&\quad+\Delta^{-1}\divv[-\eta v_t-\eta v\cdot\nabla v+(p_\varrho(a)-p_\varrho(a+\eta))\nabla\eta+af_g\cr
&\quad+\eta f]-(\eta-\diagintop_\Omega\eta dx)+a\diagintop_\Omega\varphi_tdx\equiv D_1+D_2+F-\bar\eta+a\diagintop_\Omega\varphi_tdx,\cr}
\label{1.17}
\end{equation}
where $\diagintop_\Omega={1\over|\Omega|}\intop_\Omega$, $\bar\eta=\eta-\diagintop_\Omega\eta dx$.

The aim of this paper is to show that the quantity $\Psi=\nu|\nabla\varphi|_{3,1,2,\Omega^t}$ is bounded for any $t\in\R_+$. Then we see that $|\nabla\Delta\varphi|_{2,\infty,\Omega^t}\le c\|\Delta\varphi\|_{W_2^{2,1}(\Omega^t)}\le c|\nabla\varphi|_{3,1,2,\Omega^t}$. Hence we have to derive that $\nu^{1/3}|\nabla\varphi|_{2,\Omega^t}^{1/3}\le{\Phi^\alpha\over\nu^\beta}$, $\alpha$, $\beta$ positive constant numbers. This is the aim of Lemmas \ref{l2.3}, \ref{l2.4} and Remark \ref{r2.2}.

\noindent
First we show the idea of the proof of Lemma \ref{l2.3}. Multiplying (\ref{1.17}) by $\varphi$ and integrating over $\Omega$ yields
\begin{equation}\eqal{
&{a\over 2}{d\over dt}|\varphi|_2^2+(\mu+\nu)|\nabla\varphi|_2^2=\intop_\Omega D_1\varphi dx+\intop_\Omega D_2\varphi dx+\intop_\Omega F\varphi dx\cr
&\quad-\intop_\Omega\bar\eta\varphi dx+a\diagintop_\Omega\varphi_tdx\intop_\Omega\varphi dx.\cr}
\label{1.18}
\end{equation}
We need to obtain such estimate for $|\nabla\varphi|_{2,\Omega^t}$ that
\begin{equation}
|\nabla\varphi|_{2,\Omega^t}\le{c\over\nu^\alpha},\quad \alpha>1,
\label{1.19}
\end{equation}
where $c$ depends on such norms of $v$, $\nabla\varphi$, $\eta$ that $\nabla\varphi,v\in L_2(0,T;\Gamma_1^3(\Omega))\cap L_\infty(0,T;\Gamma_1^2(\Omega))$, $\eta\in L_\infty(0,T;\Gamma_1^2(\Omega))$.

\noindent
The dependence can be strongly nonlinear because small parameter $1/\nu^\alpha$ ($\nu$ is assumed to be large) helps to get an estimate by a perturbation argument. But the first and the last terms from the r.h.s. od (\ref{1.18}) will not imply estimate (\ref{1.19}). They must be treated in the following different way
$$\eqal{
\bigg|\intop_\Omega D_1\varphi dx\bigg|&\le\intop_\Omega {|D_1|\varphi^2\over
\varphi_*}dx\le{|D_1|_{p/(p-2)}\over\varphi_*}|\varphi|_p^2\cr
&\le{1\over2}(\mu+\nu)|\nabla\varphi|_2^2+ {c|D_1|_{p/(p-2)}^{1/(1-\varkappa)}\over [(\mu+\nu)^\varkappa\varphi_*]^{1/(1-\varkappa)}}|\varphi|_2^2,\quad \varkappa\in(1/2,1)\cr}
$$
and
$$
a\diagintop_\Omega\varphi_t dx\intop_\Omega\varphi dx=0,
$$
because we assume that $\varphi=\varphi'+L$, $\intop\varphi'dx=0$ and $L$ is some arbitrary constant, where $0<\varphi_*=\min_\Omega\varphi$. $\varphi$ can be chosen positive because it is determined up to an arbitrary positive constant $L$.

\noindent
Using that $f=\divv F'$ we get
$$
\bigg|\intop_\Omega F\varphi dx\bigg|=\bigg|\intop_\Omega F'\nabla\varphi dx\bigg|\le\varepsilon|\nabla\varphi|_2^2+c/\varepsilon|F'|_2^2.
$$
Since $|D_1|\le c|v|^2$ we derive (\ref{2.31}) in the form
\begin{equation}\eqal{
&a|\varphi(t)|_2^2+(\mu+\nu)|\nabla\varphi|_{2,\Omega^t}^2\le\exp \bigg({c|v|_{2p/(p-2),2/(1-\varkappa),\Omega^t}^{2/(1-\varkappa)}\over [(\mu+\nu)^\varkappa\varphi_*]^{1/(1-\varkappa)}}\bigg)\cr
&\quad\cdot\bigg[{c\over\mu+\nu}\{|\Delta\varphi|_{3/2,\infty,\Omega^t}^2A_1^2+ |\eta|_{6,\infty,\Omega^t}^2|v_t|_{2,\Omega^t}^2\cr
&\quad+|v|_{6,\infty,\Omega^t}^2A_1^2+|\nabla\eta|_{2,\Omega^t}^2+ |f|_{2,\Omega^t}^2+|f_g|_{2,\Omega^t}^2+|\eta|_{2,\Omega^t}^2\}\cr
&\quad+a|\varphi(0)|_2^2\bigg].\cr}
\label{1.20}
\end{equation}
Lemma \ref{l2.4} implies the bound for $\bar\varphi=\varphi-\diagintop_\Omega\varphi dx$.

\noindent
Let $\hat k=\max\{|\bar\varphi(0)|_\infty,1\}$, $\eta\in L_\infty(0,T;H^2(\Omega))$, $v\in L_\infty(\Omega^T)$, $f\in L_3(\Omega^T)$. Then
\begin{equation}\eqal{
|\bar\varphi|_{\infty,\Omega^t}&\le2\hat k\bigg[1+ct^{(1+\varkappa)/r_0} {\rm meas}^{(1+\varkappa)/p_0}\Omega\bigg({1\over\mu+\nu}\bigg)^{(2/q)(1+1/\varkappa)} \cdot\cr
&\quad\cdot\phi(\|\eta\|_{2,\infty,\Omega^t},|v|_{\infty,\Omega^t}, |f|_{3,\Omega^t})\bigg]\equiv\gamma_*,\cr}
\label{1.21}
\end{equation}
where $t\le T$, $\varkappa\in(0,4/3)$, $3/p_0+2/r_0=3/2$ and $\phi$ is an increasing positive function. Next Remark \ref{r2.2} gives
\begin{equation}
0<\varphi_*\le\diagintop_\Omega\varphi dx-\gamma_*\le\varphi,
\label{1.22}
\end{equation}
where $\varphi$ is defined up to an arbitrary constant $L$ but $\gamma_*$ not because $\gamma_*$ depends on $\hat k=\max\{|\bar\varphi(0)|_\infty,1\}$ which is independent of $L$.

Finally we have to estimate the second term on the r.h.s. of (\ref{1.14}). In view of (\ref{1.15}) we have
\begin{equation}
\nu|\Delta\varphi|_{18/7,6,\Omega^t}\le c\nu^{2/3}|\nabla\Delta\varphi|_{2,\infty,\Omega^t}^{2/3}\nu^{1/3} |\nabla\varphi|_{2,\Omega^t}^{1/3}\equiv I_1.
\label{1.23}
\end{equation}
Recall that $\Psi(t)=\nu|\nabla\varphi|_{3,1,2,\Omega^t}$. Our aim is to show that there exist positive constants $\alpha_k$, $\beta_k$, $k=1,2,\dots$, such that
\begin{equation}
I_1\le\sum_k{\Psi^{\alpha_k}\over\nu^{\beta_k}}.
\label{1.24}
\end{equation}
By imbedding we already have that
\begin{equation}
I_1\le c\Psi^{2/3}\nu^{1/3}|\nabla\varphi|_{2,\Omega^t}^{1/3}.
\label{1.25}
\end{equation}
Hence, we have to use (\ref{1.20}). First we have to estimate the argument of exp function. We assume that
\begin{equation}
c_1\le(\mu+\nu)^\varkappa\varphi_*.
\label{1.26}
\end{equation}
Then the term under exp in (\ref{1.20}) is bounded by
$$
c|v|_{\infty,\Omega^t}^{2/(1-\varkappa)},\ \ \varkappa=3/2-3/p\in(0,1)\quad {\rm for}\ \ p\in(2,6).
$$
Assuming that
\begin{equation}
|\eta(0)|\le{c\over\nu},|f_g|_{2,\Omega^t}\le{c\over\nu}
\label{1.27}
\end{equation}
we obtain (see (\ref{2.62}) and (\ref{2.64}))
\begin{equation}
|\eta|_{r,\infty,\Omega^t}\le{c\over\nu}(\Psi+c)
\label{1.28}
\end{equation}
Then (\ref{1.20}) implies
\begin{equation}\eqal{
|\nabla\varphi|_{2,\Omega^t}&\le\phi(|v|_{\infty,\Omega^t},|v_t|_{2,\Omega^t}, |\nabla\eta|_{2,\Omega^t},|f|_{2,\Omega^t})\cdot\cr
&\quad\cdot\bigg[{1\over\nu^2}(\Psi+c)+{c\over\nu^{1/2}}|\varphi(0)|_2\bigg].\cr}
\label{1.29}
\end{equation}
Assuming additionally that
\begin{equation}
|\varphi(0)|_2\le{c\over\nu^\varkappa},\quad \varkappa>1/2
\label{1.30}
\end{equation}
and using (\ref{1.30}) in (\ref{1.29}) and the result in (\ref{1.23}) yields
\begin{equation}\eqal{
I_1&\le\phi(|v|_{\infty,\Omega^t},|v_t|_{2,\Omega^t},|\nabla\eta|_{2,\Omega^t}, |f|_{2,\Omega^t})\cdot\cr
&\quad\cdot\bigg[{\Psi\over\nu^{1/3}}+{\Psi^{2/3}\over\nu^{1/3}}+ {\Psi^{2/3}\over\nu^{\varkappa/3-1/6}}\bigg].\cr}
\label{1.31}
\end{equation}
Then from (\ref{1.14}) for $r=6$ we get (see (\ref{2.82}))
\begin{equation}\eqal{
&|v(t)|_6+\bigg(\intop_0^t|\nabla|v|^3|^2dxdt'\bigg)^{1/6}\le c|\eta|_{18/7,6,\Omega^t}\cr
&\quad+\phi(|v|_{\infty,\Omega^t},|v_t|_{2,\Omega^t},|\nabla\eta|_{2,\Omega^t}, |f|_{2,\Omega^t})[\Psi/\nu^{1/3}+\Psi^{2/3}/\nu^{1/3}\cr
&\quad+\Psi^{2/3}/\nu^{\varkappa/3-1/6}]+c|f|_{18/7,6,\Omega^t}+c |\varrho_0|_\infty^{1/6}|v_0|_6\equiv D_1.\cr}
\label{1.32}
\end{equation}
Finally, Lemma \ref{l2.8} implies
\begin{equation}\eqal{
&|v_t(t)|_2^2+\mu|\nabla v_t|_{2,\Omega^t}^2+\nu|\Delta\varphi_t|_{2,\Omega^t}^2 \le\exp(|\eta_t|_{3,2,\Omega^t}^2\cr
&\quad+(\|\eta\|_{2,\infty,\Omega^t}^2+D_1^2+1)A_1^2)\cdot[|\eta_t|_{2,\Omega^t}^2 +\|\eta_t\|_{1,\infty,\Omega^t}^2(D_1^2A_1^2\cr
&\quad+|\nabla\varphi|_{3,1,2,\Omega^t}^2A_1^2+|f|_{2,\Omega^t}^2)+ |f_t|_{2,\Omega^t}^2+|\varrho_0|_\infty|v_t(0)|_2^2]\cr
&\equiv D_2^2.\cr}
\label{1.33}
\end{equation}
Let us introduce the quantities
$$\eqal{
&\Psi(t)=\nu|\nabla\varphi|_{3,1,2,\Omega^t},\cr
&\chi_1^2(t)=\nu|\nabla\varphi|_{2,1,\infty,\Omega^t}^2+ |\rot\psi|_{2,1,\infty,\Omega^t}^2,\cr
&\chi_2^2(t)=\nu|\nabla\varphi|_{3,1,2,\Omega^t}^2+ |\rot\psi|_{3,1,2,\Omega^t}^2,\cr
&\Phi_1^2(t)=\nu(|\nabla\varphi|_{2,1,\infty,\Omega^t}^2+ |\nabla\varphi|_{3,1,2,\Omega^t}^2),\cr
&\Phi_2^2(t)=|\rot\psi|_{2,1,\infty,\Omega^t}^2+|\rot\psi|_{3,1,2,\Omega^t}^2.\cr}
$$
From (\ref{4.2}) we have inequality
\begin{equation}\eqal{
&\chi_1^2(t)+\chi_2^2(t)+\Psi^2(t)\le\phi\bigg(D_1,D_2,A_1,{\Psi\over\nu}, {\Phi_1\over\sqrt{\nu}},{\Psi\over\nu},\Phi_2,\cr
&\quad|f|_{1,1,2,\Omega^t},|\eta|_{2,1,\infty,\Omega^t}, |\eta|_{2,1,2,\Omega^t}\bigg)\cr
&\quad+c|\eta|_{2,1,\infty,\Omega^t}^2(1+|\eta|_{2,1,\infty,\Omega^t}^2)\Psi^2+ \nu|\varphi(0)|_{2,1}^2+|\rot\psi(0)|_{2,1}^2.\cr}
\label{1.34}
\end{equation}
Hence for $\nu$ sufficiently large there exists a constant $A$ such that
\begin{equation}
X(t)\equiv\chi_1t)+\chi_2(t)+\Psi(t)\le A.
\label{1.35}
\end{equation}
there is such restriction on time that $t$ is proportional to some positive increasing function of $\nu$. But large time is not convenient because strong restrictions on $v$, $f$ follow from time-integrals norms. Therefore to derive estimate (\ref{1.35}) for all $t\in\R_+$ we perform the procedure step by step in time. In Theorem \ref{t5.9} we prove that
\begin{equation}
X(t)\le A,\quad t\in[kT,(k+1)T],\quad k\in\N_0=\|\cup\{0\},
\label{1.36}
\end{equation}
where $A$ does not depend on $k$.

\noindent
The derivation of such estimate is possible thanks to sufficiently large $\nu$, $T$ and sufficiently small $|\nabla\varphi(0)|_{2,1}|$, $|\eta(0)|_{2,1}$.

\noindent
In the proof is used natural dissipation mechanism in  the compressible Navier-Stokes equations connected with viscosity coefficients $\mu$ and $\nu$.

\section{Notation and auxiliary results}\label{s2}

We use the simplified notation
$$\eqal{
&\|u\|_{L_p(\Omega)}=|u|_p,\ \ \|u\|_{H^s(\Omega)}=\|u\|_s,\ \ \|u\|_{W_p^s(\Omega)}=\|u\|_{s,p},\cr
&|u|_{k,l}^2=\sum_{i=0}^l\|\partial_t^iu\|_{k-i}^2,\ \ |u|_{k,l,r,\Omega^t}=\bigg(\intop_0^t|u(t')|_{k,l}^rdt'\bigg)^{1/r},\cr
&|u|_{r,q,\Omega^t}=\bigg(\intop_0^t|u(t')|_r^qdt'\bigg)^{1/q},\cr
&\|u\|_{L_r(0,t;H^s(\Omega))}=\|u\|_{s,r,\Omega^t},\ \ \|u\|_{L_r(0,t;W_p^s(\Omega))}=\|u\|_{s,p,r,\Omega^t}.\cr}
$$
Introduce the spaces
$$
\Gamma_l^k(\Omega)=\{u:|u|_{k,l}<\infty\},\ \ l\le k,\ \ l,k\in\N_0.
$$
First we obtain the energy type estimate for solutions to problem (\ref{1.1}).

\begin{lemma}\label{l2.1}
Assume that $(\varrho,v)$ is a solution to problem (\ref{1.1}). Assume that $p=p(\varrho)=A\varrho^\varkappa$, $\varkappa>1$.
Assume the periodic boundary conditions and that $f\in L_2(\Omega\times(kT,(k+1)T)$, $\alpha(0)=\intop_\Omega\big({1\over2}\varrho_0v_0^2+ {A\over\varkappa-1}\varrho_0^\varkappa\big)dx<\infty$
$$\eqal{
&B_1(T)=\sup_{k\in\N_0}\intop_{kT}^{(k+1)T}|\eta(t)|_2^2dt,\quad 
B_2(T)=\sup_{k\in\N_0}\intop_{kT}^{(k+1)T}|f(t)|_2^2dt,\quad \varrho_0\in L_2(\Omega),\cr
&v_0\in L_2(\Omega),\quad B(T)=\exp B_1(T)(B_2(T)+\sup_k\intop_{kT}^{(k+1)T} |\varrho|_\varkappa^\varkappa dt+|\varrho_0|_2^2|v_0|_2^2).\cr}
$$
Then
\begin{equation}\eqal{
&\intop_\Omega\bigg({1\over2}\varrho v^2+{A\over\varkappa-1}\varrho^\varkappa\bigg)dx+\mu\intop_0^t\|v(t')\|_1^2 dt'+\nu\intop_0^t|\divv v(t')|_2^2dt'\cr
&\le c(1+B_1(T)+B_2(T))(1+T)\bigg({B(T)\over1-e^{-\mu T}}+\alpha(0)\bigg)\equiv A_1^2.\cr}
\label{2.1}
\end{equation}
\end{lemma}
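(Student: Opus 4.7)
My plan is to derive the standard kinetic-plus-internal energy identity for the compressible Navier--Stokes system and then close it into a uniform-in-time bound by iterating over the time windows $[kT,(k+1)T]$.

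First I would multiply $(\ref{1.1})_1$ by $v$ and integrate over $\Omega$. The viscous terms give, after integration by parts (with no boundary contributions thanks to the periodicity), the dissipation $\mu|\nabla v|_2^2+\nu|\divv v|_2^2$. The convective part $\int_\Omega[\varrho v_t+\varrho v\cdot\nabla v]\cdot v\,dx$ combines with the continuity equation $(\ref{1.1})_2$ to yield $\frac{1}{2}\frac{d}{dt}\int_\Omega\varrho v^2\,dx$, by the usual trick of adding and subtracting $\frac{1}{2}v^2\varrho_t$ and using the identity $\frac{1}{2}v^2\varrho_t+\frac{1}{2}v^2\divv(\varrho v)=0$. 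The pressure term becomes $-\int_\Omega p\,\divv v\,dx$, which using $p=A\varrho^\varkappa$ and again the continuity equation can be rewritten as $\frac{d}{dt}\int_\Omega\frac{A}{\varkappa-1}\varrho^\varkappa\,dx$; this is the classical barotropic internal-energy computation that relies on $\frac{d}{d\varrho}\bigl(\frac{A\varrho^\varkappa}{\varkappa-1}\bigr)\varrho_t=p(\varrho)\cdot\varrho_t/\varrho$.

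Putting these together yields the differential identity
\[
\frac{d}{dt}\!\!\int_\Omega\!\!\Bigl(\tfrac{1}{2}\varrho v^2+\tfrac{A}{\varkappa-1}\varrho^\varkappa\Bigr)dx+\mu|\nabla v|_2^2+\nu|\divv v|_2^2=\int_\Omega\varrho f\cdot v\,dx.
\]
For the forcing term I would split $\varrho=a+\eta$ and apply Hölder plus Young to get $|\int_\Omega\varrho f\cdot v\,dx|\le\varepsilon|v|_2^2+c|\eta|_2^2|f|_2^2+c|f|_2^2$. Using the Poincaré inequality for $v$ (valid since the mean of $v$ is controlled by the momentum balance proven just before equation (\ref{1.7})) one bounds $|v|_2^2$ by a small constant times $|\nabla v|_2^2$ plus a controlled mean-value term, and the small constant is absorbed into the viscous dissipation $\mu|\nabla v|_2^2$.

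Next, to upgrade this into the uniform-in-$k$ estimate (\ref{2.1}), I would combine the resulting differential inequality with Poincaré to produce a Gronwall inequality of the form $y'(t)+\mu\, y(t)\le g(t)$, where $y(t)=\int_\Omega(\frac{1}{2}\varrho v^2+\frac{A}{\varkappa-1}\varrho^\varkappa)dx$ and $g(t)$ is controlled by $|\eta|_2^2|f|_2^2+|f|_2^2+|\varrho|_\varkappa^\varkappa$. Integration gives $y((k+1)T)\le e^{-\mu T}y(kT)+\int_{kT}^{(k+1)T}e^{-\mu((k+1)T-s)}g(s)\,ds$, and summing the resulting geometric series in $k$ yields the factor $\frac{1}{1-e^{-\mu T}}$ appearing in (\ref{2.1}). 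Integrating the differential inequality over a single window $[kT,(k+1)T]$ and using the window-uniform bound on $y(kT)$ then produces the claimed estimate with the stated constants $B_1(T)$, $B_2(T)$, $B(T)$ and $\alpha(0)$.

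The main obstacle will be the pressure energy term $\int_\Omega\varrho^\varkappa dx$, because it is not directly dissipated by the viscosity and therefore does not decay on its own in the Gronwall scheme. I plan to handle it by exploiting the assumption $\varkappa>1$ together with the $L_2$-control of $\eta$ built into $B_1(T)$: writing $\varrho^\varkappa=(a+\eta)^\varkappa$ and splitting off the constant part, the fluctuation is controlled via the assumed bound $\sup_k\int_{kT}^{(k+1)T}|\varrho|_\varkappa^\varkappa\,dt$ that is absorbed into $B(T)$. This, coupled with the exponential contraction $e^{-\mu T}$ from the Poincaré-dissipation mechanism, is precisely what permits the geometric summation and delivers the window-uniform bound $A_1^2$.
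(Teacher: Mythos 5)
Your proposal follows the same overall strategy as the paper: multiply the momentum equation by $v$, use the continuity equation to produce the kinetic plus barotropic internal energy, obtain the energy identity (paper's (2.7)), control the mean value of $v$ via the momentum balance, and then iterate the resulting Gronwall inequality over windows $[kT,(k+1)T]$ to produce the geometric-series factor $\frac{1}{1-e^{-\mu T}}$. The observation that the pressure energy $\int_\Omega\varrho^\varkappa dx$ is not dissipated directly, and must be handled via the assumed control of $|\varrho|_\varkappa^\varkappa$ inside $B(T)$, is also exactly what the paper does.

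There is however a concrete gap in how the $|\eta|_2^2$-dependent term enters. In the paper, the mean of $v$ is written out explicitly as $G=\tfrac{1}{a|\Omega|}\bigl[-\int_\Omega\eta v\,dx+\int_{\Omega^t}(a+\eta)f\,dx\,dt'+\int_\Omega(a+\eta_0)v_0\,dx\bigr]$, so bounding $|G|^2$ (which is how the $\|v\|_1^2$ rather than just $|\nabla v|_2^2$ dissipation is recovered) produces $|\int_\Omega\eta v\,dx|^2\le |\eta|_2^2|v|_2^2$, which, since $|v|_2^2$ is comparable to the energy $\alpha$, yields a term $c|\eta|_2^2\alpha$. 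The resulting differential inequality is $\frac{d}{dt}\alpha+\mu\alpha\le c|\eta|_2^2\alpha+c\beta$ — the factor $c|\eta|_2^2$ multiplies the \emph{energy} itself, not the data. The Gronwall integrating factor $\exp\bigl(c\int|\eta|_2^2\,dt'\bigr)$ is then exactly what produces the $\exp B_1(T)$ inside $B(T)$. Your proposed form $y'+\mu y\le g(t)$ with $g$ controlled by $|\eta|_2^2|f|_2^2+|f|_2^2+|\varrho|_\varkappa^\varkappa$ has instead attached the $|\eta|_2^2$ weight to $|f|_2^2$ in the source; this does not correspond to any of the terms that actually appear and, more importantly, would not reproduce the $\exp B_1(T)$ factor in (\ref{2.1}). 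A smaller inaccuracy: the Poincar\'e constant is fixed, not arbitrarily small, so the statement that $|v|_2^2\le\varepsilon|\nabla v|_2^2$ plus a mean term with $\varepsilon$ small and then absorbed into $\mu|\nabla v|_2^2$ is not how the absorption works; one obtains $\mu'\|v\|_1^2$ on the left with a possibly smaller constant $\mu'$, plus the correction $|G|^2$, which is where the whole $|\eta|_2^2\alpha$ structure originates. You should explicitly write out $G$ and trace how the $\int_\Omega\eta v\,dx$ contribution produces a coefficient on the energy, rather than on the data.
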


\begin{proof}
Multiplying $(\ref{1.1})_1$ by $v$, integrating over $\Omega$ and using the periodic boundary conditions, we obtain
\begin{equation}\eqal{
&{1\over2}\intop_\Omega(\varrho\partial_tv^2+\varrho v\cdot\nabla v^2)dx+\mu |\nabla v|_2^2+\nu|\divv v|_2^2\cr
&\quad+\intop_\Omega\nabla p(\varrho)\cdot vdx=\intop_\Omega\varrho f\cdot vdx\cr}
\label{2.2}
\end{equation}
Adding the identity
$$
{1\over2}\intop_\Omega[\varrho_t+\divv(\varrho v)]v^2dx=0
$$
we derive from (\ref{2.2}) the equality
\begin{equation}
{1\over2}{d\over dt}\intop_\Omega\varrho v^2dx+\mu|\nabla v|_2^2+\nu|\divv v|_2^2+\intop_\Omega\nabla p(\varrho)\cdot vdx=\intop_\Omega\varrho f\cdot vdx.
\label{2.3}
\end{equation}
Using that $p=A\varrho^\varkappa$, $\varkappa>1$, the last term on the l.h.s. of (\ref{2.3}) equals
\begin{equation}
\intop_\Omega\nabla p(\varrho)\cdot vdx=A\intop_\Omega v\cdot\nabla\varrho^\varkappa dx.
\label{2.4}
\end{equation}
To use (\ref{2.4}) in (\ref{2.3}) we multiply $(\ref{1.1})_2$ by $\varrho^{\varkappa-1}$. Then we get
$$
\varrho^{\varkappa-1}(\varrho_t+v\cdot\nabla\varrho)+\varrho^\varkappa\divv v=0.
$$
Continuing, we have
\begin{equation}
{1\over\varkappa}\partial_t\varrho^\varkappa+{1\over\varkappa}v\cdot \nabla\varrho^\varkappa+\varrho^\varkappa\divv v=0.
\label{2.5}
\end{equation}
Adding ${\varkappa-1\over\varkappa}v\cdot\nabla\varrho^\varkappa$ to both sides of (\ref{2.5}) yields
$$
{1\over\varkappa}{\partial\over\partial t}\varrho^\varkappa+\divv(v\varrho^\varkappa)= {\varkappa-1\over\varkappa}v\cdot\nabla\varrho^\varkappa.
$$
Integrating the equality over $\Omega$ and using boundary conditions gives
$$
{A\over\varkappa}{d\over dt}\intop_\Omega\varrho^\varkappa dx={A(\varkappa-1)\over\varkappa}\intop_\Omega v\cdot\nabla\varrho^\varkappa dx.
$$
Hence
\begin{equation}
{A\over\varkappa-1}{d\over dt}\intop_\Omega\varrho^\varkappa dx=A\intop_\Omega v\cdot\nabla\varrho^\varkappa dx.
\label{2.6}
\end{equation}
In view of (\ref{2.4}) and (\ref{2.6}) equality (\ref{2.3}) takes the form
\begin{equation}
{d\over dt}\intop_\Omega\bigg({1\over2}\varrho v^2+{A\over\varkappa-1}\varrho^\varkappa\bigg)dx+\mu|\nabla v|_2^2+\nu|\divv v|_2^2=\intop_\Omega\varrho f\cdot vdx.
\label{2.7}
\end{equation}
Using that $\diagintop_\Omega vdx=G$, where $G$ is defined between (\ref{1.6}) and (\ref{1.7}), and applying the Poincar\'e inequality we get
\begin{equation}\eqal{
&{d\over dt}\intop_\Omega\bigg({1\over2}\varrho v^2+{A\over\varkappa-1} \varrho^\varkappa\bigg)dx+\mu\|v\|_1^2+\nu|\divv v|_2^2\le\intop_\Omega\varrho f\cdot vdx\cr
&\quad+|G|^2\le\intop_\Omega\varrho f\cdot vdx+|\eta|_2^2|v|_2^2+\bigg|\intop_{\Omega^t}\varrho fdxdt'\bigg|^2+\bigg|\intop_\Omega\varrho_0v_0dx\bigg|^2.\cr}
\label{2.8}
\end{equation}
Exploiting that $a/2\le\varrho\le{3\over 2}a$ and introducing the quantity
$$
\alpha=\intop_\Omega\bigg({1\over2}\varrho v^2+{A\over\varkappa-1}\varrho^\varkappa\bigg)dx
$$
we have
\begin{equation}
{d\over dt}\alpha+\mu\alpha\le c|\eta|_2^2\alpha+c\beta,
\label{2.9}
\end{equation}
where
$$
\beta=|f|_2^2+|f|_{2,\Omega^t}^2+|\varrho_0|_2^2|v_0|_2^2+ |\varrho|_\varkappa^\varkappa
$$
Looking for the energy estimate with the bound independent of time we use the step by step in time argument. Then we consider (\ref{2.8}) in the time interval $[kT,(k+1)T]$, $k\in\N$. Then the second term in $\beta$ equals $|f|_{2,\Omega\times(kT,t)}^2$.

\noindent
From (\ref{2.8}) we have
$$
{d\over dt}\bigg[\alpha(t)\exp\bigg(\mu t-c\intop_{kT}^t|\eta(t')|_2^2dt'\bigg) \bigg]\le\beta\exp\bigg(\mu t-c\intop_{kT}^t|\eta(t')|_2^2dt'\bigg).
$$
Integrating with respect to time we finally get
$$\eqal{
\alpha(t)&\le\exp\bigg(-\mu t+c\intop_{kT}^t|\eta(t')|_2^2dt'\bigg)\intop_{kT}^t\beta\exp\bigg(\mu t'-c\intop_{kT}^{t'}|\eta(t'')|_2^2dt''\bigg)dt'\cr
&\quad+\alpha(kT)\exp(-\mu(t-kT)).\cr}
$$
Taking into account that some parts of $\beta$ are bounded in time we obtain the inequality
\begin{equation}\eqal{
\alpha(t)&\le\exp\bigg[c\intop_{kT}^t|\eta(t')|_2^2dt'\bigg]\bigg(\intop_{kT}^t |f|_{2,\Omega}^2dt'+|\varrho_0|_2^2|v_0|_2^2+ |\varrho|_{\varkappa,\infty,\Omega\times(kT,t)}^\varkappa\bigg)\cr
&\quad+\alpha(kT)\exp(-\mu(t-kT)).\cr}
\label{2.10}
\end{equation}
Setting $t=(k+1)T$ yields
$$\eqal{
\alpha((k+1)T)&\le\exp\bigg[c\intop_{kT}^{(k+1)T}|\eta(t')|_2^2dt'\bigg] \bigg[\intop_{kT}^{(k+1)T}(|f|_{2,\Omega}^2+|\varrho|_\varkappa^\varkappa)dt+|\varrho_0|_2^2|v_0|_2^2\bigg]\cr
&\quad+\alpha(kT)\exp(-\mu T)\le B(T)+\alpha(kT)\exp(-\mu T),\cr}
$$
where
$$\eqal{
B(T)&=\sup_{k\in\N_0}\exp\bigg[c\intop_{kT}^{(k+1)T}|\eta(t)|_2^2dt\bigg] \bigg[\intop_{kT}^{(k+1)T}(|f|_2^2+|\varrho|_\varkappa^\varkappa)dx\cr
&\quad+|\varrho_0|_2^2|v_0|_2^2\bigg].\cr}
$$
By interation we have
$$
\alpha(kT)\le{B(T)\over1-e^{-\mu T}}+\alpha(0)e^{-\mu kT},\quad k\in\N_0.
$$
Using the estimate in (\ref{2.10}) yields
\begin{equation}
\alpha(t)\le{2B(T)\over1-e^{-\mu T}}+\alpha(0),\quad t\in[kT,(k+1)T].
\label{2.11}
\end{equation}
Integrating (\ref{2.8}) with respect to time from $t=kT$ to $t\in(kT,(k+1)T]$, $k\in\N_0$, using (\ref{2.11}) and that $\varrho\in(a/2,3a/2)$ we obtain
$$\eqal{
&\intop_\Omega\bigg({1\over2}\varrho v^2+{A\over\varkappa-1} \varrho^\varkappa\bigg)dx+\mu\intop_{kT}^t\|v(t')\|_1^2dt'+\nu\intop_{kT}^t |\divv v(t')|_2^2dt'\cr
&\le c\bigg(\intop_{kT}^t|f(t')|_2^2dt'+\intop_{kT}^t|\eta(t')|_2^2dt'\bigg) \alpha(t)+cT\intop_{kT}^t|f(t')|_2^2dt'\cr
&\quad+T|\varrho_0|_2^2|v_0|_2^2+{B(T)\over1-e^{-\mu T}}+\alpha(0),\quad t\in[kT,(k+1)T].\cr}
$$
The above inequality implies (\ref{2.1}). This concludes the proof.
\end{proof}

\begin{lemma}\label{l2.2}
Assume that $r>2$, $\varrho f\in L_{3r\over2r+1}(\Omega)$, $p(\varrho)-p(a)\in L_{3r\over r+1}(\Omega)$, $\divv v\in L_{3r\over r+1}$, $\nu=\nu_1+\nu_2$, ${\nu_1^2\over\nu}$ sufficiently small. Then
\begin{equation}\eqal{
&{1\over r}{d\over dt}\intop_\Omega\varrho|v|^rdx+{k_0\over2}\intop_\Omega |\nabla|v|^{r/2}|^2dx+{\mu\over2}\intop_\Omega|v|^r \bigg|\nabla{v\over|v|}\bigg|^2dx\cr
&\quad+{\nu\over2}\intop_\Omega|\divv v|^2|v|^{r-2}dx\le{c\over\nu^{r/2}} |p(\varrho)-p(a)|_{3r\over r+1}^r+c|p(\varrho)\cr
&\quad-p(a)-\nu_2\divv v|_{3r\over r+1}^r+c|\varrho f|_{3r\over2r+1}^r,\cr}
\label{2.12}
\end{equation}
where $k_0=k_0\big(\mu,r,{\nu_1^2\over\nu}\big)$, $c=c(r,k_0)$.
\end{lemma}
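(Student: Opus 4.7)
I would multiply $(\ref{1.1})_1$ by $v|v|^{r-2}$ and integrate over $\Omega$. Adding the identity produced by $(\ref{1.1})_2$ multiplied by $|v|^r/r$, the inertial contributions collapse to $\frac{1}{r}\frac{d}{dt}\intop_\Omega\varrho|v|^r dx$ after integrating by parts with the periodic boundary conditions. Integrating the Laplacian piece by parts and using the pointwise identity $|\nabla v|^2=|\nabla|v||^2+|v|^2|\nabla(v/|v|)|^2$ yields
\[
-\mu\intop_\Omega\Delta v\cdot v|v|^{r-2}dx=\mu\frac{4(r-1)}{r^{2}}\intop_\Omega|\nabla|v|^{r/2}|^{2}dx+\mu\intop_\Omega|v|^{r}\Big|\nabla\frac{v}{|v|}\Big|^{2}dx,
\]
which supplies two of the three dissipations on the left of (\ref{2.12}).

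Integrating the second-viscosity and pressure pieces by parts, using $v\cdot\nabla|v|^{r-2}=(r-2)|v|^{r-3}(v\cdot\nabla|v|)$, leaves the straight dissipation $\nu\intop|\divv v|^{2}|v|^{r-2}$ together with three cross terms: (i) $\nu(r-2)\intop\divv v\cdot|v|^{r-3}(v\cdot\nabla|v|)$; (ii) $-\intop(p(\varrho)-p(a))|v|^{r-2}\divv v$; (iii) $-(r-2)\intop(p(\varrho)-p(a))|v|^{r-3}(v\cdot\nabla|v|)$. The essential step is the splitting $\nu=\nu_{1}+\nu_{2}$: grouping (i) with (iii) produces the factor
\[
\nu\divv v-(p(\varrho)-p(a))=\nu_{1}\divv v-\bigl[(p(\varrho)-p(a))-\nu_{2}\divv v\bigr],
\]
so the part of (i) paired with the large viscosity is in fact weighted only by $\nu_{1}$. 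The weighted Young inequality $\nu_{1}|A||B|\le\frac{\nu}{4}|A|^{2}+\frac{\nu_{1}^{2}}{\nu}|B|^{2}$ with $A=|\divv v||v|^{r/2-1}$, $B=|v|^{r/2-1}|\nabla|v||$, applied together with an analogous Young step on (ii), absorbs a total of $\frac{\nu}{2}\intop|\divv v|^{2}|v|^{r-2}$ from the $\nu$-dissipation (leaving the coefficient $\nu/2$ appearing in (\ref{2.12})) and produces a leftover proportional to $\frac{\nu_{1}^{2}}{\nu}\intop|v|^{r-2}|\nabla|v||^{2}=\frac{4\nu_{1}^{2}}{r^{2}\nu}\intop|\nabla|v|^{r/2}|^{2}$; this is absorbed into the $\mu$-dissipation with residual constant $k_{0}=k_{0}(\mu,r,\nu_{1}^{2}/\nu)>0$ under the smallness assumption on $\nu_{1}^{2}/\nu$.

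What remains are: (a) the piece $\frac{c}{\nu}\intop(p(\varrho)-p(a))^{2}|v|^{r-2}$ left over from (ii); (b) the inhomogeneous cross term $(r-2)\intop[(p(\varrho)-p(a))-\nu_{2}\divv v]|v|^{r-3}(v\cdot\nabla|v|)$ coming from the regrouping above; (c) the forcing $\intop\varrho f\cdot v|v|^{r-2}$. For (a) I would apply H\"older with exponents $\frac{3r}{2(r+1)}$ and $\frac{3r}{r-2}$, then the Sobolev embedding $\||v|^{r/2}\|_{6}\le c\|\nabla|v|^{r/2}\|_{2}$ (the mean correction is $|v|_{r}^{r/2}$ and is reabsorbed through the final Gronwall step), and Young's inequality $xy\le\epsilon x^{r/(r-2)}+c_{\epsilon}y^{r/2}$ to obtain $\frac{c}{\nu^{r/2}}|p(\varrho)-p(a)|_{3r/(r+1)}^{r}+\epsilon\|\nabla|v|^{r/2}\|_{2}^{2}$. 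The same sequence with $h=(p(\varrho)-p(a))-\nu_{2}\divv v$ treats (b) and yields $c|p(\varrho)-p(a)-\nu_{2}\divv v|_{3r/(r+1)}^{r}$. For (c) I would use H\"older with exponents $\frac{3r}{2r+1}$ and $\frac{3r}{r-1}$, the identity $\||v|^{r-1}\|_{3r/(r-1)}=\||v|^{r/2}\|_{6}^{2(r-1)/r}$, Sobolev, and Young to close to $c|\varrho f|_{3r/(2r+1)}^{r}+\epsilon\|\nabla|v|^{r/2}\|_{2}^{2}$. All $\epsilon$-remainders are absorbed into the $k_{0}/2$ coefficient on the left, yielding (\ref{2.12}).

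The main obstacle is the second paragraph: the cross term (i) carries the large factor $\nu$ and so cannot be controlled by the $\mu$-dissipation, which forces one to draw from the $\nu$-dissipation produced by $-\nu\nabla\divv v$ itself; but a naive Young inequality applied directly to (i) leaves a residue weighted by $\nu$ rather than by $1/\nu$, which would be fatal. The splitting $\nu=\nu_{1}+\nu_{2}$ is what converts this leftover into the small weight $\nu_{1}^{2}/\nu$, and it is precisely this manoeuvre that motivates the hypothesis $\nu_{1}^{2}/\nu$ sufficiently small.
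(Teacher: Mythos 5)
Your proof is correct and follows essentially the same route as the paper: testing with $v|v|^{r-2}$, the pointwise identity $|\nabla v|^{2}=|\nabla|v||^{2}+|v|^{2}|\nabla(v/|v|)|^{2}$ for the Laplacian, the split $\nu=\nu_{1}+\nu_{2}$ to convert the large-$\nu$ cross term into a $\nu_{1}^{2}/\nu$-weighted residue absorbable by the $\mu$-dissipation, and then H\"older--Sobolev--Young on the remaining terms. The only cosmetic difference is that the paper pre-rewrites the equation as (\ref{2.13}) bundling $\nu_{2}\divv v$ into the pressure gradient before testing, whereas you perform the same regrouping at the level of the resulting integrals; the Young constants you display also omit the $(r-2)$ factors carried along in the paper, but this affects only the numerical constant in $k_{0}$.
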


\begin{proof}
We express $(\ref{1.1})_1$ in the form
\begin{equation}
\varrho v_t+\varrho v\cdot\nabla v-\mu\Delta v-\nu_1\nabla\divv v+\nabla(p-p(a)-\nu_2\divv v)=\varrho f,
\label{2.13}
\end{equation}
where $\nu=\nu_1+\nu_2$, $\nu_i>0$, $i=1,2$. Multiplying (\ref{2.13}) by $v|v|^{r-2}$, integrating over $\Omega$, using $(\ref{1.1})_2$ and boundary conditions yields
\begin{equation}\eqal{
&{1\over r}{d\over dt}\intop_\Omega\varrho|v|^rdx+\mu\intop_\Omega\nabla v\cdot\nabla(v|v|^{r-2})dx+\nu_1\intop_\Omega\divv v\divv(v|v|^{r-2})dx\hskip-.4cm\cr
&\quad-\intop_\Omega(p-p(a)-\nu_2\divv v)\divv(v|v|^{r-2})dx=\intop_\Omega\varrho fv|v|^{r-2}dx.\cr
&\cr}
\label{2.14}
\end{equation}
First, we consider
$$\eqal{
J_1&=\mu\intop_\Omega\nabla v\cdot\nabla(v|v|^{r-2})dx\cr
&=\mu\intop_\Omega|\nabla v|^2|v|^{r-2}dx+\mu\intop_\Omega v_k\nabla v_k\cdot(r-2)|v|^{r-3}\nabla|v|dx\equiv I_1+I_2.}
$$
Using that $v_k\nabla v_k={1\over2}\nabla|v|^2=|v|\nabla|v|$ we have
$$\eqal{
I_2&=(r-2)\mu\intop_\Omega|v|^{r-2}|\nabla|v||^2dx=(r-2)\mu\intop_\Omega ||v|^{{r\over2}-1}\nabla|v||^2dx\cr
&={4(r-2)\over r^2}\mu\intop_\Omega|\nabla|v|^{r/2}|^2dx.\cr}
$$
To examine $I_1$ we use the formula
\begin{equation}
|\nabla u|^2=|u|^2\bigg|\nabla{u\over|u|}\bigg|^2+\big|\nabla|u|\big|^2.
\label{2.15}
\end{equation}
Then $I_1$ takes the form
$$\eqal{
I_1&=\mu\intop_\Omega\bigg(|v|^2\bigg|\nabla{v\over|v|}\bigg|^2+ \big|\nabla|v|\big|^2\bigg)|v|^{r-2}dx\cr
&=\mu\intop_\Omega\bigg[|v|^r\bigg|\nabla{v\over|v|}\bigg|^2+|v|^{r-2} \big|\nabla|v|\big|^2\bigg]dx\cr
&=\mu\intop_\Omega\bigg[|v|^r\bigg|\nabla{v\over|v|}\bigg|^2+ \big||v|^{{r\over2}-1}\nabla|v|\big|^2\bigg]dx\cr
&=\mu\intop_\Omega|v|^r\bigg|\nabla{v\over|v|}\bigg|^2dx+{4\mu\over r^2} \intop_\Omega\big|\nabla|v|^{r/2}\big|^2dx.\cr}
$$
Hence
$$
J_1={4(r-1)\mu\over r^2}\intop_\Omega\big|\nabla|v|^{r/2}\big|^2dx+\mu\intop_\Omega |v|^r\bigg|\nabla{v\over|v|}\bigg|^2dx.
$$
Next, we consider
$$\eqal{
J_2&=\nu_1\intop_\Omega\divv v\divv(v|v|^{r-2})dx\cr
&=\nu_1\intop_\Omega|\divv v|^2|v|^{r-2}dx+\nu_1\intop_\Omega\divv vv\cdot\nabla|v|^{r-2}dx\equiv I_3+I_4,\cr}
$$
where
$$
I_4=\nu_1(r-2)\intop_\Omega\divv vv\cdot\nabla|v|\,|v|^{r-3}dx.
$$
Then
$$
|I_4|\le\nu_1(r-2)\intop_\Omega|\divv v|\,|v|^{r-2}|\nabla|v||dx.
$$
Employing the above expressions in (\ref{2.14}) one gets
\begin{equation}\eqal{
&{1\over r}{d\over dt}\intop_\Omega\varrho|v|^rdx+{4(r-1)\mu\over r^2}\intop_\Omega\big|\nabla|v|^{r/2}\big|^2dx+ \mu\intop_\Omega|v|^r \bigg|\nabla{v\over|v|}\bigg|^2dx\cr
&\quad+\nu_1\intop_\Omega|\divv v|^2|v|^{r-2}dx\le\nu_1(r-2)\intop_\Omega|\divv v|\,|v|^{r-2}\big|\nabla|v|\big|dx\cr
&\quad+\intop_\Omega[p(\varrho)-p(a)-\nu_2\divv v][\divv v|v|^{r-2}+(r-2)|v|^{r-3}v\cdot\nabla|v|]dx\cr
&\quad+\bigg|\intop_\Omega\varrho f\cdot v|v|^{r-2}dx\bigg|.\cr}
\label{2.16}
\end{equation}
The first term on the r.h.s. of (\ref{2.16}) is bounded by
$$
{\varepsilon_1\over2}\nu_1\intop_\Omega|\divv v|^2|v|^{r-2}dx+{1\over2\varepsilon_1}\nu_1(r-2)^2\intop_\Omega|v|^{r-2} \big|\nabla|v|\big|^2dx,
$$
where the second integral equals
$$
{\nu_1\over2\varepsilon_1}(r-2)^2\intop_\Omega\big||v|^{{r\over2}-1}\nabla|v|\big|^2dx ={4\nu_1(r-2)^2\over2\varepsilon_1r^2}\intop_\Omega|\nabla|v|^{r/2}|^2dx.
$$
The second term on the r.h.s. of (\ref{2.16}) can be expressed in the form
$$\eqal{
&\intop_\Omega(p(\varrho)-p(a))\divv v|v|^{r-2}dx-\nu_2\intop_\Omega|\divv v|^2 |v|^{r-2}dx\cr
&\quad+\intop_\Omega[p(\varrho)-p(a)-\nu_2\divv v](r-2)|v|^{r-3}v\cdot\nabla |v|dx\equiv J_1+J_2+J_3,\cr}
$$
where
$$
|J_1|\le{\varepsilon_2\over2}\intop_\Omega|\divv v|^2|v|^{r-2}dx+ {1\over2\varepsilon_2}\intop_\Omega|p(\varrho)-p(a)|^2|v|^{r-2}dx
$$
and
$$\eqal{
|J_3|&\le(r-2)\intop_\Omega|p(\varrho)-p(a)-\nu_2\divv v|\,|v|^{r-2}|\nabla |v||dx\cr
&\le{\varepsilon_3\over2}(r-2)\intop_\Omega|v|^{r-2}|\nabla|v||^2dx\cr
&\quad+{r-2\over2\varepsilon_3}\intop_\Omega|p(\varrho)-p(a)-\nu_2\divv v|^2|v|^{r-2}dx\equiv K_1.\cr}
$$
Hence we have
$$\eqal{
K_1&=2\varepsilon_3{r-2\over r^2}\intop_\Omega\big|\nabla|v|^{r/2}\big|^2dx\cr
&\quad+{r-2\over2\varepsilon_3}\intop_\Omega|p(\varrho)-p(a)-\nu_2\divv v|^2|v|^{r-2}dx.\cr}
$$
Employing the above estimates in (\ref{2.16}) yields
\begin{equation}\eqal{
&{1\over r}{d\over dt}\intop_\Omega\varrho|v|^rdx+\bigg[{4(r-1)\mu\over r^2}- {4\nu_1(r-2)^2\over2\varepsilon_1r^2}-{2\varepsilon_3(r-2)\over r^2}\bigg] \intop_\Omega\big|\nabla|v|^{r/2}\big|^2dx\cr
&\quad+\mu\intop_\Omega|v|^r \bigg|\nabla{v\over|v|}\bigg|^2dx+\!\bigg[\nu-{\nu_1\varepsilon_1\over2}- {\varepsilon_2\over2}\bigg] \intop_\Omega|\divv v|^2|v|^{r-2}dx\cr
&\le{1\over2\varepsilon_2}\intop_\Omega|p(\varrho)-p(a)|^2|v|^{r-2}dx+ {r-2\over2\varepsilon_3}\intop_\Omega|p(\varrho)-p(a)-\nu_2\divv v|^2 |v|^{r-2}dx\cr
&\quad+\intop_\Omega\varrho f\cdot v|v|^{r-2}dx.\cr}
\label{2.17}
\end{equation}
We set
\begin{equation}
\varepsilon_1={\nu\over2\nu_1},\quad \varepsilon_2={\nu\over2},\quad \varepsilon_3={\mu\over r-2}
\label{2.18}
\end{equation}
then coefficient near the second term on the l.h.s. of (\ref{2.17}) equals
\begin{equation}\eqal{
&{4(r-1)\mu\over r^2}-{4\nu_1^2(r-2)^2\over\nu r^2}-{2\mu\over r^2}= {2\mu(2r-3)\over r^2}-{4\nu_1^2(r-2)^2\over\nu r^2}\cr
&={4\over r^2}\bigg[{(2r-3)\mu\over2}-{\nu_1^2\over\nu}(r-2)^2\bigg]\equiv k_0\bigg(\mu,r,{\nu_1^2\over\nu}\bigg)}
\label{2.19}
\end{equation}
which is positive for $r\ge2$ and $\nu_1^2/\nu$ small.

\noindent
Coefficients near the third and fourth terms are equal, respectively,
$$
\mu\quad {\rm and}\quad {\nu\over 2}.
$$
To have $k_0>0$ we obtain the restriction on $\nu_1$
$$
{\nu_1^2\over\nu}<{\mu(2r-3)\over 2(r-2)^2}\equiv d^2,\quad {\rm so}\quad \nu_1<d\sqrt{\nu}
$$
Then
$$
\nu_2>\nu-d\sqrt{\nu}.
$$
Hence for $\nu$ large $\nu_2$ is close to $\nu$

\noindent
Then (\ref{2.17}) takes the form
\begin{equation}\eqal{
&{1\over r}{d\over dt}\intop_\Omega\varrho|v|^rdx+k_0\intop_\Omega\big|\nabla|v|^{r/2}\big|^2dx+ \mu\intop_\Omega|v|^r\bigg|\nabla{v\over|v|}\bigg|^2dx\cr
&\quad+{\nu\over2}\intop_\Omega|\divv v|^2|v|^{r-2}dx\le{1\over\nu}\intop_\Omega |p(\varrho)-p(a)|^2|v|^{r-2}dx\cr
&\quad+{(r-2)^2\over2\mu}\intop_\Omega|p(\varrho)-p(a)-\nu_2\divv v|^2|v|^{r-2}dx+\intop_\Omega|\varrho f\cdot v|\,|v|^{r-2}dx.\cr}
\label{2.20}
\end{equation}
First we use the estimate
$$
\intop_\Omega|\nabla|v|^{r/2}|^2dx\ge c_0|v|_{3r}^r.
$$
Next, we have
$$\eqal{
&{1\over\nu}\intop_\Omega|p(\varrho)-p(a)|^2|v|^{r-2}dx\le{1\over\nu} |p(\varrho)-p(a)|_{3r\over r+1}^2|v|_{3r}^{r-2}\cr
&\le\bigg[{\varepsilon_4^{r\over r-2}\over{r\over r-2}}|v|_{3r}^r+{1\over{r\over2}\varepsilon_4^{r/2}\nu^{r/2}}|p(\varrho)- p(a)|_{3r\over r+1}^r\bigg]\equiv L_1,\cr}
$$
$$\eqal{
&{(r-2)^2\over2\mu}\intop_\Omega|p(\varrho)-p(a)-\nu_2\divv v|^2|v|^{r-2}dx\cr
&\le {(r-2)^2\over2\mu}|p(\varrho)-p(a)-\nu_2\divv v|_{3r\over r+1}^2 |v|_{3r}^{r-2}\cr
&\le{(r-2)^2\over2\mu}\bigg[{\varepsilon_5^{r\over r-2}\over{r\over r-2}}|v|_{3r}^r+{1\over{r\over 2}\varepsilon_5^{r/2}}|p(\varrho)-p(a)-\nu_2\divv v|_{3r\over r+1}^r\bigg]\equiv L_2,\cr}
$$
$$\eqal{
&\bigg|\intop_\Omega\varrho f\cdot v|v|^{r-2}dx\bigg|\le\intop_\Omega|\varrho f|\,|v|^{r-1}dx\le|\varrho f|_{3r\over2r+1}|v|_{3r}^{r-1}\cr
&\le{\varepsilon_6^{r/(r-1)}\over r/(r-1)}|v|_{3r}^r+{1\over r\varepsilon_6^r}|\varrho f|_{3r\over2r+1}^r\equiv L_3.\cr}
$$
We set
$$
{\varepsilon_4^{r/(r-2)}\over r/(r-2)}{1\over\nu}={k_0c_0\over 6},\quad {(r-2)^2\over2\mu}{\varepsilon_5^{r/(r-2)}\over r/(r-2)}={k_0c_0\over6},\quad {\varepsilon_6^{r/(r-1)}\over r/(r-1)}={k_0c_0\over 6}
$$
Then
$$\eqal{
&L_1\le c_0{k_0\over 6}|v|_{3r}^r+{c(r,k_0)\over\nu^{r/2}}|p(\varrho)-p(a)|_{3r\over r+1}^r,\cr
&L_2\le c_0{k_0\over6}|v|_{3r}^r+c(r,k_0)|p(\varrho)-p(a)-\nu_2\divv v|_{3r\over r+1}^r,\cr
&L_3\le c_0{k_0\over6}|v|_{3r}^2+c(r,k_0)|\varrho f|_{3r\over2r+1}^r.\cr}
$$
Employing the estimates in (\ref{2.20}) implies
\begin{equation}\eqal{
&{1\over r}{d\over dt}\intop_\Omega\varrho|v|^rdx+{k_0\over2}\intop_\Omega |\nabla|v|^{r/2}|^2dx+\mu\intop_\Omega|v|^r \bigg|\nabla{v\over|v|}\bigg|^2dx\cr
&\quad+{\nu\over2}\intop_\Omega|\divv v|^2|v|^{r-2}dx\le{c\over\nu^{r/2}} |p(\varrho)-p(a)|_{3r\over r+1}^r\cr
&\quad+c|p(\varrho)-p(a)-\nu_2\divv v|_{3r\over r+1}^r+c|\varrho f|_{3r\over2r+1}^r.\cr}
\label{2.21}
\end{equation}
This inequality implies (\ref{2.12}) and concludes the proof.
\end{proof}

\begin{remark}\label{r2.1}
We integrate (\ref{2.12}) with respect to time. Then we get
\begin{equation}\eqal{
&{1\over r}\intop_\Omega\varrho|v|^rdx+{k_0\over2}\intop_{\Omega^t} \big|\nabla|v|^{r/2}\big|^2dxdt'+\mu\intop_{\Omega^t}|v|^r \bigg|\nabla{v\over|v|}\bigg|^2dxdt'\cr
&\quad+{\nu\over2}\intop_{\Omega^t}|\divv v|^2|v|^{r-2}dxdt'\le {c\over\nu^{r/2}}\intop_0^t|p(\varrho)-p(a)|_{3r\over r+1}^rdt'\cr
&\quad+c\intop_0^t|p(\varrho)-p(a)-\nu_2\divv v|_{3r\over r+1}^rdt'+c\intop_0^t |\varrho f|_{3r\over2r+1}^rdt'+{1\over r}\intop_\Omega\varrho_0|v_0|^rdx\cr
&\equiv A_{2,r}^r(t).\cr}
\label{2.22}
\end{equation}
Since $\varrho=a+\eta$, we obtain from (\ref{2.22}) for $\eta$ so small that $|\eta|<a/2$ the inequality
\begin{equation}
|v(t)|_r\le cA_{2,r}(t).
\label{2.23}
\end{equation}
Since $\varrho$ is bounded from below and above and since $\nu_2$ is close to $\nu$ we obtain from (\ref{2.22}) the inequality
\begin{equation}\eqal{
&{1\over r}|v|_r^r+{k_0\over2}\intop_{\Omega^t}|\nabla|v|^{r/2}|^2dxdt'\le c\intop_0^t|\eta|_{3r\over r+1}^rdt'+c\nu^r\intop_0^t|\Delta\varphi|_{3r\over r+1}^rdt'\cr
&\quad+c\intop_0^t|f|_{3r\over2r+1}^rdt'+{1\over r} |\varrho_0|_\infty|v_0|_r^r.\cr}
\label{2.24}
\end{equation}
Simplifying, we get
\begin{equation}\eqal{
&{1\over\root{r}\of{r}}|v|_r+\root{r}\of{k_0\over2} \bigg(\intop_{\Omega^t}|\nabla|v|^{r/2}|^2dxdt'\bigg)^{1/r}\le c|\eta|_{{3r\over r+1},r,\Omega^t}+c\nu|\Delta\varphi|_{{3r\over r+1},r,\Omega^t}\cr
&\quad+c|f|_{{3r\over2r+1},r,\Omega^t}+{1\over\root{r}\of{r}} |\varrho_0|_\infty^{1/r}|v_0|_r.\cr}
\label{2.25}
\end{equation}
Since $\Delta\varphi\in W_2^{2,1}(\Omega^t)$ then $\Delta\varphi\in L_{{3r\over r+1},r}(\Omega^t)$ with arbitrary $r$, because
$$
{5\over2}-{3(r+1)\over3r}-{2\over r}\le 2\quad {\rm so}\quad {3\over2}-{3\over r}\le 2\ {\rm which\ holds\ for\ any}\ r.
$$
To derive a global estimate for solutions to problem (\ref{1.1}) we need that the first two terms on the r.h.s. of (\ref{2.25}) are estimated by quantities multiplied by the small parameter $\big({1\over\nu}\big)^\alpha$, $\alpha>0$.

\noindent
However, the second term on the r.h.s. of (\ref{2.25}) contains the coefficient $\nu$. This means that we need more delicate estimate to get the factor $\big({1\over\nu}\big)^\alpha$, $\alpha>0$.

\noindent
For this purpose we consider the interpolation
\begin{equation}
|\Delta\varphi|_{3r\over r+1}\le c|\nabla\Delta\varphi|_2^\theta|\nabla\varphi|_2^{1-\theta},
\label{2.26}
\end{equation}
where $\theta$ is a solution to the equation
$$
{3(r+1)\over3r}-1={3\over2}-2\theta.
$$
Then $\theta={3\over4}-1/2r$, $1-\theta={1\over4}+{1\over2r}$. Therefore
\begin{equation}
|\Delta\varphi|_{{3r\over r+1},r,\Omega^t}\le c|\nabla\Delta\varphi|_{2,\infty,\Omega^t}^{{3\over4}-1/2r}\bigg(\intop_0^t |\nabla\varphi|_2^{(1/4+1/2r)r}dt\bigg)^{1/r},
\label{2.27}
\end{equation}
where $(1/4+1/2r)r\le 2$ for $r\le 6$. For $r=6$, (\ref{2.27}) takes the form
\begin{equation}
|\Delta\varphi|_{18/7,6,\Omega^t}\le c|\nabla\Delta\varphi|_{2,\infty,\Omega^t}^{2/3} |\nabla\varphi|_{2,\Omega^t}^{1/3}.
\label{2.28}
\end{equation}
To estimate the last factor on the r.h.s. of (\ref{2.28}) we need the following equation derived from $(\ref{1.1})_1$ by applying the div operator
\begin{equation}\eqal{
&a\Delta\varphi_t-(\mu+\nu)\Delta^2\varphi+a_0\Delta\eta=-a\divv(v\cdot\nabla v)+ \divv[-\eta v_t-\eta v\cdot\nabla v\cr
&\quad+(p_\varrho(a)-p_\varrho(a+\eta))\nabla\eta+(a+\eta)f].\cr}
\label{2.29}
\end{equation}
Applying operator $\Delta^{-1}$ to (\ref{2.29}) yields
\begin{equation}\eqal{
&a\varphi_t-(\mu+\nu)\Delta\varphi=-a\Delta^{-1}\partial_{x_i}\partial_{x_j} (v_iv_j)+a\Delta^{-1}\partial_{x_i}(\Delta\varphi v_i)\cr
&\quad+\Delta^{-1}\divv[-\eta v_t-\eta v\cdot\nabla v+(p_\varrho(a)-p_\varrho(a+\eta))\nabla\eta+(a+\eta)f]\cr
&\quad-(\eta-\diagintop_\Omega\eta dx)+a\diagintop_\Omega\varphi_tdx\cr
&\equiv D_1+D_2+F-(\eta-\diagintop_\Omega\eta dx)+ a\diagintop_\Omega\varphi_tdx,\cr}
\label{2.30}
\end{equation}
where $\diagintop_\Omega={1\over|\Omega|}\intop$.
\end{remark}

Integrating (\ref{2.30}) over $\Omega$ we obtain identity in view of the periodic boundary conditions.

To obtain an estimate of the term $\nu|\Delta\varphi|_{{3r\over r+1},r,\Omega^t}$, which appears on the r.h.s. of (\ref{2.24}), in terms of the function $\Psi^\alpha/\nu^\beta$, where $\alpha>0$, $\beta>0$, $\Psi=\nu|\nabla\varphi|_{3,1,2,\Omega^t}$, we need the result

\begin{lemma}\label{l2.3}
Let the assumptions of Lemma \ref{l2.1} hold. Let $A_1$ be defined in Lemma \ref{l2.1}. Let $|\eta|\le a/2$. Let $0<\varphi_*=\min_\Omega\varphi$. Let $v\in L_{2p/(p-2),2/(1-x)}(\Omega^t)$, $\Delta\varphi\in L_{3/2,\infty}(\Omega^t)$, $\eta\in L_{6,\infty}(\Omega^t)$, $\nabla\eta\in L_2(\Omega^t)$, $v_t\in L_2(\Omega^t)$, $v\in L_{6,\infty}(\Omega^t)$, $f_g\in L_{6/5,2}(\Omega^t)$, $f\in L_2(\Omega^t)$, $\varphi(0)\in L_2(\Omega)$, $p\in(2,6)$, $\varkappa={3\over2}-{3\over p}$.\\
Then
\begin{equation}\eqal{
&a|\varphi(t)|_2^2+(\mu+\nu)|\nabla\varphi|_{2,\Omega^t}^2\le\exp\bigg( {c|v|_{2p/(p-2),2/(1-\varkappa),\Omega^t}^{2/(1-\varkappa)}\over [(\mu+\nu)^\varkappa\varphi_*]^{1/(1-\varkappa)}}\cr
&\quad+{\intop_0^t|\diagintop_\Omega\varphi_tdx|dt'\over\varphi_*}\bigg)\bigg[ {c\over\mu+\nu}(|\Delta\varphi|_{3/2,\infty,\Omega^t}^2A_1^2+ |\eta|_{3,\infty,\Omega^t}^2|v_t|_{2,\Omega^t}^2\cr
&\quad+|\eta|_{6,\infty,\Omega^t}^2|v|_{6,\infty,\Omega^t}^2A_1^2+ |\eta|_{6,\infty,\Omega^t}^2|\nabla\eta|_{2,\Omega^t}^2+|f_g|_{6/5,2,\Omega^t}^2+ |\eta|_{2,\Omega^t}^2\cr
&\quad+|\eta|_{3,\infty,\Omega^t}^2|f|_{2,\Omega^t}^2)+ a|\varphi(0)|_2^2\bigg].\cr}
\label{2.31}
\end{equation}
\end{lemma}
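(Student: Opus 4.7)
The plan is to multiply (2.30) by $\varphi$ --- chosen positive, exploiting that $\varphi$ is defined only up to an additive constant so that $\varphi \ge \varphi_* > 0$ on $\Omega$ --- integrate over $\Omega$, and integrate by parts in the $(\mu+\nu)\Delta\varphi$-term using the periodic boundary conditions. This yields the energy identity
$$\frac{a}{2}\frac{d}{dt}|\varphi|_2^2 + (\mu+\nu)|\nabla\varphi|_2^2 = \int_\Omega D_1\varphi\,dx + \int_\Omega D_2\varphi\,dx + \int_\Omega F\varphi\,dx - \int_\Omega \bar\eta\,\varphi\,dx + a\diagintop_\Omega\varphi_t\,dx\!\cdot\!\int_\Omega\!\varphi\,dx.$$
The task is then to estimate each term on the right by either $\varepsilon(\mu+\nu)|\nabla\varphi|_2^2$ (to be absorbed on the left) or by $g(t)|\varphi|_2^2 + h(t)$, so that Gronwall in time produces (2.31).

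The delicate term is $\int_\Omega D_1\varphi\,dx$ with $D_1 = -a\Delta^{-1}\partial_{x_i}\partial_{x_j}(v_iv_j)$, since its $L^q$-norm is controlled only by $|v|^2$ in a norm not absorbed by the dissipation. Using $\varphi \ge \varphi_*$, I would replace $\varphi$ by $\varphi^2/\varphi_*$ in the integrand; H\"older then gives $|D_1|_{p/(p-2)}|\varphi|_p^2/\varphi_*$, while Calder\'on--Zygmund boundedness of $\Delta^{-1}\partial_{x_i}\partial_{x_j}$ on $L^{p/(p-2)}$ (valid for $p>2$) yields $|D_1|_{p/(p-2)} \le c|v|_{2p/(p-2)}^2$. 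Gagliardo--Nirenberg interpolation $|\varphi|_p \le c|\nabla\varphi|_2^\varkappa|\varphi|_2^{1-\varkappa}$ with $\varkappa = 3/2 - 3/p \in (0,1)$, together with Young's inequality of exponents $1/\varkappa$ and $1/(1-\varkappa)$, splits off $\frac12(\mu+\nu)|\nabla\varphi|_2^2$ and leaves the coefficient $c|v|_{2p/(p-2)}^{2/(1-\varkappa)}[(\mu+\nu)^\varkappa\varphi_*]^{-1/(1-\varkappa)}|\varphi|_2^2$, exactly as inside the exponential in (2.31). The mean-value term $a\diagintop\varphi_t\,dx \cdot \int\varphi\,dx$ is bounded by $a|\diagintop\varphi_t\,dx|\,|\varphi|_2^2/\varphi_*$ by the same $\varphi \ge \varphi_*$ device, contributing the second summand inside the exponential.

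The remaining terms are source-type. For $D_2 = a\partial_{x_i}\Delta^{-1}(\Delta\varphi\,v_i)$ I would integrate by parts once to get $-a\int_\Omega\Delta^{-1}(\Delta\varphi\,v_i)\partial_{x_i}\varphi\,dx$ and invoke elliptic $L^p$ theory with $W^{2,6/5}(\Omega) \hookrightarrow L^6(\Omega) \hookrightarrow L^2(\Omega)$ to get $|\Delta^{-1}(\Delta\varphi\,v)|_2 \le c|\Delta\varphi\,v|_{6/5} \le c|\Delta\varphi|_{3/2}|v|_6$ by H\"older. Young against $(\mu+\nu)|\nabla\varphi|_2^2$ plus Lemma 2.1 (bounding $|v|_{6,2,\Omega^t}$ and $|\nabla v|_{2,\Omega^t}$ by $A_1$) produce the $\frac{c}{\mu+\nu}|\Delta\varphi|_{3/2,\infty,\Omega^t}^2 A_1^2$ contribution after time integration. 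For $F = \Delta^{-1}\divv G$ with $G = -\eta v_t - \eta v\cdot\nabla v + (p_\varrho(a)-p_\varrho(a+\eta))\nabla\eta + af_g + \eta f$, I use that $\Delta^{-1}$ commutes with $\divv$ on scalars, integrate by parts to get $|\int_\Omega F\varphi\,dx| \le c|G|_{6/5}|\nabla\varphi|_2$, and apply H\"older to each piece: $|\eta v_t|_{6/5}\le |\eta|_3|v_t|_2$, $|\eta v\cdot\nabla v|_{6/5}\le |\eta|_6|v|_6|\nabla v|_2$, $|(p_\varrho(a)-p_\varrho(a+\eta))\nabla\eta|_{6/5}\le c|\eta|_3|\nabla\eta|_2$, $|\eta f|_{6/5}\le |\eta|_3|f|_2$, together with the linear $|af_g|_{6/5}$. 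Finally, $\int\bar\eta\,\varphi\,dx = \int\bar\eta\,\bar\varphi\,dx$ (the constant parts cancel) is bounded by $c|\eta|_2|\nabla\varphi|_2$ via Poincar\'e $|\bar\varphi|_2 \le c|\nabla\varphi|_2$, absorbed by Young; after time integration this gives the $\frac{c}{\mu+\nu}|\eta|_{2,\Omega^t}^2$ contribution.

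The main obstacle is book-keeping the interpolation exponent $\varkappa$ carefully so that the final $\varphi_*$- and $(\mu+\nu)$-weights emerge as $\varphi_*^{-1/(1-\varkappa)}(\mu+\nu)^{-\varkappa/(1-\varkappa)}$, and verifying that the choice $p\in(2,6)$, $\varkappa\in(1/2,1)$ keeps every H\"older pairing legitimate (in particular $|D_1|_{p/(p-2)}$ requires $p/(p-2)\in(1,\infty)$, achieved for $p>2$). Once each term is either absorbed into $(\mu+\nu)|\nabla\varphi|_2^2$ or bounded by $g(t)|\varphi|_2^2+h(t)$, Gronwall's inequality with integrating factor corresponding to the $D_1$-plus-mean-value contribution yields exactly (2.31).
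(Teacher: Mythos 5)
Your proposal follows the paper's proof essentially verbatim: same multiplication of (2.30) by $\varphi$, same $\varphi\ge\varphi_*$ device and $L^{p/(p-2)}$--$L^p$ Hölder plus Gagliardo--Nirenberg--Young split on the $D_1$-term to produce the exponential weight, same integration by parts and $W^{2,6/5}\hookrightarrow L^6$ elliptic estimate for $D_2$ and $F$, same Poincaré treatment of the $\bar\eta$-term, and the same Gronwall closing step. The only cosmetic deviation is that the lemma permits the full range $p\in(2,6)$ (so $\varkappa\in(0,1)$), not only $\varkappa\in(1/2,1)$; that tighter range is a later requirement for Remark 2.3, not for this lemma.
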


\begin{proof}
Multiplying (\ref{2.30}) by $\varphi$ and integrating over $\Omega$ yields
\begin{equation}\eqal{
&{a\over2}{d\over dt}|\varphi|_2^2+(\mu+\nu)|\nabla\varphi|_2^2=\intop_\Omega D_1\varphi dx+\intop_\Omega D_2\varphi dx\cr
&\quad+\intop_\Omega F\varphi dx-\intop_\Omega\bar\eta\varphi dx+a\diagintop_\Omega\varphi_tdx\intop_\Omega\varphi dx,\cr}
\label{2.32}
\end{equation}
where $\bar\eta=\eta-\diagintop_\Omega\eta dx$.

\noindent
Now we estimate the particular terms from the r.h.s. of (\ref{2.32}). The first term is bounded by
\begin{equation}
\bigg|\intop_\Omega D_1\varphi dx\bigg|\le\intop_\Omega {|D_1|\varphi^2\over\varphi_*}dx\le{|D_1|_{p/(p-2)}\over\varphi_*}|\varphi|_p^2 \equiv I_1,
\label{2.33}
\end{equation}
where $2<p<6$. Let $\alpha={|D_1|_{p/(p-2)}\over\varphi_*}$. Then we use the interpolation
$$
I_1^{1/2}=\alpha^{1/2}|\varphi|_p\le\alpha^{1/2}(\varepsilon^{1/\varkappa} |\nabla\varphi|_2+c\varepsilon^{-{1\over1-\varkappa}}|\varphi|_2),
$$
where $\varkappa=3/2-3/p$.

\noindent
Setting $\varepsilon^{1/\varkappa}\alpha^{1/2}=(\mu+\nu)^{1/2}$ we have $\varepsilon=\big({\mu+\nu\over\alpha}\big)^{\varkappa/2}$.

\noindent
Then
$$
\alpha^{1/2}\varepsilon^{-1/(1-\varkappa)}={\alpha^{1/2(1-\varkappa)}\over (\mu+\nu)^{\varkappa/2(1-\varkappa)}}
$$
Therefore
\begin{equation}\eqal{
I_1&\le{1\over2}(\mu+\nu)|\nabla\varphi|_2^2+{c\alpha^{1/(1-\varkappa)}\over (\mu+\nu)^{\varkappa/(1-\varkappa)}}|\varphi|_2^2\cr
&={1\over2}(\mu+\nu)|\nabla\varphi|_2^2+{c|D_1|_{p/(p-2)}^{1/(1-\varkappa)}\over ((\mu+\nu)^\varkappa\varphi_*)^{1/(1-\varkappa)}}|\varphi|_2^2,\cr}
\label{2.34}
\end{equation}
where $|D_1|_q\le c\sum_{i,j=1}^3|v_lv_j|_q$ for any $q\in(1,\infty)$.

\noindent
Consider the second term on the r.h.s. of (\ref{2.32}). Integration by parts yields
$$
\intop_\Omega D_2\varphi dx=-\intop_\Omega\Delta^{-1}(\Delta\varphi v)\cdot\nabla\varphi dx\equiv I_2.
$$
Hence
\begin{equation}\eqal{
|I_2|&\le\varepsilon_1|\nabla\varphi|_2^2+c/\varepsilon_1|\Delta^{-1} (\Delta\varphi v)|_2^2\le\varepsilon_1|\nabla\varphi|_2^2+c/\varepsilon_1|\Delta\varphi v|_{6/5}^2\cr
&\le\varepsilon_1|\nabla\varphi|_2^2+c/\varepsilon_1|\Delta\varphi|_{3/2}^2 |v|_6^2.\cr}
\label{2.35}
\end{equation}
Consider the third term on the r.h.s. of (\ref{2.32}). Using that $F=\divv F'$ we get
$$
\bigg|\intop_\Omega F\cdot\varphi dx\bigg|=\bigg|\intop_\Omega F'\cdot\nabla\varphi\bigg|dx\le\varepsilon_2|\nabla\varphi|_2^2+c/\varepsilon_2 |F'|_2^2,
$$
where
\begin{equation}\eqal{
|F'|_2^2&=|\Delta^{-1}[-\eta v_t-\eta v\cdot\nabla v+(p_\varrho(a)-p_\varrho(a+\eta))\nabla\eta+af_g+\eta f]|_2^2\cr
&\le c(|\eta v_t|_{6/5}^2+|\eta v\cdot\nabla v|_{6/5}^2+|\eta\nabla\eta|_{6/5}^2+|f_g|_{6/5}^2+|\eta f|_{6/5}^2)\cr
&\le c(|\eta|_3^2|v_t|_2^2+|\eta|_6^2|v|_6^2|\nabla v|_2^2+|\eta|_6^2|\nabla\eta|_2^2+|f_g|_{6/5}^2+|\eta|_3^2|f|_2^2).\cr}
\label{2.36}
\end{equation}
We express the fourth term on the r.h.s. of (\ref{2.32}) in the form
$$
\intop_\Omega\bar\eta\varphi dx=\intop_\Omega\bar\eta\bar\varphi dx\equiv I_3.
$$
Hence
$$
|I_3|\le\varepsilon_3|\nabla\varphi|_2^2+c/\varepsilon_3|\bar\eta|_2^2\le \varepsilon_3|\nabla\varphi|_2^2+c/\varepsilon_3|\eta|_2^2.
$$
Finally, the last term on the r.h.s. of (\ref{2.32}) is bounded by
$$
{\left|\diagintop\varphi_tdx\right|\over\varphi_*}|\varphi|_2^2.
$$
Using the above estimates in (\ref{2.32}) and assuming that $\varepsilon_1-\varepsilon_3$ are sufficiently small we derive the inequality
\begin{equation}\eqal{
&a{d\over dt}|\varphi|_2^2+(\mu+\nu)|\nabla\varphi|_2^2\le\bigg[ {c|D_1|_{p/(p-2)}^{1/(1-\varkappa)}\over [(\mu+\nu)^\varkappa\varphi_*]^{1/(1-\varkappa)}}+ {|\diagintop\varphi_tdx|\over\varphi_*}\bigg]|\varphi|_2^2\cr
&\quad+{c\over\mu+\nu}[|\Delta\varphi|_{3/2}^2|v|_6^2+|F'|_2^2+|\eta|_2^2]\cr
&\equiv cd^2|\varphi|_2^2+{c\over\mu+\nu}[|\Delta\varphi|_{3/2}^2|v|_6^2+ |F'|_2^2+|\eta|_2^2].\cr}
\label{2.37}
\end{equation}
From (\ref{2.37}) we have
\begin{equation}\eqal{
&a{d\over dt}(|\varphi|_2^2\exp\bigg[-c\intop_0^td^2(t')dt'\big]\bigg)+(\mu+\nu) |\nabla\varphi|_2^2\exp\bigg[-c\intop_0^td^2(t')dt'\bigg]\cr
&\le{c\over\mu+\nu}[|\Delta\varphi|_{3/2}^2|v|_6^2+|F'|_2^2+|\eta|_2^2]\exp \bigg[-c\intop_0^td^2(t')dt'\bigg].\cr}
\label{2.38}
\end{equation}
Integrating (\ref{2.38}) with respect to time implies
\begin{equation}\eqal{
&a|\varphi(t)|_2^2+(\mu+\nu)|\nabla\varphi|_{2,\Omega^t}^2\le\exp\bigg[ c\intop_0^td^2(t')dt'\bigg]\cdot\cr
&\quad\cdot\bigg[{c\over\mu+\nu}(|\Delta\varphi|_{3/2,\infty,k\Omega^t}^2A_1^2+ |F'|_{2,\Omega^t}^2+t|\eta|_{2,\infty,\Omega^t}^2)+a|\varphi(0)|_2^2\bigg].\cr}
\label{2.39}
\end{equation}
This inequality implies (\ref{2.31}) and concludes the proof.
\end{proof}

\noindent
Now we obtain bounds of $\varphi$ from below and from above. We follow considerations from \cite[Ch. 2, Sections 5, 6]{LSU}.

\begin{lemma}\label{l2.4}
Let $\bar\varphi=\varphi-\diagintop_\Omega\varphi dx$. Let $\bar\varphi(0)\in L_\infty(\Omega)$. Assume that $\hat k=|\bar\varphi(0)|_\infty$ for $\bar\varphi(0)\ge 1$ and $\hat k=1$ for $\bar\varphi(0)<1$. Assume that $\eta\in L_\infty(\Omega^t)$, $\nabla\eta\in L_6(\Omega^t)$, $v_t\in L_{30/(22-9\varkappa)}(\Omega^t)$, $v\in L_{20/(4-3\varkappa)}(\Omega^t)\cap L_{60/(17-9\varkappa)}(\Omega^t)$, $f_g,f\in L_{30/(22-9\varkappa)}(\Omega^t)$, $\varkappa\in(0,4/3)$, $t\le T$. Then
\begin{equation}\eqal{
&|\bar\varphi|_{\infty,\Omega^t}\le 2\hat k[1+2^{2/\varkappa+1/\varkappa^2} (\beta\gamma)^{1+1/\varkappa}t^{(1+\varkappa)/r_0}{\rm meas}^{(1+\varkappa)/p_0}(\Omega)]\cr
&\equiv\gamma_*,\ \ t\le T,\cr
&{3\over p_0}+{2\over r_0}={3\over2},\quad \beta={c\over(\mu+\nu)^{1/q}},\quad 
{3\over p}+{2\over q}={3\over2},\cr
&\gamma={c\over(\mu+\nu)^{1/q}} (|v|_{20/(4-3\varkappa),\Omega^t}^2+G(\varkappa,t))\equiv {c\over(\mu+\nu)^{1/q}}G_0(\varkappa,t),\cr}
\label{2.40}
\end{equation}
where $G$ is defined in (\ref{2.50}).
\end{lemma}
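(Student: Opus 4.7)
The plan is to adapt the De Giorgi–Moser iteration from \cite[Ch.~2, §5--6]{LSU} to the parabolic equation satisfied by $\bar\varphi$. Subtracting the spatial mean from both sides of (\ref{2.30}), the zero-mean function $\bar\varphi$ satisfies
$$
a\bar\varphi_t-(\mu+\nu)\Delta\bar\varphi = D_1+D_2+F-\bar\eta\quad\text{in }\Omega\times(0,t),
$$
with periodic boundary conditions, and by definition of $\hat k$ we have $|\bar\varphi(0)|_\infty\le\hat k$.

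The first step is a Caccioppoli inequality at each truncation level $k\ge\hat k$. Multiplying the equation by $(\bar\varphi-k)_+$ and integrating on $\Omega\times(0,s)$ for $s\le t$, and observing that the initial contribution vanishes because $k\ge\hat k$, one obtains
$$
\tfrac{a}{2}\bigl|(\bar\varphi-k)_+(s)\bigr|_2^2 + (\mu+\nu)\bigl|\nabla(\bar\varphi-k)_+\bigr|_{2,\Omega^s}^2 \le \int_0^s\!\!\int_{A_k(\tau)}(D_1+D_2+F-\bar\eta)(\bar\varphi-k)_+\,dx\,d\tau,
$$
where $A_k(\tau)=\{x:\bar\varphi(x,\tau)>k\}$. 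A symmetric argument with $-(\bar\varphi+k)_-$ furnishes the bound from below.

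Next I would estimate the right-hand side using the parabolic Sobolev embedding
$$
\|u\|_{L_{p_0,r_0}(\Omega^s)}\le c\bigl(\|\nabla u\|_{2,\Omega^s}+\|u\|_{2,\infty,\Omega^s}\bigr),\qquad \tfrac{3}{p_0}+\tfrac{2}{r_0}=\tfrac{3}{2},
$$
applied to $u=(\bar\varphi-k)_+$. The terms $D_1\sim|v|^2$, $D_2\sim\Delta\varphi\cdot v$, and the constituents of $F$ (namely $\eta v_t$, $\eta v\cdot\nabla v$, $\eta\nabla\eta$, $f_g$, $\eta f$) together with $\bar\eta$ are placed in the corresponding dual mixed-norm spaces $L_{p',q'}$, using the $L_p$-boundedness of the Riesz operators implicit in $\Delta^{-1}\partial_{x_i}\partial_{x_j}$ and $\Delta^{-1}\divv$. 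Hölder's inequality applied on the super-level set extracts a factor $|A_k|^{1+\sigma}$ with some positive surplus $\sigma$ controlled by $\varkappa\in(0,4/3)$; the small parameter $\beta=c/(\mu+\nu)^{1/q}$ (with $3/p+2/q=3/2$) appears through Young's inequality used to absorb $\nabla(\bar\varphi-k)_+$ on the left. The outcome is an inequality of the schematic form
$$
\bigl\|(\bar\varphi-k)_+\bigr\|_{L_{p_0,r_0}(\Omega^s)} \le \beta\gamma\,|A_k|_{\Omega^s}^{\,1+\sigma},
$$
with $\gamma$ encoding precisely the norms of $v,v_t,\eta,\nabla\eta,f,f_g$ listed in the hypotheses.

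Finally, I would invoke the Ladyzhenskaya–De Giorgi iteration lemma \cite[Ch.~2, Lemma~5.6]{LSU}: since $\omega_k\ge (h-k)\chi_{A_h}$ for $h>k$, the preceding estimate yields $|A_h|_{\Omega^t}^{\theta}\le (\beta\gamma)^{\lambda}(h-k)^{-\mu}|A_k|_{\Omega^t}^{\theta(1+\sigma)}$, whence $|A_k|_{\Omega^t}$ vanishes for $k\ge \hat k + d$, where $d$ is explicitly computable in terms of $\beta$, $\gamma$, $\sigma$, and $|A_{\hat k}|_{\Omega^t}\le t\,\mathrm{meas}(\Omega)$. Tracking the exponents through the iteration reproduces the factor $2^{2/\varkappa+1/\varkappa^2}(\beta\gamma)^{1+1/\varkappa}t^{(1+\varkappa)/r_0}\mathrm{meas}^{(1+\varkappa)/p_0}(\Omega)$ appearing in (\ref{2.40}). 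The principal obstacle is the mixed-norm bookkeeping: one must select exponents for each source term so that the integrability hypotheses $v\in L_{20/(4-3\varkappa)}\cap L_{60/(17-9\varkappa)}$, $v_t,f,f_g\in L_{30/(22-9\varkappa)}$, $\nabla\eta\in L_6$ yield the \emph{same} surplus $\sigma$ for every constituent of $D_1,D_2,F,\bar\eta$, so that the iteration closes and delivers precisely the exponents $(1+\varkappa)/p_0$, $(1+\varkappa)/r_0$, $1+1/\varkappa$ stated in (\ref{2.40}).
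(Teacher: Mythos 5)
Your proposal follows essentially the same route as the paper: truncate $\bar\varphi$ at levels $k\ge\hat k$, multiply by $\bar\varphi^{(k)}$ to obtain a Caccioppoli-type $V(\Omega^t)$-estimate, use the parabolic embedding from \cite[Ch.~2, Sect.~3]{LSU} to introduce the small constant $\beta=c/(\mu+\nu)^{1/q}$, place the sources $D_1,D_2,F,\bar\eta$ in dual mixed norms via H\"older on $A_k(t)$ to extract the surplus factor $\mu(k,t)^{(1+\varkappa)/r_0}$, and finish by the De Giorgi iteration. The only cosmetic difference is that you invoke the abstract iteration lemma (\cite[Ch.~2, Lemma~5.6]{LSU}) directly, whereas the paper applies the packaged boundedness result Theorem~6.1 of \cite[Ch.~2, Sect.~6]{LSU} (after verifying its hypothesis via Lemma~6.1), which avoids re-tracking the iteration exponents by hand; both deliver the constant $2\hat k\bigl[1+2^{2/\varkappa+1/\varkappa^2}(\beta\gamma)^{1+1/\varkappa}t^{(1+\varkappa)/r_0}\mathrm{meas}^{(1+\varkappa)/p_0}\Omega\bigr]$.
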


\begin{proof}
From (\ref{2.30}) we have
\begin{equation}
a\bar\varphi_t-(\mu+\nu)\Delta\bar\varphi=-a\Delta^{-1}\partial_{x_i} \partial_{x_j}(v_iv_j)+a\Delta^{-1}\partial_{x_i}(\Delta\varphi v_i)+F-\bar\eta
\label{2.41}
\end{equation}
Let $\bar\varphi^{(k)}=\max\{\bar\varphi^{(k)}(x,t)-k,0\}$. Multiply (\ref{2.41}) by $\bar\varphi^{(k)}$ and integrate over $\Omega$. Then we have
\begin{equation}\eqal{
&{a\over2}{d\over dt}|\bar\varphi^{(k)}|_2^2+(\mu+\nu)|\nabla\bar\varphi^{(k)}|_2^2\cr
&= -\intop_\Omega\Delta^{-1}\partial_{x_i}\partial_{x_j}(v_iv_j) \bar\varphi^{(k)}dx\cr
&\quad+a\intop_\Omega\Delta^{-1}\partial_{x_i}(\Delta\varphi v_i)\bar\varphi^{(k)}dx+ \intop_\Omega F\bar\varphi^{(k)}dx-\intop_\Omega\bar\eta\bar\varphi^{(k)}dx.\cr}
\label{2.42}
\end{equation}
Assume that
$$
\hat k< k.
$$
Integrating (\ref{2.42}) with respect to time and using $k>\hat k$ gives
\begin{equation}\eqal{
&\|\bar\varphi^{(k)}\|_{V(\Omega^t)}^2\equiv a|\bar\varphi^{(k)}(t)|_2^2+(\mu+\nu)|\nabla\bar\varphi^{(k)}|_{2,\Omega^t}^2\le c|vv|_{p',q',A_k^t(t)}|\bar\varphi^{(k)}|_{p,q,\Omega^t}\cr
&\quad+c(|\Delta\varphi v|_{p',q',A_k^t(t)} +c|F|_{p',q',A_k^t(t)}) |\bar\varphi^{(k)}|_{p,q,\Omega^t}+ |\bar\eta|_{p',q',A_k^t(t)} |\bar\varphi^{(k)}|_{p,q,\Omega^t},\cr}
\label{2.43}
\end{equation}
where ${3\over p}+{2\over q}={3\over2}$ and ${1\over p}+{1\over p'}=1$, ${1\over q}+{1\over q'}=1$ and $A_k(t)=\{x\in\Omega:\bar\varphi(x,t)>k\}$. Then ${3\over p'}+{2\over q'}=7/2$.

\noindent
Now we have to estimate $|\bar\varphi^{(k)}|_{p,q,\Omega^t}$ by the norm on the l.h.s. of (\ref{2.43}), where we have to take under account the coefficient $\mu+\nu$ which is assumed to be large. From \cite[Ch. 2, Sect. 3]{LSU} we have
\begin{equation}
|\bar\varphi^{(k)}|_{p,q,\Omega^t}\le c|\bar\varphi^{(k)}|_{2,\infty,\Omega^t}^{1-2/q} |\nabla\bar\varphi^{(k)}|_{2,\Omega^t}^{2/q}.
\label{2.44}
\end{equation}
Let $\beta$ be a power function of $(\mu+\nu)$. Then we have
$$
\beta|\bar\varphi^{(k)}|_{2,\infty,\Omega^t}^{1-2/q} |\nabla\bar\varphi^{(k)}|_{2,\Omega^t}^{2/q}\le c\beta^{q/2}|\bar\varphi_x^{(k)}|_{2,\Omega^t}+c |\bar\varphi^{(k)}|_{2,\infty,\Omega^t}.
$$
Comparing this with the norm from the l.h.s. of (\ref{2.43}) we have
$$
\beta^{q/2}=(\mu+\nu)^{1/2}\quad {\rm so}\quad \beta=(\mu+\nu)^{1/q}.
$$
Then, (\ref{2.43}) takes the form
\begin{equation}\eqal{
&|\bar\varphi^{(k)}(t)|_{2,\infty,\Omega^t}+(\mu+\nu)^{1/2} |\nabla\bar\varphi^{(k)}|_{2,\Omega^t}\le{c\over(\mu+\nu)^{1/q}} |vv|_{p',q',A_k^t(t)}\cr
&\quad+{1\over(\mu+\nu)^{1/q}}|F|_{p',q',A_k^t(t)}+{1\over(\mu+\nu)^{1/q}} (|\Delta\varphi v|_{p',q',A_k^t(t)}+|\bar\eta|_{p',q',A_k^t(t)}).\cr}
\label{2.45}
\end{equation}
Now, we examine the terms from the r.h.s. of (\ref{2.45}). Examine the first term. Let $h=v\cdot v$. Then we have
$$\eqal{
&{c\over(\mu+\nu)^{1/q}}|h|_{p',q',A_k^t(t)}\equiv
{c\over(\mu+\nu)^{1/q}}\bigg(\intop_0^t\bigg(\intop_{A_k(t')} |h|^{p'}dx\bigg)^{q'/p'}dt'\bigg)^{1/q'}\cr
&\le{c\over(\mu+\nu)^{1/q}}\bigg(\intop_0^t\bigg[\bigg( \intop_{A_k(t')}1dx\bigg)^{1/\lambda p'}\bigg(\intop_{A_k(t')}|h|^{p'\lambda'} \bigg)^{1/p'\lambda'}\bigg]^{q'}dt'\bigg)^{1/q'}\cr
&={c\over(\mu+\nu)^{1/q}}\bigg(\intop_0^t|A_k(t')|^{q'/\lambda p'}
|h|_{p'\lambda',\Omega}^{q'}dt'\bigg)^{1/q'}\cr
&\le{c\over(\mu+\nu)^{1/q}}\bigg(\intop_0^t|A_k(t')|^{\gamma q'/\lambda p'}dt' \bigg)^{1/\gamma q'}\bigg(\intop_0^t|h|_{p'\lambda',\Omega}^{\gamma'q'}dt' \bigg)^{1/\gamma'q'}\equiv I_1,\cr}
$$
where $1/\lambda+1/\lambda'=1$, $1/\gamma+1/\gamma'=1$, 
$|A_k(t)|={\rm meas}A_k(t)$, $\mu(k,t)=\intop_0^t|A_k(t')|^{r_0/p_0}dt'$, ${3\over p_0}+{2\over r_0}={3\over2}$.

\noindent
Let ${r_0\over p_0}={\gamma q'\over\lambda p'}$, 
$1/\gamma q'={1+\varkappa\over r_0}$, where $\varkappa>0$. Then $\gamma q'={r_0\over1+\varkappa}$, $\lambda p'={p_0\over1+\varkappa}$.

\noindent
Since $p'={p_0\over(1+\varkappa)\lambda}$, $q'={r_0\over(1+\varkappa)\gamma}$ we have the following two equations for $p_0$, $r_0$,
\begin{equation}
{3\over p_0}+{2\over r_0}={3\over2},\quad 
{3(1+\varkappa)\lambda\over p_0}+{2(1+\varkappa)\gamma\over r_0}={7\over2}.
\label{2.46}
\end{equation}
Hence, we obtain
\begin{equation}\eqal{
&{2\over r_0}(1+\varkappa)(\gamma-\lambda)={7\over2}-{3\over2}(1+\varkappa)\lambda,\cr
&{3\over p_0}(1+\varkappa)(\lambda-\gamma)={7\over2}-{3\over2}(1+\varkappa)\gamma.\cr}
\label{2.47}
\end{equation}
Consider the case $\gamma=\lambda$. Then $\gamma'=\lambda'$, $\lambda={7\over3(1+\varkappa)}$ and $\lambda'={7\over4-3\varkappa}$. Therefore
$$
I_1\le{c\over(\mu+\nu)^{1/q}}\mu(k,t)^{1+\varkappa\over r_0}\bigg(\intop_0^t |h|_{p'\lambda'}^{q'\lambda'}dt'\bigg)^{1/q'\lambda'}\equiv I_2.
$$
Since $3/p'+2/q'=7/2$ we have that $3/p'\lambda'+2/q'\lambda'={7\over2\lambda'}={4-3\varkappa\over2}$. Let $p_*=p'\lambda'$, $q_*=q'\lambda'$. Then
$$
{3\over p_*}+{2\over q_*}={4-3\varkappa\over 2},
$$
where $0<\varkappa<4/3$. For $p_*=q_*$ we have that $p_*={10\over4-3\varkappa}$. But $h\sim v^2$ so to have $I_2$ bounded we need that
\begin{equation}
v\in L_{20/(4-3\varkappa)}(\Omega^t),\quad {\rm so}\quad I_2\le{c\over(\mu+\nu)^{1/q}}\mu(k,t)^{1+\varkappa\over r_0} |v|_{20/(4-3\varkappa),\Omega^t}^2
\label{2.48}
\end{equation}
Looking for solutions such that $v\in L_\infty(0,t;H^2(\Omega))$ we see that (\ref{2.48}) may hold.

\noindent
Looking for the second term on the r.h.s. of (\ref{2.45}) and using the above considerations we have to find an estimate for
$$
|F|_{10/(4-3\varkappa),\Omega^t}.
$$
Using the form of $F$ we calculate
\begin{equation}\eqal{
&|F|_{10/(4-3\varkappa),\Omega^t}=\bigg(\intop_{\Omega^t}|\Delta^{-1}\divv [-\eta v_t-\eta v\cdot\nabla v\cr
&\quad+(p_\varrho(a)-p_\varrho(a+\eta))\nabla\eta+af_g+\eta f]|^{10/(4-3\varkappa)}dxdt'\bigg)^{(4-3\varkappa)/10}.\cr}
\label{2.49}
\end{equation}
Now, we examine the particular terms from (\ref{2.49})
$$\eqal{
&|\Delta^{-1}\divv(\eta v_t)|_{10/(4-3\varkappa),\Omega^t}
\le c|\eta v_t|_{30/(22-9\varkappa),10/(4-3\varkappa),\Omega^t}\cr
&\le|\eta|_{\infty,\Omega^t}|v_t|_{30/(22-9\varkappa),10/(4-3\varkappa),\Omega^t},\cr
&|\Delta^{-1}\divv(\eta v\cdot\nabla v)|_{10/(4-3\varkappa),\Omega^t}\le |\Delta^{-1}\partial_{x_i}\partial_{x_j}(\eta v_iv_j)|_{10/(4-3\varkappa),\Omega^t}\cr
&\quad+|\Delta^{-1}\partial_{x_j}(\partial_{x_i}\eta v_iv_j)|_{10/(4-3\varkappa),\Omega^t}\cr
&\le c|\eta|_{\infty,\Omega^t}|v^2|_{10/(4-3\varkappa),\Omega^t}+c|\nabla\eta v^2|_{30/(22-9\varkappa),\Omega^t}\cr
&\le c|\eta|_{\infty,\Omega^t}|v|_{20/(4-3\varkappa),\Omega^t}^2+c |\nabla\eta|_{30\lambda_1/(22-9\varkappa),\Omega^t} |v^2|_{30\lambda_2/(22-9\varkappa),\Omega^t}\cr
&\equiv J_1+J_2,\cr}
$$
where $1/\lambda_1+1/\lambda_2=1$.

\noindent
Since
$$
{30\lambda_1\over22-9\varkappa}=6\ \ {\rm we\ have\ that}\ \ \lambda_1={22-9\varkappa\over 5}
$$
so $\lambda_2={22-9\varkappa\over17-9\varkappa}$.

\noindent
Therefore
$$\eqal{
J_2&\le c|\nabla\eta|_{6,\infty,\Omega^t}|v^2|_{30/(17-9\varkappa),10/(4-3\varkappa), \Omega^t}\cr
&\le c|\nabla\eta|_{6,\infty,\Omega^t} |v|_{60/(17-9\varkappa),20/(4-3\varkappa),\Omega^t}^2.\cr}
$$
Continuing,
$$\eqal{
&|\Delta^{-1}\divv(p_\varrho(a)- p_\varrho(a+\eta))\nabla\eta|_{10/(4-3\varkappa),\Omega^t}\le c|\eta\nabla\eta|_{30/(22-9\varkappa),10/(4-3\varkappa),\Omega^t}\cr
&\le c|\eta|_{\infty,\Omega^t} |\nabla\eta|_{30/(22-9\varkappa),10/(4-3\varkappa),\Omega^t},\cr}
$$
where ${30\over22-9\varkappa}\le 6$ so $5\le 22-9\varkappa$. Hence, $\varkappa\le{17\over 9}$. Finally, we have
$$
|\Delta^{-1}\divv f_g|_{10/(4-3\varkappa),\Omega^t}\le c|f_g|_{30/(22-9\varkappa),10/(4-3\varkappa),\Omega^t}
$$
and
$$
|\Delta^{-1}\divv(\eta f)|_{10/(4-3\varkappa),\Omega^t}\le c|\eta|_{\infty,\Omega^t}|f|_{30/(22-9\varkappa),10/(4-3\varkappa),\Omega^t}.
$$
Using the above estimates in (\ref{2.49}) yields
\begin{equation}\eqal{
&|F|_{10/(4-3\varkappa),\Omega^t}\le c[|\eta|_{\infty,\Omega^t} |v_t|_{30/(22-9\varkappa),10/(4-3\varkappa),\Omega^t}\cr
&\quad+|\eta|_{\infty,\Omega^t}|v|_{20/(4-3\varkappa),\Omega^t}^2+ |\nabla\eta|_{6,\infty,\Omega^t}|v|_{60/(17-9\varkappa),20/(4-3\varkappa),\Omega^t}^2\cr
&\quad+|\eta|_{\infty,\Omega^t} |\nabla\eta|_{30/(22-9\varkappa),10/(4-3\varkappa),\Omega^t}+ |f_g|_{30/(22-9\varkappa),10/(4-3\varkappa),\Omega^t}\cr
&\quad+|\eta|_{\infty,\Omega^t} |f|_{30/(22-9\varkappa),10/(4-3\varkappa),\Omega^t}]\cr
&\equiv G(\varkappa,t)\cr}
\label{2.50}
\end{equation}
In view of (\ref{2.50}) the second term on the r.h.s. of (\ref{2.45}) is bounded by
\begin{equation}\eqal{
&{1\over(\mu+\nu)^{1/q}}|F|_{p',q',\Omega^t}\le{c\over(\mu+\nu)^{1/q}}\mu (k,t)^{1+\varkappa\over r_0}G_1(\varkappa,t),\cr
&|\Delta\varphi v|_{10/(4-3\varkappa),\Omega^t}\le |\Delta\varphi|_{20/(4-3\varkappa),\Omega^t} |v|_{20/(4-3\varkappa),\Omega^t} \equiv G_2,\cr
&|\bar\eta|_{10/(4-3\varkappa),\Omega^t}\equiv G_3,\cr
&G=G_1+G_2+G_3.\cr}
\label{2.51}
\end{equation}
Employing estimates (\ref{2.48}) and (\ref{2.51}) in (\ref{2.45}) implies the inequality
\begin{equation}\eqal{
&|\bar\varphi^{(k)}(t)|_{2,\infty,\Omega^t}+(\mu+\nu)^{1/2} |\nabla\bar\varphi^{(k)}(t)|_{2,\Omega^t}\cr
&\le{c\over(\mu+\nu)^{1/q}}\mu(k,t)^{1+\varkappa\over r_0} (|v|_{20/(4-3\varkappa),\Omega^t}^2+G(\varkappa,t)),\cr}
\label{2.52}
\end{equation}
where $q$ and $r_0$ follow from the relations
$$
{3\over p}+{2\over q}={3\over2},\ \ {3\over p_0}+{2\over r_0}={3\over2}\quad {\rm and}\quad 0<\varkappa<4/3.
$$
We apply Lemma 6.1 from \cite[Ch. 2, Sect. 6]{LSU}. Then for $k\ge\hat k$ we obtain from (\ref{2.52}) the inequality
\begin{equation}
\|\bar\varphi^{(k)}\|_{V(\Omega^t)}\le\gamma k\mu^{1+\varkappa\over r_0}(k),
\label{2.53}
\end{equation}
where the norm of $V(\Omega^t)$ is determined by the l.h.s. of (\ref{2.52}) and
$$
\gamma={c\over(\mu+\nu)^{1/q}}(|v|_{20/(4-3\varkappa),\Omega^t}^2+ G(\varkappa,t)),\ \ t\le T.
$$
Moreover, (\ref{2.44}) is used in the form
\begin{equation}
|\bar\varphi^{(k)}|_{p,q,\Omega^t}\le\beta(\mu+\nu)^{1/q} |\bar\varphi^{(k)}|_{2,\infty,\Omega^t}^{1-2/q}
|\nabla\bar\varphi^{(k)}|_{2,\Omega^t}^{2/q}\le\beta \|\bar\varphi^{(k)}\|_{V(\Omega^t)},
\label{2.54}
\end{equation}
where $\beta=c/(\mu+\nu)^{1/q}$ appears in formula (\ref{3.2}) from \cite[Ch. 2, Sect. 3]{LSU}. The $\beta$ appears also in Theorem 6.1 from \cite[Ch. 2, Sect. 6]{LSU}. Then Theorem 6.1 yields the estimate
\begin{equation}
|\bar\varphi|_{\infty,\Omega^t}\le 2\hat k[1+2^{2/\varkappa+1/\varkappa^2} (\beta\gamma)^{1+1/\varkappa}t^{1+\varkappa\over r_0}{\rm meas}^{1+\varkappa\over p_0}\Omega].
\label{2.55}
\end{equation}
This implies (\ref{2.40}) and concludes the proof.
\end{proof}

\begin{remark}\label{r2.2}
To obtain an estimate for $\varphi$ from below we assume that $\varphi>0$. This can be always derived by adding a positive constant $L$ to $\varphi$ because $|\bar\varphi|_\infty\le\gamma_*$ holds with $\gamma_*$ independent of $L$. Then we have
$$\eqal{
&\diagintop_\Omega\varphi dx=\bigg|\diagintop_\Omega\varphi dx\bigg|= \bigg|\diagintop_\Omega\varphi dx-\varphi+\varphi\bigg|\le \bigg|\diagintop_\Omega\varphi dx-\varphi\bigg|+|\varphi|\cr
&\le\gamma_*+|\varphi|=\gamma_*+\varphi.\cr}
$$
Hence we have
\begin{equation}
0<\varphi_*\le\diagintop_\Omega\varphi dx-\gamma_*\le\varphi,
\label{2.56}
\end{equation}
where positiveness follows from the above explanation.
\end{remark}

\noindent
The fact that $\varphi$ is defined up to an arbitrary constant, say $L$, is connected with the considered periodic boundary conditions. Therefore, we have some freedom with determining the magnitude of $\varphi$.

\begin{remark}\label{r2.3}
To estimate the second term on the r.h.s. of (\ref{2.25}) we need (\ref{2.28}). Then we examine
\begin{equation}
\nu|\Delta\varphi|_{18/7,6,\Omega^t}\le c\nu^{2/3}|\nabla\Delta\varphi|_{2,\infty,\Omega^t}^{2/3}\nu^{1/3} |\nabla\varphi|_{2,\Omega^t}^{1/3}\equiv I.
\label{2.57}
\end{equation}
Our aim is the following estimate for $I$
\begin{equation}
I\le c{\Psi^\alpha\over\nu^\beta},
\label{2.58}
\end{equation}
where $\alpha$, $\beta$ are positive numbers and $\Psi=\nu|\nabla\varphi|_{3,1,2,\Omega^t}$.
\end{remark}

\noindent
Hence
\begin{equation}
I\le c\Psi^{2/3}\nu^{1/3}|\nabla\varphi|_{2,\Omega^t}^{1/3}\equiv I_1.
\label{2.59}
\end{equation}
To derive the bound (\ref{2.58}) for $I_1$ in the case of large $\nu$ we need to estimate $|\nabla\varphi|_{2,\Omega^t}$. For this purpose we use (\ref{2.31}). To derive bound (\ref{2.58}) from (\ref{2.31}) we need to know that the coefficient with exponent is independent of $\nu$.

\noindent
For this purpose we assume that there exist positive constants $c_1$, $c_2$, $c_1<c_2$ such that
\begin{equation}
c_1\le(\mu+\nu)^\varkappa\varphi_*\le c_2,
\label{2.60}
\end{equation}
where $\varkappa=3/2-3/p$, $2<p<6$.

\noindent
Using (\ref{2.56}) the second component under exponent in the r.h.s. of (\ref{2.31}) equals
$$
I_2={\intop_0^t\left|{d\over dt'}\intop_\Omega\varphi dx\right|\over\varphi_*}.
$$
Let $\varphi=\varphi'+L$, where $\diagintop_\Omega\varphi'dx=0$ and $L=\const$.
Then $I_2=0$.

\noindent
Then (\ref{2.39}), which is a simpler version of (\ref{2.31}), takes the form
\begin{equation}\eqal{
&a|\varphi(t)|_2^2+(\mu+\nu)|\nabla\varphi|_{2,\Omega^t}^2\le c\exp(c|v|_{2p/(p-2),2/(1-\varkappa),\Omega^t}^{2/(1-\varkappa)})\cdot\cr
&\quad\cdot\bigg[{c\over\mu+\nu}(A_1^2|\Delta\varphi|_{3/2,\infty,\Omega^t}^2+ |F'|_{2,\Omega^t}^2+t|\eta|_{2,\infty,\Omega^t}^2)\cr
&\quad+a|\varphi(0)|_2^2\bigg].\cr}
\label{2.61}
\end{equation}
From the problem for $\eta$
$$
\eta_t+v\cdot\nabla\eta=-a\Delta\varphi-\eta\Delta\varphi,\quad \eta|_{t=0}=\eta(0),
$$
we have
\begin{equation}\eqal{
|\eta(t)|_r&\le\exp(c|\Delta\varphi|_{\infty,1,\Omega^t}) \bigg(\intop_0^t|\Delta\varphi|_rdt'+|\eta(0)|_r\bigg)\cr
&\le\exp\bigg(ct^{1/2}{\Psi\over\nu}\bigg)\bigg(t^{1/2}{\Psi\over\nu}+ |\eta(0)|_r\bigg),\quad r\le\infty.\cr}
\label{2.62}
\end{equation}
Assuming that
\begin{equation}
|\eta(0)|_r\le{c_3\over\nu}
\label{2.63}
\end{equation}
we obtain that
\begin{equation}
|\eta(t)|_r\le{c(t)\over\nu}(\Psi+c_3).
\label{2.64}
\end{equation}
In view of the assumptions of Lemma \ref{l2.4} we have
\begin{equation}
|F'|_{2,\Omega^t}\le{c(t)\over\nu}(\Psi+c_3).
\label{2.65}
\end{equation}
Then (\ref{2.61}) implies
\begin{equation}
|\nabla\varphi|_{2,\Omega^t}\le{c(t)(1+t)^{1/2}\over(\mu+\nu)\nu}(\Psi+c_3)+ {c\over(\mu+\nu)^{1/2}}|\varphi(0)|_2.
\label{2.66}
\end{equation}
Inserting estimate (\ref{2.66}) in (\ref{2.57}) yields
\begin{equation}\eqal{
I&\le c\Psi^{2/3}\bigg[\nu^{1/3}\bigg[(1+t)^{1/2}\bigg({\Psi\over\nu^2}+ {c_3\over\nu^2}\bigg)\bigg]^{1/3}\cr
&\quad+\nu^{1/3}{1\over\nu^{1/6}}|\varphi(0)|_2^{1/3}\bigg]\cr
&\le c(1+t)^{1/6}\bigg[{\Psi\over\nu^{1/3}}+{\Psi^{2/3}c_3^{1/3}\over\nu^{1/3}}\bigg] +c\Psi^{2/3}\nu^{1/6}|\varphi(0)|_2^{1/3}.\cr}
\label{2.67}
\end{equation}
We see that (\ref{2.67}) does not have from (\ref{2.58}).

\noindent
In view of restriction (\ref{2.60}) we can assume that
\begin{equation}
\nu^\varkappa|\varphi(0)|_2\le c_4.
\label{2.68}
\end{equation}
The restriction is compatible with (\ref{2.60}).

\noindent
In view of (\ref{2.68}) the last element on the r.h.s. of (\ref{2.67}) is estimated in the following way
\begin{equation}
c\Psi^{2/3}\nu^{1/6}|\varphi(0)|_2^{1/3}\le c\Psi^{2/3}\nu^{1/6-\varkappa/3} (\nu^\varkappa|\varphi(0)|_2)^{1/3}= {c\Psi^{2/3}c_5^{1/3}\over\nu^{\varkappa/3-1/6}}.
\label{2.69}
\end{equation}
To have estimate (\ref{2.58}) we need
$$
{\varkappa\over3}-{1\over6}={1\over3}(\varkappa-1/2)>0.
$$
Hence
\begin{equation}
\varkappa>1/2\ \ {\rm so}\ \ {3\over2}-{3\over p}>{1\over2}\ \ {\rm implies\ that}\ \ p>3.
\label{2.70}
\end{equation}
Therefore, we can formulate the Corollary.

\noindent
Assume that there exist positive constants $c_1-c_4$ such that
\begin{equation}\eqal{
&c_1\le(\mu+\nu)^\varkappa\varphi_*\le c_2,\cr
&
|\eta(0)|_r\le{c_3\over\nu}, \quad |\varphi(0)|_2\le{c_4\over\nu^\varkappa},\cr}
\label{2.71}
\end{equation}
where $\varkappa=3/2-3/p>1/2$, $3<p<6$.

\noindent
Assume that
$$\eqal{
&v\in L_{2p/(p-2),2/(1-\varkappa)}(\Omega^t),\quad &F'\in L_2(\Omega^t),\cr
&|F'|_{2,\Omega^t}\le{c\over\nu}(\Psi+c_3),\quad &|\eta(t)|_r\le{c\over\nu}(\Psi+c_3).\cr}
$$
Then
\begin{equation}
\nu|\Delta\varphi|_{18/7,6,\Omega^t}\le c\bigg({\Psi\over\nu^{1/3}}+ {\Psi^{2/3}\over\nu^{1/3}}+{\Psi^{2/3}\over\nu^{\varkappa/3-1/6}}\bigg).
\label{2.72}
\end{equation}

\begin{lemma}\label{l2.5}
Let $\eta$ be a solution to (\ref{1.4}). Assume also that $\eta(0)\in L_r(\Omega)$, $\divv v\in L_r(\Omega)\cap L_\infty(\Omega)$, $r\in[1,\infty]$. Then
\begin{equation}
|\eta(t)|_r\le\exp\bigg[\bigg(1-{1\over r}\bigg)\intop_0^t|\divv v(t')|_\infty dt'\bigg]\bigg[\intop_0^ta|\divv v|_rdt'+|\eta(0)|_r\bigg].
\label{2.73}
\end{equation}
\end{lemma}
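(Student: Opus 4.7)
My plan is to derive the stated inequality by testing the continuity equation against $|\eta|^{r-2}\eta$ for $r\in[1,\infty)$, integrating over $\Omega$, and then handling $r=\infty$ by a limiting argument. Concretely, I would multiply $(\ref{1.4})_1$ by $|\eta|^{r-2}\eta$ and integrate over $\Omega$. The time-derivative term produces $\frac{1}{r}\frac{d}{dt}|\eta|_r^r$, while the convective term gives $\int_\Omega v\cdot\nabla\eta\,|\eta|^{r-2}\eta\,dx = \frac{1}{r}\int_\Omega v\cdot\nabla|\eta|^r\,dx$, which after integration by parts (using the periodic boundary conditions) equals $-\frac{1}{r}\int_\Omega\divv v\,|\eta|^r\,dx$. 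The last term on the left-hand side of $(\ref{1.4})_1$ yields $\int_\Omega\divv v\,|\eta|^r\,dx$.

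Combining these contributions, I would arrive at
\begin{equation*}
\frac{1}{r}\frac{d}{dt}|\eta|_r^r + \left(1-\frac{1}{r}\right)\int_\Omega\divv v\,|\eta|^r\,dx + a\int_\Omega\divv v\,|\eta|^{r-2}\eta\,dx = 0.
\end{equation*}
Bounding the middle term by $(1-1/r)|\divv v|_\infty|\eta|_r^r$ and the last term via Hölder's inequality by $a|\divv v|_r|\eta|_r^{r-1}$ produces the differential inequality
\begin{equation*}
\frac{d}{dt}|\eta|_r^r \le (r-1)|\divv v|_\infty|\eta|_r^r + ra|\divv v|_r|\eta|_r^{r-1}.
\end{equation*}
Dividing by $r|\eta|_r^{r-1}$ (which is justified at points where $|\eta|_r>0$; the inequality extends by continuity) gives the scalar ODE
\begin{equation*}
\frac{d}{dt}|\eta|_r \le \left(1-\frac{1}{r}\right)|\divv v|_\infty|\eta|_r + a|\divv v|_r,
\end{equation*}
and Gronwall's lemma applied to this inequality yields exactly (\ref{2.73}) for $r\in[1,\infty)$.

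For the endpoint $r=\infty$, I would take $r\to\infty$ in the inequality just obtained, noting that the factor $(1-1/r)$ tends to $1$ and $|\eta(0)|_r\to|\eta(0)|_\infty$, $|\divv v|_r\to|\divv v|_\infty$ under the assumption $\divv v\in L_\infty(\Omega)$ and $\eta(0)\in L_\infty(\Omega)$. Alternatively one could exploit the representation along characteristics $X(t;x)$ of $v$: equation $(\ref{1.4})_1$ reads $\frac{d}{dt}\eta(X(t),t) = -a\,\divv v - \eta\,\divv v$, which is a linear ODE in $\eta$ along trajectories whose integration reproduces (\ref{2.73}) with $r=\infty$. I do not anticipate any serious obstacle: the only subtle point is justifying the division by $|\eta|_r^{r-1}$, which is routine (one can instead work with $(|\eta|_r^r+\varepsilon)^{1/r}$ and send $\varepsilon\to 0$), and controlling the $r=\infty$ case, for which the characteristic argument is the cleanest route.
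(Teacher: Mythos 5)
Your proof takes essentially the same approach as the paper: multiply $(\ref{1.4})_1$ by $\eta|\eta|^{r-2}$, integrate over $\Omega$, integrate the convective term by parts to recover the $(1-1/r)$ coefficient, estimate by H\"older, and apply Gronwall. The only difference is that you explicitly justify the passage from the $|\eta|_r^r$ inequality to the $|\eta|_r$ inequality (division by $|\eta|_r^{r-1}$) and you handle the endpoint $r=\infty$ by a limiting or characteristic argument, neither of which the paper addresses even though its statement allows $r=\infty$; these are worthwhile additions but do not change the underlying argument.
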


\begin{proof}
Multiplying $(\ref{1.4})_1$ by $\eta|\eta|^{r-2}$ and integrating over $\Omega$ yields
$$
{1\over r}{d\over dt}|\eta|_r^r+{1\over r}\intop_\Omega v\cdot\nabla|\eta|^rdx+ \intop_\Omega|\eta|^r\divv vdx+\intop_\Omega a\eta|\eta|^{r-2}\divv vdx=0.
$$
Integrating by parts we have
$$
{1\over r}{d\over dt}|\eta|_r^r+\bigg(1-{1\over r}\bigg)\intop_\Omega\divv v|\eta|_r^rdx+a\intop_\Omega\divv v\eta|\eta|^{r-2}dx=0.
$$
Continuing,
$$
{1\over r}{d\over dt}|\eta|_r^r\le\bigg(1-{1\over r}\bigg)|\divv v|_\infty|\eta|_r^r+a|\divv v|_r|\eta|_r^{r-1}.
$$
Simplifying,
$$
{d\over dt}|\eta|_r\le(1-1/r)|\divv v|_\infty|\eta|_r+a|\divv v|_r.
$$
Integrating with respect to time yields (\ref{2.73}). This concludes the proof.
\end{proof}

Next, we obtain estimates for derivatives of $\eta$.

\begin{lemma}\label{l2.6}
Let $\eta$ be a solution to (\ref{1.4}). Let $\nabla\varphi,\rot\psi\in L_1(0,t;H^3(\Omega))$, $\eta(0)\in H^2(\Omega)$. Then
\begin{equation}\eqal{
&\|\eta(t)\|_2\cr
&\le\exp\bigg[c\intop_0^t(\|\nabla\varphi(t')\|_3+\|\rot\psi(t')\|_3) dt'\bigg]\bigg[c\intop_0^t\|\nabla\varphi(t')\|_3dt'+ \|\eta(0)\|_2\bigg].\cr}
\label{2.74}
\end{equation}
\end{lemma}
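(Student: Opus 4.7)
The plan is to run a standard $H^2$ energy estimate on the transport equation (1.4), which, in terms of $\varphi$, reads $\eta_t + v\cdot\nabla\eta + a\Delta\varphi + \eta\Delta\varphi = 0$ with $v=\nabla\varphi+\rot\psi+G$. First I apply $\partial^\alpha$ for every multi-index $|\alpha|\le 2$ to obtain
$$\partial_t\partial^\alpha\eta + v\cdot\nabla\partial^\alpha\eta = -[\partial^\alpha,v\cdot\nabla]\eta - a\partial^\alpha\Delta\varphi - \partial^\alpha(\eta\Delta\varphi),$$
multiply by $\partial^\alpha\eta$ and integrate over $\Omega$. By the periodic boundary conditions, the transport term contributes $-\tfrac12\intop_\Omega\divv v\,|\partial^\alpha\eta|^2 dx = -\tfrac12\intop_\Omega\Delta\varphi\,|\partial^\alpha\eta|^2 dx$, which is absorbed into $c|\Delta\varphi|_\infty\|\partial^\alpha\eta\|_0^2 \le c\|\nabla\varphi\|_3\,\|\eta\|_2^2$ via the embedding $H^2\hookrightarrow L_\infty$ applied to $\Delta\varphi$.

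Next I bound the remaining three terms after summing over $|\alpha|\le 2$. The commutator $[\partial^\alpha,v\cdot\nabla]\eta$ involves only derivatives of $v$ (not $v$ itself, so the constant part $G$ drops out), and its worst pieces are $\partial^2 v\cdot\nabla\eta$ and $\partial v\cdot\partial^2\eta$; these are handled by H\"older with the pairings $L_3$--$L_6$ and $L_\infty$--$L_2$ respectively, giving $c(\|\nabla\varphi\|_3+\|\rot\psi\|_3)\,\|\eta\|_2$ after using $H^3\hookrightarrow W^{1,\infty}\cap W^{2,6}$ and $H^1\hookrightarrow L_6$. The Moser product estimate yields $\|\eta\Delta\varphi\|_2\le c(|\eta|_\infty\|\Delta\varphi\|_2 + |\Delta\varphi|_\infty\|\eta\|_2)\le c\|\nabla\varphi\|_3\,\|\eta\|_2$, while the linear forcing satisfies $\|a\Delta\varphi\|_2\le c\|\nabla\varphi\|_3$. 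Assembling everything gives
$$\frac12\frac{d}{dt}\|\eta\|_2^2 \le c(\|\nabla\varphi\|_3+\|\rot\psi\|_3)\,\|\eta\|_2^2 + c\|\nabla\varphi\|_3\,\|\eta\|_2.$$

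Dividing by $\|\eta\|_2$ (with a standard regularization to avoid the zero set) produces the linear differential inequality
$$\frac{d}{dt}\|\eta\|_2 \le c(\|\nabla\varphi\|_3+\|\rot\psi\|_3)\,\|\eta\|_2 + c\|\nabla\varphi\|_3,$$
to which Gronwall's lemma is applied on $[0,t]$, yielding exactly (\ref{2.74}).

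The main obstacle is the commutator at top order, specifically the term $\partial^2 v\cdot\nabla\eta$: since $\nabla\eta\in H^1$ does \emph{not} embed into $L_\infty$ in three dimensions, one cannot pair it with $|\partial^2 v|_2$ through an $L_\infty$--$L_2$ bound, and so one is forced to use the $L_3$--$L_6$ pairing and exploit the full $H^3$ regularity of $v$ (precisely what the hypothesis $\nabla\varphi,\rot\psi\in L_1(0,t;H^3(\Omega))$ provides). Once this is understood, the rest of the argument is a routine Moser/Gronwall computation.
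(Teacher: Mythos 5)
Your proof is correct and follows essentially the same route as the paper: an $H^2$ energy estimate on the transport equation, with the transport term handled by integration by parts using $\divv v=\Delta\varphi$, the remaining terms bounded by H\"older and the Sobolev embeddings $H^3\hookrightarrow W^{1,\infty}\cap W^{2,6}$, $H^1\hookrightarrow L_6$, and a Gronwall conclusion. The paper carries out the estimate separately at derivative orders $0,1,2$ (inequalities (\ref{2.75})--(\ref{2.77})) and adds them, working directly with the norms $|\eta|_2$, $|\eta_{,x}|_2$, $|\eta_{,xx}|_2$ rather than squaring and dividing; your unified multi-index/commutator presentation is a cosmetic repackaging of the same computation, and your identification of $\partial^2 v\cdot\nabla\eta$ as the term that forces the $H^3$ hypothesis matches the paper's use of $|v_{,xx}|_4|\eta_{,x}|_4$ in the unnumbered display preceding (\ref{2.77}).
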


\begin{proof}
Multiplying $(\ref{1.5})_1$ by $\eta$, integrating over $\Omega$ and by parts we get
$$
{1\over2}{d\over dt}|\eta|_2^2+{1\over2}\intop_\Omega\Delta\varphi\eta^2dx+ a\intop_\Omega\Delta\varphi\eta dx=0.
$$
By the H\"older inequality we have
\begin{equation}
{d\over dt}|\eta|_2\le{1\over2}|\Delta\varphi|_\infty|\eta|_2+a|\Delta\varphi|_2.
\label{2.75}
\end{equation}
Differentiating $(\ref{1.4})_1$ with respect to $x$, multiplying by $\eta_{,x}$ and integrating over $\Omega$ implies
$$\eqal{
&{1\over2}{d\over dt}|\eta_{,x}|_2^2+a\intop_\Omega\Delta\varphi_{,x}\eta_{,x}dx+ {1\over2}\intop_\Omega\Delta\varphi\eta_{,x}^2dx\cr
&\quad+\intop_\Omega v_{,x}\cdot\nabla\eta\eta_{,x}dx+ \intop_\Omega\eta\Delta\varphi_{,x}\eta_{,x}dx.\cr}
$$
By the H\"older inequality we get
\begin{equation}
{d\over dt}|\eta_{,x}|_2\le c(|\rot\psi_{,x}|_\infty+|\nabla^2\varphi|_\infty) |\eta_{,x}|_2+|\Delta\varphi_{,x}|_2+c\|\eta\|_1\|\Delta\varphi_{,x}\|_1.
\label{2.76}
\end{equation}
Finally, we differentiate $(\ref{1.4})_1$ twice with respect to $x$, multiply by $\eta_{,xx}$ and integrate over $\Omega$. Then we have
$$\eqal{
&{1\over2}{d\over dt}|\eta_{,xx}|_2^2\le{1\over2}|\Delta\varphi|_\infty |\eta_{,xx}|_2^2+|v_{,x}|_\infty|\eta_{,xx}|_2^2+|v_{,xx}|_4|\eta_{,x}|_4 |\eta_{,xx}|_2\cr
&\quad+a|\Delta\varphi_{,xx}|_2|\eta_{,xx}|_2+|\Delta\varphi_{,x}|_4|\eta_{,x}|_4 |\eta_{,xx}|_2+|\eta|_\infty|\Delta\varphi_{,xx}|_2|\eta_{,xx}|_2.\cr}
$$
Simplifying we get
\begin{equation}
{d\over dt}|\eta_{,xx}|_2\le c(\|\nabla\varphi\|_3+\|\rot\psi\|_3)\|\eta\|_2 +|\nabla\varphi_{,xxx}|_2.
\label{2.77}
\end{equation}
Adding (\ref{2.75}), (\ref{2.76}) and (\ref{2.77}) yields
\begin{equation}
{d\over dt}\|\eta\|_2\le c(\|\nabla\varphi\|_3+\|\rot\psi\|_3)\|\eta\|_2+ c\|\nabla\varphi\|_3.
\label{2.78}
\end{equation}
From (\ref{2.78}) we have
\begin{equation}\eqal{
&{d\over dt}\bigg[\|\eta\|_2\exp\bigg(-c\intop_0^t(\|\nabla\varphi\|_3+ \|\rot\psi\|_3)dt'\bigg)\bigg]\cr
&\le c\|\nabla\varphi\|_3\exp\bigg(-c\intop_0^t(\|\nabla\varphi\|_3+ \|\rot\psi\|_3)dt'\bigg).\cr}
\label{2.79}
\end{equation}
Integrating (\ref{2.79}) with respect to time yields (\ref{2.74}). This concludes the proof.
\end{proof}

\begin{lemma}\label{l2.7}
Assume that $\rot\psi,\nabla\varphi\in L_1(0,t;\Gamma_1^3(\Omega))$, $\eta(0)\in\Gamma_1^2(\Omega)$, $t\le T$. Then
\begin{equation}\eqal{
|\eta(t)|_{2,1}&\le\exp\bigg[c\intop_0^t(|\nabla\varphi(t')|_{3,1}+ |\rot\psi(t')|_{3,1})dt'\bigg]\cr
&\quad\cdot\bigg[c\intop_0^t|\nabla\varphi(t')|_{3,1}dt'+\|\eta(0)\|_2+ \|\eta_t(0)\|_1\bigg].\cr}
\label{2.80}
\end{equation}
\end{lemma}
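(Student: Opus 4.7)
The plan is to combine the $H^2$-bound on $\eta$ from Lemma~\ref{l2.6} with a new $H^1$-bound on the time derivative $\eta_t$, since by definition $|\eta|_{2,1}^2=\|\eta\|_2^2+\|\eta_t\|_1^2$. The inequality (2.78) from the proof of Lemma~\ref{l2.6} already supplies
$$\frac{d}{dt}\|\eta\|_2\le c(\|\nabla\varphi\|_3+\|\rot\psi\|_3)\|\eta\|_2+c\|\nabla\varphi\|_3,$$
so what remains is to derive a parallel differential inequality for $\|\eta_t\|_1$ whose coefficients involve $|\nabla\varphi|_{3,1}$ and $|\rot\psi|_{3,1}$, and then to close everything via Gronwall.

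To get the $\eta_t$-inequality, I would differentiate (1.4)$_1$ in time to obtain
$$\eta_{tt}+v\cdot\nabla\eta_t+v_t\cdot\nabla\eta+(a+\eta)\Delta\varphi_t+\eta_t\Delta\varphi=0,$$
test it against $\eta_t$, integrate by parts the transport term via $\int v\cdot\nabla\eta_t\,\eta_t\,dx=-\tfrac12\int\Delta\varphi\,\eta_t^2\,dx$, and bound the remaining terms by H\"older. The cross term $\int v_t\cdot\nabla\eta\,\eta_t\,dx$ is controlled by $|v_t|_6|\nabla\eta|_3|\eta_t|_2$ with $|v_t|_6\le c(\|\nabla\varphi_t\|_1+\|\rot\psi_t\|_1)$ via the decomposition $v=\nabla\varphi+\rot\psi+G$ and Sobolev embedding, while $|\Delta\varphi_t|_2\le c\|\nabla\varphi_t\|_1$ and $|\eta|_\infty\le c\|\eta\|_2$. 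A completely analogous computation after first applying $\partial_x$ to the above evolution equation yields an inequality for $\|\nabla\eta_t\|_{L_2}$, where the dangerous terms $\partial_x v_t\cdot\nabla\eta$ and $v_t\cdot\partial_x\nabla\eta$ are handled by writing $\|v_t\|_{L_\infty}\le c\|v_t\|_{H^2}\le c(\|\nabla\varphi_t\|_2+\|\rot\psi_t\|_2)$, i.e.\ exactly the quantities contained in $|\nabla\varphi|_{3,1}+|\rot\psi|_{3,1}$. Summing the two estimates gives
$$\frac{d}{dt}\|\eta_t\|_1\le c(|\nabla\varphi|_{3,1}+|\rot\psi|_{3,1})(\|\eta_t\|_1+\|\eta\|_2)+c|\nabla\varphi|_{3,1}.$$

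Adding this to the inequality recalled from Lemma~\ref{l2.6}, and using $\|\nabla\varphi\|_3\le|\nabla\varphi|_{3,1}$, I obtain
$$\frac{d}{dt}|\eta|_{2,1}\le c(|\nabla\varphi|_{3,1}+|\rot\psi|_{3,1})|\eta|_{2,1}+c|\nabla\varphi|_{3,1}.$$
Multiplying by the integrating factor $\exp\bigl[-c\int_0^t(|\nabla\varphi|_{3,1}+|\rot\psi|_{3,1})dt'\bigr]$ and integrating in time from $0$, with starting datum $|\eta(0)|_{2,1}\le\|\eta(0)\|_2+\|\eta_t(0)\|_1$, produces exactly (2.80).

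The main technical obstacle is the $x$-differentiated cross term $\int\partial_x v_t\cdot\nabla\eta\,\partial_x\eta_t\,dx$ (and the companion $\int v_t\cdot\partial_x\nabla\eta\,\partial_x\eta_t\,dx$): these are the places where one could easily overshoot the stated norms. The pairing has to be chosen so that $v_t$ or $\nabla v_t$ is measured in a norm that the decomposition $v_t=\nabla\varphi_t+\rot\psi_t+G_t$ together with $\nabla\varphi_t,\rot\psi_t\in H^2$ can absorb, while $\nabla\eta$ or $\nabla^2\eta$ consumes the $\|\eta\|_2$ factor supplied by Lemma~\ref{l2.6}, leaving a clean factor of $\|\eta_t\|_1$ on the right. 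The periodic boundary conditions are essential throughout, as they eliminate boundary terms in every integration by parts.
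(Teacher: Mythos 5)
Your proposal is correct and follows essentially the same route as the paper: the paper differentiates the transport equation (2.81) in $x$ and $t$, tests against $\eta_{xt}$ (and against $\eta_t$ for the lower-order part), bounds the cross terms with $\|v_t\|$-norms controlled by $|\nabla\varphi|_{3,1}+|\rot\psi|_{3,1}$, combines with the $H^2$-inequality (2.78) for $\eta$, and closes with Gronwall — exactly the plan you outlined.
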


\begin{proof}
We consider the equation
\begin{equation}
\eta_t=-v\cdot\nabla\eta-a\Delta\varphi-\eta\Delta\varphi
\label{2.81}
\end{equation}
From (\ref{2.81}) we have
$$\eqal{
&\intop_\Omega\eta_{xtt}\eta_{xt}dx=-\intop_\Omega (v_{xt}\cdot\nabla\eta+v_x\nabla\eta_t+v_t\nabla\eta_x+v\cdot\nabla\eta_{xt}) \eta_{xt}dx\cr
&\quad-a\intop_\Omega\Delta\varphi_{xt}\eta_{xt}dx-\intop_\Omega(\eta_{xt} \Delta\varphi+\eta_x\Delta\varphi_t+\eta_t\Delta\varphi_x+\eta\Delta\varphi_{xt}) \eta_{xt}dx.\cr}
$$
Hence
$$
{d\over dt}|\eta_{xt}|_2^2\le c|v|_{3,1}(\|\nabla\eta\|_1^2+\|\eta_{xt}\|_0^2)+ |\Delta\varphi_{xt}|_2|\eta_{xt}|_2+c|\nabla\varphi|_{3,1}(\|\nabla\eta\|_1^2+ \|\eta_t\|_1^2)
$$
and
$$
{d\over dt}|\eta|_2^2\le c(|v_t|_\infty+|\nabla\varphi|_{3,1})(|\eta_t|_2^2+ \|\eta\|_2^2)+c|\Delta\varphi_t|_2|\eta_t|_2.
$$
Using (\ref{2.78}) yields
$$\eqal{
&{d\over dt}(\|\eta\|_2^2+\|\eta_t\|_1^2)\le c|v|_{3,1}(\|\eta\|_2^2+\|\eta_t\|_1^2)+c|\nabla\varphi|_{3,1}(\|\eta\|_2^2+ \|\eta_t\|_1^2)\cr
&\quad+|\nabla\varphi|_{3,1}(\|\eta_t\|_1+\|\eta\|_2)\cr}
$$
This implies (\ref{2.80}) and concludes the proof.
\end{proof}

\begin{remark}\label{r2.4}
From (\ref{2.25}) for $r=6$ and from (\ref{2.72}) we have
\begin{equation}\eqal{
|v|_6&\le c|\eta|_{18/7,6,\Omega^t}+c\Psi^{2/3}/\nu^{1/3}+c\Psi/\nu^{1/3}+ 
+c{\Psi^{2/3}\over\nu^{\varkappa/3-1/6}}\cr
&\quad+c|f|_{18/7,6,\Omega^t}+c |\varrho_0|_\infty^{1/6}|v_0|_6\equiv D_1,\cr}
\label{2.82}
\end{equation}
where $\varkappa>1/2$ appears in (\ref{2.71}) and $\Psi$ is introduced in (\ref{2.58}).
\end{remark}

\noindent
Differentiating (\ref{2.13}) with respect to $t$ yields
\begin{equation}\eqal{
&\varrho v_{tt}+\varrho_tv_t+\varrho v\cdot\nabla v_t+\varrho_tv\cdot\nabla v+\varrho v_t\cdot\nabla v-\mu\Delta v_t\cr
&\quad-\nu_1\nabla\divv v_t+\nabla(p_t-\nu_2\divv v_t)=\varrho f_t+\varrho_tf,\cr}
\label{2.83}
\end{equation}
where $\nu=\nu_1+\nu_2$, $\nu_i>0$, $i=1,2$.

\noindent
Next we derive the result

\begin{lemma}\label{l2.8} Assume $\eta\in L_\infty(0,T;\Gamma_1^2(\Omega))$, $v\in L_\infty(0,T;L_6(\Omega))\cap\break\cap\;  L_\infty(0,T;L_2(\Omega))$, $\Delta\varphi\in L_2(0,T;L_6(\Omega))$, $\Delta\varphi_t\in L_2(0,T;L_3(\Omega))$, $f_t\in L_2(0,T;L_{6/5}(\Omega))$, $f\in L_2(0,T;L_{3/2}(\Omega))$.\\
Recall the estimates
$$
|v(t)|_2\le A_1\ \ ({\rm see}\ (\ref{2.1})),\quad |v(t)|_6\le D_1\ \ ({\rm see}\ (\ref{2.82}))
$$
Then
\begin{equation}\eqal{
&|v_t(t)|_2^2+\mu|\nabla v_t|_{2,2,\Omega^t}^2+\nu|\Delta\varphi_t|_{2,2,\Omega^t}^2\cr
&\le c(a)\exp[cB_1(t)] [B_2(t)+|\varrho_0|_\infty|v_t(0)|_2^2]\equiv D_2^2\cr}
\label{2.84}
\end{equation}
where
$$
B_1(t)=\intop_0^t|\eta_t|_3^2dt'+\sup_t\|\eta(t)\|_2^2A_1^2+D_1^2A_1^2+A_1^2
$$
and
$$\eqal{
&B_2(t)=|\eta_t|_{2,2,\Omega^t}^2+ \|\eta_t\|_{1,\infty,\Omega^t}^2|\Delta\varphi|_{6,2,\Omega^t}^2A_1^2\cr
&\quad+\|\eta_t\|_{1,\infty,\Omega^t}^2D_1^2A_1^2+ |\Delta\varphi_t|_{3,2,\Omega^t}^2A_1^2+|f_t|_{6/5,2,\Omega^t}^2+ \|\eta_t\|_{1,\infty,\Omega^t}^2|f|_{3/2,2,\Omega^t}^2.\cr}
$$
\end{lemma}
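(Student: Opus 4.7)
The plan is to test the time-differentiated momentum equation (\ref{2.83}) against $v_t$, reduce to a Gronwall-type differential inequality, and integrate. Multiplying (\ref{2.83}) by $v_t$ and integrating over $\Omega$, the first three terms on the left combine via continuity $(\ref{1.1})_2$: using
\[
\tfrac{d}{dt}(\varrho|v_t|^2)=\varrho_t|v_t|^2+2\varrho v_{tt}\cdot v_t
\quad\text{and}\quad
\intop_\Omega\varrho v\cdot\nabla v_t\cdot v_t\,dx=\tfrac{1}{2}\intop_\Omega\varrho_t|v_t|^2\,dx,
\]
one arrives at $\tfrac{1}{2}\tfrac{d}{dt}|\sqrt{\varrho}\,v_t|_2^2+\intop_\Omega\eta_t|v_t|^2dx$ on the left. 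Integration by parts in the viscous and pressure-divergence blocks, with periodicity and $\nu_1+\nu_2=\nu$, turns the remaining left-hand side into $\mu|\nabla v_t|_2^2+\nu|\divv v_t|_2^2$. I thus reduce to an identity of the form
\[
\tfrac{1}{2}\tfrac{d}{dt}|\sqrt{\varrho}\,v_t|_2^2+\mu|\nabla v_t|_2^2+\nu|\divv v_t|_2^2=R_1+R_2+R_3+R_4+R_5+R_6,
\]
with $R_1=-\intop\eta_t|v_t|^2dx$, $R_2=-\intop\eta_t v\cdot\nabla v\cdot v_t\,dx$, $R_3=-\intop\varrho v_t\cdot\nabla v\cdot v_t\,dx$, $R_4=\intop p_t\divv v_t\,dx$, $R_5=\intop\varrho f_t\cdot v_t\,dx$, $R_6=\intop\eta_t f\cdot v_t\,dx$.

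Each $R_k$ I would then bound by H\"older and the Sobolev embedding $H^1(\Omega)\hookrightarrow L_6(\Omega)$, using Young's inequality to absorb small multiples of $\|v_t\|_1^2$ into $\mu|\nabla v_t|_2^2$. Thus $|R_1|\le|\eta_t|_3|v_t|_3^2\le\varepsilon\|v_t\|_1^2+c|\eta_t|_3^2|v_t|_2^2$, producing the $|\eta_t|_{3,2,\Omega^t}^2$ entry of $B_1$; $|R_2|\le|\eta_t|_6|v|_6|\nabla v|_2|v_t|_6\le\varepsilon\|v_t\|_1^2+c\|\eta_t\|_1^2|v|_6^2|\nabla v|_2^2$, and inserting $|v|_6\le D_1$ (Remark~\ref{r2.4}) and $\intop_0^t|\nabla v|_2^2dt'\le A_1^2$ (Lemma~\ref{l2.1}) yields the $\|\eta_t\|_{1,\infty,\Omega^t}^2D_1^2A_1^2$ contribution to $B_2$; writing $p_t=p_\varrho(\varrho)\eta_t$ gives $|R_4|\le c|\eta_t|_2|\divv v_t|_2\le\tfrac{\nu}{4}|\divv v_t|_2^2+c|\eta_t|_2^2/\nu$, producing $|\eta_t|_{2,2,\Omega^t}^2$; and the force terms, handled by $|R_5|\le\varepsilon\|v_t\|_1^2+c|f_t|_{6/5}^2$ and $|R_6|\le\varepsilon\|v_t\|_1^2+c\|\eta_t\|_1^2|f|_{3/2}^2$, supply the remaining entries $|f_t|_{6/5,2,\Omega^t}^2$ and $\|\eta_t\|_{1,\infty,\Omega^t}^2|f|_{3/2,2,\Omega^t}^2$ of $B_2$.

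The main obstacle is $R_3$: a Gagliardo--Nirenberg interpolation $|v_t|_4^2\le c|v_t|_2^{1/2}\|v_t\|_1^{3/2}$ together with Young gives $|R_3|\le\varepsilon\|v_t\|_1^2+c|v_t|_2^2|\nabla v|_2^4$, and the factor $|\nabla v|_2^4$ must be split as (sup-in-time) $\times$ (time-integrable) so that after time integration the Gronwall coefficient lands in the permitted combination $\sup\|\eta\|_2^2A_1^2+D_1^2A_1^2+A_1^2$ inside $B_1$; here the a priori bounds $|v|_6\le D_1$ and $\intop_0^t|\nabla v|_2^2dt'\le A_1^2$, together with the hypothesis $\eta\in L_\infty(0,T;\Gamma_1^2(\Omega))$, are used crucially to pay for the sup factor. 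Choosing $\varepsilon$ small enough to absorb the $\varepsilon\|v_t\|_1^2$ terms into the dissipation and rearranging, I reach
\[
\tfrac{d}{dt}|\sqrt{\varrho}\,v_t|_2^2+c\mu|\nabla v_t|_2^2+c\nu|\divv v_t|_2^2\le g(t)|v_t|_2^2+h(t)
\]
with $\intop_0^tg\,dt'\le cB_1(t)$ and $\intop_0^th\,dt'\le cB_2(t)$. Gronwall's inequality, time-integration, and the lower/upper bound $\varrho\in(a/2,3a/2)$ then yield (\ref{2.84}), the remaining routine bookkeeping being the identification of the coefficient combinations produced by $R_3$ with those appearing in $B_1$.
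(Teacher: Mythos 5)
Your overall structure coincides with the paper's: test the time-differentiated momentum equation (\ref{2.83}) against $v_t$, absorb the dissipation, and close with a Gronwall argument. Your bounds for $R_1$, $R_4$, $R_5$, $R_6$ are correct, and your direct H\"older estimate for $R_2$ (avoiding the paper's integration by parts and producing the $\|\eta_t\|_{1,\infty,\Omega^t}^2D_1^2A_1^2$ entry of $B_2$ without the $|\Delta\varphi|_{6,2,\Omega^t}^2$ intermediate) is a legitimate simplification.

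There is, however, a genuine gap in your treatment of $R_3=-\intop_\Omega\varrho\,v_t\cdot\nabla v\cdot v_t\,dx$, which you yourself flag as the main obstacle but do not resolve. The interpolation $|v_t|_4^2\le c|v_t|_2^{1/2}\|v_t\|_1^{3/2}$ together with Young leaves the Gronwall coefficient $|\nabla v|_2^4$, and no a priori estimate makes $\intop_0^t|\nabla v(t')|_2^4\,dt'$ finite: the lemma hypotheses give $v\in L_\infty(0,T;L_6\cap L_2)$, and Lemma~\ref{l2.1} gives $\intop_0^t\|v\|_1^2\,dt'\le A_1^2$, but $\sup_t|\nabla v(t)|_2$ is not controlled by anything available. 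Your remark that $|v|_6\le D_1$ and $\eta\in L_\infty(0,T;\Gamma_1^2(\Omega))$ ``pay for the sup factor'' is not correct, because the factor you must take a supremum of is $|\nabla v|_2^2$, not $|v|_6^2$ or a norm of $\eta$. Any direct H\"older split $|\nabla v|_p|v_t|_q|v_t|_r$ with $1/p+1/q+1/r=1$ runs into the same obstruction: $\intop_0^t|\nabla v|_2^4$, $\intop_0^t|\nabla v|_3^2$, $\intop_0^t|\nabla v|_6^{4/3}$ are all uncontrolled.

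The step you are missing --- and what the paper does --- is to integrate by parts in $R_3$ \emph{before} estimating, transferring the spatial derivative from $v$ onto $v_t$:
\[
-\intop_\Omega\varrho\,v_t\cdot\nabla v\cdot v_t\,dx
=\intop_\Omega v_t\cdot\nabla\varrho\,v\cdot v_t\,dx
+\intop_\Omega\varrho\,\Delta\varphi_t\,v\cdot v_t\,dx
+\intop_\Omega\varrho\,v_t\cdot\nabla v_t\cdot v\,dx.
\]
The third integral is then handled with $|v_t|_3\le c|\nabla v_t|_2^{1/2}|v_t|_2^{1/2}+|v_t|_2$, so the surplus $\nabla v_t$ is absorbed by the $\mu|\nabla v_t|_2^2$ dissipation and the Gronwall coefficient becomes $c(|v|_6^4+|v|_6^2)$, which integrates in time to $D_1^2A_1^2+A_1^2$ via $\sup|v|_6\le D_1$ and $\intop_0^t\|v\|_1^2\le A_1^2$. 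The first integral contributes $|\eta_x|_6^2|v|_6^2$ to the Gronwall coefficient, integrating to $\sup_t\|\eta\|_2^2A_1^2$. The second integral produces the $|\Delta\varphi_t|_{3,2,\Omega^t}^2A_1^2$ entry of $B_2$, which your decomposition cannot generate at all. These are precisely the entries of $B_1$ and $B_2$ that your proposal left unaccounted for, and the integration by parts is what ties the $R_3$ estimate to the stated hypotheses rather than to an unavailable bound on $\sup_t|\nabla v|_2$.
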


\begin{proof}
Multiplying (\ref{2.83}) by $v_t$, integrating over $\Omega$, using $(\ref{1.1})_2$ and boundary conditions we get
\begin{equation}\eqal{
&{1\over2}{d\over dt}\intop_\Omega\varrho|v_t|^2dx+\mu|\nabla v_t|_2^2+\nu_1 |\divv v_t|_2^2=\intop_\Omega(p_t-\nu_2\divv v_t)\divv v_tdx\cr
&\quad-\intop_\Omega\varrho_tv_t^2dx-\intop_\Omega\varrho_tv\cdot\nabla v\cdot v_tdx-\intop_\Omega\varrho v_t\cdot\nabla v\cdot v_tdx\cr
&\quad+\intop_\Omega(\varrho f_t+\varrho_tf)\cdot v_tdx.\cr}
\label{2.85}
\end{equation}
From (\ref{2.85}) we have
\begin{equation}\eqal{
&{1\over2}{d\over dt}\intop_\Omega\varrho|v_t|^2dx+\mu|\nabla v_t|_2^2+\nu |\divv v_t|_2^2\le{\varepsilon\over2}|\divv v_t|_2^2\cr
&\quad+{1\over2\varepsilon}|\eta_t|_2^2-\intop_\Omega\varrho_tv_t^2dx- \intop_\Omega\varrho_tv\cdot\nabla v\cdot v_tdx-\intop_\Omega\varrho v_t\cdot\nabla v\cdot v_tdx\cr
&\quad+\intop_\Omega(\varrho f_t+\varrho_tf)\cdot v_tdx.\cr}
\label{2.86}
\end{equation}
Now we estimate non-positive terms from the r.h.s. of (\ref{2.86}). We estimate the third term by
$$
\bigg|\intop_\Omega\eta_tv_t^2dx\bigg|\le|\eta_t|_3|v_t|_2|v_t|_6\le \varepsilon/2|v_t|_6^2+c/\varepsilon|\eta_t|_3^2|v_t|_2^2
$$
We intergate by parts in the fourth term on the r.h.s. of (\ref{2.86}). Then we have
$$\eqal{
&-\intop_\Omega\varrho_tv\cdot\nabla vv_tdx=\intop_\Omega v\cdot\nabla\varrho_tv\cdot v_tdx+\intop_\Omega\varrho_t\divv vv\cdot v_tdx\cr
&\quad+\intop_\Omega\varrho_tv\cdot\nabla v_t\cdot vdx\equiv\sum_{i=1}^3K_i,\cr}
$$
where
$$\eqal{
&|K_1|\le\varepsilon|v_t|_6^2+c/\varepsilon|v|_6^4|\eta_{xt}|_2^2,\cr
&|K_2|\le\varepsilon|v_t|_6^2+c/\varepsilon|\eta_t|_6^2 |\Delta\varphi|_6^2|v|_2^2,\cr
&|K_3|\le\varepsilon|\nabla v_t|_2^2+c/\varepsilon|\eta_t|_6^2|v|_6^4.\cr}
$$
Integrating by parts in the fifth term on the r.h.s. of (\ref{2.86}) yields
$$\eqal{
&-\intop_\Omega\varrho v_t\cdot\nabla v\cdot v_tdx=\intop_\Omega v_t\cdot\nabla\varrho v\cdot v_tdx+\intop_\Omega\varrho\Delta\varphi_tv\cdot v_tdx\cr
&\quad+\intop_\Omega\varrho v_t\cdot\nabla v_t\cdot vdx\equiv\sum_{i=1}^3L_i,\cr}
$$
where
$$\eqal{
&|L_1|\le\varepsilon|v_t|_6^2+c/\varepsilon|\eta_x|_6^2|v|_6^2|v_t|_2^2,\cr
&|L_2|\le\varepsilon|v_t|_6^2+c/\varepsilon|\varrho|_\infty^2 |\Delta\varphi_t|_3^2|v|_2^2,\cr
&|L_3|\le\varepsilon|\nabla v_t|_2^2+c/\varepsilon|\varrho|_\infty^2|v_t|_3^2 |v|_6^2\equiv L_3^1+L_3^2.\cr}
$$
To estimate $L_3^2$ we use the interpolation
$$
|v_t|_3\le c|\nabla v_t|_2^{1/2}|v_t|_2^{1/2}+|v_t|_2.
$$
Hence
$$
L_3^2\le\varepsilon|\nabla v_t|_2^2+c/\varepsilon|\varrho|_\infty^4|v|_6^4 |v_t|_2^2+c/\varepsilon|\varrho|_\infty^2|v|_6^2|v_t|_2^2.
$$
Finally, we estimate the last term on the r.h.s. of (\ref{2.86}) by
$$
\bigg|\intop_\Omega(\varrho f_t+\varrho_tf)\cdot v_tdx\bigg|\le\varepsilon |v_t|_6^2+c/\varepsilon|\varrho|_\infty^2|f_t|_{6/5}^2+c/\varepsilon |\varrho_t|_6^2|f|_{3/2}^2.
$$
Employing the above estimates in (\ref{2.86}) and assuming that $\varepsilon$ is sufficiently small we have
\begin{equation}\eqal{
&{1\over2}{d\over dt}\intop_\Omega\varrho|v_t|^2dx+{\mu\over2}|\nabla v_t|_2^2+ {\nu\over2}|\divv v_t|_2^2\cr
&\le c[|\eta_t|_2^2+|\eta_t|_3^2|v_t|_2^2+|v|_6^4|\eta_{xt}|_2^2+|\eta_t|_6^2 |\Delta\varphi|_6^2|v|_2^2+|\eta_t|_6^2|v|_6^4\cr
&\quad+|\eta_x|_6^2|v|_6^2|v_t|_2^2+|\varrho|_\infty^2|\Delta\varphi_t|_3^2 |v|_2^2+|\varrho|_\infty^4|v|_6^4|v_t|_2^2+|\varrho|_\infty^2|v|_6^2|v_t|_2^2\cr
&\quad+|\varrho|_\infty^2|f_t|_{6/5}^2+|\varrho_t|_6^2|f|_{3/2}^2].\cr}
\label{2.87}
\end{equation}
Using Lemma \ref{l2.1} and that $a/2\le\varrho\le3a/2$ we obtain
\begin{equation}\eqal{
&{d\over dt}\intop_\Omega\varrho|v_t|^2dx+\mu|\nabla v_t|_2^2+\nu|\divv v_t|_2^2\cr
&\le c(|\eta_t|_3^2+|\eta_x|_6^2|v|_6^2+|v|_6^4+|v|_6^2) \intop_\Omega\varrho|v_t|^2dx\cr
&\quad+c(|\eta_t|_2^2+\|\eta_t\|_1^2|v|_6^4+\|\eta_t\|_1^2|\Delta\varphi|_6^2 A_1^2+\|\eta_t\|_1^2|v|_6^4\cr
&\quad+|\Delta\varphi_t|_3^2A_1^2+|f_t|_{6/5}^2+\|\eta_t\|_1^2|f|_{3/2}^2).\cr}
\label{2.88}
\end{equation}
Introduce the quantities
$$\eqal{
&B'_1(t)=|\eta_t(t)|_3^2+|\eta_x(t)|_6^2|v(t)|_6^2+|v(t)|_6^4+|v(t)|_6^2,\cr
&B'_2(t)=|\eta_t(t)|_2^2+\|\eta_t(t)\|_1^2|v(t)|_6^4+\|\eta_t(t)\|_1^2 |\Delta\varphi(t)|_6^2A_1^2\cr
&\quad+|\Delta\varphi_t|_3^2A_1^2+|f_t(t)|_{6/5}^2+\|\eta_t(t)\|_1^2 |f(t)|_{3/2}^2.\cr}
$$
Using the quantities in (\ref{2.88}) it takes the form
\begin{equation}
{d\over dt}\intop_\Omega\varrho|v_t|^2dx+\mu|\nabla v_t|_2^2+\nu|\divv v_t|_2^2\le cB'_1\intop_\Omega\varrho|v_t|^2dx+cB'_2.
\label{2.89}
\end{equation}
Integrating (\ref{2.89}) with respect to time yields
\begin{equation}\eqal{
&\intop_\Omega\varrho|v_t|^2dx+\mu\intop_0^t|\nabla v_t|_2^2dt'+\nu\intop_0^t|\divv v_t|^2dt'\cr
&\le\exp\bigg[c\intop_0^tB'_1(t')dt'\bigg]\bigg[c\intop_0^tB'_2(t')dt'+ |\varrho_0|_\infty|v_t(0)|_2^2\bigg].\cr}
\label{2.90}
\end{equation}
In view of Lemma \ref{l2.1} and (\ref{2.82}) we have
\begin{equation}\eqal{
B_1&=\intop_0^tB'_1dt'\le|\eta_t|_{3,2,\Omega^t}^2+ \|\eta\|_{2,\infty,\Omega^t}^2A_1^2+D_1^2A_1^2+A_1^2,\cr
B_2&=\intop_0^tB'_2dt'\le|\eta_t|_{2,2,\Omega^t}^2+ \|\eta_t\|_{1,\infty,\Omega^t}^2D_1^2A_1^2+\|\eta_t\|_{1,\infty,\Omega^t}^2 |\Delta\varphi|_{6,2,\Omega^t}^2A_1^2\cr
&\quad+|\Delta\varphi_t|_{3,2,\Omega^t}^2A_1^2+|f_t|_{6/5,2,\Omega^t}^2\cr
&\quad+\|\eta_t\|_{1,\infty,\Omega^t}^2|f|_{3/2,2,\Omega^t}^2.\cr}
\label{2.91}
\end{equation}
Exployting the estimates in (\ref{2.90}) implies (\ref{2.84}). This concludes the proof.
\end{proof}

To formulate Section \ref{s3} more explicitly we introduce the notation

\begin{notation}\label{n2.13}
We introduce the quantities
$$\eqal{
&\chi_1^2(t)=\nu|\nabla\varphi|_{2,1,\infty,\Omega^t}^2+ |\rot\psi|_{2,1,\infty,\Omega^t}^2,\cr
&\chi_2^2(t)=\nu|\nabla\varphi|_{3,1,2,\Omega^t}^2+ |\rot\psi|_{3,1,2,\Omega^t}^2,\cr
&\Psi^2(t)=\nu^2|\nabla\varphi|_{3,1,2,\Omega^t}^2,\cr
&\Phi_1^2(t)=\nu(U_1^2+V_1^2)\equiv\nu(|\nabla\varphi|_{2,1,\infty,\Omega^t}^2+ |\nabla\varphi|_{3,1,2,\Omega^t}^2)\equiv\Phi_{11}^2(t)+\Phi_{12}^2(t),\cr
&\Phi_2^2(t)=\nu(U_2^2+V_2^2)\equiv|\rot\psi|_{2,1,\infty,\Omega^t}^2+ |\rot\psi|_{3,1,2,\Omega^t}^2\equiv\Phi_{21}^2(t)+\Phi_{22}^2(t),\cr
&\chi_0^2=\nu|\nabla\varphi|_{2,1,\infty,\Omega^t}^2\equiv\Phi_{11}^2(t),\cr
&\Phi_0^2(t)=\nu|\nabla\varphi(t)|_{2,1}^2+|\rot\psi(t)|_{2,1}^2,\cr
&\Phi_*^2(t)=|\nabla\varphi|_{3,1,2,\Omega^t}^2+|\rot\psi|_{3,1,2,\Omega^t}^2.\cr}
$$
From (\ref{2.71}) we have
\begin{equation}
|\eta(t)|_r\le\exp\bigg[(1-1/r)\intop_0^t|\Delta\varphi(t')|_\infty dt'\bigg] \bigg[\intop_0^t|\Delta\varphi|_rdt'+|\eta(0)|_r\bigg].
\label{2.92}
\end{equation}
From (\ref{2.92}) we have
\begin{equation}
|\eta(t)|_r\le\exp\bigg(t^{1/2}{\Psi\over\nu}\bigg)\bigg[t^{1/2}{\Psi\over\nu}+ |\eta(0)|_r\bigg]
\label{2.93}
\end{equation}
From (\ref{2.78}) we derive the estimate
\begin{equation}\eqal{
|\eta(t)|_{2,1}&\le\exp\bigg[ct^{1/2}\bigg(\bigg(\intop_0^t |\nabla\varphi(t')|_{3,1}^2dt'\bigg)^{1/2}+\bigg(\intop_0^t|\rot\psi(t')|_{3,1}^2 dt'\bigg)^{1/2}\bigg)\bigg]\cr
&\quad\cdot\bigg[t^{1/2}\bigg(\intop_0^t|\nabla\varphi(t')|_{3,1}^2 dt'\bigg)^{1/2}+ |\eta(0)|_{2,1}\bigg]\cr
&\le\exp\bigg[ct^{1/2}\bigg({\Psi\over\nu}+\Phi_2\bigg)\bigg]\bigg[t^{1/2} {\Psi\over\nu}+|\eta(0)|_{2,1}\bigg].\cr}
\label{2.94}
\end{equation}
From (\ref{2.82}) and (\ref{2.93}) we have
\begin{equation}
D_1^2=c\bigg(\exp\bigg(t^{1/2}{\Psi\over\nu}\bigg) \bigg(t^{1/2}{\Psi\over\nu}+|\eta(0)|_3^2\bigg)+ {\Psi^{4/3}\over\nu^{2/3}}+{\Psi^2\over\nu^{2/3}}+ {\Psi^{4/3}\over\nu^{2\varkappa/3-1/3}}\bigg)+A_2^2,
\label{2.95} 
\end{equation}
where $A_2=|f|_{3,6,\Omega^t}+|\varrho_0|_\infty^{1/6}|v_0|_6$ (see also (\ref{2.82})), and from (\ref{2.83}) we obtain
\begin{equation}\eqal{
D_2^2&\le\|\eta_t\|_{1,2,\Omega^t}^2\bigg(1+D_1^2A_1^2+{\Psi^2\over\nu^2}A_1^2+ |f|_{3/2,2,\Omega^t}^2\bigg)\cr
&\quad+{\Psi^2\over\nu^2}A_1^2+|f_t|_{6/5,2,\Omega^t}^2,\cr}
\label{2.96}
\end{equation}
where $A_1^2=2|\varrho_0|_1|f|_{\infty,1,\Omega^t}^2+{3\over2}\intop_\Omega \big({1\over2}\varrho_0v_0^2+{A\over\varkappa-1}\varrho_0^\varkappa\big)dx$ (see (\ref{2.1})).
\end{notation}

\section{Differential inequality}\label{s3}

We recall the considered equations
\begin{equation}
\eta_t+v\cdot\nabla\eta+a\Delta\varphi+\eta\Delta\varphi=0,\quad \eta|_{t=0}=\eta(0),
\label{3.1}
\end{equation}
and
\begin{equation}\eqal{
&(a+\eta)v_t-\mu\Delta v-\nu\nabla\Delta\varphi+a_0\nabla\eta+(a+\eta)v\cdot \nabla v\cr
&=[p_\varrho(a)-p_\varrho(a+\eta)]\nabla\eta+(a+\eta)f,\quad v|_{t=0}=v(0),\cr}
\label{3.2}
\end{equation}
where $a_0=p_\varrho(a)$ and
\begin{equation}
v=\nabla\varphi+\rot\psi+G,
\label{3.3}
\end{equation}
where $G$ is defined below (\ref{1.6}).

\begin{lemma}\label{l3.1}
Assume that $|v|_2\le A_1$ (see Lemma \ref{l2.1}), $|v|_6\le D_1$ (see (\ref{2.82})), $|v_t|\le D_2$ (see (\ref{2.84})). Then for sufficiently regular solutions to (\ref{3.1})--(\ref{3.3}) we have
\begin{equation}\eqal{
&{a\over2}{d\over dt}\bigg(|\nabla\varphi|_2^2+{1\over\nu}|\rot\psi|_2^2\bigg)+ \mu|\nabla^2\varphi|_2^2+\nu|\Delta\varphi|_2^2+{\mu\over\nu} |\nabla\rot\psi|_2^2\cr
&\le{c\over\nu}[|\eta|_2^2+|\eta|_3^2|v_t|_2^2+(1+|\eta|_\infty^2) |\Delta\varphi|_3^2A_1^2+(1+|\eta|_\infty^2)^2D_1^4A_1^2\cr
&\quad+|\eta|_3^2|\nabla\eta|_2^2+(1+|\eta|_3^2)|f|_2^2],\cr
&{a\over2}\bigg(|\nabla\varphi(t)|_2^2+{1\over\nu}|\rot\psi(t)|_2^2\bigg)+\mu |\nabla^2\varphi|_{2,2,\Omega^t}^2+\nu|\Delta\varphi|_{2,2,\Omega^t}^2+ {\mu\over\nu}|\nabla\rot\psi|_{2,2,\Omega^t}^2\cr
&\le{c\over\nu}[|\eta|_{2,2,\Omega^t}^2+|\eta|_{3,\infty,\Omega^t}^2D_2^2+ (1+|\eta|_{\infty,\infty,\Omega^t}^2)|\Delta\varphi|_{3,2,\Omega^t}^2A_1^2\cr
&\quad+(1+\|\eta\|_{2,\infty,\Omega^t}^2)D_1^2A_1^2+|\eta|_{3,\infty,\Omega^t}^2+ |\nabla\eta|_{2,2,\Omega^t}^2+(1+|\eta|_{3,\infty,\Omega^t}^2) |f|_{2,2,\Omega^t}^2]\cr
&\quad+{a\over2}\bigg(|\nabla\varphi(0)|_2^2+{1\over\nu}|\rot\psi(0)|_2^2 \bigg).\cr}
\label{3.4}
\end{equation}
\end{lemma}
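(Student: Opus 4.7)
The plan is to test the momentum equation (3.2) by $\nabla\varphi$ and, separately, by $\nu^{-1}\rot\psi$, then add the two resulting identities. Writing $v=\nabla\varphi+\rot\psi+G$ and using the periodic boundary conditions, the orthogonality relations $\int\rot\psi_t\cdot\nabla\varphi\,dx=0$, $\int\nabla\varphi_t\cdot\rot\psi\,dx=0$ and $\int\nabla\varphi\,dx=0$ ensure that the leading part of $a\,v_t$ contributes only $\tfrac{a}{2}\tfrac{d}{dt}|\nabla\varphi|_2^2$ in the first identity and $\tfrac{a}{2\nu}\tfrac{d}{dt}|\rot\psi|_2^2$ in the second. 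Two integrations by parts of $-\mu\int\Delta v\cdot\nabla\varphi\,dx$ yield $\mu|\nabla^2\varphi|_2^2$ (the cross-term with $\rot\psi$ vanishes because $\divv\rot\psi=0$), and $-\nu\int\nabla\Delta\varphi\cdot\nabla\varphi\,dx=\nu|\Delta\varphi|_2^2$. On the rotational side, the viscous term delivers $\tfrac{\mu}{\nu}|\nabla\rot\psi|_2^2$, while the $\nu\nabla\Delta\varphi$ and $a_0\nabla\eta$ contributions drop out entirely; all three coercive dissipation terms on the left-hand side of (3.4) are thereby produced.

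The uniform $c/\nu$ prefactor on the right-hand side of (3.4) is engineered by systematically absorbing error terms into $\nu|\Delta\varphi|_2^2$ or $\tfrac{\mu}{\nu}|\nabla\rot\psi|_2^2$ rather than into $\mu|\nabla^2\varphi|_2^2$: for a bound of the form $|T|\le C|\Delta\varphi|_2$, Young's inequality gives $\tfrac12\nu|\Delta\varphi|_2^2+\tfrac{c}{\nu}C^2$. The key tools are the periodic Calder\'on--Zygmund estimate $|\nabla^2\varphi|_q\le c|\Delta\varphi|_q$ and the Sobolev embedding $|\nabla\varphi|_6\le c|\nabla^2\varphi|_2$ for mean-free $\varphi$, whose combination yields $|\nabla\varphi|_6\le c|\Delta\varphi|_2$. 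The pressure contribution $-a_0\int\eta\,\Delta\varphi\,dx$ is bounded by $|\eta|_2|\Delta\varphi|_2$, producing the $|\eta|_2^2/\nu$ term; the $\eta v_t$ piece is estimated by $|\eta|_3|v_t|_2|\nabla\varphi|_6\le c|\eta|_3|v_t|_2|\Delta\varphi|_2$, giving $|\eta|_3^2|v_t|_2^2/\nu$; the forcing is handled analogously. The rotational analogues use $|\rot\psi|_6\le c|\nabla\rot\psi|_2$ and absorb into $\tfrac{\mu}{\nu}|\nabla\rot\psi|_2^2$.

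The principal obstacle is the nonlinear convective term $\int(a+\eta)(v\cdot\nabla v)\cdot(\nabla\varphi+\nu^{-1}\rot\psi)\,dx$. I would first integrate by parts using the mass-conservation identity $\divv[(a+\eta)v]=-\eta_t$ from (3.1) to rewrite the gradient part as $\int\eta_t v\cdot\nabla\varphi\,dx-\int(a+\eta)v_iv_j\partial_i\partial_j\varphi\,dx$, with an analogous splitting for the rotational part. Each resulting trilinear integrand is then estimated by H\"older with the a priori bounds $|v|_2\le A_1$ and $|v|_6\le D_1$, the interpolation $|v|_3\le|v|_6^{1/2}|v|_2^{1/2}$, and the Calder\'on--Zygmund estimate, and finally absorbed into $\nu|\Delta\varphi|_2^2$ via Young's inequality; this yields the $(1+|\eta|_\infty^2)|\Delta\varphi|_3^2 A_1^2/\nu$ and $(1+|\eta|_\infty^2)^2 D_1^4 A_1^2/\nu$ contributions in (3.4). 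The pressure-type term $[p_\varrho(a)-p_\varrho(a+\eta)]\nabla\eta$ is controlled using the mean-value bound $|p_\varrho(a)-p_\varrho(a+\eta)|\le c|\eta|$, giving the $|\eta|_3^2|\nabla\eta|_2^2/\nu$ contribution. The delicate bookkeeping—assigning each quadratic or cubic integrand its correct absorption target so that a $1/\nu$ factor is always recovered—is the technical heart of the proof.

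The second, integrated inequality follows by integrating the differential inequality on $(0,t)$, replacing time-pointwise norms of $\eta$, $\Delta\varphi$, etc.\ by their $L_\infty(0,t;\cdot)$ or $L_2(0,t;\cdot)$ counterparts, using $|v_t|_{2,2,\Omega^t}\le D_2$ from Lemma~\ref{l2.8}, $|v|_{6,\infty,\Omega^t}\le D_1$ from Remark~\ref{r2.4}, and $|v|_{2,\infty,\Omega^t}\le A_1$ from Lemma~\ref{l2.1}, and carrying along the initial data $\tfrac{a}{2}(|\nabla\varphi(0)|_2^2+\nu^{-1}|\rot\psi(0)|_2^2)$ from evaluating the left-hand side at $t=0$.
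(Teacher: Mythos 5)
Your overall approach is essentially the paper's: multiply (3.2) by $\nabla\varphi$ and by $\nu^{-1}\rot\psi$, use periodic orthogonality to isolate the time-derivative and dissipation, and recover the $c/\nu$ prefactor by scaling Young's inequality so that error terms are absorbed into the $\nu$-weighted dissipation $\nu|\Delta\varphi|_2^2$ (and $\mu\nu^{-1}|\nabla\rot\psi|_2^2$) rather than $\mu|\nabla^2\varphi|_2^2$ — this is exactly the mechanism the paper uses implicitly, and you have identified it correctly, as well as the role of the periodic Calder\'on--Zygmund bound $|\nabla^2\varphi|_q\le c|\Delta\varphi|_q$.

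The one place where you deviate is the convective term. The paper splits it into an $a$-part and an $\eta$-part before integrating by parts (its $I_1,I_2$ and $J_1,J_2$), using only $\divv v=\Delta\varphi$. You instead integrate by parts using $\divv[(a+\eta)v]=-\eta_t$, which produces the extra integral $\int\eta_t\,v\cdot\nabla\varphi\,dx$ (and its rotational analogue). As written, you then claim these are controlled ``by H\"older with the a priori bounds $|v|_2\le A_1$ and $|v|_6\le D_1$,'' but none of those bounds, nor anything on the right side of (3.4), controls $|\eta_t|$. The gap is real but small: you need to substitute $\eta_t=-v\cdot\nabla\eta-(a+\eta)\Delta\varphi$ from (3.1) one more time, after which the $\eta_t$-integral decomposes into $\int(v\cdot\nabla\eta)(v\cdot\nabla\varphi)$ and $\int(a+\eta)\Delta\varphi\,(v\cdot\nabla\varphi)$, i.e.\ exactly the trilinear terms the paper's $I_{11}$ and $I_2^1,I_2^2$ produce directly. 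With that one extra step spelled out, the two routes coincide term by term; the paper's splitting is marginally cleaner because it never introduces $\eta_t$ at all. The remaining estimates — the mean-value bound $|p_\varrho(a)-p_\varrho(a+\eta)|\le c|\eta|$, the interpolation $|v|_3\le|v|_6^{1/2}|v|_2^{1/2}$ leading to the $(1+|\eta|_\infty^2)^2D_1^4A_1^2$ term, and the time integration using $A_1,D_1,D_2$ — match the paper's.
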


\begin{proof}
Multiplying (\ref{3.2}) by $\nabla\varphi$ and integrating over $\Omega$ yields
\begin{equation}\eqal{
&{a\over2}{d\over dt}|\nabla\varphi|_2^2+\mu|\nabla^2\varphi|_2^2+\nu |\Delta\varphi|_2^2=-a_0\intop_\Omega\nabla\eta\cdot\nabla\varphi dx\cr
&\quad-\intop_\Omega\eta v_t\cdot\nabla\varphi dx-\intop_\Omega(a+\eta)v\cdot\nabla v\cdot\nabla\varphi dx\cr
&\quad+\intop_\Omega[p_\varphi(a)-p_\varphi(a+\eta)]\nabla\eta\cdot\nabla\varphi dx+\intop_\Omega(a+\eta)f\cdot\nabla\varphi dx.\cr}
\label{3.5}
\end{equation}
Integrating by parts in the first term on the r.h.s. of (\ref{3.5}) we estimate it by
$$
\varepsilon|\Delta\varphi|_2^2+c/\varepsilon|\eta|_2^2.
$$
The second term on the r.h.s. of (\ref{3.5}) is bounded by
$$
\varepsilon|\nabla\varphi|_6^2+c/\varepsilon|\eta|_3^2|v_{,t}|_2^2\le \varepsilon|\nabla\varphi|_6^2+c/\varepsilon|\eta|_3^2D_2^2,
$$
where (\ref{2.67}) is used.

\noindent
The third term on the r.h.s. of (\ref{3.5}) is expressed in the form
$$
-a\intop_\Omega v\cdot\nabla v\cdot\nabla\varphi dx-\intop_\Omega\eta v\cdot\nabla v\cdot\nabla\varphi dx\equiv I_1+I_2.
$$
We drop the factor $a$ in $I_1$ for simplicity. Integrating by parts we express it in the form
$$
I_1=\intop_\Omega\divv vv\cdot\nabla\varphi dx+\intop_\Omega v\cdot v\cdot\nabla\nabla\varphi dx\equiv I_{11}+I_{12}.
$$
Then
$$
|I_{11}|\le\varepsilon|\nabla\varphi|_6^2+c/\varepsilon|\Delta\varphi|_3^2 |v|_2^2\le\varepsilon|\nabla\varphi|_6^2+c/\varepsilon|\Delta\varphi|_3^2A_1^2.
$$
and
$$
|I_{12}|\le\varepsilon|\nabla^2\varphi|_2^2+c/\varepsilon|v|_6^2|v|_3^2\le \varepsilon|\nabla^2\varphi|_2^2+c/\varepsilon D_1^2|v|_3^2.
$$
Next we estimate $I_2$. Integration by parts yields
$$\eqal{
I_2&=\intop_\Omega\nabla\eta\cdot vv\cdot\nabla\varphi dx+\intop_\Omega\eta\Delta\varphi v\cdot\nabla\varphi dx+\intop_\Omega\eta vv\cdot\nabla^2\varphi dx\cr
&\equiv I_2^1+I_2^2+I_2^3,\cr}
$$
where
$$\eqal{
&|I_2^1|\le\varepsilon|\nabla\varphi|_6^2+c/\varepsilon |\nabla\eta|_6^2|v|_6^2|v|_2^2\le\varepsilon|\nabla\varphi|_6^2+c/\varepsilon \|\eta\|_2^2D_1^2A_1^2,\cr
&|I_2^2|\le\varepsilon|\nabla\varphi|_6^2+c/\varepsilon|\eta|_\infty^2 |\Delta\varphi|_3^2|v|_2^2\le\varepsilon|\nabla\varphi|_6^2+c/\varepsilon |\eta|_\infty^2 |\Delta\varphi|_3^2A_1^2,\cr
&|I_2^2|\le\varepsilon|\nabla^2\varphi|_2^2+c/\varepsilon|\eta|_\infty^2D_1^2 |v|_3^2.\cr}
$$
The fourth term on the r.h.s. of (\ref{3.5}) is bounded by
$$
c\intop_\Omega|\eta|\,|\nabla\eta|\,|\nabla\varphi|dx\le\varepsilon |\nabla\varphi|_6^2+c/\varepsilon|\eta|_3^2|\nabla\eta|_2^2.
$$
Finally, the last term on the r.h.s. of (\ref{3.5}) equals
$$
I_3=a\intop_\Omega f_g\cdot\nabla\varphi dx+\intop_\Omega\eta f\cdot\nabla\varphi dx.
$$
Hence, it is bounded by
$$
|I_3|\le\varepsilon|\nabla\varphi|_2^2+c/\varepsilon|f_g|_2^2+\varepsilon |\nabla\varphi|_6^2+c/\varepsilon|\eta|_3^2|f|_2^2.
$$
Employing the above estimates in (\ref{3.5}) and assuming that $\varepsilon$ is sufficiently small yields
\begin{equation}\eqal{
&a{d\over dt}|\nabla\varphi|_2^2+\mu|\nabla^2\varphi|_2^2+\nu|\Delta\varphi|_2^2 \le{c\over\nu}[|\eta|_2^2+|\eta|_3^2|v_t|_2^2\cr
&\quad+(1+|\eta|_\infty^2)|\Delta\varphi|_3^2A_1^2+(1+|\eta|_\infty^2)D_1^2 |v|_3^2+\|\eta\|_2^2D_1^2|v|_2^2\cr
&\quad+|\eta|_3^2|\nabla\eta|_2^2+(1+|\eta|_3^2)|f|_2^2].\cr}
\label{3.6}
\end{equation}
Multiplying (\ref{3.2}) by $\rot\psi$ and integrating the result over $\Omega$ implies
\begin{equation}\eqal{
&{a\over2}{d\over dt}|\rot\psi|_2^2+\mu|\nabla\rot\psi|_2^2=-\intop_\Omega\eta v_t\cdot\rot\psi dx\cr
&\quad-\intop_\Omega(a+\eta)v\cdot\nabla v\cdot\rot\psi dx+\intop_\Omega(a+\eta)f\cdot\rot\psi dx.\cr}
\label{3.7}
\end{equation}
We estimate the first term on the r.h.s. of (\ref{3.7}) by
$$
\varepsilon|\rot\psi|_6^2+c/\varepsilon|\eta|_3^2|v_t|_2^2.
$$
The second term on the r.h.s. of (\ref{3.7}) is expressed in the form
$$
-a\intop_\Omega v\cdot\nabla v\cdot\rot\psi dx-\intop_\Omega\eta v\cdot\nabla v\cdot\rot\psi dx\equiv J_1+J_2.
$$
Integrating by parts in $J_1$ yields
$$
J_1=\intop_\Omega\Delta\varphi v\cdot\rot\psi dx+\intop_\Omega v\cdot v\cdot\nabla\rot\psi dx\equiv J_1^1+J_1^2,
$$
where
$$\eqal{
&|J_1^1|\le\varepsilon|\rot\psi|_6^2+c/\varepsilon|\Delta\varphi|_3^2|v|_2^2\le \varepsilon|\rot\psi|_6^2+c/\varepsilon|\Delta\varphi|_3^2A_1^2,\cr
&|J_1^2|\le\varepsilon|\nabla\rot\psi|_2^2+c/\varepsilon|v|_6^2|v|_3^2\le \varepsilon|\nabla\rot\psi|_2^2+c/\varepsilon D_1^2|v|_3^2.\cr}
$$
Next we examine $J_2$. Integration by parts gives
$$\eqal{
J_2&=\intop_\Omega\nabla\eta\cdot vv\cdot\rot\psi dx+\intop_\Omega\eta\Delta\varphi v\cdot\rot\psi dx+\intop_\Omega\eta v\cdot v\cdot\nabla\rot\psi dx\cr
&\equiv J_2^1+J_2^2+J_2^3,\cr}
$$
where
$$\eqal{
&|J_2^1|\le\varepsilon|\rot\psi|_6^2+c/\varepsilon|\nabla\eta|_6^2|v|_6^2|v|_2^2 \le\varepsilon|\rot\psi|_6^2+c/\varepsilon\|\eta\|_2^2D_1^2|v|_2^2,\cr
&|J_2^2|\le\varepsilon|\rot\psi|_6^2+c/\varepsilon|\eta|_\infty^2 |\Delta\varphi|_3^2|v|_2^2\le\varepsilon|\rot\psi|_6^2+c/\varepsilon |\eta|_\infty^2|\Delta\varphi|_3^2A_1^2,\cr
&|J_2^3|\le\varepsilon|\nabla\rot\psi|_2^2+c/\varepsilon|\eta|_\infty^2|v|_6^2 |v|_3^2\le\varepsilon|\nabla\rot\psi|_2^2+c/\varepsilon|\eta|_\infty^2D_1^2 |v|_3^2.\cr}
$$
Finally, the last term on the r.h.s. of (\ref{3.7}) has the form
$$
a\intop_\Omega f_r\cdot\rot\psi dx+\intop_\Omega\eta f\cdot\rot\psi dx\equiv J_3.
$$
Hence
$$
|J_3|\le\varepsilon|\rot\psi|_2^2+c/\varepsilon|f_r|_2^2+\varepsilon |\rot\psi|_6^2+c/\varepsilon|\eta|_3^2|f|_2^2.
$$
Employing the above estimates in (\ref{3.7}) and using that $\varepsilon$ is sufficiently small we derive the inequality
\begin{equation}\eqal{
&{a\over2}{d\over dt}|\rot\psi|_2^2+\mu|\nabla\rot\psi|_2^2\le c[|\eta|_3^2|v_t|_2^2+(1+|\eta|_\infty^2)|\Delta\varphi|_3^2A_1^2\cr
&\quad+(1+|\eta|_\infty^2)D_1^2|v|_3^2+(1+|\eta|_3^2)|f|_2^2].\cr}
\label{3.8}
\end{equation}
Multiplying (\ref{3.8}) by $1/\nu$ and adding to (\ref{3.6}) yields
\begin{equation}\eqal{
&{a\over2}{d\over dt}\bigg(|\nabla\varphi|_2^2+{1\over\nu}|\rot\psi|_2^2\bigg)+ \mu|\nabla^2\varphi|_2^2+\nu|\Delta\varphi|_2^2+{\mu\over\nu} |\nabla\rot|_2^2\cr
&\le{c\over\nu}[|\eta|_2^2+|\eta|_3^2|v_t|_2^2+(1+|\eta|_\infty^2) |\Delta\varphi|_3^2A_1^2+(1+|\eta|_\infty^2)D_1^2|v|_3^2\cr
&\quad+|\eta|_3^2|\nabla\eta|_2^2+(1+|\eta|_3^2)|f|_2^2].\cr}
\label{3.9}
\end{equation}
Using that
$$
(1+|\eta|_\infty^2)D_1^2|v|_3^2\le\varepsilon|\nabla v|_2^2+c/\varepsilon (1+|\eta|_\infty^2)^2D_1^4A_1^2
$$
we obtain (\ref{3.4}).

\noindent
Integrating (\ref{3.9}) with respect to time and using (\ref{2.1}), (\ref{2.67}) gives
\begin{equation}\eqal{
&{a\over2}\bigg(|\nabla\varphi(t)|_2^2+{1\over\nu}|\rot\psi(t)|_2^2\bigg)+\mu |\nabla^2\varphi|_{2,2,\Omega^t}^2+\nu|\Delta\varphi|_{2,2,\Omega^t}^2\cr
&\quad+{\mu\over\nu}|\nabla\rot\psi|_{2,2,\Omega^t}^2\le{c\over\nu} [|\eta|_{2,2,\Omega^t}^2+|\eta|_{3,\infty,\Omega^t}^2D_2^2\cr
&\quad+(1+|\eta|_{\infty,\infty,\Omega^t}^2)|\Delta\varphi|_{3,2,\Omega^t}^2A_1^2 +(1+\|\eta\|_{2,\infty,\Omega^t}^2)D_1^2A_1^2\cr
&\quad+|\eta|_{3,\infty,\Omega^t}^2|\nabla\eta|_{2,2,\Omega^t}^2+(1+ |\eta|_{3,\infty,\Omega^t}^2)|f|_{2,2,\Omega^t}^2]\cr
&\quad+{a\over2}\bigg(|\nabla\varphi(0)|_2^2+{1\over\nu}|\rot\psi(0)|_2^2 \bigg).\cr}
\label{3.10}
\end{equation}
The above inequalities imply (\ref{3.4}) and conclude the proof.
\end{proof}

\begin{remark}\label{r3.1}
To simplify considerations we introduce the quantity
\begin{equation}
\phi_1=|\eta|_2^2+|\eta|_3^2D_2^2+(1+|\eta|_\infty^2)^2D_1^4A_1^2+\|\eta\|_1^4
\label{3.11}
\end{equation}
Then (\ref{3.4}) takes the form
\begin{equation}\eqal{
&{a\over2}{d\over dt}\bigg(|\nabla\varphi|_2^2+{1\over\nu}|\rot\psi|_2^2\bigg)+ \mu|\nabla^2\varphi|_2^2+\nu|\Delta\varphi|_2^2+{\mu\over\nu} |\nabla\rot\psi|_2^2\cr
&\le{c\over\nu}[\phi_1+(1+|\eta|_\infty^2)|\Delta\varphi|_3^2A_1^2+(1+ |\eta|_3^2)|f|_2^2].\cr}
\label{3.12}
\end{equation}
To obtain estimates for time derivatives we express (\ref{3.2}) in the form
\begin{equation}\eqal{
&\nabla\varphi_t+\rot\psi_t+G_t-{\mu\over a}\Delta v-{\nu\over a}\nabla\Delta\varphi +{a_0\over a+\eta}\nabla\eta\cr
&=-{\mu\over a}{\eta\over a+\eta}\Delta v-{\nu\over a}{\eta\over a+\eta}\nabla \Delta\varphi-v\cdot\nabla v\cr
&\quad+{1\over a+\eta}[p_\varrho(a)-p_\varrho(a+\eta)]\nabla\eta+f.\cr}
\label{3.13}
\end{equation}
\end{remark}

\begin{lemma}\label{l3.2}
Let the assumptions of Lemma \ref{l3.1} hold. Let $\phi_2$ be defined by (\ref{3.21}). Then
\begin{equation}\eqal{
&{d\over dt}\bigg(|\nabla\varphi_t|_2^2+{1\over\nu}|\rot\psi_t|_2^2\bigg)+ {\mu\over a}\|\nabla\varphi_t\|_1^2+{\mu\over\nu a}\|\rot\psi_t\|_1^2+{\nu\over a}|\Delta\varphi_t|_2^2\cr
&\le{c\over\nu}[\phi_2(1+|\eta|_{2,1}^2+|\nabla\rot\psi|_3^2+ |\nabla^2\varphi|_3^2)+(|\nabla\rot\psi_t|_2^2+\|\nabla\varphi_{,t}\|_1^2)\cr
&\quad+|\Delta\varphi|_3^2(|\nabla\rot\psi_t|_2^2+\|\nabla\varphi_{,t}\|_1^2+D_2^2) +\|\nabla\varphi_{,t}\|_1^2(|\nabla v|_2^2+D_1^2)\cr
&\quad+|v_t|_3^2D_1^2+|f_t|_2^2]\cr
&\quad+c\nu[|\eta|_{2,1}^2(1+|\eta|_{2,1}^2)|\Delta\varphi|_3^2+\|\eta\|_2^2 \|\nabla\varphi_t\|_1^2].\cr}
\label{3.14}
\end{equation}
\begin{equation}\eqal{
&|\nabla\varphi_t(t)|_2^2+{1\over\nu}|\rot\psi_t(t)|_2^2+{\mu\over a}\|\nabla\varphi_t\|_{1,2,\Omega^t}^2+{\nu\over a} |\Delta\varphi_t|_{2,2,\Omega^t}^2+{\mu\over\nu a} \|\rot\psi_t\|_{1,2,\Omega^t}^2\cr
&\le{c\over\nu}\bigg[|\eta|_{2,1,2,\Omega^t}^2(1+ |\eta|_{2,1,\infty,\Omega^t}^4)+|\eta|_{2,1,\infty,\Omega^t}^2 (|\eta|_{2,1,\infty,\Omega^t}^2+1)\bigg(|\nabla\rot\psi|_{3,2,\Omega^t}^2\cr
&\quad+ |\nabla\rot\psi_t|_{2,2,\Omega^t}^2+{\Psi^2\over\nu^2}\bigg)+{\chi_0^2\over\nu} \bigg(|\nabla\rot\psi_t|_{2,2,\Omega^t}^2+D_2^2+A_1^2+D_1^2\cr
&\quad+{\psi^2\over\nu^2} D_2^2D_1^2\bigg)+ |f_t|_{2,2,\Omega^t}^2\bigg]+c\nu|\eta|_{2,1,\infty,\Omega^t}^2 (1+|\eta|_{2,1,\infty,\Omega^t}^2) {\Psi^2\over\nu^2}+ |\nabla\varphi_t(0)|_2^2\cr
&\quad+{1\over\nu}|\rot\psi_t(0)|_2^2.\cr}
\label{3.15}
\end{equation}
\end{lemma}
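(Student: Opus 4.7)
The plan is to differentiate equation (\ref{3.13}) with respect to $t$ and test the resulting evolution equation against $\nabla\varphi_t$ and then against $\rot\psi_t$ separately, mimicking the strategy used for Lemma \ref{l3.1}. Upon $t$-differentiation of (\ref{3.13}), the linear dissipative part $-\frac{\mu}{a}\Delta v_t - \frac{\nu}{a}\nabla\Delta\varphi_t + \frac{a_0}{a+\eta}\nabla\eta_t$ will, after integration by parts and use of the orthogonality $\intop_\Omega \nabla\varphi_t \cdot \rot\psi_t \, dx = 0$, produce on the left the dissipation quantities $\frac{\mu}{a}\|\nabla\varphi_t\|_1^2$ and $\frac{\nu}{a}|\Delta\varphi_t|_2^2$ from the first test, and $\frac{\mu}{a}\|\nabla\rot\psi_t\|_0^2$ from the second. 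Adding the two identities with weight $1/\nu$ in front of the rot-test, exactly as in the proof of Lemma \ref{l3.1}, yields a single differential inequality for $|\nabla\varphi_t|_2^2 + \frac{1}{\nu}|\rot\psi_t|_2^2$ whose left-hand side matches (\ref{3.14}).

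On the right-hand side we must control the $t$-differentiated nonlinearities. The troublesome pieces come from differentiating the coefficients $\frac{\eta}{a+\eta}$ and $\frac{a_0 - p_\varrho(a+\eta)}{a+\eta}$: this produces factors of $\eta_t$ multiplying $\Delta v$, $\nabla\Delta\varphi$, and $\nabla\eta$. I would estimate each such term by Hölder with splitting into $L_3$, $L_6$, $L_2$ factors and then by Sobolev, inserting $\eta_t$ in $L_3$ or $L_\infty(0,t;H^1)$ and absorbing the derivatives of $v$ via the already established bounds $|v|_2\le A_1$, $|v|_6\le D_1$ (\ref{2.82}) and $|v_t|_2\le D_2$ (\ref{2.84}). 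The inertial term $\partial_t(v\cdot\nabla v)$ splits as $v_t\cdot\nabla v + v\cdot\nabla v_t$; the first is handled by $|v_t|_3 D_1$ (with $|v_t|_3$ treated by interpolation into $\|\nabla v_t\|_0^{1/2}|v_t|_2^{1/2}$ so as to absorb a small fraction of $\|\nabla\varphi_t\|_1^2$), while $v\cdot\nabla v_t$ is absorbed into $\|\nabla\varphi_t\|_1^2 + \|\nabla\rot\psi_t\|_0^2$ using $v\in L_\infty$ combined with the decomposition (\ref{3.3}). This structure explains the cluster $\phi_2(1+|\eta|_{2,1}^2+|\nabla\rot\psi|_3^2+|\nabla^2\varphi|_3^2)$ appearing in (\ref{3.14}).

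To pass from (\ref{3.14}) to (\ref{3.15}) I would simply integrate in $t\in[0,t]$ and recognise, term by term, the space-time norms of $\eta$, $\eta_t$ and the velocity decomposition encoded in Notation \ref{n2.13}. The key substitutions are: $|\Delta\varphi|_{3,2,\Omega^t}\le\Psi/\nu$, $|\eta|_{2,1,\infty,\Omega^t}$ bounded via (\ref{2.94}), the velocity bounds through $A_1$ and $D_1$, and $|v_t|$ in time-integrated norm through $D_2$. The term $c\nu|\eta|_{2,1}^2(1+|\eta|_{2,1}^2)|\Delta\varphi|_3^2$ on the right of (\ref{3.14}), once integrated and combined with $|\Delta\varphi|_{3,2,\Omega^t}^2\le\Psi^2/\nu^2$, yields precisely the summand $c\nu|\eta|_{2,1,\infty,\Omega^t}^2(1+|\eta|_{2,1,\infty,\Omega^t}^2)\Psi^2/\nu^2$ in (\ref{3.15}).

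The main obstacle will be the presence of the term $c\nu\|\eta\|_2^2 \|\nabla\varphi_t\|_1^2$ on the right of (\ref{3.14}): it is not multiplied by a small parameter $1/\nu^\alpha$, and it has the same dissipation exponent $\|\nabla\varphi_t\|_1^2$ that appears on the left. Absorbing it directly into the left-hand side is only possible if we know a priori that $\nu\|\eta\|_{2,1,\infty,\Omega^t}^2$ is small — which, by (\ref{2.94}) and the smallness hypothesis $|\eta(0)|_{2,1}\le c/\nu$, holds in the regime of large $\nu$ and moderate $\Psi$. I therefore plan to keep this term on the right (as the authors do) and defer its absorption to the closing step, where the estimate on $\Psi$, $\chi_1$, $\chi_2$ will be combined into a single inequality as in (\ref{1.34}). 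The secondary technical difficulty is bookkeeping of the $G_t$ contribution, which is handled using its explicit formula in terms of $\intop_\Omega \eta v \, dx$ and its time derivative via (\ref{3.1}), giving only lower order terms already controlled by $D_1$, $D_2$, $A_1$, and $|f|$.
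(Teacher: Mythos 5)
Your proposal is correct and follows essentially the paper's strategy: differentiate (\ref{3.13}) in $t$, test against $\nabla\varphi_t$ and then $\rot\psi_t$, exploit gradient--curl orthogonality to isolate the dissipation terms, combine the two identities with weight $1/\nu$, estimate the $t$-differentiated nonlinearities with H\"older/Sobolev and the bounds $A_1$, $D_1$, $D_2$, and finally integrate in time using Notation~\ref{n2.13}. The one over-complication in your plan is the treatment of $G$: after $t$-differentiation the contribution is $G_{tt}$, which is spatially constant, and since $\intop_\Omega\nabla\varphi_t\,dx=\intop_\Omega\rot\psi_t\,dx=0$ by periodicity, its test against either factor vanishes identically, so no explicit formula or lower-order bound for $G_t$ is needed.
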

\begin{proof}
Differentiate (\ref{3.13}) with respect to $t$, multiply by $\nabla\varphi_t$ and integrate over $\Omega$. Then we have
\begin{equation}\eqal{
&{1\over2}{d\over dt}|\nabla\varphi_t|_2^2+{\mu\over a}|\nabla^2\varphi_t|_2^2+ {\nu\over a}|\Delta\varphi_t|_2^2=-\intop_\Omega\bigg({a_0\over a+\eta}\nabla\eta\bigg)_t\cdot\nabla\varphi_tdx\cr
&\quad-{\mu\over a}\intop_\Omega\bigg({\eta\over a+\eta}\Delta v\bigg)_{,t}\cdot\nabla\varphi_t dx-{\nu\over a}\intop_\Omega\bigg({\eta\over a+\eta}\nabla\Delta\varphi\bigg)_{,t}\cdot\nabla\varphi_tdx\cr
&\quad-\intop_\Omega(v\cdot\nabla v)_{,t}\cdot\nabla\varphi_tdx+\intop_\Omega \bigg[{1\over a+\eta}(p_\varrho(a)-p_\varrho(a+\eta))\nabla\eta\bigg]_{,t}\cdot \nabla\varphi_tdx\cr
&\quad+\intop_\Omega f_{gt}\cdot\nabla\varphi_tdx.\cr}
\label{3.16}
\end{equation}
Now, we estimate the terms from the r.h.s. of (\ref{3.16}). The first term on the r.h.s. of (\ref{3.16}) is bounded by
$$\eqal{
&c\intop_\Omega(|\nabla\eta_t|\,|\nabla\varphi_t|+|\eta_t|\,|\nabla\eta|\, |\nabla\varphi_t|)dx\le\varepsilon|\nabla\varphi_t|_2^2+c/\varepsilon |\nabla\eta_t|_2^2\cr
&\quad+\varepsilon|\nabla\varphi_t|_6^2+c/\varepsilon|\eta_t|_3^2| \nabla\eta|_2^2.\cr}
$$
The second term on the r.h.s. of (\ref{3.16}) equals
$$
-{\mu\over a}\intop_\Omega\bigg({\eta\over a+\eta}\Delta\rot\psi\bigg)_{,t}\cdot \nabla\varphi_tdx-{\mu\over a}\intop_\Omega\bigg({\eta\over a+\eta} \Delta\nabla\varphi\bigg)_{,t}\cdot\nabla\varphi_tdx\equiv I_1+I_2,
$$
where
$$\eqal{
I_1&=-{\mu\over a}\intop_\Omega\bigg({\eta\over a+\eta}\bigg)_{,\eta}\eta_t \Delta\rot\psi\cdot\nabla\varphi_tdx-{\mu\over a}\intop_\Omega{\eta\over a+\eta} \Delta\rot\psi_t\cdot\nabla\varphi_tdx\cr
&\equiv I_{11}+I_{12}.\cr}
$$
Integrating by parts in $I_{11}$ yields
$$\eqal{
I_{11}&={\mu\over a}\intop_\Omega\bigg({\eta\over a+\eta}\bigg)_{,\eta\eta} \nabla\eta\eta_t\nabla\rot\psi\cdot\nabla\varphi_tdx\cr
&\quad+{\mu\over a}\intop_\Omega\bigg({\eta\over a+\eta}\bigg)_{,\eta} \nabla\eta_t\cdot\nabla\rot\psi\cdot\nabla\varphi_tdx\cr
&\quad+{\mu\over a}\intop_\Omega\bigg({\eta\over a+\eta}\bigg)_{,\eta}\eta_t \nabla\rot\psi\cdot\nabla\nabla\varphi_tdx\equiv I_{11}^1+I_{11}^2+I_{11}^3.\cr}
$$
Continuing, we have
$$\eqal{
&|I_{11}^1|\le\varepsilon|\nabla\varphi_t|_6^2+c/\varepsilon|\nabla\eta|_6^2 |\eta_t|_6^2|\nabla\rot\psi|_2^2,\cr
&|I_{11}^2|\le\varepsilon|\nabla\varphi_t|_6^2+c/\varepsilon|\nabla\eta_t|_2^2 |\nabla\rot\psi|_3^2,\cr
&|I_{11}^3|\le\varepsilon|\nabla^2\varphi_t|_2^2+c/\varepsilon|\eta_t|_6^2 |\nabla\rot\psi|_3^2.\cr}
$$
Next we examine $I_{12}$. Integration by parts yields
$$\eqal{
I_{12}&={\mu\over a}\intop_\Omega\bigg({\eta\over a+\eta}\bigg)_{,\eta}\nabla\eta\cdot\nabla\rot\psi_t\cdot\nabla\varphi_tdx\cr
&\quad+{\mu\over a}\intop_\Omega{\eta\over a+\eta}\nabla\rot\psi_t\cdot\nabla^2\varphi_tdx\equiv I_{12}^1+I_{12}^2.\cr}
$$
Continuing, we have
$$\eqal{
&|I_{12}^1|\le\varepsilon|\nabla\varphi_t|_6^2+c/\varepsilon|\nabla\eta|_3^2 |\nabla\rot\psi_t|_2^2,\cr
&|I_{12}^2|\le\varepsilon|\nabla^2\varphi_t|_2^2+c/\varepsilon|\eta|_\infty^2 |\nabla\rot\psi_t|_2^2.\cr}
$$
Summarizing the estimates yields
$$\eqal{
|I_1|&\le\varepsilon\|\nabla\varphi_t\|_1^2+c/\varepsilon|\eta|_{2,1}^4 |\nabla\rot\psi|_2^2+c/\varepsilon|\eta|_{2,1}^2|\nabla\rot\psi|_3^2\cr
&\quad+c/\varepsilon\|\eta\|_2^2|\nabla\rot\psi_{,t}|_2^2.\cr}
$$
Next, we estimate $I_2$. Performing differentiation with respect to time implies
$$
I_2=-{\mu\over a}\intop_\Omega\bigg({\eta\over a+\eta}\bigg)_{,\eta}\eta_t \Delta\nabla\varphi\cdot\nabla\varphi_tdx-{\mu\over a}\intop_\Omega{\eta\over a+\eta}\Delta\nabla\varphi_t\nabla\varphi_tdx\equiv I_{21}+I_{22}.
$$
Consider $I_{21}$. Integration by parts gives
$$\eqal{
I_{21}&={\mu\over a}\intop_\Omega\bigg({\eta\over a+\eta}\bigg)_{,\eta\eta} \nabla\eta\eta_t\Delta\varphi\cdot\nabla\varphi_tdx+{\mu\over a}\intop_\Omega \bigg({\eta\over a+\eta}\bigg)_{,\eta}\nabla\eta_t\Delta\varphi\cdot\nabla\varphi_tdx\cr
&\quad+{\mu\over a}\intop_\Omega\bigg({\eta\over a+\eta}\bigg)_{,\eta}\eta_t \Delta\varphi\cdot\Delta\varphi_tdx\equiv I_{21}^1+I_{21}^2+I_{21}^3.\cr}
$$
Continuing, we have
$$\eqal{
&|I_{21}^1|\le\varepsilon|\nabla\varphi_t|_6^2+c/\varepsilon|\nabla\eta|_6^2 |\eta_t|_6^2|\Delta\varphi|_2^2,\cr
&|I_{21}^2|\le\varepsilon|\nabla\varphi_t|_6^2+c/\varepsilon|\nabla\eta_t|_2^2 |\Delta\varphi|_3^2,\cr
&|I_{21}^3|\le\varepsilon|\Delta\varphi_t|_2^2+c/\varepsilon|\eta_t|_6^2 |\Delta\varphi|_3^2.\cr}
$$
Integration by parts in $I_{22}$ implies
$$
I_{22}={\mu\over a}\intop_\Omega\bigg({\eta\over a+\eta}\bigg)_{,\eta}\nabla\eta \nabla^2\varphi_t\cdot\nabla\varphi_tdx+{\mu\over a}\intop_\Omega{\eta\over a+\eta}|\nabla^2\varphi_t|^2dx\equiv I_{22}^1+I_{22}^2,
$$
where $I_{22}^2$ is qabsorbed by the second term on the l.h.s. of (\ref{3.16}) in such way that
$$\eqal{
&{\mu\over a}\intop_\Omega|\nabla^2\varphi_t|^2dx-{\mu\over a}\intop_\Omega{\eta\over a+\eta}|\nabla^2\varphi_t|^2dx\cr
&={\mu\over a}\intop_\Omega{a\over a+\eta}|\nabla^2\varphi_t|^2dx\ge{2\over3}{\mu\over a} \intop_\Omega|\nabla^2\varphi_t|^2dx,\cr}
$$
where we used that $|\eta|_\infty\le a/2$. Finally,
$$
|I_{22}^1|\le\varepsilon|\nabla^2\varphi_t|_2^2+c/\varepsilon|\nabla\eta|_6^2 |\nabla\varphi_t|_3^2.
$$
Summarizing the above estimates implies
$$
|I_2-I_{22}^2|\le\varepsilon\|\nabla\varphi_t\|_1^2+c/\varepsilon(|\eta|_{2,1}^4 |\nabla^2\varphi|_2^2+|\eta|_{2,1}^2|\nabla^2\varphi|_3^2+\|\eta\|_2^2 |\nabla\varphi_t|_3^2).
$$
Next, we examine the third term on the r.h.s. of (\ref{3.16}). First, we write it in the form
$$
I_3=-{\nu\over a}\intop_\Omega\bigg({\eta\over a+\eta}\bigg)_{,\eta}\eta_t \nabla\Delta\varphi\cdot\nabla\varphi_tdx-{\nu\over a}\intop_\Omega{\eta\over a+\eta}\nabla\Delta\varphi_t\cdot\nabla\varphi_tdx\equiv I_{31}+I_{32}.
$$
Integration by parts in $I_{31}$ yields
$$\eqal{
I_{31}&={\nu\over a}\intop_\Omega\bigg({\eta\over a+\eta}\bigg)_{,\eta\eta} \nabla\eta\eta_t\Delta\varphi\cdot\nabla\varphi_tdx+{\nu\over a}\intop_\Omega \bigg({\eta\over a+\eta}\bigg)_{,\eta}\nabla\eta_t\Delta\varphi\nabla\varphi_t dx\cr
&\quad+{\nu\over a}\intop_\Omega\bigg({\eta\over a+\eta}\bigg)_{,\eta}\eta_t\Delta\varphi\Delta\varphi_tdx\equiv I_{31}^1+I_{31}^2+I_{31}^3,\cr}
$$
where
$$\eqal{
&|I_{31}^1|\le\nu\varepsilon|\nabla\varphi_t|_6^2+{\nu c\over\varepsilon} |\nabla\eta|_6^2|\eta_t|_6^2|\Delta\varphi|_2^2,\cr
&|I_{31}^2|\le\nu\varepsilon|\nabla\varphi_t|_6^2+{\nu c\over\varepsilon} |\nabla\eta_t|_2^2|\Delta\varphi|_3^2,\cr
&|I_{31}^3|\le\nu\varepsilon|\Delta\varphi_t|_2^2+{\nu c\over\varepsilon} |\eta_t|_6^2|\Delta\varphi|_3^2.\cr}
$$
Finally, we examine $I_{32}$. Integration by parts yields
$$
I_{32}={\nu\over a}\intop_\Omega\bigg({\eta\over a+\eta}\bigg)_{,\eta} \nabla\eta\Delta\varphi_t\cdot\nabla\varphi_tdx+{\nu\over a}\intop_\Omega {\eta\over a+\eta}|\Delta\varphi_t|^2dx\equiv I_{32}^1+I_{32}^2,
$$
where $I_{32}^2$ is absorbed by the third term on the l.h.s. of (\ref{3.16}) in the following way
$$\eqal{
&{\nu\over a}\intop_\Omega|\Delta\varphi_t|^2dx-{\nu\over a}\intop_\Omega {\eta\over a+\eta}|\Delta\varphi_t|^2dx\cr
&={\nu\over a}\intop_\Omega{a\over a+\eta} |\Delta\varphi_t|^2dx\ge{2\over3}{\nu\over a}\intop_\Omega|\Delta\varphi_t|^2dx\cr}
$$
which holds for $|\eta|\le a/2$ and
$$
|I_{32}^1|\le\nu\varepsilon|\Delta\varphi_t|_2^2+{\nu c\over\varepsilon} |\nabla\eta|_6^2|\nabla\varphi_t|_3^2.
$$
Summarizing, we have
$$
|I_3-I_{32}^2|\le\nu\varepsilon\|\nabla\varphi_t\|_1^2+{\nu c\over\varepsilon} (|\eta|_{2,1}^4|\Delta\varphi|_2^2+|\eta|_{2,1}^2|\Delta\varphi|_3^2+ \|\eta\|_2^2|\nabla\varphi_t|_3^2).
$$
We write the fourth term on the r.h.s. of (\ref{3.16}) in the form
$$\eqal{
I_4&=\intop_\Omega v_t\cdot\nabla v\cdot\nabla\varphi_tdx+\intop_\Omega v\cdot\nabla v_t\cdot\nabla\varphi_tdx=-\intop_\Omega\Delta\varphi_tv\cdot\nabla\varphi_tdx\cr
&\quad- \intop_\Omega v_tv\cdot\nabla^2\varphi_tdx-\intop_\Omega\Delta\varphi v_t\cdot\nabla\varphi_tdx-\intop_\Omega vv_t\cdot\nabla^2\varphi_tdx.\cr}
$$
Hence, we have
$$\eqal{
|I_4|&\le\varepsilon|\nabla\varphi_t|_6^2+c/\varepsilon(|\Delta\varphi_t|_2^2 |v|_3^2+|\Delta\varphi|_3^2|v_t|_2^2)\cr
&\quad+\varepsilon|\nabla^2\varphi_t|_2^2+c/\varepsilon|v_t|_3^2|v|_6^2\le \varepsilon|\nabla\varphi_t|_6^2+c/\varepsilon(|\Delta\varphi_t|_2^2D_1^2+ |\Delta\varphi|_3^2D_2^2)\cr
&\quad+\varepsilon|\nabla^2\varphi_t|_2^2+c/\varepsilon|v_t|_3^2D_1^2.\cr}
$$
We estimate the fifth term on the r.h.s. of (\ref{3.16}) by
$$\eqal{
|I_5|&\le c\intop_\Omega(|\eta_|\,|\eta|\,|\nabla\eta|+|\eta_t|\,|\nabla\eta|+ |\eta|\nabla\eta_t|)|\nabla\varphi_t|dx\cr
&\le \varepsilon|\nabla\varphi_t|_6^2+c/\varepsilon|(|\eta|_{2,1}^6+\eta|_{2,1}^4).\cr}
$$
Finally, we estimate the last term on the r.h.s. of (\ref{3.16}) by
$$
\varepsilon|\nabla\varphi_t|_6^2+c/\varepsilon|f_{gt}|_2^2.
$$
Employing the above estimates in (\ref{3.16}) and assuming that $\varepsilon$ is sufficiently small we have
\begin{equation}\eqal{
&{d\over dt}|\nabla\varphi_t|_2^2+{\mu\over a}\|\nabla\varphi_t\|_1^2+{\nu\over a}|\Delta\varphi_t|_2^2\cr
&\le{c\over\nu}[|\eta|_{2,1}^2+|\eta|_{2,1}^4+|\eta|_{2,1}^6+ |\eta|_{2,1}^4(|\nabla\rot\psi|_2^2+|\nabla^2\varphi|_2^2)\cr
&\quad+|\eta|_{2,1}^2(|\nabla\rot\psi|_3^2+|\nabla^2\varphi|_3^2)+\|\eta\|_2^2 (|\nabla\rot\psi_t|_2^2+|\nabla\varphi_t|_3^2)\cr
&\quad+|\Delta\varphi_t|_2^2D_1^2+|\Delta\varphi|_3^2D_2^2+|v_t|_3^2D_1^2+ |f_{gt}|_2^2]\cr
&\quad+c\nu[|\eta|_{2,1}^4|\Delta\varphi|_2^2+|\eta|_{2,1}^2|\Delta\varphi|_3^2+ \|\eta\|_2^2|\nabla\varphi_t|_3^2].\cr}
\label{3.17}
\end{equation}
Differentiate (\ref{3.13}) with respect to $t$, multiply by $\rot\psi_t$ and integrate over $\Omega$. Then we have
\begin{equation}\eqal{
&{1\over2}{d\over dt}|\rot\psi_t|_2^2+{\mu\over a}|\nabla\rot\psi_t|_2^2= -{\mu\over a}\intop_\Omega\bigg({\eta\over a+\eta}\Delta v\bigg)_{,t}\cdot \rot\psi_tdx\cr
&\quad-{\nu\over a}\intop_\Omega\bigg({\eta\over a+\eta}\Delta\nabla\varphi\bigg)_{,t}\cdot\rot\psi_tdx-\intop_\Omega(v\cdot\nabla v)_{,t}\cdot\rot\psi_tdx\cr
&\quad+\intop_\Omega f_{r,t}\cdot\rot\psi_tdx.\cr}
\label{3.18}
\end{equation}
Now we examine the particular terms from the r.h.s. of (\ref{3.18}). The first term is written in the form
$$
-{\mu\over a}\intop_\Omega\bigg({\eta\over a+\eta}\Delta\rot\psi\bigg)_{,t}\cdot \rot\psi_tdx-{\mu\over a}\intop_\Omega\bigg({\eta\over a+\eta}\Delta\nabla \varphi\bigg)_{,t}\cdot\rot\psi_tdx\equiv I_1+I_2.
$$
Performing differentiation in $I_1$ yields
$$\eqal{
I_1&=-{\mu\over a}\intop_\Omega\bigg({\eta\over a+\eta}\bigg)_{,\eta}\eta_t\Delta \rot\psi\cdot\rot\psi_tdx-{\mu\over a}\intop_\Omega{\eta\over a+\eta}\Delta\rot\psi_t\cdot\rot\psi_tdx\cr
&\equiv I_{11}+I_{12}.\cr}
$$
Integration by parts in $I_{11}$ implies
$$\eqal{
I_{11}&={\mu\over a}\intop_\Omega\bigg({\eta\over a+\eta}\bigg)_{,\eta\eta} \nabla\eta\eta_t\nabla\rot\psi\cdot\rot\psi_tdx\cr
&\quad+{\mu\over a}\intop_\Omega\bigg({\eta\over a+\eta}\bigg)_{,\eta} \nabla\eta_t\nabla\rot\psi\cdot\rot\psi_tdx\cr
&\quad+{\mu\over a}\intop_\Omega\bigg({\eta\over a+\eta}\bigg)_{,\eta}\eta_t \nabla\rot\psi\cdot\nabla\rot\psi_tdx\equiv I_{11}^1+I_{11}^2+I_{11}^3.\cr}
$$
Continuing, we have the estimates
$$\eqal{
&|I_{11}^1|\le\varepsilon|\rot\psi_t|_6^2+c/\varepsilon|\nabla\eta|_6^2 |\eta_t|_6^2|\nabla\rot\psi|_2^2,\cr
&|I_{11}^2|\le\varepsilon|\rot\psi_t|_6^2+c/\varepsilon|\nabla\eta_t|_2^2 |\nabla\rot\psi|_3^2,\cr
&|I_{11}^3|\le\varepsilon|\nabla\rot\psi_t|_2^2+c/\varepsilon|\eta_t|_6^2 |\nabla\rot\psi|_3^2.\cr}
$$
Integrating by parts in $I_{12}$ gives
$$\eqal{
I_{12}&={\mu\over a}\intop_\Omega\bigg({\eta\over a+\eta}\bigg)_{,\eta} \nabla\eta\nabla\rot\psi_t\cdot\rot\psi_tdx+{\mu\over a}\intop_\Omega{\eta\over a+\eta}|\nabla\rot\psi_t|^2dx\cr
&\equiv I_{12}^1+I_{12}^2,\cr}
$$
where
$$
|I_{12}^1|\le\varepsilon|\nabla\rot\psi_t|_2^2+c/\varepsilon|\nabla\eta|_6^2 |\rot\psi_t|_3^2
$$
and $I_{12}^2$ is absorbed by the second term on the l.h.s. of (\ref{3.18}) in such way that
$$\eqal{
&{\mu\over a}\intop_\Omega|\nabla\rot\psi_t|^2dx-{\mu\over a}\intop_\Omega{\eta\over a+\eta}|\nabla\rot\psi_t|^2dx\cr
&={\mu\over a}\intop_\Omega{a\over a+\eta}|\nabla\rot\psi_t|^2dx\ge{2\over3}{\mu\over a}|\nabla\rot\psi_t|_2^2\quad {\rm for}\ |\eta|\le a/2.\cr}
$$
Next, we examine $I_2$. Performing differentiation with respect to time gives
$$\eqal{
I_2&=-{\mu\over a}\intop_\Omega\bigg({\eta\over a+\eta}\bigg)_{,\eta}\eta_t \Delta\nabla\varphi\cdot\rot\psi_tdx\cr
&\quad-{\mu\over a}\intop_\Omega{\eta\over a+\eta}\Delta\nabla\varphi_t\cdot \rot\psi_tdx\equiv I_{21}+I_{22}.\cr}
$$
Integrating by parts in $I_{21}$ implies
$$\eqal{
I_{21}&={\mu\over a}\intop_\Omega\bigg({\eta\over a+\eta}\bigg)_{,\eta\eta} \nabla\eta\eta_t\Delta\varphi\cdot\rot\psi_tdx\cr
&\quad+{\mu\over a}\intop_\Omega\bigg({\eta\over a+\eta}\bigg)_{,\eta} \nabla\eta_t\Delta\varphi\cdot\rot\psi_tdx\equiv I_{21}^1+I_{21}^2,\cr}
$$
where
$$\eqal{
&|I_{21}^1|\le\varepsilon|\rot\psi_t|_6^2+c/\varepsilon|\nabla\eta|_6^2 |\eta_t|_6^2|\Delta\varphi|_2^2,\cr
&|I_{21}^2|\le\varepsilon|\rot\psi_t|_6^2+c/\varepsilon|\nabla\eta_t|_2^2 |\Delta\varphi|_3^2.\cr}
$$
Integration by parts in $I_{22}$ gives
$$
I_{22}={\mu\over a}\intop_\Omega\bigg({\eta\over a+\eta}\bigg)_{,\eta}\nabla\eta \Delta\varphi_t\cdot\rot\psi_tdx.
$$
Hence,
$$
|I_{22}|\le c\intop_\Omega|\nabla\eta|\,|\Delta\varphi_t|\,|\rot\psi_t|dx\le \varepsilon|\rot\psi_t|_6^2+c/\varepsilon|\nabla\eta|_3^2|\Delta\varphi_t|_2^2.
$$
Collecting the above estimates yields
$$\eqal{
&|I_1|\le\varepsilon\|\rot\psi_t\|_1^2+c/\varepsilon(|\eta|_{2,1}^4 |\nabla\rot\psi|_2^2+|\eta|_{2,1}^2|\nabla\rot\psi|_3^2+\|\eta\|_2^2 |\rot\psi_t|_3^2),\cr
&|I_2|\le\varepsilon|\rot\psi_t|_6^2+c/\varepsilon(|\eta|_{2,1}^4 |\Delta\varphi|_2^2+|\nabla\eta|_3^2|\Delta\varphi_t|_2^2+|\eta|_{2,1}^2 |\Delta\varphi|_3^2).\cr}
$$
The second term on the r.h.s. of (\ref{3.18}) equals
$$
I_3=-{\nu\over a}\intop_\Omega\bigg({\eta\over a+\eta}\bigg)_{,\eta}\eta_t \Delta\nabla\varphi\cdot\rot\psi_tdx-{\nu\over a}\intop_\Omega{\eta\over a+\eta} \Delta\nabla\varphi_t\cdot\rot\psi_tdx.
$$
Integrating by parts in the above integrals yields
$$\eqal{
I_3&={\nu\over a}\intop_\Omega\bigg({\eta\over a+\eta}\bigg)_{,\eta\eta} \nabla\eta\eta_t\Delta\varphi\cdot\rot\psi_tdx\cr
&\quad+{\nu\over a}\intop_\Omega\bigg({\eta\over a+\eta}\bigg)_{,\eta}\nabla \eta_t\Delta\varphi\cdot\rot\psi_tdx\cr
&\quad+{\nu\over a}\intop_\Omega\bigg({\eta\over a+\eta}\bigg)_{,\eta}\nabla \eta\Delta\varphi_t\cdot\rot\psi_tdx.\cr}
$$
Hence, we have
$$
|I_3|\le\varepsilon|\rot\psi_t|_6^2+{c\nu^2\over\varepsilon}(|\nabla\eta|_6^2 |\eta_t|_6^2|\Delta\varphi|_2^2+|\nabla\eta_t|_2^2|\Delta\varphi|_3^2+ |\nabla\eta|_6^2|\Delta\varphi_t|_{3/2}^2).
$$
The third term on the r.h.s. of (\ref{3.18}) equals
$$
I_4=-\intop_\Omega(v\cdot\nabla v_t+v_t\cdot\nabla v)\cdot\rot\psi_tdx\equiv I_{41}+I_{42}.
$$
First we examine $I_{41}$. We write it in the form
$$
I_{41}=-\intop_\Omega v\cdot\nabla\rot\psi_t\cdot\rot\psi_tdx-\intop_\Omega v\cdot\nabla\nabla\varphi_t\cdot\rot\psi_tdx\equiv I_{41}^1+I_{41}^2,
$$
where
$$
I_{41}^1=-{1\over2}\intop_\Omega v\cdot\nabla|\rot\psi_t|^2dx={1\over2}\intop_\Omega\Delta\varphi|\rot\psi_t|^2dx
$$
so
$$
|I_{41}^1|\le\varepsilon|\rot\psi_t|_6^2+c/\varepsilon|\Delta\varphi|_3^2 |\rot\psi_t|_2^2.
$$
Integrating by parts in $I_{41}^2$ yields
$$
I_{41}^2=\intop_\Omega\nabla v\cdot\nabla\varphi_t\cdot\rot\psi_tdx.
$$
Hence,we have
$$
|I_{41}^2|\le\varepsilon|\rot\psi_t|_6^2+c/\varepsilon|\nabla v|_2^2 |\nabla\varphi_t|_3^2.
$$
Next, we examine $I_{42}$. Integration by parts implies
$$
I_{42}=\intop_\Omega\Delta\varphi_tv\cdot\rot\psi_tdx+\intop_\Omega v_t\cdot v\cdot\nabla\rot\psi_tdx.
$$
Hence, we obtain
$$
|I_{42}|\le\varepsilon\|\rot\psi_t\|_1^2+c/\varepsilon(|\Delta\varphi_t|_2^2 |v|_3^2+|v_t|_3^2|v|_6^2).
$$
Finally, the last term on the r.h.s. of (\ref{3.18}) is bounded by
$$
\varepsilon|\rot\psi_t|_2^2+c/\varepsilon|f_{rt}|_2^2.
$$
Employing the above estimates in (\ref{3.18}) and using that $\varepsilon$ is sufficiently small we derive the inequality
\begin{equation}\eqal{
&{d\over dt}|\rot\psi_t|_2^2+{\mu\over a}|\nabla\rot\psi_t|_2^2\le c[|\eta|_{2,1}^2(|\eta|_{2,1}^2+1)|\nabla\rot\psi|_3^2\cr
&\quad+(\|\eta\|_2^2+|\Delta\varphi|_3^2)|\rot\psi_t|_3^2+(|\eta|_{2,1}^4+ |\eta|_{2,1}^2)|\Delta\varphi|_3^2+|\nabla\eta|_3^2|\Delta\varphi_t|_2^2\cr
&\quad+|\nabla v|_2^2|\nabla\varphi_t|_3^2+ (|\Delta\varphi_t|_2^2+|v_t|_3^2)D_1^2+|f_{rt}|_2^2]+c\nu^2 [|\eta|_{2,1}^2(|\eta|_{2,1}^2+1)|\Delta\varphi|_3^2\cr
&\quad+ \|\eta\|_2^2|\Delta\varphi_t|_2^2].\cr}
\label{3.19}
\end{equation}
Multiplying (\ref{3.19}) by $1/\nu$ and adding to (\ref{3.17}) gives the inequality
\begin{equation}\eqal{
&{d\over dt}\bigg(|\nabla\varphi_t|_2^2+{1\over\nu}|\rot\psi_t|_2^2\bigg)+{\mu\over a} \|\nabla\varphi_t\|_1^2+{\mu\over\nu a}\|\rot\psi_t\|_1^2+{\nu\over a} |\Delta\varphi_t|_2^2\cr
&\le{c\over\nu}[|\eta|_{2,1}^2+|\eta|_{2,1}^4+|\eta|_{2,1}^6+|\eta|_{2,1}^2(1+ |\eta|_{2,1}^2) (|\nabla\rot\psi|_3^2+|\nabla^2\varphi|_3^2)\cr
&\quad+(\|\eta\|_2^2+|\Delta\varphi|_3^2)(|\nabla\rot\psi_t|_2^2+ \|\nabla\varphi_t\|_1^2)+|\nabla v|_2^2 |\nabla\varphi_t|_3^2\cr
&\quad+|\Delta\varphi_t|_2^2D_1^2+|\Delta\varphi|_3^2D_2^2+|v_t|_3^2D_1^2+ |f_{rt}|_2^2+|f_{gt}|_2^2]\cr
&\quad+c\nu[|\eta|_{2,1}^2(1+|\eta|_{2,1}^2)|\Delta\varphi|_3^2+\|\eta\|_2^2 \|\nabla\varphi_t\|_1^2].\cr}
\label{3.20}
\end{equation}
Introduce the quantity
\begin{equation}
\phi_2=|\eta|_{2,1}^2(1+|\eta|_{2,1}^2).
\label{3.21}
\end{equation}
Then (\ref{3.20}) takes the form
\begin{equation}\eqal{
&{d\over dt}\bigg(|\nabla\varphi_t|_2^2+{1\over\nu}|\rot\psi_t|_2^2\bigg)+ {\mu\over a}\|\nabla\varphi_t\|_1^2+{\mu\over\nu a}\|\rot\psi_t\|_1^2+{\nu\over a}|\Delta\varphi_t|_2^2\cr
&\le{c\over\nu}[\phi_2(1+|\eta|_{2,1}^2)+\phi_2(|\nabla\rot\psi|_3^2+ |\nabla^2\varphi|_3^2+ |\nabla\rot\psi_t|_2^2+\|\nabla\varphi_{,t}\|_1^2)\cr
&\quad+|\Delta\varphi|_3^2(|\nabla\rot\psi_t|_2^2+\|\nabla\varphi_t\|_1^2+D_2^2)+ \|\nabla\varphi_t\|_1^2(|\nabla v|_2^2+D_1^2)\cr
&\quad+|v_t|_3^2D_1^2+|f_t|_2^2]+c\nu[\phi_2|\Delta\varphi|_3^2+\|\eta\|_2^2 \|\nabla\varphi_t\|_1^2].\cr}
\label{3.22}
\end{equation}
Hence (\ref{3.22}) implies (\ref{3.14}). Integrating (\ref{3.22}) with respect to time and using Notation \ref{n2.13} and the fact that $|v|_6\le D_1$, $|v_t|_2\le D_2$, $|v|_{6,2,\Omega^t}\le A_1$, $|v_t|_{2,\Omega^t}\le D_2$, $|\nabla v|_{2,\Omega^t}\le A_1$ implies (\ref{3.15}). This concludes the proof of Lemma \ref{l3.2}.
\end{proof}

\begin{lemma}\label{l3.3}
Let the assumptions of Lemma \ref{l3.1} hold. Then
\begin{equation}\eqal{
&a{d\over dt}\bigg(|\nabla\varphi_x|_2^2+{1\over\nu}|\rot\psi_x|_2^2\bigg)+\mu |\nabla^2\varphi_x|_2^2+\nu|\Delta\varphi_x|_2^2+{\mu\over\nu} |\nabla\rot\psi_x|_2^2\cr
&\le{c\over\nu}[|\eta|_\infty^2D_2^2+(1+|\eta|_\infty^4)|\nabla v|_2^2D_1^4+ |\nabla\eta|_2^2+|\eta|_\infty^2|\nabla\eta|_2^2\cr
&\quad+(1+|\eta|_\infty^2)|f|_2^2],\cr}
\label{3.23}
\end{equation}
\begin{equation}\eqal{
&|\nabla\varphi_x(t)|_2^2+{1\over\nu}|\rot\psi_x(t)|_2^2+{\mu\over a} |\nabla^2\varphi_x|_{2,2,\Omega^t}^2+{\nu\over a} |\Delta\varphi_x|_{2,2,\Omega^t}^2\cr
&\quad+{\mu\over\nu a}|\nabla\rot\psi_x|_{2,2,\Omega^t}^2\le{c\over\nu} [|\eta|_\infty^2D_2^2+(1+|\eta|_\infty^2)A_1^2D_1^4\cr
&\quad+|\nabla\eta|_2^2+|\eta|_\infty^2|\nabla\eta|_2^2+(1+|\eta|_\infty^2) |f|_{2,2,\Omega^t}^2]\cr
&\quad+|\nabla\varphi_x(0)|_2^2+{1\over\nu}|\rot\psi_x(0)|_2^2.\cr}
\label{3.24}
\end{equation}
\end{lemma}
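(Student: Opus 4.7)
The plan is to repeat the scheme of Lemma \ref{l3.1}, but applied to the once-$x$-differentiated momentum equation. Concretely, I would differentiate (\ref{3.2}) with respect to a spatial variable $x$, test first against $\nabla\varphi_x$ and then against $\rot\psi_x$, integrate over $\Omega$, and finally combine the two resulting identities with weights $1$ and $1/\nu$. The periodic boundary conditions together with the Helmholtz decomposition $v=\nabla\varphi+\rot\psi+G$ make the cross terms disappear in exactly the same way as in (\ref{3.5}), (\ref{3.7}), so we obtain
$$
{a\over2}{d\over dt}|\nabla\varphi_x|_2^2+\mu|\nabla^2\varphi_x|_2^2+\nu|\Delta\varphi_x|_2^2=\sum_iR_i,\qquad {a\over2}{d\over dt}|\rot\psi_x|_2^2+\mu|\nabla\rot\psi_x|_2^2=\sum_iS_i,
$$
where $R_i,S_i$ arise from differentiating $a_0\nabla\eta$, $\eta v_t$, $(a+\eta)v\cdot\nabla v$, $(p_\varrho(a)-p_\varrho(a+\eta))\nabla\eta$ and $(a+\eta)f$ with respect to $x$, and then testing.

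For each term I would use the same devices as in the proof of Lemma \ref{l3.1}: H\"older with the split $2\cdot 3\cdot 6$ or $\infty\cdot 2$, integration by parts to move one derivative off the highest order factor, Young's inequality with a small parameter $\varepsilon$ to absorb the highest derivatives of $\varphi_x$ and $\rot\psi_x$ into the dissipation on the l.h.s., and the a priori bounds $|v|_2\le A_1$, $|v|_6\le D_1$, $|v_t|_2\le D_2$ coming from Lemmas \ref{l2.1}, \ref{l2.2} and \ref{l2.8}. The $\eta_x v_t$-type contribution gives $|\eta|_\infty^2 D_2^2$; the quadratic convective piece $(v\cdot\nabla v)_x$ produces, after integration by parts, the $|\nabla v|_2^2 D_1^4$ block (estimated via $|\nabla v|_2\le A_1$ once we integrate in time); the $\eta v\cdot\nabla v$ piece, after distributing the $x$-derivative and an integration by parts that sends the highest derivative back onto $\varphi_x$ or $\rot\psi_x$, contributes the factor $(1+|\eta|_\infty^4)|\nabla v|_2^2 D_1^4$; the pressure piece yields $|\nabla\eta|_2^2(1+|\eta|_\infty^2)$; finally, $((a+\eta)f)_x$ gives $(1+|\eta|_\infty^2)|f|_2^2$. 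After absorbing, dividing the $\rot\psi_x$ inequality by $\nu$ and adding, we obtain (\ref{3.23}).

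Once (\ref{3.23}) is in hand, integration in $t$ over $(0,t)$, taking suprema where needed and using $\int_0^t|\nabla v|_2^2\,dt'\le A_1^2$ from Lemma \ref{l2.1}, produces (\ref{3.24}); the initial-data terms $|\nabla\varphi_x(0)|_2^2+\nu^{-1}|\rot\psi_x(0)|_2^2$ appear as usual from integrating the time-derivative on the l.h.s.

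The main obstacle I anticipate is the careful bookkeeping of the terms coming from $(\eta v\cdot\nabla v)_x$ and from the $\eta$-dependent coefficients on the viscous block: in particular, the highest-derivative pieces $\int_\Omega \eta\, v\cdot\nabla v_x\cdot\nabla^2\varphi_x\,dx$ and the analogous rotational term must be controlled by $\varepsilon(\mu|\nabla^2\varphi_x|_2^2+(\mu/\nu)|\nabla\rot\psi_x|_2^2)$ plus lower order, and one has to check that the coefficient produced by Young's inequality decorates with the correct power of $\nu$ (that is what forces the $1/\nu$ factor in front of the right-hand side of (\ref{3.23})). Provided the smallness $|\eta|_\infty\le a/2$ is used to keep $\eta/(a+\eta)$ bounded and $\nu$ is large enough that the $\nu$-weighted dissipation dominates the $\nu$-weighted bad terms, this absorption goes through and gives exactly the stated inequality.
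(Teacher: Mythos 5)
Your overall strategy matches the paper's: differentiate (\ref{3.2}) once in $x$, test against $\nabla\varphi_x$ and $\rot\psi_x$, integrate by parts to move derivatives off the highest-order factors, absorb via Young, combine with weights $1$ and $1/\nu$, and integrate in time. The final bounds you state also agree with (\ref{3.23})--(\ref{3.24}). However, you gloss over the one genuinely delicate device. The convective contribution, after integration by parts, yields $(1+|\eta|_\infty^2)|v|_6^2|\nabla v|_3^2 \le (1+|\eta|_\infty^2)D_1^2|\nabla v|_3^2$, and this is \emph{not yet} in the form $(1+|\eta|_\infty^4)D_1^4|\nabla v|_2^2$ appearing in (\ref{3.23}). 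The paper interposes the interpolation $|\nabla v|_3 \le c\,|\nabla^2 v|_2^{1/2}|\nabla v|_2^{1/2}$ and then applies Young so that the $|\nabla^2 v|_2^2$ piece (which is comparable to $|\nabla^2\varphi_x|_2^2 + |\nabla\rot\psi_x|_2^2$) is absorbed by the dissipation on the left; the small parameter in Young must be of order $1/\nu$ to fit under $\frac{\mu}{\nu}|\nabla\rot\psi_x|_2^2$, and this is exactly what produces the extra $D_1^2$ and $(1+|\eta|_\infty^2)$ powers as well as the global $1/\nu$ prefactor. Without spelling this out, one does not obtain a right-hand side that is time-integrable via $\int_0^t|\nabla v|_2^2\,dt'\le A_1^2$, so (\ref{3.24}) would not follow. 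Please make this interpolation/absorption step explicit.

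A smaller point: the obstacle you single out, the ``$\eta$-dependent coefficients on the viscous block,'' does not arise here. Lemma~\ref{l3.3} works with (\ref{3.2}), in which the viscous terms $-\mu\Delta v-\nu\nabla\Delta\varphi$ carry constant coefficients; the coefficient $\eta/(a+\eta)$ on the Laplacians is a feature of the reformulated equation (\ref{3.13}), which is used in Lemmas~\ref{l3.2} and~\ref{l3.4} but not here. Correspondingly, the paper handles $\int[(a+\eta)v\cdot\nabla v]_{,x}\cdot\nabla\varphi_x\,dx$ simply by shifting the whole $x$-derivative onto $\nabla\varphi_x$ to get $\nabla\varphi_{xx}$, rather than ever confronting a term like $\int\eta\,v\cdot\nabla v_x\cdot\nabla^2\varphi_x\,dx$; your bookkeeping is a valid alternative but is not the one the paper uses, and it is not where the real difficulty lies.
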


\begin{proof}
Differentiate (\ref{3.2}) with respect to $x$, multiply by $\nabla\varphi_x$ and integrate over $\Omega$. Then we get
\begin{equation}\eqal{
&{a\over2}{d\over dt}|\nabla\varphi_{,x}|_2^2+\mu|\nabla^2\varphi_x|_2^2+\mu |\Delta\varphi_x|_2^2=-\intop_\Omega(\eta v_t)_{,x}\nabla\varphi_xdx\cr
&\quad-a_0\intop_\Omega\nabla\eta_x\nabla\varphi_xdx-\intop_\Omega[(a+\eta)v \cdot\nabla v]_{,x}\cdot\nabla\varphi_xdx\cr
&\quad+\intop_\Omega[(p_\varrho(a)-p_\varrho(a+\eta)) \nabla\eta]_{,x}\cdot\nabla\varphi_xdx\cr
&\quad+\intop_\Omega[(a+\eta)f]_{,x}\cdot\nabla\varphi_xdx.\cr}
\label{3.25}
\end{equation}
After integration by parts in the first term on the r.h.s. of (\ref{3.25}) we bound it by
$$
\varepsilon|\nabla\varphi_{xx}|_2^2+c/\varepsilon|\eta|_\infty^2|v_t|_2^2.
$$
The second term on the r.h.s. of (\ref{3.25}) is bounded by
$$
\varepsilon|\nabla\varphi_{xx}|_2^2+c/\varepsilon|\nabla\eta|_2^2.
$$
After integration by parts the third term on the r.h.s. of (\ref{3.25}) is estimated by
$$
\varepsilon|\nabla\varphi_{xx}|_2^2+c/\varepsilon(1+|\eta|_\infty^2)|v|_6^2 |\nabla v|_3^2\le\varepsilon|\nabla\varphi_{xx}|_2^2+c/\varepsilon|\nabla v|_3^2 D_1^2(1+|\eta|_\infty^2)
$$
and the fourth term by
$$
\varepsilon|\nabla\varphi_{xx}|_2^2+c/\varepsilon|\eta|_\infty^2|\nabla\eta|_2^2.
$$
Finally, the last term on the r.h.s. of (\ref{3.25}) equals
$$
I_1=-a\intop_\Omega f_g\cdot\nabla\varphi_{xx}dx-\intop_\Omega\eta f\cdot\nabla\varphi_{xx}dx,
$$
so it is estimated by
$$
|I_1|\le\varepsilon|\nabla\varphi_{xx}|_2^2+c/\varepsilon(|f_g|_2^2+ |\eta|_\infty^2|f|_2^2).
$$
Employing the above estimates in (\ref{3.25}) and assuming that $\varepsilon$ is sufficiently small we obtain the inequality
\begin{equation}\eqal{
&a{d\over dt}|\nabla\varphi_x|_2^2+\mu|\nabla^2\varphi_x|_2^2+\nu |\Delta\varphi_x|_2^2\cr
&\le{c\over\nu}[|\eta|_\infty^2D_2^2+|\nabla\eta|_2^2+ |\nabla v|_3^2D_1^2(1+|\eta|_\infty^2)+|\eta|_\infty^2|\nabla\eta|_2^2\cr
&\quad+|f|_2^2(1+|\eta|_\infty^2)].\cr}
\label{3.26}
\end{equation}
Differentiate (\ref{3.2}) with respect to $x$, multiply by $\rot\psi_x$ and integrate over $\Omega$. Then we derive
\begin{equation}\eqal{
&{a\over2}{d\over dt}|\rot\psi_x|_2^2+\mu|\nabla\rot\psi_x|_2^2=-\intop_\Omega (\eta v_t)_{,x}\rot\psi_xdx\cr
&\quad-\intop_\Omega[(a+\eta)v\cdot\nabla v]_{,x}\cdot\rot\psi_xdx+\intop_\Omega [(a+\eta)f]_{,x}\cdot\rot\psi_xdx.\cr}
\label{3.27}
\end{equation}
Integrating by parts in the terms from the r.h.s. of the above equality we obtain
\begin{equation}\eqal{
&a{d\over dt}|\rot\psi_x|_2^2+\mu|\nabla\rot\psi_x|_2^2\cr
&\le c[|\eta|_\infty^2 |v_t|_2^2+(1+|\eta|_\infty^2)|v|_6^2|\nabla v|_3^2+(1+|\eta|_\infty^2)|f|_2^2].\cr}
\label{3.28}
\end{equation}
Multiplying (\ref{3.28}) by $1/\nu$ and adding to (\ref{3.26}) yields
\begin{equation}\eqal{
&a{d\over dt}\bigg(|\nabla\varphi_x|_2^2+{1\over\nu}|\rot\psi_x|_2^2\bigg)+\mu |\nabla^2\varphi_x|_2^2+\nu|\Delta\varphi_x|_2^2+{\mu\over\nu} |\nabla\rot\psi_x|_2^2\cr
&\le{c\over\nu}[|\eta|_\infty^2D_2^2+(1+|\eta|_\infty^2)|\nabla v|_3^2D_1^2+ |\nabla\eta|_2^2+|\eta|_\infty^2|\nabla\eta|_2^2\cr
&\quad+(1+|\eta|_\infty^2)|f|_2^2].\cr}
\label{3.29}
\end{equation}
Using the interpolation
$$
|\nabla v|_3\le c|\nabla^2v|_2^{1/2}|\nabla v|_2^{1/2}
$$
We obtain from (\ref{3.29}) the inequality
\begin{equation}\eqal{
&a{d\over dt}\bigg(|\nabla\varphi_x|_2^2+{1\over\nu}|\rot\psi_x|_2^2\bigg)+\mu |\nabla^2\varphi_x|_2^2+\nu|\Delta\varphi|_2^2+{\mu\over\nu} |\nabla\rot\psi_x|_2^2\cr
&{c\over\nu}[|\eta|_\infty^2D_2^2+(1+|\eta|_\infty^4)|\nabla v|_2^2D_1^4+|\nabla\eta|_2^2+|\eta|_\infty^2|\nabla\eta|_2^2\cr
&\quad+(1+|\eta|_\infty^2)|f|_2^2]\cr}
\label{3.30}
\end{equation}
Integrating (\ref{3.30}) with respect to time yields
\begin{equation}\eqal{
&a\bigg(|\nabla\varphi_x(t)|_2^2+{1\over\nu}|\rot\psi_x(t)|_2^2\bigg)+\mu |\nabla^2\varphi_x|_{2,2,\Omega^t}^2+\nu|\Delta\varphi_x|_{2,2,\Omega^t}^2\cr
&\quad+{\mu\over\nu}|\nabla\rot\psi_x|_{2,2,\Omega^t}^2\le{c\over\nu} [|\eta|_{\infty,2,\Omega^t}^2D_2^2+(1+|\eta|_{\infty,2,\Omega^t}^4)A_1^2D_1^4\cr
&\quad+|\nabla\eta|_{2,2,\Omega^t}^2+|\eta|_{\infty,\infty,\Omega^t}^2 |\nabla\eta|_{2,2,\Omega^t}^2+(1+|\eta|_{\infty,\infty,\Omega^t}^2) |f|_{2,2,\Omega^t}^2]\cr
&\quad+a\bigg(|\nabla\varphi_x(0)|_2^2+{1\over\nu}|\rot\psi_x(0)|_2^2\bigg).\cr}
\label{3.31}
\end{equation}
Since $|\eta(t)|_{2,1}$ is bounded by a constant dependent of time the r.h.s. of (\ref{3.31}) is an increasing function of time. To obtain such estimate that the r.h.s. of (\ref{3.31}) does not depend on time needs another approach. However, we shallshow such result that $\nu\to\infty$ implies that $t\to\infty$. This concludes the proof.
\end{proof}

\begin{remark}\label{r3.2}
We simplify (\ref{3.29}). Introduce the quantities
\begin{equation}
\phi_3=|\eta|_\infty^2(D_2^2+|\nabla\eta|_2^2)+|\nabla\eta|_2^2,\quad \phi_4=(1+|\eta|_\infty^2)D_1^2.
\label{3.32}
\end{equation}
Then (\ref{3.29}) takes the form
\begin{equation}\eqal{
&a{d\over dt}\bigg(|\nabla\varphi_x|_2^2+{1\over\nu}|\rot\psi_x|_2^2\bigg)+\mu |\nabla^2\varphi_x|_2^2+\nu|\Delta\varphi_x|_2^2+{\mu\over\nu} |\nabla\rot\psi_x|_2^2\cr
&\le{c\over\nu}[\phi_3+\phi_4|\nabla v|_3^2+(1+|\eta|_\infty^2)|f|_2^2].\cr}
\label{3.33}
\end{equation}
Using the interpolation
\begin{equation}
\alpha|\nabla v|_3^2\le\varepsilon^{4/3}(|\nabla^2\varphi_x|_2^2+ |\nabla\rot\psi_x|_2^2)+{c\over\varepsilon^4}\alpha^4(|\nabla\varphi|_2^2+ |\rot\psi|_2^2)
\label{3.34}
\end{equation}
we obtain from (\ref{3.33}) the inequality
\begin{equation}\eqal{
&a{d\over dt}\bigg(|\nabla\varphi_x|_2^2+{1\over\nu}|\rot\psi_x|_2^2\bigg)+\mu |\nabla^2\varphi_x|_2^2+\nu|\Delta\varphi_x|_2^2+{\mu\over\nu} |\nabla\rot\psi_x|_2^2\cr
&\le{c\over\nu}[\phi_3+\phi_4^4A_1^2+(1+|\eta|_\infty^2)|f|_2^2].\cr}
\label{3.35}
\end{equation}
Adding (\ref{3.12}) and (\ref{3.35}) we have
\begin{equation}\eqal{
&a{d\over dt}\bigg(\|\nabla\varphi\|_1^2+{1\over\nu}\|\rot\psi\|_1^2\bigg)+\mu \|\nabla\varphi\|_2^2+\nu\|\nabla\varphi\|_2^2+{\mu\over\nu}\|\rot\psi\|_2^2\cr
&\le{c\over\nu}[\phi_1+\phi_3+\phi_4^4A_1^2+(1+|\eta|_\infty^2)A_1^2 |\Delta\varphi|_3^2+(1+|\eta|_\infty^2)|f|_2^2].\cr}
\label{3.36}
\end{equation}
Using interpolation (\ref{3.34}) in the r.h.s. of (\ref{3.22}) to terms $|\nabla\rot\psi|_3^2$, $|\nabla^2\varphi|_3^2$, $|\Delta\varphi|_3^2$ we obtain from (\ref{3.36}) and (\ref{3.22}) the inequality
\begin{equation}\eqal{
&a{d\over dt}\bigg(|\nabla\varphi|_{1,1}^2+{1\over\nu}|\rot\psi|_{1,1}^2\bigg)+ \mu|\nabla\varphi|_{2,1}^2+\nu|\nabla\varphi|_{2,1}^2+{\mu\over\nu} |\rot\psi|_{2,1}^2\cr
&\le{c\over\nu}[\phi_1+\phi_2(1+|\eta|_{2,1}^2)+\phi_3+\phi_4^4A_1^2+ (1+|\eta|_\infty^2)A_1^2 |\Delta\varphi|_3^2\cr
&\quad+\phi_2^4+D_2^8+\phi_2(|\nabla\rot\psi_t|_2^2+|\nabla\varphi_t|_3^2+ |\Delta\varphi_t|_3^2)\cr
&\quad+|\Delta\varphi|_3^2(|\nabla\rot\psi_t|_2^2+ \|\nabla\varphi_t\|_1^2)+|\nabla v|_2^2 \|\nabla\varphi_t\|_1^2+\|\nabla\varphi_t\|_1^2D_1^2\cr
&\quad+|v_t|_3^2 D_1^2+(1+|\eta|_\infty^2)|f|_2^2+|f_t|_2^2]+c\nu[\phi_2|\Delta\varphi|_3^2 +\|\eta\|_2^2\|\nabla\varphi_t\|_1^2].\cr}
\label{3.37}
\end{equation}
\end{remark}

\begin{lemma}\label{l3.4}
Let the assumptions of Lemma \ref{l3.1} hold. Let Notation \ref{n2.13} be applied. Then
\begin{equation}\eqal{
&{d\over dt}\bigg(|\nabla\varphi_{xt}|_2^2+{1\over\nu}|\rot\psi_{xt}|_2^2\bigg)+ {\nu\over a}|\nabla^2\varphi_x|_2^2+{\nu\over a}|\Delta\varphi_{xt}|_2^2+ {\mu\over\nu a}|\nabla\rot\psi_{xt}|_2^2\cr
&\le{\varepsilon\over\nu}|v_{xxx}|_2^2+\nu\varepsilon_1 |\nabla\varphi_{,xxx}|_2^2\cr
&{c\over\nu\varepsilon}[\phi_8+|\nabla v|_3^2|\nabla\varphi_{,xt}|_2^2+|\nabla v|_3^2|\nabla v_{,t}|_2^2+|f_t|_2^2]\cr
&\quad+{c\nu\over\varepsilon_1}\phi_7(|\eta|_{2,1})(|\nabla\varphi_{,xx}|_2^2+ |\nabla\varphi_{,xt}|_2^2),\cr}
\label{3.38}
\end{equation}
where $\phi_8$ is introduced below (\ref{3.47}) and $\phi_7$ below (\ref{3.45}).
\begin{equation}\eqal{
&|\nabla\varphi_{,xt}(t)|_2^2+{1\over\nu}|\rot\psi_{,xt}(t)|_2^2+{\mu\over a} |\nabla^2\varphi_{,xt}|_{2,2,\Omega^t}^2+{\nu\over a}|\Delta\varphi_{,xt}|_{2,2,\Omega^t}^2\cr
&\quad+{\mu\over\nu a} |\nabla\rot\psi_{,xt}|_{2,2,\Omega^t}^2\le\varepsilon\bigg({1\over\nu}|v_{,xxx}|_{2,2,\Omega^t}^2+ |\nabla\varphi_{,xxx}|_{2,2,\Omega^t}^2\bigg)\cr
&\quad+{c\over\varepsilon\nu}\bigg[\intop_0^t\phi_8(t')dt'+{\chi_0^2\over\nu} |\nabla v|_{3,2,\Omega^t}^2+|\nabla v|_{3,\infty,\Omega^t}^2D_1^2+ |f_t|_{2,\Omega^t}^2\bigg]\cr
&\quad+c\nu|\eta|_{2,1,\infty,\Omega^t}^2(1+|\eta|_{2,1,\infty,\Omega^t}^2) {\Psi^2\over\nu^2}+|\nabla\varphi_{,xt}(0)|_2^2+ {1\over\nu}|\rot\psi_{,xt}(0)|_2^2,\cr}
\label{3.39}
\end{equation}
where $|\phi_8|_{1,(0,t)}$ is defined by(\ref{3.53}).
\end{lemma}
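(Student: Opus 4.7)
The plan is to parallel the derivations of Lemmas \ref{l3.2} and \ref{l3.3}, but applying $\partial_x\partial_t$ to (\ref{3.13}) and testing against $\nabla\varphi_{,xt}$ and then against $\rot\psi_{,xt}$. First I would differentiate (\ref{3.13}) once in $x$ and once in $t$, multiply by $\nabla\varphi_{,xt}$, and integrate over $\Omega$. Using the periodic boundary conditions and the identity $v=\nabla\varphi+\rot\psi+G$ this produces a basic equality
$$\eqal{
&\tfrac12\tfrac{d}{dt}|\nabla\varphi_{,xt}|_2^2+\tfrac{\mu}{a}|\nabla^2\varphi_{,xt}|_2^2+\tfrac{\nu}{a}|\Delta\varphi_{,xt}|_2^2\cr
&\quad=-\intop_\Omega\bigl[\tfrac{a_0}{a+\eta}\nabla\eta\bigr]_{,xt}\cdot\nabla\varphi_{,xt}\,dx-\intop_\Omega(v\cdot\nabla v)_{,xt}\cdot\nabla\varphi_{,xt}\,dx\cr
&\qquad-\tfrac{\mu}{a}\intop_\Omega\bigl[\tfrac{\eta}{a+\eta}\Delta v\bigr]_{,xt}\cdot\nabla\varphi_{,xt}\,dx-\tfrac{\nu}{a}\intop_\Omega\bigl[\tfrac{\eta}{a+\eta}\nabla\Delta\varphi\bigr]_{,xt}\cdot\nabla\varphi_{,xt}\,dx\cr
&\qquad+\intop_\Omega\bigl[\tfrac{1}{a+\eta}(p_\varrho(a)-p_\varrho(a+\eta))\nabla\eta\bigr]_{,xt}\cdot\nabla\varphi_{,xt}\,dx+\intop_\Omega f_{g,xt}\cdot\nabla\varphi_{,xt}\,dx,
\cr}$$
and the same procedure with $\rot\psi_{,xt}$ in place of $\nabla\varphi_{,xt}$ gives the analogous equality with $\tfrac{\mu}{a}|\nabla\rot\psi_{,xt}|_2^2$ on the left-hand side (and no $\nu$-term, since $\rot$ annihilates $\nabla\Delta\varphi$ only after integration by parts).

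Next I would expand each $\partial_x\partial_t$ derivative by the Leibniz rule into products of $\eta$, $\eta_t$, $\eta_x$, $\eta_{xt}$ with $v$, $v_t$, $v_x$, $v_{xt}$, $\nabla\Delta\varphi$, $\Delta\varphi_t$, $\nabla\Delta\varphi_t$, $\Delta\varphi_{,xt}$ and integrate by parts whenever a third spatial derivative of $\varphi$ appears. The $\nu$-term $\tfrac{\nu}{a}[\tfrac{\eta}{a+\eta}\nabla\Delta\varphi]_{,xt}$ will be the main source of trouble: after differentiation it produces $\tfrac{\eta}{a+\eta}\nabla\Delta\varphi_{,xt}$, which, by the same trick used in the proof of Lemma \ref{l3.2}, is split as $\tfrac{\nu}{a}\intop\nabla\Delta\varphi_{,xt}\cdot\nabla\varphi_{,xt}\,dx-\tfrac{\nu}{a}\intop\tfrac{a}{a+\eta}|\Delta\varphi_{,xt}|^2\,dx$ and absorbed into $\tfrac{\nu}{a}|\Delta\varphi_{,xt}|_2^2$ via $|\eta|\le a/2$. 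The remaining terms of the expansion contain at most one factor of the form $\nabla^3\varphi$, $v_{xxx}$, $\Delta\varphi_{,xt}$ or $\nabla^2\varphi_{,xt}$ and are estimated by the Hölder and Young inequalities with parameters $\varepsilon$, $\varepsilon_1$: the pieces with third spatial derivatives of $\varphi$ or $v$ are kept as $\varepsilon\nu^{-1}|v_{,xxx}|_2^2+\nu\varepsilon_1|\nabla\varphi_{,xxx}|_2^2$ (to be absorbed on the left when this lemma is combined with the other bounds), the pieces containing $|\nabla v|_3$, $|\Delta\varphi|_3$, $|\nabla\varphi_{,xt}|_3$ are split by interpolation such as $|\nabla\varphi_{,xt}|_3\le c|\nabla^2\varphi_{,xt}|_2^{1/2}|\nabla\varphi_{,xt}|_2^{1/2}$ so that the highest-order factor is absorbed, and the remaining lower-order pieces are lumped into a single function $\phi_8$ of $|\eta|_{2,1}$, $D_1$, $D_2$, $A_1$, $\Psi/\nu$.

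After dividing the $\rot\psi_{,xt}$-inequality by $\nu$, adding it to the $\nabla\varphi_{,xt}$-inequality, and choosing $\varepsilon$ small enough to absorb the $|\Delta\varphi_{,xt}|_2^2$- and $|\nabla^2\varphi_{,xt}|_2^2$-pieces into the left-hand side, I arrive at the differential inequality (\ref{3.38}). Finally, I would integrate (\ref{3.38}) in $t$ from $0$ to $t$ and, using Lemma \ref{l2.1}, (\ref{2.82}), (\ref{2.84}), (\ref{2.93})--(\ref{2.94}) and the definitions of $\chi_0$, $\Psi$ from Notation \ref{n2.13} to bound the time integrals of $|\nabla v|_{3,2,\Omega^t}^2$, $D_1^2$, $|\eta|_{2,1,\infty,\Omega^t}^2$ etc., obtain (\ref{3.39}).

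The main obstacle is bookkeeping rather than any deep estimate: tracking all the $\partial_x\partial_t$ Leibniz terms coming from the quasilinear coefficient $\eta/(a+\eta)$ in front of $\Delta v$ and $\nabla\Delta\varphi$, and, in particular, arranging the interpolations so that the single top-order factor in each term is genuinely absorbable into $\varepsilon\nu^{-1}|v_{,xxx}|_2^2+\nu\varepsilon_1|\nabla\varphi_{,xxx}|_2^2$ on the left of (\ref{3.38}) without picking up hidden $\nu$-powers; this is what forces the coefficient $\phi_7(|\eta|_{2,1})$ to multiply both $|\nabla\varphi_{,xx}|_2^2$ and $|\nabla\varphi_{,xt}|_2^2$, and what dictates the final combination of $\Psi/\nu$, $\chi_0/\sqrt{\nu}$ and $D_1$, $D_2$, $A_1$ appearing on the right-hand side of (\ref{3.39}).
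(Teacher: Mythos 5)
Your proposal follows essentially the same route as the paper's proof: applying $\partial_x\partial_t$ to (\ref{3.13}), testing against $\nabla\varphi_{,xt}$ and $\rot\psi_{,xt}$ in turn, absorbing the quasilinear $\tfrac{\eta}{a+\eta}|\Delta\varphi_{,xt}|^2$ and $\tfrac{\eta}{a+\eta}|\nabla^2\varphi_{,xt}|^2$ pieces via $|\eta|\le a/2$, using Young and interpolation to isolate the top-order factors $\varepsilon\nu^{-1}|v_{,xxx}|_2^2+\nu\varepsilon_1|\nabla\varphi_{,xxx}|_2^2$, adding $\nu^{-1}\times(\rot\psi$-inequality) to the $\nabla\varphi$-inequality, and integrating in time; this is exactly (\ref{3.40})--(\ref{3.52}) in the paper. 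One minor caveat: your parenthetical remark that the $\rot\psi_{,xt}$ equation has "no $\nu$-term" is slightly misleading --- the $\nu$-\emph{damping} term $\nu|\Delta\varphi_{,xt}|_2^2$ indeed vanishes when tested against $\rot\psi_{,xt}$ (since $\int\nabla\Delta\varphi_{,xt}\cdot\rot\psi_{,xt}=0$), but the $\nu$-\emph{weighted quasilinear} term $\tfrac{\nu}{a}\tfrac{\eta}{a+\eta}\nabla\Delta\varphi$ on the right-hand side of (\ref{3.13}) does survive and must be estimated, which is precisely what produces the $\nu^2$-factors in (\ref{3.43}) and, after dividing by $\nu$, the $c\nu\phi_7$-term in (\ref{3.38}).
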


\begin{proof}
Differentiate (\ref{3.13}) with respect to $x$ and $t$, multiply by $\nabla\varphi_{xt}$ and integrate over $\Omega$. Then we have
\begin{equation}\eqal{
&{1\over2}{d\over dt}|\nabla\varphi_{xt}|_2^2+{\mu\over a} |\nabla^2\varphi_{xt}|_2^2+{\nu\over a}|\Delta\varphi_{xt}|_2^2\cr
&=-\intop_\Omega \bigg({a_0\over a+\eta}\nabla\eta\bigg)_{,xt}\cdot\nabla\varphi_{,xt}dx-{\mu\over a}\intop_\Omega\bigg({\eta\over a+\eta}\Delta v\bigg)_{,xt} \cdot\nabla\varphi_{xt}dx\cr
&\quad-{\nu\over a}\intop_\Omega\bigg({\eta\over a+\eta} \nabla\Delta\varphi\bigg)_{,xt}\cdot\nabla\varphi_{xt}dx-\intop_\Omega(v\cdot\nabla v)_{,xt}\cdot\nabla\varphi_{xt}dx\cr
&\quad+\intop_\Omega \bigg[{1\over a+\eta}(p_\varrho(a)-p_\varrho(a+\eta))\nabla\eta\bigg]_{,xt}\cdot \nabla\varphi_{xt}dx\cr
&\quad+\intop_\Omega f_{,xt}\cdot\nabla\varphi_{,xt}dx.\cr}
\label{3.40}
\end{equation}
Now, we examine the particular terms from the r.h.s. of (\ref{3.40}). Integrating by parts in the first term we get
$$
I_1=\intop_\Omega\bigg[\bigg({a_0\over a+\eta}\bigg)_{,\eta}\eta_t\nabla\eta+ {a_0\over a+\eta}\nabla\eta_t\bigg]\cdot\nabla\varphi_{,xxt}dx.
$$
Hence, we have
$$
|I_1|\le\varepsilon|\nabla\varphi_{,xxt}|_2^2+c/\varepsilon(|\eta_t|_4^2 |\nabla\eta|_4^2+|\nabla\eta_t|_2^2).
$$
We express the second term on the r.h.s. of (\ref{3.40}) in the form
$$\eqal{
I_2&=-{\mu\over a}\intop_\Omega\bigg[\bigg({\eta\over a+\eta}\bigg)_{,\eta} \eta_t\Delta v\bigg]_{,x}\cdot\nabla\varphi_{,xt}dx\cr
&\quad-{\mu\over a}\intop_\Omega \bigg[{\eta\over a+\eta}\Delta v_{,t}\bigg]_{.x}\cdot\nabla\varphi_{,xt}dx\equiv I_{21}+I_{22}.\cr}
$$
Integrating by parts in $I_{21}$ yields
$$
I_{21}={\mu\over a}\intop_\Omega\bigg({\eta\over a+\eta}\bigg)_{,\eta}\eta_t \Delta v\cdot\nabla\varphi_{,xxt}dx
$$
and
$$
|I_{21}|\le\varepsilon|\nabla\varphi_{,xxt}|_2^2+c/\varepsilon|\eta_t|_6^2 |\Delta v|_3^2.
$$
Performing differentiation in $I_{22}$ gives
$$\eqal{
I_{22}&=-{\mu\over a}\intop_\Omega\bigg({\eta\over a+\eta}\bigg)_{,\eta}\eta_x \Delta v_{,t}\nabla\varphi_{,xt}dx\cr
&\quad-{\mu\over a}\intop_\Omega{\eta\over a+\eta} \Delta v_{tx}\cdot\nabla\varphi_{,xt}dx\equiv I_{221}+I_{222}.\cr}
$$
Integrating by parts in $I_{221}$ implies
$$\eqal{
I_{221}&={\mu\over a}\intop_\Omega\bigg({\eta\over a+\eta}\bigg)_{,\eta\eta} \nabla\eta\eta_x\cdot\nabla v_t\cdot\nabla\varphi_{,xt}dx\cr
&\quad+{\mu\over a}\intop_\Omega\bigg({\eta\over a+\eta}\bigg)_{,\eta}\nabla \eta_x\nabla v_t\cdot\nabla\varphi_{,xt}dx\cr
&\quad+{\mu\over a}\intop_\Omega\bigg({\eta\over a+\eta}\bigg)_{,\eta}\eta_{,x} \nabla v_t\cdot\nabla^2\varphi_{,xt}dx\equiv I_{221}^1+I_{221}^2+I_{221}^3,\cr}
$$
where
$$\eqal{
&|I_{221}^1|\le\varepsilon|\nabla\varphi_{,xt}|_6^2+c/\varepsilon|\eta_x|_6^4 |v_{,xt}|_2^2,\cr
&|I_{221}^2|\le\varepsilon|\nabla\varphi_{,xt}|_6^2+c/\varepsilon|\eta_{xx}|_2^2 |v_{,xt}|_3^2,\cr
&|I_{221}^3|\le\varepsilon|\nabla^2\varphi_{,xt}|_2^2+c/\varepsilon|\eta_x|_6^2 |v_{,xt}|_3^2.\cr}
$$
Next we examine $I_{222}$. We express it in the form
$$\eqal{
I_{222}&=-{\mu\over a}\intop_\Omega{\eta\over a+\eta}\Delta\rot\psi_{,xt}\cdot \nabla\varphi_{,xt}dx\cr
&\quad-{\mu\over a}\intop_\Omega{\eta\over a+\eta}\Delta\nabla\varphi_{,xt}\cdot \nabla\varphi_{,xt}dx\equiv J_1+J_2,\cr}
$$
where
$$\eqal{
J_1&={\mu\over a}\intop_\Omega\bigg({\eta\over a+\eta}\bigg)_{,\eta}\nabla\eta \nabla\rot\psi_{,xt}\cdot\nabla\varphi_{,xt}dx\cr
&\quad+{\mu\over a}\intop_\Omega{\eta\over a+\eta}\nabla\rot\psi_{,xt} \nabla^2\varphi_{,xt}dx\cr
&\equiv J_{11}+J_{12}.\cr}
$$
and
$$\eqal{
J_{12}&={\mu\over a}\intop_\Omega{\eta\over a+\eta}\nabla_j(\rot\psi)_{i,xt} \nabla_j\nabla_i\varphi_{,xt}dx\cr
&=-{\mu\over a}\intop_\Omega\bigg({\eta\over a+\eta}\bigg)_{,\eta}\nabla_i\eta \nabla_j(\rot\psi)_{i,xt}\nabla_j\varphi_{,xt}dx.\cr}
$$
Hence,
$$\eqal{
|J_{11}|&\le\varepsilon_1|\nabla\rot\psi_{,xt}|_2^2+c/\varepsilon_1 |\nabla\eta|_6^2|\nabla\varphi_{,xt}|_3^2\cr
&\le\varepsilon_1|\nabla\rot\psi_{xt}|_2^2+c/\varepsilon_1|\nabla\eta|_6^2 |\varphi_{,xxxt}|_2|\varphi_{,xt}|_2\cr
&\le\varepsilon_1|\nabla\rot\psi_{xt}|_2^2+\varepsilon|\varphi_{,xxxt}|_2^2+ c/\varepsilon_1^2\varepsilon|\nabla\eta|_6^4|\varphi_{,xt}|_2^2\cr}
$$
and
$$\eqal{
|J_{12}|&\le\varepsilon_1|\nabla\rot\psi_{,xt}|_2^2+c/\varepsilon_1 |\nabla\eta|_6^2|\nabla\varphi_{,xt}|_3^2\cr
&\le\varepsilon_1|\nabla\rot\psi_{,xt}|_2^2+c/\varepsilon_1|\nabla\eta|_6^2 |\nabla\varphi_{,xxt}|_2|\nabla\varphi_{,xt}|_2\cr
&\le\varepsilon_1|\nabla\rot\psi_{,xt}|_2^2+\varepsilon |\nabla\varphi_{,xxt}|_2^2+c/\varepsilon_1^2\varepsilon|\nabla\eta|_6^4 |\nabla\varphi_{,xt}|_2^2.\cr}
$$
Summarizing,
$$
|J_1|\le\varepsilon_1|\nabla\rot\psi_{,xt}|_2^2+\varepsilon|\nabla\varphi_{,xxt}|_2^2 +c/\varepsilon_1^2\varepsilon\|\eta\|_2^4|\nabla\varphi_{,xt}|_2^2.
$$
Finally, we examine $J_2$. Integrating by parts yields
$$\eqal{
J_2&={\mu\over a}\intop_\Omega\bigg({\eta\over a+\eta}\bigg)_{,\eta}\nabla\eta \nabla\nabla\varphi_{,xt}\cdot\nabla\varphi_{xt}dx\cr
&\quad+{\mu\over a}\intop_\Omega{\eta\over a+\eta}|\nabla^2\varphi_{,xt}|^2dx \equiv J_{21}+J_{22},\cr}
$$
where
$$\eqal{
|J_{21}|&\le\varepsilon|\nabla^2\varphi_{,xt}|_2^2+c/\varepsilon|\nabla\eta|_6^2 |\nabla\varphi_{,xt}|_3^2\cr
&\le\varepsilon|\nabla^2\varphi_{,xt}|_2^2+c/\varepsilon|\nabla\eta|_6^2 |\nabla\varphi_{,xxt}|_2|\nabla\varphi_{,xt}|_2\cr
&\le\varepsilon|\nabla^2\varphi_{,xt}|_2^2+c/\varepsilon^3|\nabla\eta|_6^4 |\nabla\varphi_{xt}|_2^2.\cr}
$$
Summarizing the estimates we have
$$\eqal{
|I_2|&\le\varepsilon_1(|\nabla\rot\psi_{,xt}|_2^2+\varepsilon \|\nabla\varphi_{,xt}\|_1^2)+c/\varepsilon[\|\eta_{,t}\|_1^2|\Delta v|_3^2\cr
&\quad+\|\eta\|_2^4|v_{,xt}|_2^2+\|\eta\|_2^2|v_{,xt}|_3^2]+ (1/\varepsilon_1^2\varepsilon+1/\varepsilon^3) \|\eta\|_2^4|\nabla\varphi_{,xt}|_2^2\cr
&\quad+{\mu\over a}\intop_\Omega{\eta\over a+\eta}|\nabla^2\varphi_{,xt}|^2dx.\cr}
$$
The term $J_{22}$ is absorbed by the second term on the l.h.s. of (\ref{3.40}) in the following way
$$\eqal{
&{\mu\over a}\intop_\Omega|\nabla^2\varphi_{,xt}|^2dx-{\mu\over a}\intop_\Omega {\eta\over a+\eta}|\nabla^2\varphi_{,xt}|^2dx={\mu\over a}\intop_\Omega{a\over a+\eta}|\nabla^2\varphi_{,xt}|^2dx\cr
&\ge{2\over3}{\mu\over a}|\nabla^2\varphi_{,xt}|_2^2,\cr}
$$
where $|\eta|\le a/2$. Now we estimate the third term on the r.h.s. of (\ref{3.40}). Performing differentiation with respect to time yields
$$\eqal{
I_3&=-{\nu\over a}\intop_\Omega\bigg[\bigg({\eta\over a+\eta}\bigg)_{,\eta} \eta_t\nabla\Delta\varphi\bigg]_{,x}\cdot\nabla\varphi_{,xt}dx\cr
&\quad-{\nu\over a}\intop_\Omega\bigg[{\eta\over a+\eta}\nabla\Delta \varphi_{,t}\bigg]_{,x}\nabla\varphi_{,xt}dx\equiv I_{31}+I_{32}.\cr}
$$
Integrating by parts in $I_{31}$ yields
$$
I_{31}={\nu\over a}\intop_\Omega\bigg({\eta\over a+\eta}\bigg)_{,\eta}\eta_t \Delta\nabla\varphi\cdot\nabla\varphi_{,xxt}dx
$$
so
$$\eqal{
|I_{31}|&\le\nu\varepsilon|\nabla\varphi_{,xxt}|_2^2+{c\nu\over\varepsilon} |\eta_t|_6^2|\Delta\nabla\varphi|_3^2\cr
&\le\nu\varepsilon|\nabla\varphi_{,xxt}|_2^2+{c\nu\over\varepsilon}|\eta_t|_6^2 |\Delta\nabla\varphi_x|_2|\Delta\nabla\varphi|_2\cr
&\le\nu\varepsilon|\nabla\varphi_{,xxt}|_2^2+\nu\varepsilon_2 |\nabla\varphi_{,xxx}|_2^2+{c\nu\over\varepsilon^2\varepsilon_2}|\eta_t|_6^4 |\Delta\nabla\varphi|_2^2.\cr}
$$
Performing differentiation with respect to $x$ in $I_{32}$ we have
$$\eqal{
I_{32}&=-{\nu\over a}\intop_\Omega\bigg({\eta\over a+\eta}\bigg)_{,\eta}\eta_x \nabla\Delta\varphi_{,t}\cdot\nabla\varphi_{,xt}dx\cr
&\quad-{\nu\over a}\intop_\Omega{\eta\over a+\eta}\nabla\Delta\varphi_{xt}\cdot \nabla\varphi_{,xt}dx\equiv L_1+L_2,\cr}
$$
where
$$\eqal{
|L_1|&\le\nu\varepsilon|\Delta\nabla\varphi_{,t}|_2^2+{c\nu\over\varepsilon} |\eta_{,x}|_6^2|\nabla\varphi_{,xt}|_3^2\cr
&\le\nu\varepsilon|\nabla\varphi_{,xxt}|_2^2+{c\nu\over\varepsilon} |\eta_{,x}|_6^2|\nabla\varphi_{,xxt}|_2|\nabla\varphi_{,xt}|_2\cr
&\le\nu\varepsilon|\nabla\varphi_{,xxt}|_2^2+{c\nu\over\varepsilon^3} |\eta_{,x}|_6^4|\nabla\varphi_{,xt}|_2^2.\cr}
$$
Integrating by parts in $L_2$ yields
$$\eqal{
L_2&={\nu\over a}\intop_\Omega\bigg({\eta\over a+\eta}\bigg)_{,\eta}\nabla\eta \Delta\varphi_{,xt}\cdot\nabla\varphi_{,xt}dx\cr
&\quad+{\nu\over a}\intop_\Omega{\eta\over a+\eta}|\Delta\varphi_{,xt}|^2dx \equiv L_2^1+L_2^2,\cr}
$$
where
$$\eqal{
|L_2^1|&\le\nu\varepsilon|\nabla\varphi_{,xxt}|_2^2+{\nu c\over\varepsilon}|\nabla\eta|_6^2|\nabla\varphi_{,xt}|_3^2\cr
&\le\nu\varepsilon|\nabla\varphi_{,xxt}|_2^2+{\nu c\over\varepsilon} 
|\nabla\eta|_6^2|\nabla\varphi_{,xxt}|_2|\nabla\varphi_{,xt}|_2\cr
&\le\nu\varepsilon|\nabla\varphi_{,xxt}|_2^2+{\nu c\over\varepsilon^3} |\nabla\eta|_6^4|\nabla\varphi_{,xt}|_2^2\cr}
$$
and $L_2^2$ is absorbed by the third term on the r.h.s. of (\ref{3.40}) in the following way
$$\eqal{
&{\nu\over a}\intop_\Omega|\Delta\varphi_{,xt}|^2dx-{\nu\over a}\intop_\Omega {\eta\over a+\eta}|\Delta\varphi_{,xt}|^2dx={\nu\over a}\intop_\Omega |\Delta\varphi_{,xt}|^2dx\cr
&\ge{2\over3}{\nu\over a}|\Delta\varphi_{,xt}|_2^2\quad {\rm for}\ \ |\eta|_\infty\le a/2.\cr}
$$
Summarizing, the third term on the r.h.s. of (\ref{3.40}) is estimated in the following way
$$\eqal{
|I_3|&\le\nu\varepsilon|\nabla\varphi_{,xxt}|_2^2+\nu\varepsilon_2 |\nabla\varphi_{,xxx}|_2^2+{c\nu\over\varepsilon^2\varepsilon_2}\|\eta_t\|_1^4 |\nabla\varphi_{,xx}|_2^2+{c\nu\over\varepsilon^3}\|\eta\|_2^4 |\nabla\varphi_{xt}|_2^2\cr
&\quad+{\nu\over a}\intop_\Omega{\eta\over a+\eta}|\Delta\varphi_{,xt}|^2dx.\cr}
$$
Consider the fourth term on the r.h.s. of (\ref{3.40}). Performing differentiations it takes the form
$$\eqal{
I_4&=\intop_\Omega v_{,xt}\cdot\nabla v\cdot\nabla\varphi_{,xt}dx+\intop_\Omega v_{,t}\cdot\nabla v_{,x}\cdot\nabla\varphi_{,xt}dx+\intop_\Omega v_{,x}\cdot\nabla v_{,t}\cdot\nabla\varphi_{,xt}dx\cr
&\quad+\intop_\Omega v\cdot\nabla v_{,xt}\cdot\nabla\varphi_{,xt}dx\equiv \sum_{i=1}^4 I_{4i}.\cr}
$$
First we consider
$$\eqal{
I_{44}&=\intop_\Omega v\cdot\!\nabla v_{xt}\cdot\!\nabla\varphi_{xt}dx=\intop_\Omega v\cdot\!\nabla\nabla\varphi_{xt}\cdot\!\nabla\varphi_{xt}dx+\intop_\Omega v\cdot\!\nabla\rot\psi_{xt}\cdot\!\nabla\varphi_{xt}dx\cr
&\equiv I_{44}^1+I_{44}^2,\cr}
$$
where
$$
I_{44}^1={1\over2}\intop_\Omega v\cdot\nabla|\nabla\varphi_{xt}|^2dx=-{1\over2} \intop_\Omega\Delta\varphi|\nabla\varphi_{xt}|^2dx,
$$
so
$$
|I_{44}^1|\le\varepsilon|\nabla\varphi_{xt}|_6^2+c/\varepsilon |\Delta\varphi|_3^2|\nabla\varphi_{xt}|_2^2.
$$
We integrate by parts in $I_{44}^2$. Then we have
$$\eqal{
I_{44}^2&=\intop_\Omega v_i\nabla_i(\rot\psi)_{jxt}\nabla_j\varphi_{xt}dx= -\intop_\Omega\nabla_jv_i\nabla_i(\rot\psi)_{jxt}\varphi_{xt}dx\cr
&=\intop_\Omega\nabla_j\Delta\varphi(\rot\psi)_{jxt}\varphi_{xt}dx+\intop_\Omega \nabla_jv_i(\rot\psi)_{jxt}\nabla_i\varphi_{xt}dx\cr
&\equiv K_1+K_2,\cr}
$$
where
$$
|K_1|\le\varepsilon|\varphi_{xt}|_\infty^2+c/\varepsilon|\Delta\nabla\varphi|_2^2 |\rot\psi_{xt}|_2^2
$$
and
$$\eqal{
|K_2|&\le\varepsilon|\nabla\varphi_{xt}|_6^2+c/\varepsilon|\nabla v|_3^2|\rot\psi_{xt}|_2^2\cr
&\le\varepsilon|\nabla\varphi_{xt}|_6^2+c/\varepsilon|\nabla v|_3^2(|v_{xt}|_2^2+|\nabla\varphi_{xt}|_2^2).\cr}
$$
Next, we examine
$$
I_{41}=-\intop_\Omega\Delta\varphi_{xt}v\cdot\nabla\varphi_{,xt}dx-\intop_\Omega v_{xt}v\cdot\nabla^2\varphi_{xt}dx\equiv I_{41}^1+I_{41}^2,
$$
where
$$\eqal{
&|I_{41}^1|\le\varepsilon|\Delta\varphi_{,xt}|_2^2+c/\varepsilon |\nabla\varphi_{,xt}|_3^2|v|_6^2\le\varepsilon|\nabla\varphi_{,xxt}|_2^2+ c/\varepsilon^3|v|_6^4|\nabla\varphi_{,xt}|_2^2,\cr
&|I_{41}^2|\le\varepsilon|\nabla^2\varphi_{,xt}|_2^2+c/\varepsilon |v_{,xt}|_3^2|v|_6^2.\cr}
$$
The second term in $I_4$ is bounded by
$$
|I_{42}|\le\varepsilon|\nabla\varphi_{,xt}|_6^2+c/\varepsilon|v_t|_2^2 |v_{xx}|_3^2.
$$
Finally, we have the estimate
$$
|I_{43}|\le\varepsilon|\nabla\varphi_{,xt}|_6^2+c/\varepsilon|v_x|_3^2|\nabla v_t|_2^2.
$$
Summarizing, we have the estimate
$$\eqal{
|I_4|&\le\varepsilon\|\nabla\varphi_{,t}\|_2^2+c/\varepsilon[|\Delta\varphi|_3^2 |\nabla\varphi_{,xt}|_2^2+|v|_6^4|\nabla\varphi_{,xt}|_2^2+|v|_6^2|v_{,xt}|_3^2\cr
&\quad+|v_{,t}|_2^2|v_{,xx}|_3^2+|\nabla v|_3^2(|v_{,xt}|_2^2+ |\nabla\varphi_{,xt}|_2^2)+|\nabla\varphi_{,xx}|_2^2|\rot\psi_{,xt}|_2^2].\cr}
$$
The fifth term on the r.h.s. of (\ref{3.40}) is expressed in the form
$$\eqal{
I_5&=-\intop_\Omega\bigg({1\over a+\eta}\bigg)_{,\eta}\eta_t(p_\varrho(a)- p_\varrho(a+\eta))\nabla\eta\cdot\nabla\varphi_{,xxt}dx\cr
&\quad+\intop_\Omega{1\over a+\eta}p_{\varrho\varrho}\eta_t\nabla\eta\cdot \nabla\varphi_{,xxt}dx\cr
&\quad+\intop_\Omega{1\over a+\eta}(p_\varrho(a)-p_\varrho(a+\eta))\nabla\eta_t \cdot\nabla\varphi_{,xxt}dx\equiv I_{51}+I_{52}+I_{53}.\cr}
$$
Hence, we have
$$\eqal{
&|I_{51}|\le\varepsilon|\nabla\varphi_{,xxt}|_2^2+c/\varepsilon|\eta_t|_6^2 |\eta|_6^2|\nabla\eta|_6^2,\cr
&|I_{52}|\le\varepsilon|\nabla\varphi_{,xxt}|_2^2+c/\varepsilon|\eta_t|_6^2 |\nabla\eta|_3^2,\cr
&|I_{53}|\le\varepsilon|\nabla\varphi_{,xxt}|_2^2+c/\varepsilon|\eta|_\infty^2 |\nabla\eta_t|_2^2.\cr}
$$
Finally, the last term on the r.h.s. of (\ref{3.40}) is bounded by
$$
\varepsilon|\nabla\varphi_{,xxt}|_2^2+c/\varepsilon|f_t|_2^2.
$$
Employing the above estimates in (\ref{3.40}) yields
\begin{equation}\eqal{
&{d\over dt}|\nabla\varphi_{,xt}|_2^2+{\mu\over a}|\nabla^2\varphi_{,xt}|_2^2+{\nu\over a}|\Delta\varphi_{,xt}|_2^2\le\varepsilon_1|\nabla\rot\psi_{,xt}|_2^2\cr
&\quad+\nu\varepsilon_2|\nabla\varphi_{,xxx}|_2^2+{c\over\nu}[\|\eta_{,t}\|_1^2 \|\eta\|_2^4+\|\eta_{,t}\|_1^2\|\eta\|_2^2+\|\eta_{,t}\|_1^2+ \|\eta_t\|_1^2|\Delta\varphi|_3^2\cr
&\quad+\|\eta\|_2^4|v_{,xt}|_2^2+(|\Delta\varphi|_3^2+|v|_6^4+\|\eta\|_2^4+ |v_{,x}|_3^3)|\nabla\varphi_{,xt}|_2^2\cr
&\quad+|v|_6^2|v_{,xt}|_3^2+|v_{,t}|_2^2|v_{,xx}|_3^2+|\nabla\varphi_{,xx}|_2^2 |\rot\psi_{,xt}|_2^2\cr
&\quad+|v_{,x}|_3^2|\nabla v_{,t}|_2^2+|f_{,t}|_2^2]+{c\over\varepsilon_1^2\nu} \|\eta\|_2^4|\nabla\varphi_{,xt}|_2^2\cr
&\quad+{\nu c\over\varepsilon_2}\|\eta_{,t}\|_1^4|\nabla\varphi_{,xx}|_2^2.\cr}
\label{3.41}
\end{equation}
Differentiate (\ref{3.13}) with respect to $x$ and $t$, multiply by $\rot\psi_{,xt}$ and integrate over $\Omega$. Then we have
\begin{equation}\eqal{
&{1\over2}{d\over dt}|\rot\psi_{,xt}|_2^2+{\mu\over a}|\nabla\rot\psi_{,xt}|_2^2 =-{\mu\over a}\intop_\Omega\bigg({\eta\over a+\eta}\Delta v\bigg)_{,xt}\cdot \rot\psi_{xt}dx\cr
&\quad-{\nu\over a}\intop_\Omega\bigg({\eta\over a+\eta}\Delta\nabla \varphi\bigg)_{,xt}\cdot\rot\psi_{xt}dx-\intop_\Omega(v\cdot\nabla v)_{,xt}\cdot \rot\psi_{xt}dx\cr
&\quad+\intop_\Omega f_{,xt}\cdot\rot\psi_{,xt}dx.\cr}
\label{3.42}
\end{equation}
Now we estimate the terms from the r.h.s. of (\ref{3.42}). We express the first term in the form
$$\eqal{
I_1&=-{\mu\over a}\intop_\Omega\bigg[\bigg({\eta\over a+\eta}\bigg)_{,\eta}\eta_t \Delta v\bigg]_{,x}\rot\psi_{,xt}dx\cr
&\quad-{\mu\over a}\intop_\Omega\bigg[{\eta\over a+\eta}\Delta v_{,t}\bigg]_{,x} \cdot\rot\psi_{,xt}dx\equiv I_{11}+I_{12}.\cr}
$$
Integrating by parts in $I_{11}$ yields
$$
I_{11}={\mu\over a}\intop_\Omega\bigg({\eta\over a+\eta}\bigg)_{,\eta}\eta_t \Delta v\cdot\rot\psi_{,xxt}dx
$$
and
$$\eqal{
|I_{11}|&\le\varepsilon|\rot\psi_{,xxt}|_2^2+c/\varepsilon|\eta_t|_6^2|\Delta v|_3^2\le\varepsilon|\rot\psi_{,xxt}|_2^2+c/\varepsilon|\eta_t|_6^2 |v_{,xxx}|_2^{5/3}|v|_2^{1/3}\cr
&\le\varepsilon|\rot\psi_{,xxt}|_2^2+\varepsilon_1|v_{,xxx}|_2^2+ c/(\varepsilon\varepsilon_1)|\eta_t|_6^{12}A_1^2.\cr}
$$
Performing differentiation with respect to $x$ in $I_{12}$ implies
$$\eqal{
I_{12}&=-{\mu\over a}\intop_\Omega\bigg({\eta\over a+\eta}\bigg)_{,\eta}\eta_x \Delta v_t\cdot\rot\psi_{,xt}dx\cr
&\quad-{\mu\over a}\intop_\Omega{\eta\over a+\eta}\Delta v_{,xt}\cdot\rot\psi_{,xt}dx\equiv I_{12}^1+I_{12}^2,\cr}
$$
where
$$\eqal{
|I_{12}^1|&\le\varepsilon|v_{,xxt}|_2^2+c/\varepsilon|\eta_{,x}|_6^2 |\rot\psi_{,xt}|_3^2\cr
&\le\varepsilon|v_{,xxt}|_2^2+c/\varepsilon\|\eta\|_2^2|\rot\psi_{,xxt}|_2 |\rot\psi_{,xt}|_2\cr
&\le\varepsilon(v_{,xxt}|_2^2+|\rot\psi_{,xxt}|_2^2)+c/\varepsilon\|\eta\|_2^4 |\rot\psi_{xt}|_2^2.\cr}
$$
To examine $I_{12}^2$ we write it in the form
$$\eqal{
I_{12}^2&=-{\mu\over a}\intop_\Omega{\eta\over a+\eta}\Delta\rot\psi_{,xt}\cdot \rot\psi_{,xt}dx\cr
&\quad-{\mu\over a}\intop_\Omega{\eta\over a+\eta}\Delta\nabla\varphi_{,xt}\cdot \rot\psi_{,xt}dx\equiv J_1+J_2.\cr}
$$
Integrating by parts in $J_1$ yields
$$\eqal{
J_1&={\mu\over a}\intop_\Omega\bigg({\eta\over a+\eta}\bigg)_{,\eta} \nabla\eta\nabla\rot\psi_{,xt}\cdot\rot\psi_{,xt}dx\cr
&\quad+{\mu\over a}\intop_\Omega{\eta\over a+\eta}|\nabla\rot\psi_{,xt}|^2dx \equiv J_{11}+J_{12},\cr}
$$
where
$$\eqal{
|J_{11}|&\le\varepsilon|\nabla\rot\psi_{,xt}|_2^2+c/\varepsilon|\nabla\eta|_6^2 |\rot\psi_{,xt}|_3^2\cr
&\le\varepsilon|\nabla\rot\psi_{,xt}|_2^2+c/\varepsilon\|\eta\|_2^2 |\rot\psi_{,xxt}|_2|\rot\psi_{,xt}|_2\cr
&\le\varepsilon|\nabla\rot\psi_{,xt}|_2^2+c/\varepsilon\|\eta\|_2^4 |\rot\psi_{,xt}|_2^2\cr}
$$
and $J_{12}$ is absorbed by the second term on the l.h.s. of (\ref{3.42}) in the following way
$$\eqal{
&{\mu\over a}\intop_\Omega|\nabla\rot\psi_{,xt}|^2dx-{\mu\over a}\intop_\Omega {\eta\over a+\eta}|\nabla\rot\psi_{,xt}|^2dx\cr
&={\mu\over a}\intop_\Omega{a\over a+\eta}|\nabla\rot\psi_{,xt}|^2dx\ge{2\over3}{\mu\over a}|\nabla\rot\psi_{,xt}|_2^2,\cr}
$$
where the last inequality holds for $|\eta|_\infty\le a/2$.

\noindent
Integrating by parts in $J_2$ yields
$$
J_2={\mu\over a}\intop_\Omega\bigg({\eta\over a+\eta}\bigg)_{,\eta}\nabla\eta \cdot\rot\psi_{,xt}\Delta\varphi_{,xt}dx.
$$
Hence,
$$\eqal{
|J_2|&\le\varepsilon|\Delta\varphi_{,xt}|_2^2+c/\varepsilon|\nabla\eta|_6^2 |\rot\psi_{,xt}|_3^2\cr
&\le\varepsilon(|\Delta\varphi_{,xt}|_2^2+|\nabla\rot\psi_{,xt}|_2^2)+ c/\varepsilon\|\eta\|_2^4|\rot\psi_{,xt}|_2^2.\cr}
$$
Next, we examine the second term on the r.h.s. of (\ref{3.42}). We express it in the form
$$\eqal{
I_2&=-{\nu\over a}\intop_\Omega\bigg[\bigg({\eta\over a+\eta}\bigg)_{,\eta} \eta_t\nabla\Delta\varphi\bigg]_{,x}\cdot\rot\psi_{,xt}dx\cr
&\quad-{\nu\over a}\intop_\Omega\bigg[{\eta\over a+\eta}\nabla\Delta \varphi_{,t}\bigg]_{,x}\rot\psi_{,xt}dx\equiv I_{21}+I_{22}.\cr}
$$
Integrating by parts in $I_{21}$ implies the estimate
$$\eqal{
|I_{21}|&\le\varepsilon|\rot\psi_{,xxt}|_2^2+c\nu^2/\varepsilon|\eta_t|_6^2 |\Delta\nabla\varphi|_3^2\cr
&\le\varepsilon|\rot\psi_{,xxt}|_2^2+{c\nu^2\over\varepsilon}\varepsilon_1 |\nabla\varphi_{,xxx}|_2^2+{c\nu^2\over\varepsilon\varepsilon_1}\|\eta_t\|_1^4 |\nabla\varphi_{xx}|_2^2.\cr}
$$
Performing differentiation in $I_{22}$ yields
$$\eqal{
I_{22}&=-{\nu\over a}\intop_\Omega\bigg({\eta\over a+\eta}\bigg)_{,\eta}\eta_x \Delta\nabla\varphi_{,t}\cdot\rot\psi_{,xt}dx\cr
&\quad-{\nu\over a}\intop_\Omega{\eta\over a+\eta}\Delta\nabla\varphi_{,tx}\cdot \rot\psi_{,xt}dx\equiv L_1+L_2,\cr}
$$
where
$$\eqal{
|L_1|&\le\nu^2\varepsilon_1|\Delta\nabla\varphi_{,t}|_2^2+{c\over\varepsilon_1} |\eta_{,x}|_6^2|\rot\psi_{,xt}|_3^2\cr
&\le\nu^2\varepsilon_1|\Delta\nabla\varphi_{,t}|_2^2+c/\varepsilon_1\|\eta\|_2^2 |\rot\psi_{,xxt}|_2|\rot\psi_{,xt}|_2\cr
&\le\nu^2\varepsilon_1|\Delta\nabla\varphi_{,t}|_2^2+\varepsilon |\rot\psi_{,xxt}|_2^2+c/(\varepsilon\varepsilon_1)\|\eta\|_2^4 |\rot\psi_{,xt}|_2^2.\cr}
$$
Integrating by parts in $L_2$ gives
$$
L_2={\nu\over a}\intop_\Omega\bigg({\eta\over a+\eta}\bigg)_{,\eta}\nabla\eta \cdot\rot\psi_{,xt}\Delta\varphi_{,xt}dx.
$$
Hence,
$$\eqal{
|L_2|&\le\nu^2\varepsilon_1|\Delta\varphi_{,xt}|_2^2+c/\varepsilon_1 |\nabla\eta|_6^2|\rot\psi_{,xt}|_3^2\cr
&\le\nu^2\varepsilon_1|\Delta\varphi_{,xt}|_2^2+\varepsilon |\rot\psi_{,xxt}|_2^2+c/(\varepsilon\varepsilon_1)\|\eta\|_2^4 |\rot\psi_{,xt}|_2^2.\cr}
$$
Now we consider the third term on the r.h.s. of (\ref{3.42}). Performing differentiations we write it in the form
$$\eqal{
I_3&=\intop_\Omega v_{,xt}\cdot\nabla v\cdot\rot\psi_{,xt}dx+\intop_\Omega v_{,x}\cdot\nabla v_{,t}\cdot\rot\psi_{,xt}dx\cr
&\quad+\intop_\Omega v_{,t}\cdot\nabla v_{,x}\cdot\rot\psi_{,xt}dx+\intop_\Omega v\cdot\nabla v_{,xt}\cdot\rot\psi_{,xt}dx\cr
&\equiv I_{31}+I_{32}+I_{33}+I_{34}.\cr}
$$
Consider the sum
$$\eqal{
I_{31}+I_{33}&=-\intop_\Omega v_{,t}\cdot\nabla v_{,x}\cdot\rot\psi_{,xt}dx-\intop_\Omega v_{,t}\cdot\nabla v\cdot\rot\psi_{,xxt}dx\cr
&\quad+\intop_\Omega v_{,t}\cdot\nabla v_{,x}\cdot\rot\psi_{,xt}dx=-\intop_\Omega v_{,t}\cdot\nabla v\cdot\rot\psi_{,xxt}dx\equiv I_0,\cr}
$$
where
$$\eqal{
|I_0|&\le\varepsilon|\rot\psi_{,xxt}|_2^2+c/\varepsilon|v_{,x}|_\infty^2 |v_{,t}|_2^2\le\varepsilon|\rot\psi_{,xxt}|_2^2+c/\varepsilon |v_{,x}|_\infty^2D_2^2\cr
&\le\varepsilon|\rot\psi_{,xxt}|_2^2+c/\varepsilon|v_{,xxx}|_2^{5/3} |v|_2^{1/3}D_2^2\cr
&\le\varepsilon|\rot\psi_{,xxt}|_2^2+\varepsilon_1|v_{,xxx}|_2^2+ c/(\varepsilon\varepsilon_1)|v|_2^2D_2^{12}\cr
&\le\varepsilon|\rot\psi_{,xxt}|_2^2+\varepsilon_1|v_{,xxx}|_2^2+ c/(\varepsilon\varepsilon_1)A_1^2D_2^{12}.\cr}
$$
Next, we consider
$$\eqal{
I_{32}+I_{34}&=-\intop_\Omega v\cdot\nabla v_{,xt}\cdot\rot\psi_{,xt}dx-\intop_\Omega v\cdot\nabla v_{,t}\cdot\rot\psi_{,xxt}dx\cr
&\quad+\intop_\Omega v\cdot\nabla v_{,xt}\cdot\rot\psi_{,xt}dx=-\intop_\Omega v\cdot\nabla v_{,t}\cdot\rot\psi_{,xxt}dx\equiv I_\infty.\cr}
$$
Hence, we have
$$\eqal{
|I_\infty|&\le\varepsilon|\rot\psi_{,xxt}|_2^2+c/\varepsilon|v_{,xt}|_3^2 |v|_6^2\le\varepsilon|\rot\psi_{,xxt}|_2^2+c/\varepsilon|v_{,xt}|_3^2D_1^2\cr
&\le\varepsilon|\rot\psi_{,xxt}|_2^2+c/\varepsilon|v_{xxt}|_2^{3/2}|v_t|_2^{1/2} D_1^2\le\varepsilon|\rot\psi_{,xxt}|_2^2+\varepsilon_1|v_{,xxt}|_2^2\cr
&\quad+c/(\varepsilon_1\varepsilon)D_1^8D_2^2.\cr}
$$
Summarizing, we have
$$
|I_3|\le\varepsilon|\rot\psi_{,xxt}|_2^2+\varepsilon_1(|v_{,xxx}|_2^2+ |v_{,xxt}|_2^2)+c/(\varepsilon\varepsilon_1)(A_1^2D_2^{12}+D_1^8D_2^2).
$$
Finally, the last term on the r.h.s. of (\ref{3.42}) is bounded by
$$
\varepsilon|\rot\psi_{,xxt}|_2^2+c/\varepsilon|f_t|_2^2.
$$
Employing the above estimates in (\ref{3.42}) and assuming that $\varepsilon$ is sufficiently small implies
\begin{equation}\eqal{
&{d\over dt}|\rot\psi_{,xt}|_2^2+{\mu\over a}|\nabla\rot\psi_{,xt}|_2^2\le \varepsilon(|v_{,xxx}|_2^2+|v_{,xxt}|_2^2)\cr
&\quad+\varepsilon_1\nu^2(|\nabla\varphi_{,xxt}|_2^2+|\nabla\varphi_{,xxx}|_2^2)\cr
&\quad+c/\varepsilon[|\eta_t|_6^{12}|v|_2^2+\|\eta\|_2^4 |\rot\psi_{,xt}|_2^2\cr
&\quad+D_2^{12}|v|_2^2+D_1^8|v_t|_2^2+|f_t|_2^2]+ {c\over\varepsilon_1}(\nu^2\|\eta_t\|_1^4|\nabla\varphi_{,xx}|_2^2+ \|\eta\|_2^4|\rot\psi_{xt}|_2^2).\cr}
\label{3.43}
\end{equation}
Multiplying (\ref{3.43}) by $1/\nu$ and adding to (\ref{3.41}) we obtain
\begin{equation}\eqal{
&{d\over dt}\bigg(|\nabla\varphi_{xt}|_2^2+{1\over\nu}|\rot\psi_{xt}|_2^2\bigg)+ {\mu\over a}|\nabla^2\varphi_{,xt}|_2^2+{\nu\over a} |\Delta\varphi_{xt}|_2^2+{\mu\over a\nu}|\nabla\rot\psi_{xt}|_2^2\cr
&\le{\varepsilon\over\nu}|v_{,xxx}|_2^2+\varepsilon_1\nu|\nabla\varphi_{xxx}|_2^2 +{c\over\nu\varepsilon} [|\eta_t|_6^{12}|v|_2^2+ |\eta|_{2,1}^2(|\eta|_{2,1}^4+1)+(\|\eta\|_2^4\cr
&\quad+ |\nabla\varphi_{,xx}|_2^2)|\rot\psi_{,xt}|_2^2+(\|\eta\|_2^4+|\Delta\varphi|_3^2+|\nabla v|_3^2)|\nabla\varphi_{,xt}|_2^2+D_1^2|\nabla\varphi_{,xt}|_3^2\cr
&\quad+(\|\eta_t\|_1^2+ D_2^2)|v_{,xx}|_3^2+(|v|_6^4+\|\eta\|_2^4+\|\eta\|_2^2+D_1^2)|v_{,xt}|_3^2+|\nabla v|_3^2|\nabla v_{,t}|_2^2\cr
&\quad+D_2^{12}|v|_2^2+D_1^8|v_t|_2^2+|f_t|_2^2]+{c\nu\over\varepsilon_1} [\|\eta_t\|_1^4|\nabla\varphi_{,xx}|_2^2+\|\eta\|_2^4|\nabla\varphi_{,xt}|_2^2].\cr}
\label{3.44}
\end{equation}
Let
$$
\phi_5=|\eta_t|_6^{12}|v|_2^2+|\eta|_{2,1}^2(|\eta|_{2,1}^4+1)+D_2^{12}|v|_2^2+ D_1^8|v_{,t}|_2^2,\quad |v|_2\le A_1,\quad |v_{,t}|_2\le D_2.
$$
Simplifying, we write (\ref{3.44}) in the form
\begin{equation}\eqal{
&a{d\over dt}(|\nabla\varphi_{,xt}|_2^2+{1\over\nu}|\rot\psi_{,xt}|_2^2)+\mu \bigg(|\nabla\varphi_{,xxt}|_2^2+{1\over\nu}|\rot\psi_{,xxt}|_2^2\bigg)\cr
&\quad+\nu|\Delta\varphi_{,xt}|_2^2\le{\varepsilon\over\nu}|v_{,xxx}|_2^2+ {\varepsilon_1\over\nu}|\nabla\varphi_{,xxx}|_2^2+{c\over\nu\varepsilon} [\phi_5(|\eta|_{2,1},D_1,D_2,A_1)\cr
&\quad+\phi_6(|\eta|_{2,1},D_1,D_2)(|\rot\psi_{,xt}|_2^2+ |\nabla\varphi_{,xt}|_2^2+|\nabla\varphi_{,xt}|_3^2+|v_{,xx}|_3^2\cr
&\quad+|v_{,xt}|_3^2+|v|_2^2+|v_{,t}|_2^2+|\nabla v|_3^2|\nabla\varphi_{,xt}|_2^2+|\nabla\varphi_{,xx}|_2^2 |\rot\psi_{,xt}|_2^2)\cr
&\quad+|\nabla v|_3^2|\nabla v_{,t}|_2^2+|f_t|_2^2]+ {c\nu\over\varepsilon_1}\phi_7(|\eta|_{2,1})(|\nabla\varphi_{,xx}|_2^2+ |\nabla\varphi_{,xt}|_2^2),\cr}
\label{3.45}
\end{equation}
where
$$\eqal{
\phi_6&=\|\eta\|_2^2(\|\eta\|_2^2+1)+\|\eta_t\|_1^2+D_1^2+D_2^2+D_1^8+D_2^{12},\cr \phi_7&=|\eta|_{2,1,\infty,\Omega^t}^2(|\eta|_{2,1,\infty,\Omega^t}^2+1)\cr}
$$
Using the interpolations
\begin{equation}\eqal{
&\alpha|\rot\psi_{,xt}|_2^2\le c\alpha|\rot\psi_{,xxt}|_2^{2/3}|\rot\psi_t|_2^{4/3}\le\varepsilon|\rot\psi_{,xxt}|_2^2\cr
&\quad+c(1/\varepsilon)\alpha^{3/2}|\rot\psi_{,t}|_2^2,\cr
&\alpha|\nabla\varphi_{,xt}|_3^2\le c\alpha|\nabla\varphi_{,xxt}|_2|\nabla\varphi_{,t}|_2\le\varepsilon |\nabla\varphi_{,xxt}|_2^2+c(1/\varepsilon)\alpha^2|\nabla\varphi_{,t}|_2^2,\cr
&\alpha|v_{,xx}|_3^2\le\alpha|v_{,xxx}|_2^{5/3}|v|_2^{1/3}\le\varepsilon |v_{,xxx}|_2^2+c(1/\varepsilon)\alpha^6|v|_2^2,\cr}
\label{3.46}
\end{equation}
where $\alpha$ is a parameter, in (\ref{3.45}) yields
\begin{equation}\eqal{
&a{d\over dt}(|\nabla\varphi_{,xt}|_2^2+{1\over\nu}|\rot\psi_{,xt}|_2^2)+\mu (|\nabla\varphi_{,xxt}|_2^2+{1\over\nu}|\rot\psi_{,xxt}|_2^2)\cr
&\quad+\nu|\Delta\varphi_{,xt}|_2^2\le{\varepsilon\over\nu}|v_{,xxx}|_2^2+ \nu\varepsilon_1|\nabla\varphi_{,xxx}|_2^2\cr
&\quad+{c\over\nu\varepsilon}[\phi_8+|\nabla v|_3^2|\nabla\varphi_{,xt}|_2^2+|\nabla v|_3^2|\nabla v_{,t}|_2^2+|f_t|_2^2]\cr
&\quad+{c\nu\over\varepsilon_1}\phi_7(|\eta|_{2,1})(|\nabla\varphi_{,xx}|_2^2+ |\nabla\varphi_{,xt}|_2^2),\cr}
\label{3.47}
\end{equation}
where
$$\eqal{
\phi_8&=\phi_5+(\phi_6^{3/2}+\phi_6^2+\phi_6^6)|v_{,t}|_2^2+\phi_6^6|v|_2^2+ |\nabla\varphi_{,xx}|_2^3\cdot\cr
&\quad\cdot(D_2^2+|\nabla\varphi_{,x}|_2^2),\quad |v_{,t}|_2\le D_2,\quad |v|_2\le A_1,\cr}
$$
and $\phi_5$, $\phi_6$, $\phi_7$ are introduced below (\ref{3.44}) and (\ref{3.45}), respectively. Hence (\ref{3.47}) implies (\ref{3.38}). Multiplying (\ref{3.47}) by $\nu$, integrating with respect to time and using that 
$$\eqal{
&\intop_0^t|v|_2^2dt'\le c\intop_0^t|\nabla v|_2^2dt'\le A_1^2,\cr
&\intop_0^t|v_{,t}|_2^2dt'\le c\intop_0^t|\nabla v_{,t}|_2^2dt'\le D_2^2,\cr}
$$
where we used the Poincar\'e inequality because $\intop_\Omega vdx=0$, we obtain the inequality
\begin{equation}\eqal{
X_1^2&\equiv a(\nu|\nabla\varphi_{,xt}(t)|_2^2+|\rot\psi_{,xt}(t)|_2^2)\cr
&\quad+\mu(\nu|\nabla\varphi_{,xxt}|_{2,\Omega^t}^2+|\rot\psi_{,xxt}|_{2,\Omega^t}^2) +\nu^2|\Delta\varphi_{,xt}|_{2,\Omega^t}^2\cr
&\le\varepsilon|v_{,xxx}|_{2,\Omega^t}^2+\nu^2\varepsilon_1 |\nabla\varphi_{,xxx}|_{2,\Omega^t}^2+{c\over\varepsilon}\bigg[\intop_0^t\phi_8dt'\cr
&\quad+|\nabla\varphi_{,xt}|_{2,\infty,\Omega^t}^2|\nabla v|_{3,2,\Omega^t}^2+ |\nabla v|_{3,\infty,\Omega^t}^2D_2^2+|f_t|_{2,\Omega^t}^2\bigg]\cr
&\quad+{c\nu^2\over\varepsilon_1}\phi_7(|\eta|_{2,1,\infty,\Omega^t}) (|\nabla\varphi_{,xx}|_{2,\Omega^t}^2+|\nabla\varphi_{,xt}|_{2,\Omega^t}^2)\cr
&\quad+(\nu|\nabla\varphi_{,xt}(0)|_2^2+|\rot\psi_{,xt}(0)|_2^2),\cr}
\label{3.48}
\end{equation}
where
\begin{equation}\eqal{
\intop_0^t\phi_8dt'&\le\intop_0^t\phi_5dt'+[(|\phi_6|_{\infty,(0,t)}^{3/2}+ |\phi_6|_{\infty,(0,t)}^2+|\phi_6|_{\infty,(0,t)}^6)D_2^2\cr
&\quad+|\phi_6|_{\infty,(0,t)}^6A_1^2]+|\nabla\varphi_{,xx}|_{2,\Omega^t}^2 |\nabla\varphi_{,xx}|_{2,\infty,\Omega^t}(D_2^2+ |\nabla\varphi_{,x}|_{2,\infty,\Omega^t}^2)\cr}
\label{3.49}
\end{equation}
and
\begin{equation}\eqal{
\intop_0^t\phi_5dt'&\le|\eta_t|_{6,\infty,\Omega_t}^{12}A_1^2+ |\eta|_{2,1,2,\Omega^t}^2(|\eta|_{2,1,\infty,\Omega^t}^2+1)\cr
&\quad+D_2^{12}A_1^2+D_1^8D_2^2,\cr}
\label{3.50}
\end{equation}
\begin{equation}\eqal{
|\phi_6|_{\infty,(0,t)}&\le\|\eta\|_{2,\infty,\Omega^t}^2(\|\eta\|_{2,\infty,\Omega^t}^2+1) +\|\eta_t\|_{1,\infty,\Omega^t}^2+D_1^2\cr
&\quad+D_2^2+D_1^8+D_2^{12}.\cr}
\label{3.51}
\end{equation}
In view of Notation \ref{n2.13} we write (\ref{3.48}) in the form
\begin{equation}\eqal{
X_1^2(t)&\le\varepsilon|v_{,xxx}|_{2,\Omega^t}^2+\nu^2\varepsilon_1 |\nabla\varphi_{,xxx}|_{2,\Omega^t}^2+{c\over\varepsilon}\bigg[\intop_0^t\phi_8 dt'\cr
&\quad+{\chi_0^2\over\nu}|\nabla v|_{3,2,\Omega^t}^2+|\nabla v|_{3,\infty,\Omega^t}^2D_2^2+|f_t|_{2,\Omega^t}^2\bigg]\cr
&\quad+{c\nu^2\over\varepsilon_1}\phi_7(|\eta|_{2,1,\infty,\Omega^t}) {\psi^2\over\nu^2}+a\Phi_0^2(0),\cr}
\label{3.52}
\end{equation}
and (\ref{3.49}) is replaced by
\begin{equation}
\intop_0^t\phi_8dt'\le\intop_0^t\phi_5dt'+\phi_9(|\phi_6|_{\infty(0,t)},A_1D_2)+ {\chi_0\over\sqrt{\nu}}{\psi^2\over\nu^2}\cdot\bigg(D_2^2+{\chi_0^2\over\nu}\bigg),
\label{3.53}
\end{equation}
where $\phi_9$ replaces the expression under the square bracket on the r.h.s. of (\ref{3.49}).

\noindent
Inequality (\ref{3.47}) implies (\ref{3.38}). From (\ref{3.52}) we derive (\ref{3.39}). This concludes the proof.
\end{proof}

\begin{remark}\label{r3.3}
Now we want to collect all derived above inequaliteis and write them in the most simple form.
\end{remark}

\noindent
In view of periodicity we have
$$\eqal{
&\intop_\Omega\nabla\varphi dx=\intop_\Omega\rot\psi dx=0,\cr
&\intop_\Omega\nabla\varphi_tdx=\intop_\Omega\rot\psi_tdx=0.\cr}
$$
Hence, we have
$$
\intop_0^t|v|_6^2dt'\le A_1^2+G^2,\quad \intop_0^t|v_t|_2^2dt'\le D_2^2.
$$
From Notation \ref{n2.13} we have
$$\eqal{
&\chi_1^2(t)=\nu|\nabla\varphi|_{2,1,\infty,\Omega^t}^2+ |\rot\psi|_{2,1,\infty,\Omega^t}^2,\cr
&\chi_2^2(t)=\nu|\nabla\varphi|_{3,1,2,\Omega^t}^2+ |\rot\psi|_{3,1,2,\Omega^t}^2,\cr
&\Psi(t)=\nu|\nabla\varphi|_{3,1,2,\Omega^t}.\cr}
$$
We additionally introduce the quantities
$$\eqal{
&\Phi_0^2(t)=\nu|\nabla\varphi(t)|_{2,1}^2+|\rot\psi(t)|_{2,1}^2\cr
&\chi_0^2(t)=\nu|\nabla\varphi|_{2,1,\infty,\Omega^t}^2.\cr}
$$
From (\ref{2.93}) we have
$$\eqal{
&|\eta(t)|_r\le\exp\bigg(t^{1/2}{\Psi\over\nu}\bigg)\bigg[t^{1/2}{\Psi\over\nu}+ |\eta(0)|_r\bigg],\cr
&|\eta(t)|_{2,1}\le\exp(ct^{1/2}\Phi_0(t))\bigg[t^{1/2}{\Psi\over\nu}+ |\eta(0)|_{2,1}\bigg],\cr
&D_1=\exp(ct^{1/2}\Phi_0)\bigg[t^{2/3}{\Psi\over\nu}+t^{1/6}|\eta(0)|_3\bigg]+ \phi\bigg({\Psi\over\nu}\bigg)+A_2,\quad \alpha>0,\cr}
$$
where
$$\eqal{
&A_2=|f|_{3,6,\Omega^t}+|\varrho_0|_\infty^{1/6}|v_0|_6,\cr
&A_1^2=2|\varrho_0|_1|f|_{\infty,1,\Omega^t}^2+{3\over2}\intop_\Omega\bigg( {1\over2}\varrho_0v_0^2+{A\over\varkappa-1}\varrho_0^\varkappa\bigg)dx,\cr
&|\eta|_{2,1,2,\Omega^t}\le\exp(ct^{1/2}\Phi_0)\bigg[t^{3/2}{\Psi\over\nu}+ t^{1/2}|\eta(0)|_{2,1}\bigg],\cr
&D_2=\exp\bigg[\intop_0^t|\eta_t(t')|_3^2dt'+\|\eta\|_{2,\infty,\Omega^t}^2A_1^2+ (1+D_1^2)A_1^2\bigg]\cdot\cr
&\quad\cdot\bigg[\|\eta_t\|_{1,2,\Omega^t}^2+\|\eta_t\|_{1,\infty,\Omega^t}^2 \bigg(1+{\Psi^2\over\nu^2}A_1^2+D_1^2A_1\bigg)+{\Psi^2\over\nu^2}A_1^2\cr
&\quad+(1+\|\eta_t\|_{1,\infty,\Omega^t}^2)|f_t|_{2,\Omega^t}^2\bigg].\cr}
$$
Integrating (\ref{3.37}) with respect to time yields
\begin{equation}\eqal{
&a(\nu|\nabla\varphi(t)|_{1,1}^2+|\rot\psi(t)|_{1,1}^2)+ \mu(\nu|\nabla\varphi|_{2,1,2,\Omega^t}^2+ |\rot\psi|_{2,1,2,\Omega^t}^2)\cr
&\quad+\nu^2|\nabla\varphi|_{2,1,2,\Omega^t}^2\le c[|\eta|_{2,2,\Omega^t}^2+ |\eta|_{3,\infty,\Omega^t}^2D_2^2+(1+|\eta|_{\infty,\Omega^t}^2)A_1^4D_1^2\cr
&\quad+(1+|\eta|_{\infty,\Omega^t}^2)^4D_1^8A_1^2+(1+|\eta|_{\infty,\Omega^t}^2) A_1^2{\Psi^2\over\nu^2}\cr
&\quad+\intop_0^t(\phi_2+\phi_2^4)dt'+D_2^8+\sup_t\phi_2\bigg( |\nabla\rot\psi_t|_{2,\Omega^t}^2+{\Psi^2\over\nu^2}\bigg)\cr
&\quad+{\Psi^2\over\nu^2}\bigg(|\nabla\rot\psi|_{2,\Omega^t}^2+ {\Psi^2\over\nu^2}\bigg)+|\nabla\varphi_t|_{3,\infty,\Omega^t}^2A_1^2+D_1^2 {\Psi^2\over\nu^2}\cr
&\quad+|v_t|_{3,\infty,\Omega^t}^2A_1^2+(1+|\eta|_{\infty,\Omega^t}^2) |f|_{2,\Omega^t}^2+|f_t|_{2,\Omega^t}^2\bigg]\cr
&\quad+c\nu^2(\sup_t\phi_2+\|\eta\|_{2,\infty,\Omega^t}^2){\Psi^2\over\nu^2}+a (\nu|\nabla\varphi(0)|_{1,1,}^2+|\rot\psi(0)|_{1,1,}^2),\cr}
\label{3.54}
\end{equation}
where $\phi_2$ is defined in (\ref{3.21}).

Simplifying, we write (\ref{3.54}) in the form
\begin{equation}\eqal{
X_2^2(t)&\equiv a(\nu|\nabla\varphi(t)|_{1,1}^2+|\rot\psi(t)|_{1,1}^2)+\mu (\nu|\nabla\varphi|_{2,1,2,\Omega^t}^2+|\rot\psi|_{2,1,2,\Omega^t}^2)\cr
&\quad+\nu^2|\nabla\varphi|_{2,1,2,\Omega^t}^2\le c\bigg[\phi_{10}\bigg(|\eta|_{2,1,\infty,\Omega^t},A_1,D_1,D_2,{\Psi\over\nu}\bigg)\cr
&\quad+\sup_t\phi_2|\nabla\rot\psi_t|_{2,\Omega^t}^2+{\Psi^2\over\nu^2} |\nabla\rot\psi|_{2,\Omega^t}^2+|\nabla\varphi_t|_{3,\infty,\Omega^t}^2A_1^2\cr
&\quad+|v_t|_{3,\infty,\Omega^t}^2A_1^2+(1+|\eta|_{\infty,\Omega^t}^2) (|f|_{2,\Omega^t}^2+|f_t|_{2,\Omega^t}^2)\bigg]\cr
&\quad+c\nu^2(\sup_t\phi_2+\|\eta\|_{2,\infty,\Omega^t}^2){\psi^2\over\nu^2}+a (\nu|\nabla\varphi(0)|_{1,1}^2+|\rot\psi(0)|_{1,1}^2).\cr}
\label{3.55}
\end{equation}
Using that
$$
|\nabla\varphi_t|_{3,\infty,\Omega^t}\le c\|\nabla\varphi_t\|_{1,\infty,\Omega^t}\le c{\chi_0\over\sqrt{\nu}}
$$
and
$$
|\nabla\rot\psi|_{2,\Omega^t}\le\varepsilon\|\rot\psi\|_{2,2,\Omega^t}+ c/\varepsilon|\rot\psi|_{2,\Omega^t}
$$
where
$$
|\rot\psi|_{2,\Omega^t}\le|v|_{2,\Omega^t}+|\nabla\varphi|_{2,\Omega^t}\le A_1+{\Psi\over\nu}
$$
we obtain from (\ref{3.55}) the inequality
\begin{equation}\eqal{
X_2^2(t)&\le c\bigg[\phi_{11}\bigg(|\eta|_{2,1,\infty,\Omega^t},A_1,D_1,D_2, {\Psi\over\nu},{\chi_0\over\sqrt{\nu}}\bigg)\cr
&\quad+\sup_t\phi_2|\nabla\rot\psi_t|_{2,\Omega^t}^2+|v_t|_{3,\infty,\Omega^t}^2+ A_1^2\cr
&\quad+(1+|\eta|_{\infty,\Omega^t}^2)(|f|_{2,\Omega^t}^2+|f_t|_{2,\Omega^t}^2) \bigg]\cr
&\quad+c\nu^2(\sup_t\phi_2+\|\eta\|_{2,\infty,\Omega^t}^2){\Psi^2\over\nu^2}\cr
&\quad+a (\nu|\nabla\varphi(0)|_{1,1}^2+|\rot\psi(0)|_{1,1}^2),\cr}
\label{3.56}
\end{equation}
where $\phi_2=|\eta|_{2,1}^2(1+|\eta|_{2,1}^2)$ (see (\ref{3.21})).

\noindent
Adding (\ref{3.52}) and (\ref{3.56}) and using the interpolations
$$\eqal{
&\alpha|\rot\psi_{,xt}|_2^2\le c\alpha|\rot\psi_{,xxt}|_2|\rot\psi_{,t}|_2\le \varepsilon|\rot\psi_{,xxt}|_2^2\cr
&\quad+c/\varepsilon\alpha^2|\rot\psi_{,t}|_2^2\le\varepsilon|\rot\psi_{,xxt}|_2^2+ cc/\varepsilon\alpha^2(D_1^2+|\nabla\varphi_{,t}|_2^2),\cr
&\alpha|v_{,t}|_3^2\le\varepsilon|v_{,xt}|_2^2+c/\varepsilon\alpha^2D_2^2\cr}
$$
we obtain
\begin{equation}\eqal{
&X_1^2(t)+X_2^2(t)\le\varepsilon|v_{,xxx}|_{2,\Omega^t}^2+\nu^2\varepsilon_1 |\nabla\varphi_{,xxx}|_{2,\Omega^t}^2\cr
&\quad+c/\varepsilon\bigg[\phi_{12}+|\nabla v|_{3,2,\Omega^t}^2{\chi_0^2\over\nu}+|\nabla v|_{3,\infty,\Omega^t}^2D_2^2\cr
&\quad+(1+|\eta|_{]\infty,\Omega^t}^2)(|f|_{2,\Omega^t}^2+|f_t|_{2,\Omega^t}^2)\cr
&\quad+{c\nu^2\over\varepsilon_1}\phi_7(|\eta|_{2,1,\infty,\Omega^t}) {\Psi^2\over\nu^2}+a\Phi_0^2(0),\cr}
\label{3.57}
\end{equation}
where
$$
\phi_{12}=\phi_{11}+|\phi_2|_{\infty,(0,t)}^2\bigg(D_1^2+{\Psi^2\over\nu^2}\bigg) +D_2^2A_1^2,
$$
and we used that
$$
|\nabla\varphi_{,t}|_{2,\Omega^t}^2\le{\Psi^2\over\nu^2}.
$$

\begin{lemma}\label{l3.5}
Let the assumptions of Lemma \ref{l3.1}. Let Notation \ref{n2.13} be applied. Then
\begin{equation}\eqal{
&{d\over dt}\bigg(\nu|\nabla\varphi_{,xx}|_2^2+|\rot\psi_{,xx}|_2^2\bigg)+\mu (\nu|\nabla^2\varphi_{,xx}|_2^2+|\nabla\rot\psi_{,xx}|_2^2)+\nu^2 |\Delta\varphi_{,xx}|_2^2\cr
&\le c[\|\eta\|_2^2\|v_{,t}\|_1^2+\|\eta\|_2^2(\|\eta\|_2^2+1)+\phi_{13} |\nabla\varphi_{,xxx}|_2^2\cr
&\quad+\phi_{14}|\nabla\varphi_{,xx}|_3^2+\phi_{15}\|\nabla\varphi_{,x}\|_1^2+ \phi_{16}|v|_2^2+\phi_{17}|v_{,x}|_2^2+(1+\|\eta\|_2^2)\|f\|_1^2],\cr}
\label{3.58}
\end{equation}
where $\phi_{13}-\phi_{17}$ are defined in (\ref{3.76}) and estimated in (\ref{3.78}) and

\begin{equation}\eqal{
&\nu|\nabla\varphi_{xx}(t)|_2^2+|\rot\psi_{,xx}(t)|_2^2+\mu(\nu |\nabla^2\varphi_{,xx}|_{2,2,\Omega^t}^2+|\rot\psi_{,xxx}|_{2,2,\Omega^t}^2)\cr
&\quad+\nu^2 |\Delta\varphi_{,xx}|_{2,2,\Omega^t}^2\le c\bigg[\exp\Big(\sqrt{t}\Phi_*(t)\Big)\bigg(t{\Psi^2\over\nu^2}+{c_0^2\over\nu^2}\bigg) \|v_{,t}\|_{1,2,\Omega^t}^2\cr
&\quad+\phi\bigg(A_1,A_2,\exp\Big(\sqrt{t}\Phi_*(t)\Big)\bigg(\sqrt{t}{\Psi\over\nu}+ {c_0\over\nu}\bigg),{\psi\over\nu},{\chi_0\over\sqrt{\nu}}\bigg)\cr
&\quad+\bigg(1+\exp\Big(\sqrt{t}\Phi_*(t)\Big)\bigg(t{\psi^2\over\nu^2}+{c_0^2\over\nu^2}\bigg)\bigg) \|f\|_{1,2,\Omega^t}^2\bigg]\cr
&\quad+c(\nu|\nabla\varphi_{,xx}(0)|_2^2+|\rot\psi_{,xx}(0)|_2^2).\cr}
\label{3.59}
\end{equation}
\end{lemma}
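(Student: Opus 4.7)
The plan is to repeat the pattern established in Lemmas \ref{l3.1}--\ref{l3.4}, but now at the level of two spatial derivatives. First I would differentiate equation (\ref{3.2}) (or equivalently the rescaled version (\ref{3.13})) twice with respect to $x$, then test the result against $\nabla\varphi_{,xx}$ and against $\rot\psi_{,xx}$ separately, integrating over $\Omega$ and using the periodic boundary conditions. The gradient test yields an inequality with $\mu|\nabla^2\varphi_{,xx}|_2^2+\nu|\Delta\varphi_{,xx}|_2^2$ on the left, while the rotation test contributes $\mu|\nabla\rot\psi_{,xx}|_2^2$; after multiplying the gradient inequality by $\nu$ and adding the rotation inequality, the diffusive structure on the left-hand side of (\ref{3.58}) emerges. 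Following the pattern of Lemma \ref{l3.3}, the leading term $\eta v_{,t}$ differentiated twice produces the factor $\|\eta\|_2^2\|v_{,t}\|_1^2$ which already appears in (\ref{3.58}).

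The heart of the argument is the term-by-term estimation of the right-hand side. Each product in the right-hand side of (\ref{3.2}) (namely $\eta v_t$, $a_0\nabla\eta$, $(a+\eta)v\cdot\nabla v$, $(p_\varrho(a)-p_\varrho(a+\eta))\nabla\eta$ and $(a+\eta)f$) must be differentiated twice in $x$, giving contributions where derivatives distribute between $\eta$, $v$ and $f$; each piece is then bounded via H\"older's inequality and the trilinear estimates $|\cdot|_\infty\|\cdot\|_1\|\cdot\|_1$ that were used in Lemma \ref{l2.6}. For each dangerous top-order term one integrates by parts (as in $J_2$ of Lemma \ref{l3.4}) to produce a coefficient $\frac{\eta}{a+\eta}$ which is absorbed by the left-hand side via $|\eta|_\infty\le a/2$, leaving the remaining factor in the form $\phi(\|\eta\|_2)$ times a gradient norm. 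Interpolations analogous to (\ref{3.34}) and (\ref{3.46}) are used to convert $L_3$ norms such as $|\nabla\varphi_{,xx}|_3^2$ and $\|\nabla\varphi_{,x}\|_1^2$ into $\varepsilon|\nabla\varphi_{,xxx}|_2^2$ plus a lower-order remainder; the $\varepsilon$ part is absorbed by the term $\mu\nu|\nabla^2\varphi_{,xx}|_2^2$ on the left, and the remainder defines the coefficient functions $\phi_{13},\dots,\phi_{17}$ whose precise dependence on $\|\eta\|_2$, $D_1$, $D_2$, $A_1$, $\Psi/\nu$ will be read off in (\ref{3.76}).

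To pass from the differential inequality (\ref{3.58}) to the time-integrated bound (\ref{3.59}), I would simply integrate over $(0,t)$ and use the estimates collected in Notation \ref{n2.13} together with the bounds $|v|_2\le A_1$, $|v_t|_2\le D_2$ from Lemma \ref{l2.1} and Lemma \ref{l2.8}. The $L_\infty(0,t;L_r)$ estimate (\ref{2.93}) gives $|\eta(t)|_r\le\exp(\sqrt{t}\Psi/\nu)(\sqrt{t}\Psi/\nu+c_0/\nu)$ with $c_0$ controlling $|\eta(0)|_r$, and similarly (\ref{2.94}) together with $\Phi_*$ introduced in Notation \ref{n2.13} controls $|\eta(t)|_{2,1}$; these enter the prefactors $\exp(\sqrt{t}\Phi_*(t))$ and $(t\Psi^2/\nu^2+c_0^2/\nu^2)$ appearing in (\ref{3.59}). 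The generic function $\phi$ in (\ref{3.59}) then collects all polynomial combinations of $A_1$, $A_2$, $\Psi/\nu$, $\chi_0/\sqrt{\nu}$ and the exponential factor arising from those integrated-$\eta$ estimates.

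The main obstacle, I expect, will be the top-order term $\nu\nabla(\eta/(a+\eta))\,\nabla\Delta\varphi$ and its twice-differentiated analogue: after two integrations by parts it produces contributions of order $\nu|\nabla\varphi_{,xxx}|_2^2$ which cannot simply be absorbed in the $\nu^2|\Delta\varphi_{,xx}|_2^2$ term unless the coefficient in front carries a small factor (such as a power of $\|\eta\|_2$ controlled by $c_0/\nu$ via (\ref{2.63})). Careful bookkeeping of the $\nu$-powers, combined with the smallness assumption on $\|\eta(0)\|_{2,1}$, is what makes these terms absorbable; the remainder is what gets hidden in the function $\phi$ in (\ref{3.59}). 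All other contributions are straightforward analogues of the computations already carried out in Lemmas \ref{l3.3} and \ref{l3.4}.
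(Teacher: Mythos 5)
Your high-level skeleton matches the paper: differentiate twice in $x$, test against $\nabla\varphi_{,xx}$ and against $\rot\psi_{,xx}$, use H\"older and interpolation on the nonlinear terms, combine the two inequalities with the $\nu$-scaling, and integrate in time. But your account diverges from the paper's actual proof in a way that matters. You describe the argument as proceeding through the rescaled equation (\ref{3.13}), and you identify the ``main obstacle'' as the twice-differentiated $\nu\,\frac{\eta}{a+\eta}\nabla\Delta\varphi$ term, to be handled by the $\frac{\eta}{a+\eta}$-absorption ($|\eta|_\infty\le a/2$) trick from Lemma \ref{l3.4}. The paper does not do this: for the purely spatial-derivative lemmas (Lemmas \ref{l3.3} and \ref{l3.5}) it tests the original momentum equation (\ref{3.2}) directly (see (\ref{3.60}) and (\ref{3.68})), where $\nu\nabla\Delta\varphi$ has a \emph{constant} coefficient and contributes cleanly to the left-hand side; there are no $\eta/(a+\eta)$ coefficients anywhere in the proof of Lemma \ref{l3.5}, and the $\nu^2|\Delta\varphi_{,xx}|_2^2$ on the left of (\ref{3.58}) arises simply by multiplying the summed inequality (\ref{3.75}) by $\nu$. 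The form (\ref{3.13}) is reserved for the time-differentiated Lemmas \ref{l3.2} and \ref{l3.4}, precisely because there a division by $\varrho=a+\eta$ is needed to isolate $v_t$; using it here would reintroduce coefficient terms that (\ref{3.2}) avoids for free.

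The actual heavy lifting in the paper's proof, which you only gesture at, is twofold. First, the twice-differentiated convection term $[(a+\eta)v\cdot\nabla v]_{,xx}$ is decomposed into the pieces $H,L$ (and $J,K$ on the $\rot\psi$ side), and each is integrated by parts repeatedly to shift derivatives off the top-order factor; this is what produces the coefficients $\phi_{13},\dots,\phi_{17}$ in (\ref{3.76}). Second — and this is essential to close the estimate — after forming (\ref{3.75}) the remaining $\rot\psi$-quantities on the right (such as $|\rot\psi_{,xx}|_3^2$ and $|\rot\psi_{,xx}|_2^2$) must be \emph{eliminated}: the paper interpolates them against $|\rot\psi_{,xxx}|_2$ and $|\rot\psi|_2\le|v|_2+|\nabla\varphi|_2\le A_1+\Psi/\nu$, splitting off an $\varepsilon$-part absorbable by the left-hand side and a remainder controlled by the time-uniform energy bound $A_1$. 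Without this $\rot\psi$-elimination step the differential inequality would carry uncontrolled $\rot\psi$-norms and could not be closed, since $\rot\psi$ has no smallness. Your interpolation remarks address only $\nabla\varphi$-norms, so this central mechanism is missing from the sketch.
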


\begin{proof}
Differentiating (\ref{3.2}) twice with respect to $x$, multiplying by $\nabla\varphi_{,xx}$ and integrating over $\Omega$ yields
\begin{equation}\eqal{
&{a\over2}{d\over dt}|\nabla\varphi_{,xx}|_2^2+\mu|\nabla^2\varphi_{,xx}|_2^2+ \nu|\Delta\varphi_{,xx}|_2^2\cr
&=-\intop_\Omega(\eta v_{,t})_{,xx}\cdot\nabla \varphi_{xx}dx-a_0\intop_\Omega\nabla\eta_{xx}\cdot\nabla\varphi_{,xx}dx\cr
&\quad-\intop_\Omega [(a+\eta)v\cdot\nabla v]_{,xx}\cdot\nabla\varphi_{,xx}dx\cr
&\quad+\intop_\Omega[(p_\varrho(a)-p_\varrho(a+\eta))\nabla\eta]_{,xx}\cdot \nabla\varphi_{,xx}dx\cr
&\quad+\intop_\Omega[(a+\eta)f]_{,xx}\cdot\nabla\varphi_{xx}dx.\cr}
\label{3.60}
\end{equation}
The first term on the r.h.s. is estimated by
$$
\varepsilon|\nabla\varphi_{,xxx}|_2^2+c/\varepsilon\|\eta\|_2^2\|v_t\|_1^2,
$$
the second by
$$
\varepsilon|\nabla\varphi_{,xxx}|_2^2+c/\varepsilon|\nabla\eta_x|_2^2.
$$
Next we examine the third term on the r.h.s. of (\ref{3.60}). We write it in the form
$$
K\equiv a\intop_\Omega(v\cdot\nabla v)_{,xx}\cdot \nabla\varphi_{xx}dx+\intop_\Omega(\eta v\cdot\nabla v)_{,xx}\cdot \nabla\varphi_{xx}dx\equiv H+L.
$$
First we consider $H$. We drop $a$ for simplicity. Then we have
$$\eqal{
H&=\intop_\Omega[v\cdot\nabla(\nabla\varphi+\rot\psi)]_{,xx}\cdot \nabla\varphi_{xx}dx=\intop_\Omega(v\cdot\nabla\nabla\varphi)_{,xx}\cdot \nabla\varphi_{xx}dx\cr
&\quad+\intop_\Omega(v\cdot\nabla\rot\psi)_{,xx}\cdot\nabla\varphi_{xx}dx\equiv I_1+I_2.\cr}
$$
Consider $I_1$. We express it in the form
$$\eqal{
I_1&=\intop_\Omega v_{,xx}\cdot\nabla\nabla\varphi\cdot\nabla\varphi_{,xx}dx+ 2\intop_\Omega v_{,x}\cdot\nabla\nabla\varphi_{,x}\cdot\nabla\varphi_{xx}dx\cr
&\quad+ \intop_\Omega v\cdot\nabla(\nabla\varphi)_{,xx}\cdot\nabla\varphi_{,xx}dx\equiv I_{11}+I_{12}+I_{13},\cr}
$$
where
$$
I_{13}={1\over2}\intop_\Omega v\cdot\nabla|\nabla\varphi_{,xx}|^2dx=-{1\over2} \intop_\Omega\Delta\varphi\cdot|\nabla\varphi_{xx}|^2dx.
$$
Hence
$$
|I_{13}|\le\varepsilon|\nabla\varphi_{,xx}|_6^2+c/\varepsilon |\Delta\varphi|_3^2| \nabla\varphi_{,xx}|_2^2.
$$
Consider $I_{12}$. Integrating by parts yields
$$
I_{12}=-2\intop_\Omega v\cdot\nabla\nabla\varphi_{,xx}\cdot\nabla\varphi_{,xx} dx-2\intop_\Omega v\cdot\nabla\nabla\varphi_{,x}\cdot\nabla\varphi_{,xxx}dx \equiv I_{12}^1+I_{12}^2,
$$
where $I_{12}^1$ is estimated by the same bound as $I_{13}$ and
$$
|I_{12}^2|\le\varepsilon|\nabla\varphi_{,xxx}|_2^2+c/\varepsilon |\nabla\varphi_{,xx}|_3^2|v|_6^2\le\varepsilon|\nabla\varphi_{,xxx}|_2^2+ c/\varepsilon|\nabla\varphi_{,xx}|_3^2D_1^2.
$$
Finally, we examine
$$
I_{11}=-\intop_\Omega\Delta\varphi_{,xx}\nabla\varphi\cdot\nabla\varphi_{,xx}dx -\intop_\Omega v_{,xx}\cdot\nabla\nabla\varphi_{xx}\cdot\nabla\varphi dx\equiv I_{11}^1+I_{11}^2
$$
and
$$\eqal{
&|I_{11}^1|\le\varepsilon|\nabla\varphi_{,xxx}|_2^2+c/\varepsilon |\nabla\varphi_{,xx}|_3^2|\nabla\varphi|_6^2,\cr
&|I_{11}^2|\le\varepsilon|\nabla\varphi_{,xxx}|_2^2+c/\varepsilon|v_{,xx}|_3^2 |\nabla\varphi|_6^2.\cr}
$$
Summarizing, we have
\begin{equation}\eqal{
|I_1|&\le\varepsilon\|\nabla\varphi_{,xx}\|_1^2+c/\varepsilon [|\Delta\varphi|_3^2|\nabla\varphi_{,xx}|_2^2\cr
&\quad+|\nabla\varphi_{,xx}|_3^2D_1^2+|\nabla\varphi_{,xx}|_3^2| \nabla\varphi|_6^2+|v_{xx}|_3^2|\nabla\varphi|_6^2].\cr}
\label{3.61}
\end{equation}
Now we estimate $I_2$,
$$\eqal{
I_2&=\intop_\Omega v_{,xx}\cdot\nabla\rot\psi\cdot\nabla\varphi_{,xx}dx+2 \intop_\Omega v_{,x}\cdot\nabla\rot\psi_{,x}\cdot\nabla\varphi_{,xx}dx\cr
&\quad+\intop_\Omega v\cdot\nabla\rot\psi_{,xx}\cdot\nabla\varphi_{,xx}dx\equiv I_{21}+I_{22}+I_{23}.\cr}
$$
Consider $I_{21}$. Integrating by parts yields
$$
I_{21}=-\intop_\Omega\Delta\varphi_{,xx}\rot\psi\cdot\nabla\varphi_{,xx}dx- \intop_\Omega v_{,xx}\rot\psi\cdot\nabla\nabla\varphi_{,xx}dx\equiv I_{21}^1+I_{21}^2,
$$
where
$$\eqal{
|I_{21}^1|&\le\varepsilon|\Delta\varphi_{,xx}|_2^2+c/\varepsilon |\nabla\varphi_{,xx}|_3^2|\rot\psi|_6^2\cr
&\le\varepsilon|\Delta\varphi_{,xx}|_2^2+ c/\varepsilon|\nabla\varphi_{,xx}|_3^2\cdot(|v|_6^2+|\nabla\varphi|_6^2)\cr
&\le\varepsilon|\Delta\varphi_{,xx}|_2^2+c/\varepsilon|\nabla\varphi_{,xx}|_3^2 (D_1^2+|\nabla\varphi|_6^2)\cr}
$$
and
$$\eqal{
|I_{21}^2|&\le\varepsilon|\nabla\varphi_{xxx}|_2^2+c/\varepsilon|v_{xx}|_3^2 |\rot\psi|_6^2\le\varepsilon|\nabla\varphi_{xxx}|_2^2+c/\varepsilon |v_{,xx}|_3^2(|v|_6^2\cr
&\quad+|\nabla\varphi|_6^2)\le\varepsilon|\nabla\varphi_{,xxx}|_2^2+c/\varepsilon |v_{,xx}|_3^2(D_1^2+|\nabla\varphi|_6^2).\cr}
$$
Integrating by parts in the sum $I_{22}+I_{23}$ we get
$$
I_{22}+I_{23}=-\intop_\Omega v\cdot\nabla\rot\psi_{,xx}\cdot\nabla\varphi_{,xx}dx-2\intop_\Omega v\cdot\nabla\rot\psi_{,x}\cdot\nabla\varphi_{,xxx}dx\equiv J_1+J_2,
$$
where integrating by parts in $J_1$ yields
$$
J_1=\intop_\Omega\Delta\varphi\rot\psi_{,xx}\cdot\nabla\varphi_{,xx}dx+ \intop_\Omega v\rot\psi_{,xx}\cdot\nabla^2\varphi_{,xx}dx\equiv J_{11}+J_{12},
$$
where
$$\eqal{
&|J_{11}|\le\varepsilon|\nabla\varphi_{,xx}|_6^2+c/\varepsilon|\Delta\varphi|_3^2 |\rot\psi_{,xx}|_2^2,\cr
&|J_{12}|\le\varepsilon|\nabla^2\varphi_{,xx}|_2^2+c/\varepsilon |\rot\psi_{,xx}|_3^2|v|_6^2\le\varepsilon|\nabla^2\varphi_{,xx}|_2^2+ c/\varepsilon|\rot\psi_{,xx}|_3^2D_1^2.\cr}
$$
Finally,
$$
|J_2|\le\varepsilon|\nabla\varphi_{,xxx}|_2^2+c/\varepsilon|\rot\psi_{,xx}|_3^2 |v|_6^2\le\varepsilon|\nabla\varphi_{,xxx}|_2^2+c/\varepsilon |\rot\psi_{,xx}|_3^2D_1^2.
$$
Collecting estimates for $I_2$ yields
\begin{equation}\eqal{
|I_2|&\le\varepsilon\|\nabla\varphi_{,xx}\|_1^2+c/\varepsilon [|\nabla\varphi_{,xx}|_3^2(D_1^2+|\nabla\varphi|_6^2)\cr
&\quad+|v_{,xx}|_3^2(D_1^2+|\nabla\varphi|_6^2)+|\Delta\varphi|_3^2 |\rot\psi_{,xx}|_2^2+|\rot\psi_{,xx}|_3^2D_1^2].\cr}
\label{3.62}
\end{equation}
Next, we consider the expression
\begin{equation}\eqal{
L&=\intop_\Omega[\eta v\cdot\nabla v]_{,xx}\cdot\nabla\varphi_{,xx}dx= \intop_\Omega[\eta v\cdot\nabla\nabla\varphi]_{,xx}\cdot\nabla\varphi_{xx}dx\cr
&\quad+\intop_\Omega[\eta v\cdot\nabla\rot\psi]_{,xx}\cdot\nabla\varphi_{xx}dx \equiv L_1+L_2.\cr}
\label{3.63}
\end{equation}
Performing differentiation in $L_1$ we have
$$\eqal{\cr
L_1&=\intop_\Omega[\eta_{xx}v\cdot\nabla\nabla\varphi+\eta v_{xx}\cdot\nabla\nabla\varphi+\eta v\cdot\nabla\nabla\varphi_{xx}+2\eta_xv_x\cdot\nabla\nabla\varphi\cr
&\quad+2\eta_xv\cdot\nabla\nabla\varphi_x+2\eta v_x\nabla\nabla\varphi_x]\cdot\nabla\varphi_{,xx}dx\equiv\sum_{i=1}^6L_{1i}.\cr}
$$
Next, we have
$$\eqal{
|L_{11}|&\le\varepsilon|\nabla\varphi_{,xx}|_6^2+c/\varepsilon|\eta_{,xx}|_2^2 |v|_6^2|\nabla\varphi_{,x}|_6^2,\cr
|L_{12}|&\le\varepsilon|\nabla\varphi_{,xx}|_6^2+c/\varepsilon|\eta|_\infty^2 |v_{,xx}|_3^2|\nabla\varphi_{,x}|_2^2,\cr
L_{13}&={1\over2}\intop_\Omega\eta v\cdot\nabla|\nabla\varphi_{,xx}|^2dx= -{1\over2}\intop_\Omega v\cdot\nabla\eta|\nabla\varphi_{,xx}|^2dx\cr
&\quad-{1\over2}\intop_\Omega\eta\Delta\varphi|\nabla\varphi_{,xx}|^2dx\equiv L_{13}^1+L_{13}^2.\cr}
$$
Continuing, we have
$$\eqal{
&|L_{13}^1|\le\varepsilon|\nabla\varphi_{,xx}|_6^2+c/\varepsilon|v|_6^2 |\nabla\eta|_6^2|\nabla\varphi_{,xx}|_2^2,\cr
&|L_{13}^2|\le\varepsilon|\nabla\varphi_{,xx}|_6^2+c/\varepsilon|\eta|_\infty^2 |\Delta\varphi|_3^2|\nabla\varphi_{,xx}|_2^2,\cr
&|L_{14}|\le\varepsilon|\nabla\varphi_{,xx}|_6^2+c/\varepsilon|\eta_x|_6^2 |v_x|_6^2|\nabla\varphi_{,x}|_2^2,\cr
&|L_{15}|\le\varepsilon|\nabla\varphi_{,xx}|_6^2+c/\varepsilon|\eta_x|_6^2 |v|_6^2|\nabla\varphi_{,xx}|_2^2,\cr
&|L_{16}|\le\varepsilon|\nabla\varphi_{,xx}|_6^2+c/\varepsilon|\eta|_\infty^2 |v_{,x}|_3^2|\nabla\varphi_{,xx}|_2^2.\cr}
$$
Summarizing the above estimates we get
\begin{equation}
|L_1|\le\varepsilon|\nabla\varphi_{,xx}|_6^2+c/\varepsilon\|\eta\|_2^2 \|\nabla\varphi_x\|_1^2(|v|_6^2+|v_{xx}|_3^2+|\Delta\varphi|_3^2+|v_x|_6^2).
\label{3.64}
\end{equation}
Next we consider $L_2$ from (\ref{3.63}). Performing differentiations we obtain
$$\eqal{
L_2&=\intop_\Omega[\eta_{,xx}v\cdot\nabla\rot\psi+\eta v_{xx}\cdot\nabla\rot\psi+\eta v\cdot\nabla\rot\psi_{xx}\cr
&\quad+2\eta_xv_x\cdot\nabla\rot\psi+2\eta_xv\cdot\nabla\rot\psi_x+2\eta v_x\cdot\nabla\rot\psi_x]\cdot\nabla\varphi_{,xx}dx\cr
&\equiv\sum_{i=1}^6L_{2i}.\cr}
$$
Now we estimate the terms from $L_2$.
$$
|L_{21}|\le\varepsilon|\nabla\varphi_{,xx}|_6^2+c/\varepsilon|\eta_{,xx}|_2^2 |v|_6^2|\nabla\rot\psi|_6^2.
$$
Integrating by parts in $L_{22}$ implies
$$\eqal{
L_{22}&=-\intop_\Omega\nabla\eta\cdot v_{xx}\rot\psi\cdot\nabla\varphi_{xx}dx- \intop_\Omega\eta\Delta\varphi_{xx}\rot\psi\cdot\nabla\varphi_{,xx}dx\cr
&\quad-\intop_\Omega\eta v_{,xx}\rot\psi\cdot\nabla\nabla\varphi_{,xx}dx\equiv L_{22}^1+L_{22}^2+L_{22}^3,\cr}
$$
where
$$\eqal{
&|L_{22}^1|\le\varepsilon|\nabla\varphi_{,xx}|_6^2+c/\varepsilon|\nabla\eta|_6^2 |v_{,xx}|_2^2|\rot\psi|_6^2,\cr
&|L_{22}^2|\le\varepsilon|\Delta\varphi_{xx}|_2^2+c/\varepsilon|\eta|_\infty^2 |\nabla\varphi_{xx}|_3^2|\rot\psi|_6^2,\cr
&|L_{22}^3|\le\varepsilon|\nabla\varphi_{,xxx}|_2^2+c/\varepsilon|\eta|_\infty^2 |v_{,xx}|_3^2|\rot\psi|_6^2.\cr}
$$
Next we examine $L_{23}$. Integration by parts yields
$$\eqal{
L_{23}&=-\intop_\Omega\nabla\eta\cdot v\rot\psi_{,xx}\cdot\nabla\varphi_{,xx}dx- \intop_\Omega\eta\Delta\varphi\rot\psi_{,xx}\cdot\nabla\varphi_{,xx}dx\cr
&\quad-\intop_\Omega\eta v\cdot\rot\psi_{,xx}\nabla^2\varphi_{,xx}dx=L_{23}^1+L_{23}^2+L_{23}^3.\cr}
$$
Continuing,
$$\eqal{
&|L_{23}^1|\le\varepsilon|\nabla\varphi_{,xx}|_6^2+c/\varepsilon|\nabla\eta|_6^2 |v|_6^2|\rot\psi_{,xx}|_2^2,\cr
&|L_{23}^2|\le\varepsilon|\nabla\varphi_{,xx}|_6^2+c/\varepsilon|\eta|_\infty^2 |\Delta\varphi|_3^2|\rot\psi_{,xx}|_2^2,\cr
&|L_{23}^3|\le\varepsilon|\nabla\varphi_{,xxx}|_2^2+c/\varepsilon|\eta|_\infty^2 |v|_6^2|\rot\psi_{,xx}|_3^2.\cr}
$$
Next we examine $L_{24}$. Integration by parts implies
$$\eqal{
L_{24}&=-2\intop_\Omega\nabla\eta_xv_x\rot\psi\cdot\nabla\varphi_{,xx}dx-2 \intop_\Omega\eta_x\Delta\varphi_{,x}\rot\psi\cdot\nabla\varphi_{,xx}dx\cr
&\quad-2\intop_\Omega\eta_xv_x\rot\psi\cdot\nabla\varphi_{,xxx}dx\equiv L_{24}^1+L_{24}^2+L_{24}^3.\cr}
$$
Estimating, we get
$$\eqal{
&|L_{24}^1|\le\varepsilon|\nabla\varphi_{,xx}|_6^2+c/\varepsilon|\eta_{,xx}|_2^2 |v_{,x}|_6^2|\rot\psi|_6^2,\cr
&|L_{24}^2|\le\varepsilon|\nabla\varphi_{,xx}|_6^2+c/\varepsilon|\eta_{,x}|_6^2 |\Delta\varphi_{,x}|_2^2|\rot\psi|_6^2,\cr
&|L_{24}^3|\le\varepsilon|\nabla\varphi_{,xxx}|_2^2+c/\varepsilon|\eta_{,x}|_6^2 |v_{,x}|_6^2|\rot\psi|_6^2.\cr}
$$
Next, we have
$$
|L_{25}|\le\varepsilon|\nabla\varphi_{,xx}|_6^2+c/\varepsilon|\eta_{,x}|_6^2 |v|_6^2|\rot\psi_{,xx}|_2^2.
$$
Finally, we integrate by parts in $L_{26}$. Then we have
$$\eqal{
L_{26}&=-2\intop_\Omega\eta_{,x}v\cdot\nabla\rot\psi_{,x}\cdot\nabla\varphi_{,xx} dx-2\intop_\Omega\eta v\cdot\nabla\rot\psi_{,xx}\cdot\nabla\varphi_{,xx}dx\cr
&\quad-2\intop_\Omega\eta v\cdot\nabla\rot\psi_{,x}\cdot\nabla\varphi_{,xxx}dx\equiv L_{26}^1+L_{26}^2+L_{26}^3.\cr}
$$
Consider $L_{26}^2$. Integration by parts yields
$$\eqal{
L_{26}^2&=2\intop_\Omega\nabla\eta\cdot v\rot\psi_{,xx}\cdot\nabla\varphi_{,xx}dx+2 \intop_\Omega\eta\Delta\varphi\rot\psi_{,xx}\cdot\nabla\varphi_{,xx}dx\cr
&\quad+2\intop_\Omega\eta v\cdot\nabla^2\varphi_{xx}\cdot\rot\psi_{xx}dx\equiv F_1+F_2+F_3.\cr}
$$
Continuing, we have
$$\eqal{
&|F_1|\le\varepsilon|\nabla\varphi_{,xx}|_6^2+c/\varepsilon|\nabla\eta|_6^2 |v|_6^2|\rot\psi_{,xx}|_2^2,\cr
&|F_2|\le\varepsilon|\nabla\varphi_{,xx}|_6^2+c/\varepsilon|\eta|_\infty^2 |\Delta\varphi|_3^2|\rot\psi_{,xx}|_2^2,\cr
&|F_3|\le\varepsilon|\nabla^2\varphi_{,xx}|_2^2+c/\varepsilon|\eta|_\infty^2 |v|_6^2|\rot\psi_{,xx}|_3^2.\cr}
$$
Finally, we have
$$\eqal{
&|L_{26}^1|\le\varepsilon|\nabla\varphi_{,xx}|_6^2+c/\varepsilon|\eta_{,x}|_6^2 |v|_6^2|\rot\psi_{,xx}|_2^2,\cr
&|L_{26}^2|\le\varepsilon\|\nabla\varphi_{,xx}\|_1^2+c/\varepsilon\|\eta\|_2^2 (|v|_6^2+|\Delta\varphi|_3^2)|\rot\psi_{,xx}|_3^2,\cr
&|L_{26}^3|\le\varepsilon|\nabla\varphi_{,xxx}|_2^2+c/\varepsilon|\eta|_\infty^2 |v|_6^2|\rot\psi_{,xx}|_3^2.\cr}
$$
Collecting all estimates for $L_2$ we have
\begin{equation}\eqal{
|L_2|&\le\varepsilon\|\nabla\varphi_{,xx}\|_1^2+{c\over\varepsilon}\|\eta\|_2^2 [|v|_6^2|\rot\psi_{,x}|_6^2+(|v_{,xx}|_3^2+|\nabla\varphi_{xx}|_3^2) |\rot\psi|_6^2\cr
&\quad+(|v|_6^2+|\Delta\varphi|_3^2)|\rot\psi_{,xx}|_3^2+(|v_x|_6^2+ |\Delta\varphi_x|_2^2)|\rot\psi|_6^2].\cr}
\label{3.65}
\end{equation}
Now we collect all estimates for the third term on the r.h.s. of (\ref{3.60}). Using (\ref{3.61}), (\ref{3.62}), (\ref{3.63}) and (\ref{3.64}) we have
$$\eqal{
|H|&\le\varepsilon\|\nabla\varphi_{,xx}\|_1^2+c/\varepsilon[ |\nabla\varphi_{,xx}|_3^2(|\Delta\varphi|_3^2+|\nabla\varphi|_6^2+D_1^2)\cr
&\quad+|v_{,xx}|_3^2(D_1^2+|\nabla\varphi|_6^2)+|\rot\psi_{,xx}|_3^2(D_1^2+ |\Delta\varphi|_3^2)]\cr}
$$
and
$$\eqal{
|L|&\le
\varepsilon\|\nabla\varphi_{,xx}\|_1^2+c/\varepsilon\|\eta\|_2^2[ \|\nabla\varphi_{,x}\|_1^2(|v|_6^2+|v_{,x}|_6^2+|v_{,xx}|_3^2+ |\Delta\varphi|_3^2)\cr
&\quad+|\rot\psi_{xx}|_3^2(|v|_6^2+|\Delta\varphi|_3^2)+|\rot\psi|_6^2 (|v_{,x}|_6^2+|v_{,xx}|_3^2+|\nabla\varphi_{,xx}|_3^2)].\cr}
$$
Using that $v=\nabla\varphi+\rot\psi$ we derive
$$\eqal{
|K|&\le|H|+|L|\le\varepsilon\|\nabla\varphi_{,xx}\|_1^2+c/\varepsilon [|\nabla\varphi_{,xx}|_3^2(|\Delta\varphi|_3^2+|\nabla\varphi|_6^2+D_1^2)\cr
&\quad+(1+\|\eta\|_2^2)|\rot\psi_{,xx}|_3^2(D_1^2+|\Delta\varphi|_3^2)\cr
&\quad+\|\eta\|_2^2\|\nabla\varphi_{,x}\|_1^2(D_1^2+|v_{,x}|_6^2+|v_{,xx}|_3^2+ |\Delta\varphi|_3^2)\cr
&\quad+\|\eta\|_2^2|\rot\psi|_6^2(|v_{,x}|_6^2+|v_{,xx}|_3^2+ |\nabla\varphi_{,xx}|_3^2)].\cr}
$$
The fourth term on the r.h.s. of (\ref{3.60}) is estimated by
$$
\varepsilon|\nabla\varphi_{,xxx}|_2^2+c/\varepsilon\|\eta\|_2^4.
$$
Finally, the last term on the r.h.s. of (\ref{3.60}) is bounded by
$$
\varepsilon|\nabla\varphi_{,xxx}|_2^2+c/\varepsilon[|f_{g,x}|_2^2+ |\eta_{,x}|_6^2|f|_3^2+|\eta|_\infty^2|f_x|_2^2].
$$
Using the above estimates in (\ref{3.60}) and assuming that $\varepsilon\le\nu/2$ we obtain the inequality
\begin{equation}\eqal{
&a{d\over dt}|\nabla\varphi_{,xx}|_2^2+\mu|\nabla^2\varphi_{,xx}|_2^2+\nu |\Delta\varphi_{,xx}|_2^2\le{c\over\nu}[\|\eta\|_2^2\|v_t\|_1^2\cr
&\quad+\|\eta\|_2^2+\|\eta\|_2^4+|\nabla\varphi_{,xx}|_3^2(|\Delta\varphi|_3^2+ |\nabla\varphi|_6^2+D_1^2)\cr
&\quad+(1+\|\eta\|_2^2)|\rot\psi_{,xx}|_3^2(D_1^2+|\Delta\varphi|_3^2)+ \|\eta\|_2^2\|\nabla\varphi_{,x}\|_1^2(D_1^2+|v_{,x}|_6^2\cr
&\quad+|v_{,xx}|_3^2+|\Delta\varphi|_3^2)+\|\eta\|_2^2(D_1^2+|\nabla\varphi|_6^2) (|v_{,x}|_6^2+|v_{,xx}|_3^2+|\nabla\varphi_{,xx}|_3^2)\cr
&\quad+(1+\|\eta\|_2^2)\|f\|_1^2].\cr}
\label{3.66}
\end{equation}
Simplifying, we write (\ref{3.66}) in the form
\begin{equation}\eqal{
&a{d\over dt}|\nabla\varphi_{,xx}|_2^2+\mu|\nabla\varphi_{,xxx}|_2^2+\nu |\Delta\varphi_{,xx}|_2^2\le{c\over\nu}[\|\eta\|_2^2\|v_{,t}\|_1^2\cr
&\quad+\|\eta\|_2^2(\|\eta\|_2^2+1)+|\nabla\varphi_{,xx}|_3^2(1+\|\eta\|_2^2) (D_1^2+\|\nabla\varphi\|_2^2)\cr
&\quad+|\rot\psi_{,xx}|_3^2(1+\|\eta\|_2^2)(D_1^2+\|\nabla\varphi\|_2^2)\cr
&\quad+\|\eta\|_2^2\|\nabla\varphi_{,x}\|_1^2(D_1^2+|v_{,x}|_6^2+ |\Delta\varphi|_3^2)+\|\eta\|_2^2(D_1^2+|\nabla\varphi|_6^2)|v_{,x}|_6^2\cr
&\quad+(1+\|\eta\|_2^2)\|f\|_1^2].\cr}
\label{3.67}
\end{equation}
Differentiating (\ref{3.2}) twice with respect to $x$, multiplying by $\rot\psi_{,xx}$ and integrating over $\Omega$ implies
\begin{equation}\eqal{
&{a\over2}{d\over dt}|\rot\psi_{,xx}|_2^2+\mu|\nabla\rot\psi_{,xx}|_2^2= -\intop_\Omega[\eta v_{,t}]_{,xx}\cdot\rot\psi_{,xx}dx\cr
&\quad-\intop_\Omega[(a+\eta)v\cdot\nabla v]_{,xx}\cdot\rot\psi_{,xx}dx+ \intop_\Omega[(a+\eta)f]_{,xx}\cdot\rot\psi_{,xx}dx.\cr}
\label{3.68}
\end{equation}
The first term on the r.h.s. of (\ref{3.68}) is bounded by
\begin{equation}
\varepsilon|\rot\psi_{,xxx}|_2^2+c/\varepsilon\|\eta\|_2^2\|v_{,t}\|_1^2.
\label{3.69}
\end{equation}
The second term on the r.h.s. of (\ref{3.68}) is expressed in the form
$$
a\intop_\Omega(v\cdot\nabla v)_{,xx}\cdot\rot\psi_{,xx}dx+\intop_\Omega(\eta v\cdot\nabla v)_{,xx}\cdot\rot\psi_{,xx}dx\equiv J+K.
$$
First we consider $J$. We drop $a$ for simplicity. Then we have
$$
J=\intop_\Omega(v\cdot\nabla\nabla\varphi)_{,xx}\cdot\rot\psi_{,xx}dx+ \intop_\Omega(v\cdot\nabla\rot\psi)_{,xx}\cdot\rot\psi_{xx}dx\equiv J_1+J_2.
$$
Consider $J_1$,
$$
J_1=\intop_\Omega[v_{,xx}\cdot\nabla\nabla\varphi+v\cdot\nabla\nabla\varphi_{xx} +2v_{,x}\nabla\nabla\varphi_{,x}]\cdot\rot\psi_{,xx}dx\equiv J_{11}+J_{12}+J_{13}.
$$
Integrating by parts in $J_{11}$ yields
$$
J_{11}=-\intop_\Omega\Delta\varphi_{,xx}\nabla\varphi\cdot\rot\psi_{,xx}dx- \intop_\Omega v_{,xx}\nabla\varphi\cdot\nabla\rot\psi_{,xx}dx\equiv J_{11}^1+J_{11}^2,
$$
where
$$\eqal{
&|J_{11}^1|\le\varepsilon|\rot\psi_{,xx}|_6^2+c/\varepsilon |\nabla\varphi_{,xxx}|_2^2|\nabla\varphi|_3^2,\cr
&|J_{11}^2|\le\varepsilon|\rot\psi_{,xxx}|_2^2+c/\varepsilon|v_{,xx}|_3^2 |\nabla\varphi|_6^2.\cr}
$$
Integrating by parts in $J_{13}$ yields
$$
J_{12}+J_{13}=-\intop_\Omega v\cdot\nabla\nabla\varphi_{,xx}\cdot\rot\psi_{,xx}dx-2\intop_\Omega v\cdot\nabla\nabla\varphi_{,x}\cdot\rot\psi_{,xxx}dx\equiv K_1+K_2,
$$
where
$$\eqal{
&|K_1|\le\varepsilon|\rot\psi_{,xx}|_6^2+c/\varepsilon|\nabla\varphi_{,xxx}|_2^2 |v|_3^2,\cr
&|K_2|\le\varepsilon|\rot\psi_{,xxx}|_2^2+c/\varepsilon |\nabla\varphi_{,xx}|_3^2|v|_6^2.\cr}
$$
Summarizing,
$$\eqal{
|J_1|&\le\varepsilon\|\rot\psi_{,xx}\|_1^2+c/\varepsilon|\nabla\varphi_{,xxx}|_2^2 (|\nabla\varphi|_3^2+|v|_3^2)+c/\varepsilon(|v_{,xx}|_3^2|\nabla\varphi|_6^2\cr
&\quad+|\nabla\varphi_{,xx}|_3^2|v|_6^2).\cr}
$$
Consider $J_2$. We express it in the form
$$
J_2=\intop_\Omega[v_{,xx}\cdot\nabla\rot\psi+2v_{,x}\cdot\nabla\rot\psi_{,x}+ v\cdot\nabla\rot\psi_{,xx}]\cdot\rot\psi_{,xx}dx\equiv J_{21}+J_{22}+J_{23}.
$$
Integration by parts in $J_{21}$ gives
$$
J_{21}=-\intop_\Omega\Delta\varphi_{,xx}\rot\psi\cdot\rot\psi_{,xx}dx- \intop_\Omega v_{,xx}\rot\psi\cdot\nabla\rot\psi_{,xx}dx\equiv J_{21}^1+J_{21}^2,
$$
where
$$\eqal{
&|J_{21}^1|\le\varepsilon|\rot\psi_{,xx}|_6^2+c/\varepsilon |\Delta\varphi_{,xx}|_2^2|\rot\psi|_3^2,\cr
&|J_{21}^2|\le\varepsilon|\rot\psi_{,xxx}|_2^2+c/\varepsilon|v_{,xx}|_3^2 |\rot\psi|_6^2.\cr}
$$
Integrating by parts in $J_{22}$ implies
$$\eqal{
J_{22}+J_{23}&=-\intop_\Omega v\cdot\nabla\rot\psi_{,xx}\cdot\rot\psi_{,xx}dx-2\intop_\Omega v\cdot\nabla\rot\psi_{,x}\cdot\rot\psi_{,xxx}dx\cr
&\equiv M_1+M_2,\cr}
$$
where
$$
M_1=-{1\over2}\intop_\Omega v\cdot\nabla|\rot\psi_{,xx}|^2dx={1\over2}\intop_\Omega\Delta\varphi |\rot\psi_{,xx}|^2dx
$$
and
$$
|M_1|\le\varepsilon|\rot\psi_{,xx}|_6^2+c/\varepsilon|\Delta\varphi|_2^2 |\rot\psi_{,xx}|_3^2.
$$
Finally,
$$
|M_2|\le\varepsilon|\rot\psi_{,xxx}|_2^2+c/\varepsilon|\rot\psi_{,xx}|_3^2 |v|_6^2.
$$
Summarizing,
$$\eqal{
|J_2|&\le\varepsilon\|\rot\psi_{,xx}\|_1^2+c/\varepsilon [|\Delta\varphi_{,xx}|_2^2|\rot\psi|_3^2+|v_{,xx}|_3^2|\rot\psi|_6^2\cr
&\quad+|\rot\psi_{,xx}|_3^2(|\Delta\varphi|_2^2+|v|_6^2)].\cr}
$$
Collecting all estimates for $J$ we get
\begin{equation}\eqal{
|J|&\le\varepsilon\|\rot\psi_{,xx}\|_1^2+c/\varepsilon[|\nabla\varphi_{,xxx}|_2^2 (|\nabla\varphi|_3^2+|v|_3^2+|\rot\psi|_3^2)\cr
&\quad+(|\nabla\varphi_{,xx}|_3^2+|\rot\psi_{,xx}|_3^2)(|v|_6^2+ |\nabla\varphi|_6^2+|\Delta\varphi|_2^2)]\cr}
\label{3.70}
\end{equation}
Finally, we consider $K$. We write it in the form
$$\eqal{
K&=\intop_\Omega[\eta v\cdot\nabla\nabla\varphi]_{,xx}\cdot\rot\psi_{,xx}dx+ \intop_\Omega[\eta v\cdot\nabla\rot\psi]_{,xx}\cdot\rot\psi_{,xx}dx\cr
&\equiv K_1+K_2.\cr}
$$
Performing differentiations we have
$$\eqal{
K_1&=\intop_\Omega[\eta_{xx}v\cdot\nabla\nabla\varphi+\eta v_{,xx}\cdot\nabla\nabla\varphi+\eta v\cdot\nabla\nabla\varphi_{,xx}+2\eta_{,x}v_{,x}\cdot\nabla\nabla\varphi\cr
&\quad+2\eta_{,x}v\cdot\nabla\nabla\varphi_{,x}+2\eta v_{,x}\nabla\nabla\varphi_{,x}]\cdot\rot\psi_{,xx}dx\equiv\sum_{i=1}^6K_{1i}.\cr}
$$
Now we estimate the terms $K_{1i}$, $i=1,\dots,6$. We have
$$\eqal{
&|K_{11}|\le\varepsilon|\rot\psi_{,xx}|_6^2+c/\varepsilon|\eta_{,xx}|_2^2 |v|_6^2|\nabla\varphi_{,x}|_6^2,\cr
&|K_{12}|\le\varepsilon|\rot\psi_{,xx}|_6^2+c/\varepsilon|\eta|_\infty^2 |v_{,xx}|_2^2|\nabla\varphi_{,x}|_3^2,\cr
&|K_{13}|\le\varepsilon|\rot\psi_{,xx}|_6^2+c/\varepsilon|\eta|_\infty^2|v|_3^2 |\nabla\varphi_{,xxx}|_2^2.\cr}
$$
Integrating by parts in $K_{14}$ yields
$$\eqal{
K_{14}&=-2\intop_\Omega\eta_{xx}v\cdot\nabla\nabla\varphi\cdot\rot\psi_{,xx}dx- 2\intop_\Omega\eta_{,x}v\cdot\nabla\nabla\varphi_x\cdot\rot\psi_{,xx}dx\cr
&\quad-2\intop_\Omega\eta_{,x}v\cdot\nabla\nabla\varphi\cdot\rot\psi_{,xxx}dx \equiv K_{14}^1+K_{14}^2+K_{14}^3,\cr}
$$
where
$$\eqal{
&|K_{14}^1|\le\varepsilon|\rot\psi_{,xx}|_6^2+c/\varepsilon|\eta_{,xx}|_2^2 |v|_6^2|\nabla\varphi_{,x}|_6^2,\cr
&|K_{14}^2|\le\varepsilon|\rot\psi_{,xx}|_6^2+c/\varepsilon|\eta_{,x}|_6^2 |v|_6^2|\nabla\varphi_{,xx}|_2^2,\cr
&|K_{14}^3|\le\varepsilon|\rot\psi_{,xxx}|_2^2+c/\varepsilon|\eta_{,x}|_6^2 |v|_6^2|\nabla\varphi_{,x}|_6^2.\cr}
$$
Next, we have
$$
|K_{15}|\le\varepsilon|\rot\psi_{,xx}|_6^2+c/\varepsilon|\eta_{,x}|_6^2|v|_6^2 |\nabla\varphi_{,xx}|_2^2.
$$
Integrating by parts in $K_{16}$ gives
$$\eqal{
K_{16}&=-2\intop_\Omega\eta_{,x}v\cdot\nabla\nabla\varphi_{,x}\cdot \rot\psi_{,xx}dx\cr
&\quad-2\intop_\Omega\eta v\cdot\nabla\nabla\varphi_{,xx}\cdot\rot\psi_{,xx}dx\cr
&\quad-2\intop_\Omega\eta v\cdot\nabla\nabla\varphi_{,x}\cdot\rot\psi_{,xxx}dx\equiv K_{16}^1+K_{16}^2+K_{16}^3,\cr}
$$
where
$$\eqal{
&|K_{16}^1|\le\varepsilon|\rot\psi_{,xx}|_6^2+c/\varepsilon|\eta_{,x}|_6^2 |v|_6^2|\nabla\varphi_{,xx}|_2^2,\cr
&|K_{16}^2|\le\varepsilon|\rot\psi_{,xx}|_6^2+c/\varepsilon|\eta|_\infty^2 |v|_3^2|\nabla\varphi_{,xxx}|_2^2,\cr
&|K_{16}^3|\le\varepsilon|\rot\psi_{,xxx}|_2^2+c/\varepsilon|\eta|_\infty^2 |v|_6^2|\nabla\varphi_{,xx}|_3^2.\cr}
$$
Summarizing, the above estimates yields
\begin{equation}\eqal{
|K_1|&\le\varepsilon\|\rot\psi_{,xx}\|_1^2+c/\varepsilon[\|\eta\|_2^2|v|_6^2 \|\nabla\varphi_{,x}\|_1^2+|\eta|_\infty^2|v_{,xx}|_2^2|\nabla\varphi_{,x}|_3^2\cr
&\quad+|\eta|_\infty^2|v|_6^2\|\nabla\varphi_{,xx}\|_1^2].\cr}
\label{3.71}
\end{equation}
Finally, we estimate $K_2$. Performing differentiations we have
$$\eqal{
K_2&=\intop_\Omega[\eta_{,xx}v\cdot\nabla\rot\psi+\eta v_{,xx}\cdot\nabla\rot\psi+\eta v\cdot\nabla\rot\psi_{,xx}+2\eta_{,x}v_{,x}\cdot\nabla\rot\psi\cr
&\quad+2\eta_{,x}v\cdot\nabla\rot\psi_{,x}+2\eta v_{,x}\cdot\nabla\rot\psi_{,x}]\cdot\rot\psi_{,xx}dx\equiv\sum_{i=1}^6K_{2i}.\cr}
$$
Now, we estimate $K_{2i}$, $i=1,\cdots,6$.
$$
|K_{21}|\le\varepsilon|\rot\psi_{,xx}|_6^2+c/\varepsilon|\eta_{,xx}|_2^2|v|_6^2 |\nabla\rot\psi|_6^2.
$$
To estimate $K_{22}$ we integrate by parts. Then we have
$$\eqal{
K_{22}&=-\intop_\Omega\nabla\eta\cdot v_{,xx}\rot\psi\cdot\rot\psi_{,xx}dx- \intop_\Omega\eta\Delta\varphi_{,xx}\rot\psi\cdot\rot\psi_{,xx}dx\cr
&\quad-\intop_\Omega\eta v_{,xx}\cdot\rot\psi\cdot\nabla\rot\psi_{,xx}dx\equiv K_{22}^1+K_{22}^2+K_{22}^3,\cr}
$$
where
$$\eqal{
&|K_{22}^1|\le\varepsilon|\rot\psi_{,xx}|_6^2+c/\varepsilon|\nabla\eta|_6^2 |v_{,xx}|_2^2|\rot\psi|_6^2,\cr
&|K_{22}^2|\le\varepsilon|\rot\psi_{,xx}|_6^2+c/\varepsilon|\eta|_\infty^2 |\nabla\varphi_{,xxx}|_2^2|\rot\psi|_3^2,\cr
&|K_{22}^3|\le\varepsilon|\rot\psi_{,xxx}|_2^2+c/\varepsilon|\eta|_\infty^2 |v_{,xx}|_3^2|\rot\psi|_6^2.\cr}
$$
Consider $K_{23}$. Integration by parts yields
$$\eqal{
K_{23}&={1\over2}\intop_\Omega\eta v\cdot\nabla|\rot\psi_{,xx}|^2dx=-{1\over2} \intop_\Omega v\cdot\nabla\eta|\rot\psi_{,xx}|^2dx\cr
&\quad-{1\over2}\intop_\Omega\eta\Delta\varphi|\rot\psi_{,xx}|^2dx\equiv K_{23}^1+K_{23}^2,\cr}
$$
where
$$\eqal{
&|K_{23}^1|\le\varepsilon|\rot\psi_{,xx}|_6^2+c/\varepsilon|\nabla\eta|_6^2 |v|_6^2|\rot\psi_{,xx}|_2^2,\cr
&|K_{23}^2|\le\varepsilon|\rot\psi_{,xx}|_6^2+c/\varepsilon|\eta|_\infty^2 |\Delta\varphi|_3^2|\rot\psi_{,xx}|_2^2.\cr}
$$
Integrating by parts in $K_{24}$ gives
$$\eqal{
K_{24}&=-2\intop_\Omega\eta_{,xx}v\cdot\nabla\rot\psi\cdot\rot\psi_{,xx}dx- 2\intop_\Omega\eta_{,x}v\cdot\nabla\rot\psi_{,x}\cdot\rot\psi_{,xx}dx\cr
&\quad-2\intop_\Omega\eta_{,x}v\cdot\nabla\rot\psi\cdot\rot\psi_{,xxx}dx\equiv K_{24}^1+K_{24}^2+K_{24}^3,\cr}
$$
where
$$\eqal{
&|K_{24}^1|\le\varepsilon|\rot\psi_{,xx}|_6^2+c/\varepsilon|\eta_{,xx}|_2^2 |v|_6^2|\rot\psi_{,x}|_6^2,\cr
&|K_{24}^2|\le\varepsilon|\rot\psi_{,xx}|_6^2+c/\varepsilon|\eta_{,x}|_6^2 |v|_6^2|\rot\psi_{,xx}|_2^2,\cr
&|K_{24}^3|\le\varepsilon|\rot\psi_{,xxx}|_2^2+c/\varepsilon|\eta_{,x}|_6^2 |v|_6^2|\rot\psi_{,x}|_6^2.\cr}
$$
Next, we have
$$
|K_{25}|\le\varepsilon|\rot\psi_{,xx}|_6^2+c/\varepsilon|\eta_{,x}|_6^2|v|_6^2 |\rot\psi_{,xx}|_2^2.
$$
Integrating by parts in $K_{26}$ implies
$$\eqal{
K_{26}&=-2\intop_\Omega\eta_{,x}v\cdot\nabla\rot\psi_{,x}\cdot\rot\psi_{,xx}dx- 2\intop_\Omega\eta v\cdot\nabla\rot\psi_{,xx}\cdot\rot\psi_{xx}dx\cr
&\quad-2\intop_\Omega\eta v\cdot\nabla\rot\psi_{,x}\cdot\rot\psi_{,xxx}dx\equiv K_{26}^1+K_{26}^2+K_{26}^3,\cr}
$$
where
$$
|K_{26}^1|\le\varepsilon|\rot\psi_{,xx}|_6^2+c/\varepsilon|\eta_{,x}|_6^2|v|_6^2 |\rot\psi_{,xx}|_2^2,
$$
$$\eqal{
K_{26}^2&=-\intop_\Omega\eta v\cdot\nabla|\rot\psi_{,xx}|^2dx\cr
&=\intop_\Omega \nabla\eta\cdot v|\rot\psi_{,xx}|^2dx+\intop_\Omega\eta\Delta\varphi|\rot\psi_{,xx}|^2dx\cr
&\equiv D_1+D_2.\cr}
$$
Continuing, we have
$$\eqal{
&|D_1|\le\varepsilon|\rot\psi_{,xx}|_6^2+c/\varepsilon|\nabla\eta|_6^2|v|_6^2 |\rot\psi_{,xx}|_2^2,\cr
&|D_2|\le\varepsilon|\rot\psi_{,xx}|_6^2+c/\varepsilon|\eta|_\infty^2 |\Delta\varphi|_3^2|\rot\psi_{,xx}|_2^2.\cr}
$$
Finally, we estimate
$$
|K_{26}^3|\le\varepsilon|\rot\psi_{,xxx}|_2^2+c/\varepsilon|\eta|_\infty^2 |v|_6^2|\rot\psi_{,xx}|_3^2.
$$
Summarizing the estimates we get
\begin{equation}\eqal{
|K_2|&\le\varepsilon\|\rot\psi_{,xx}\|_1^2+c/\varepsilon\|\eta\|_2^2 [(|v|_6^2+|\Delta\varphi|_3^2)\|\rot\psi_{,x}\|_1^2\cr
&\quad+(D_1^2+|\nabla\varphi|_6^2)(|\rot\psi_{,xx}|_3^2+|\nabla\varphi_{,xx}|_3^2 +|\nabla\varphi_{,xxx}|_2^2)].\cr}
\label{3.72}
\end{equation}
The last term on the r.h.s. of (\ref{3.68}) is bounded by
\begin{equation}
\varepsilon|\rot\psi_{,xxx}|_2^2+c/\varepsilon(\|f_r\|_1^2+\|\eta\|_2^2\|f\|_1^2).
\label{3.73}
\end{equation}
Using estimates (\ref{3.69})--(\ref{3.73}) in (\ref{3.68}) and assuming that $\varepsilon$ is sufficiently small we get
\begin{equation}\eqal{
&a{d\over dt}|\rot\psi_{,xx}|_2^2+\mu|\nabla\rot\psi_{,xx}|_2^2\le c[\|\eta\|_2^2\|v_{,t}\|_1^2\cr
&\quad+|\nabla\varphi_{,xxx}|_2^2(1+\|\eta\|_2^2)(|\nabla\varphi|_6^2+|v|_6^2)\cr
&\quad+(|\nabla\varphi_{,xx}|_3^2+|\rot\psi_{,xx}|_3^2)(1+\|\eta\|_2^2)(|v|_6^2+ \|\nabla\varphi\|_1^2)\cr
&\quad+\|\nabla\varphi_{,x}\|_1^2\|\eta\|_2^2(|v|_6^2+|v_{,xx}|_2^2)+ \|\rot\psi_{,x}\|_1^2\|\eta\|_2^2(|v|_6^2+\|\nabla\varphi\|_2^2)\cr
&\quad+(1+\|\eta\|_2^2)\|f\|_1^2].\cr}
\label{3.74}
\end{equation}
Multiplying (\ref{3.74}) by $1/\nu$ and adding to (\ref{3.67}) yields
\begin{equation}\eqal{
&a{d\over dt}\bigg(|\nabla\varphi_{,xx}|_2^2+{1\over\nu}|\rot\psi_{,xx}|_2^2\bigg)+\mu \bigg(|\nabla\varphi_{,xxx}|_2^2+{1\over\nu}|\rot\psi_{,xxx}|_2^2\bigg)\cr
&\quad+\nu|\Delta\varphi_{,xx}|_2^2\le{c\over\nu}[\|\eta\|_2^2\|v_{,t}\|_1^2+ \|\eta\|_2^2(\|\eta\|_2^2+1)\cr
&\quad+|\nabla\varphi_{,xxx}|_2^2(1+\|\eta\|_2^2)(|\nabla\varphi|_6^2+|v|_6^2)\cr
&\quad+(1+\|\eta\|_2^2)(|\nabla\varphi_{,xx}|_3^2+|\rot\psi_{,xx}|_3^2) (|v|_6^2+\|\nabla\varphi\|_2^2)\cr
&\quad+\|\eta\|_2^2\|\nabla\varphi_{,x}\|_1^2(|v|_6^2+\|\nabla\varphi\|_2^2+ \|v_{,x}\|_1^2)+\|\eta\|_2^2\|\rot\psi_{,x}\|_1^2(|v|_6^2+\|\nabla\varphi\|_2^2)\cr
&\quad+\|\eta\|_2^2(|v|_6^2+|\nabla\varphi|_6^2)|v_{,x}|_6^2+(1+\|\eta\|_2^2) \|f\|_1^2].\cr}
\label{3.75}
\end{equation}
Now we simplify (\ref{3.75}). First we multiply it by $\nu$. Next we have to eliminate $\rot\psi$ from the r.h.s. of (\ref{3.75}). For this purpose we use the interpolations:
$$
|\rot\psi_{,xx}|_3^2\le c|\rot\psi_{,xxx}|_2^{5/3}|\rot\psi|_2^{1/3}.
$$
Then
$$\eqal{
\alpha|\rot\psi_{,xx}|_3^2&\le\varepsilon|\rot\psi_{,xxx}|_2^2+c(1/\varepsilon) \alpha^6|\rot\psi|_2^2\cr
&\le\varepsilon|\rot\psi_{,xxx}|_2^2+c(1/\varepsilon)\alpha^6|v|_2^2,\cr}
$$
where $\alpha=(1+\|\eta\|_2^2)(|v|_6^2+\|\nabla\varphi\|_2^2)$.

\noindent
Next
$$
|\rot\psi_{,xx}|_2^2\le c|\rot\psi_{,xxx}|_2^{4/3}|\rot\psi|_2^{2/3}.
$$
Hence
$$
\beta|\rot\psi_{,xx}|_2^2\le\varepsilon|\rot\psi_{,xxx}|_2^2+c(1/\varepsilon) \beta^3|v|_2^2,
$$
where $\beta=\|\eta\|_2^2(|v|_6^2+\|\nabla\varphi\|_2^2)$.

\noindent
Finally we have that
$$
|v_{,xx}|_2^2\le|\rot\psi_{,xx}|_2^2+|\nabla\varphi_{,xx}|_2^2
$$
and
$$
|v_{,x}|_6^2\le c(|v_{,xx}|_2^2+|v_{,x}|_2^2).
$$
Then (\ref{3.75}) takes the form
\begin{equation}\eqal{
&a{d\over dt}(\nu|\nabla\varphi_{,xx}|_2^2+|\rot\psi_{,xx}|_2^2)+\mu(\nu |\nabla\varphi_{,xxx}|_2^2+|\rot\psi_{,xxx}|_2^2)\cr
&\quad+\nu^2|\Delta\varphi_{,xx}|_2^2\le c[\|\eta\|_2^2\|v_{,t}\|_1^2+ \|\eta\|_2^2(\|\eta\|_2^2+1)\cr
&\quad+|\nabla\varphi_{,xxx}|_2^2(1+\|\eta\|_2^2)(|\nabla\varphi|_6^2+|v|_6^2)\cr
&\quad+(1+\|\eta\|_2^2)(|v|_6^2+\|\nabla\varphi\|_2^2)|\nabla\varphi_{,xx}|_3^2\cr
&\quad+(1+\|\eta\|_2^2)^6(|v|_6^2+\|\nabla\varphi\|_2^2)^2|v|_2^2\cr
&\quad+\|\eta\|_2^2(|v|_6^2+\|\nabla\varphi\|_2^2)\|\nabla\varphi_{,x}\|_1^2\cr
&\quad+\|\eta\|_2^6(|v|_6^2+ \|\nabla\varphi\|_2^2)^3|v|_2^2\cr
&\quad+\|\eta\|_2^2(|v|_6^2+|\nabla\varphi|_6^2)|v_{,x}|_2^2+(1+\|\eta\|_2^2) \|f\|_1^2]\cr
&\equiv c[\|\eta\|_2^2\|v_{,t}\|_1^2+\|\eta\|_2^2(\|\eta\|_2^2+1)+\phi_{13} |\nabla\varphi_{,xxx}|_2^2\cr
&\quad+\phi_{14}|\nabla\varphi_{,xx}|_3^2+\phi_{15}\|\nabla\varphi_{,x}\|_1^2+ \phi_{16}|v|_2^2+\phi_{17}|v_{,x}|_2^2\cr
&\quad+(1+\|\eta\|_2^2)\|f\|_1^2].\cr}
\label{3.76}
\end{equation}
Integrating (\ref{3.76}) with respect to time yields
\begin{equation}\eqal{
&a(\nu|\nabla\varphi_{,xx}(t)|_2^2+|\rot\psi_{,xx}(t)|_2^2)+\mu(\nu |\nabla\varphi_{,xxx}|_{2,\Omega^t}^2+|\rot\psi_{,xxx}|_{2,\Omega^t}^2)\cr
&\quad+\nu^2|\Delta\varphi_{,xx}|_{2,\Omega^t}^2\le c[\|\eta\|_{2,\infty,\Omega^t}^2\|v_{,t}\|_{1,2,\Omega^t}^2\cr
&\quad+\|\eta\|_{2,2,\Omega^t}^2(\|\eta\|_{2,\infty,\Omega^t}^2+1)+\sup_t\phi_{13} |\nabla\varphi_{,xxx}|_{2,\Omega^t}^2\cr
&\quad+\sup_t\phi_{14}|\nabla\varphi_{,xx}|_{3,2,\Omega^t}^2+\sup_t\phi_{15} \|\nabla\varphi\|_{2,2,\Omega^t}^2\cr
&\quad+(\sup_t\phi_{16}+\sup_t\phi_{17})A_1^2+(1+\|\eta\|_{2,\infty,\Omega^t}^2) \|f\|_{1,2,\Omega^t}^2]\cr
&\quad+a(\nu|\nabla\varphi_{,xx}(0)|_2^2+|\rot\psi_{,xx}(0)|_2^2),\cr}
\label{3.77}
\end{equation}
where
\begin{equation}\eqal{
&\sup_t\phi_{13}\le(1+\|\eta\|_{2,\infty,\Omega^t}^2) (|\nabla\varphi|_{6,\infty,\Omega^t}^2+D_1^2),\cr
&\sup_t\phi_{14}\le(1+\|\eta\|_{2,\infty,\Omega^t}^2) (\|\nabla\varphi\|_{2,\infty,\Omega^t}^2+D_1^2)\cr
&\sup_t\phi_{15}\le\|\eta\|_{2,\infty,\Omega^t}^2 (\|\nabla\varphi\|_{2,\infty,\Omega^t}^2+D_1^2)\cr
&\sup_t\phi_{16}\le\phi(\|\eta\|_{2,\infty,\Omega^t}^2, \|\nabla\varphi\|_{2,\infty,\Omega^t},D_1)\cr
&\sup_t\phi_{17}\le\|\eta\|_{2,\infty,\Omega^t}^2(D_1^2+ \|\nabla\varphi\|_{1,\infty,\Omega^t}^2).\cr}
\label{3.78}
\end{equation}
Using the inequalities
\begin{equation}\eqal{
&|\nabla\varphi_{,xxx}|_{2,\Omega^t}\le{\Psi\over\nu}, |\nabla\varphi_{,xx}|_{3,2,\Omega^t}\le{\Psi\over\nu},\cr
&\|\eta\|_{2,\infty,\Omega^t}\le\exp(t^{1/2}\Phi_*(t))\bigg(t^{1/2}{\Psi\over\nu} +\|\eta(0)\|_2\bigg),\cr
&\|\eta\|_{2,2,\Omega^t}\le\exp(t^{1/2}\Phi_*(t))\bigg(t^{3/2}{\Psi\over\nu}+ \|\eta(0)\|_2\bigg),\cr}
\label{3.79}
\end{equation}
where $\|\eta(0)\|_2\le{c_0\over\nu}$,
$$\eqal{
&|D_1|\le\phi\bigg({\Psi\over\nu^\alpha}\bigg)+c|\eta|_{3,6,\Omega^t}+A_2,\cr
&\|\nabla\varphi\|_{2,\infty,\Omega^t}\le c{\chi_0\over\sqrt{\nu}},\cr}
$$
we write (\ref{3.77}) in the form
\begin{equation}\eqal{
&a(\nu|\nabla\varphi_{,xx}(t)|_2^2+|\rot\psi_{,xx}(t)|_2^2)+\mu(\nu |\nabla\varphi_{,xxx}|_{2,\Omega^t}^2+|\rot\psi_{,xxx}|_{2,\Omega^t}^2)\cr
&\quad+\nu^2|\Delta\varphi_{,xx}|_{2,\Omega^t}^2\le\exp(\sqrt{t}\Phi_*(t)) \bigg(t{\Psi^2\over\nu^2}+{c_0^2\over\nu^2}\bigg)\|v_{,t}\|_{1,2,\Omega^t}^2\cr
&\quad+\phi\bigg(A_1,A_2,\exp(\sqrt{t}\Phi_*(t))\bigg(\sqrt{t}{\Psi\over\nu}+ {c_0\over\nu}\bigg),{\Psi\over\nu},{\chi_0\over\sqrt{\nu}}\bigg)\cr
&\quad+\bigg(1+\exp(\sqrt{t}\Phi_*(t))\bigg(t{\Psi^2\over\nu^2}+ {c_0^2\over\nu^2}\bigg)\bigg)\|f\|_{1,2,\Omega^t}^2\cr
&\quad+a(\nu|\nabla\varphi_{,xx}(0)|_2^2+|\rot\psi_{,xx}(0)|_2^2).\cr}
\label{3.80}
\end{equation}
Inequality (\ref{3.76}) implies (\ref{3.58}) and (\ref{3.80}) yields (\ref{3.59}). This concludes the proof.
\end{proof}

\begin{remark}\label{r3.4}
Finally, we summarize estimates derived in this section. First we derive the differential inequality. From (\ref{3.57}) and (\ref{3.58}) we have
\begin{equation}\eqal{
&a{d\over dt}(\nu|\nabla\varphi|_{2,1}^2+|\rot\psi|_{2,1}^2)+(\nu |\nabla\varphi|_{3,1}^2+|\rot\psi|_{3,1}^2)\cr
&\quad+\nu^2|\nabla\varphi|_{3,1}^2\le\phi_{18}(\exp(\sqrt{t}\Phi_*(t)) \bigg(\sqrt{t}{\Psi\over\nu}+{c_0\over\nu}\bigg),D_1,D_2,A_1, |\nabla\varphi|_{2,1})\cdot\cr
&\quad\cdot(\|\nabla\varphi_{,x}\|_2^2+|v|_2^2+|\nabla v|_2^2)+c(1+|\eta|_{2,1}^2)|f|_{1,1}^2,\cr}
\label{3.81}
\end{equation}
where we used that
\begin{equation}
|\eta|_{2,1}^2\le\phi\bigg(\exp(\sqrt{t}\Phi_*(t))\cdot\bigg(\sqrt{t}{\Psi\over\nu}+ {c_0\over\nu}\bigg)\bigg).
\label{3.82}
\end{equation}
Next, (\ref{3.52}) and (\ref{3.80}) imply
\begin{equation}\eqal{
&a(\nu|\nabla\varphi(t)|_{2,1}^2+|\rot\psi(t)|_{2,1}^2)+\mu(\nu |\nabla\varphi|_{3,1,2,\Omega^t}^2+|\rot\psi|_{3,1,2,\Omega^t}^2)\cr
&\quad+\nu^2|\nabla\varphi|_{3,1,2,\Omega^t}^2\le\phi_{19}(D_1,D_2,A_1, {\Psi\over\nu},{\chi_0\over\sqrt{\nu}},|\eta|_{2,1,2,\Omega^t}, |\eta|_{2,1,\infty,\Omega^t})\cr
&\quad+c(1+|\eta|_{2,1,\infty,\Omega^t}^2)|f|_{1,1,2,\Omega^t}^2+c(\nu |\nabla\varphi(0)|_{2,1}^2+|\rot\psi(0)|_{2,1,}^2).\cr}
\label{3.83}
\end{equation}
\end{remark}

\section{Estimate and Existence}\label{s4}

\begin{lemma}\label{l4.1}
Let Notation \ref{n2.13} hold. Let $\nu>0$ be given. Assume that $\eta(0)\in L_\infty(\Omega)$, $\eta(0),\nabla\varphi(0),\rot\psi(0)\in\Gamma_1^2(\Omega)$, $f\in L_2(0,T;H^1(\Omega))$, $f_t\in L_2(\Omega^T)$, $f\in L_6(0,T;L_3(\Omega))$, $f\in L_1(0,T;L_\infty(\Omega))$, $\nu|\nabla\varphi(0)|_{2,1}^2\le\const$.\\
Then for sufficiently large $\nu$ there exists a constant $A$ depending on all above assumptions such that
\begin{equation}
\chi_1(t)+\chi_2(t)+\Psi(t)\le A,\quad t\le T,
\label{4.1}
\end{equation}
where $T$ is proportional to $\nu$.
\end{lemma}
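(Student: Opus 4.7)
The plan is to treat the quantity $X(t) \equiv \chi_1(t)+\chi_2(t)+\Psi(t)$ as the unknown in a bootstrap argument against the integrated inequality~(\ref{3.83}) derived at the end of Section~\ref{s3}. Since the left-hand side of~(\ref{3.83}) dominates $\chi_1^2(t)+\chi_2^2(t)+\Psi^2(t)$ after taking the supremum in $t$, and the right-hand side is made of an increasing function $\phi_{19}$ whose arguments are all controlled by $X/\nu^\alpha$ for some $\alpha>0$ plus data, the key point is to show that for $\nu$ large and $t\le T=c_0\nu$, these $X$-dependent contributions can be absorbed into half of the left-hand side.

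First, I would collect inputs. From~(\ref{3.83}) together with the squared definitions in Notation~\ref{n2.13},
$$
X^2(t)\le c\,\phi_{19}\Bigl(D_1,D_2,A_1,\tfrac{\Psi}{\nu}, \tfrac{\chi_0}{\sqrt{\nu}},|\eta|_{2,1,2,\Omega^t},|\eta|_{2,1,\infty,\Omega^t}\Bigr) +c(1+|\eta|_{2,1,\infty,\Omega^t}^2)|f|_{1,1,2,\Omega^t}^2+c\Phi_0^2(0).
$$
Using (\ref{2.93})--(\ref{2.95}) and (\ref{2.96}), the quantities $|\eta|_{r,\infty,\Omega^t}$, $|\eta|_{2,1,\infty,\Omega^t}$, $|\eta|_{2,1,2,\Omega^t}$, $D_1$, and $D_2$ are each of the form $\Xi(\sqrt{t}\,\Psi/\nu,\sqrt{t}\,\Phi_0)\,(\sqrt{t}\,\Psi/\nu+\text{initial data})+\text{(data)}$, where $\Xi$ is an exponential of its argument. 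Since $\Psi\le X$, $\chi_0\le\chi_1\le X$, and $\Phi_0(t)\le cX(t)/\sqrt{\nu}$, every $X$-dependent argument entering $\phi_{19}$ is bounded by a polynomial in $X/\nu$ and $X/\sqrt{\nu}$, times a factor $\exp(\sqrt{t}\,X/\sqrt{\nu})$, provided $t\le T=c_0\nu$.

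Second, I would run the continuity argument. Set
$$
A^2=4c\bigl[\phi_{19}(D_1^\ast,D_2^\ast,A_1,0,0,0,|\eta(0)|_{2,1})+(1+|\eta(0)|_{2,1}^2)|f|_{1,1,2,\Omega^T}^2+\Phi_0^2(0)\bigr],
$$
where $D_1^\ast,D_2^\ast$ denote the data-only limits of $D_1,D_2$ as $\Psi/\nu,\chi_0/\sqrt{\nu}\to 0$. Define $S=\{t\in[0,T]:X(t)\le A\}$. The set $S$ is nonempty (contains $0$ because of the initial-data hypotheses, in particular $\nu|\nabla\varphi(0)|_{2,1}^2\le\mathrm{const}$) and closed by continuity of $t\mapsto X(t)$. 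For $t\in S$, the bootstrap assumption $X(t)\le A$ combined with $t\le T=c_0\nu$ gives $\sqrt{t}\,X/\nu\le \sqrt{c_0}\,A/\sqrt{\nu}$, hence all arguments of $\phi_{19}$ that involve $X$ can be made smaller than any prescribed $\varepsilon$ by taking $\nu$ large enough. Monotonicity of $\phi_{19}$ then yields $X^2(t)\le A^2/2$, which shows $S$ is open in $[0,T]$, hence $S=[0,T]$.

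The main obstacle is a careful tracking of the exponential factors $\exp(\sqrt{t}\,\Phi_\ast(t))$ and $\exp(\sqrt{t}\,\Psi/\nu)$ appearing in (\ref{3.79}), (\ref{2.95}), and (\ref{2.96}): under the bootstrap $X\le A$ and $t\le c_0\nu$, these exponents behave like $\sqrt{c_0}\,A/\sqrt{\nu}$ and are therefore bounded (and indeed tend to $0$) for large $\nu$, so the exponentials sit in a compact set and contribute only a bounded multiplicative constant to $\phi_{19}$. One also needs the smallness assumptions $\nu\|\eta(0)\|_2\le c_0$, $|\nabla\varphi(0)|_{2,1}\le c/\sqrt{\nu}$, and $\nu|f_g|_{6/5,2,\Omega^t}\le c$ from Theorem~A so that the data contributions remain $O(1)$ in $\nu$. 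Once these are verified, the bootstrap closes and (\ref{4.1}) follows with $T$ proportional to $\nu$.
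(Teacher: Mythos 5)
Your proposal follows the same route as the paper: rewrite the integrated inequality~(\ref{3.83}), together with the auxiliary estimates (\ref{2.93})--(\ref{2.96}) and (\ref{3.79}), in the closed form~(\ref{4.2}), and then choose a fixed constant $A$ so that the inequality is self-consistent. Your continuity argument via the set $S$ is a more explicit wording of the paper's fixed-point choice of $A$ in (\ref{4.3})--(\ref{4.4}); the ingredients and strategy coincide.

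There is, however, a concrete error in your analysis of the exponential factors, which is precisely the place where the argument needs to close. You assert that under the bootstrap $X\le A$, $t\le c_0\nu$, both $\sqrt{t}\,\Psi/\nu$ and $\sqrt{t}\,\Phi_*(t)$ behave like $\sqrt{c_0}\,A/\sqrt{\nu}$ and so the exponentials are bounded and tend to $0$. That is correct only for $\sqrt{t}\,\Psi/\nu\le\sqrt{c_0}\,A/\sqrt{\nu}$. By Notation~\ref{n2.13}, $\Phi_*^2=|\nabla\varphi|_{3,1,2,\Omega^t}^2+|\rot\psi|_{3,1,2,\Omega^t}^2$, and the $\rot\psi$ contribution carries no power of $\nu$; all the bootstrap gives is $\Phi_*(t)\le X(t)\le A$, hence $\sqrt{t}\,\Phi_*(t)\le\sqrt{c_0\nu}\,A$, which \emph{grows} like $\sqrt{\nu}$ rather than decaying. (Your claim $\Phi_0(t)\le cX(t)/\sqrt{\nu}$ is off for the same reason: $\Phi_0\le\chi_1\le X$, not $X/\sqrt{\nu}$.) Consequently $\exp(\sqrt{t}\,\Phi_*(t))$ can be as large as $e^{\sqrt{c_0\nu}\,A}$, which overwhelms the $O(\nu^{-1/2})$ prefactor $(\sqrt{t}\,\Psi/\nu+c_0/\nu)$ appearing in (\ref{3.79}) and (\ref{2.94}). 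The step ``monotonicity of $\phi_{19}$ then yields $X^2\le A^2/2$'' therefore does not follow from what you have written: you would need either a separate estimate making the $\rot\psi$ part of $\Phi_*$ small in $\nu$, or a stronger restriction on $T$ (keeping $\sqrt{T}\,\Phi_*$ bounded, e.g. $T\sim(\log\nu)^2$) instead of $T\sim\nu$. The paper's own statement of (\ref{4.3})--(\ref{4.4}) leaves this point implicit and does not make the explicit quantitative claim you do.
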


\begin{proof}
Using (\ref{2.92})--(\ref{2.96}), (\ref{3.79}) and Notation \ref{n2.13} in (\ref{3.83}) we have
\begin{equation}\eqal{
&\chi_1^2+\chi_2^2+\Psi^2\le\phi_{20}(\exp(t^{1/2}\Phi_*)\bigg(t^{1/2} {\Psi\over\nu}+c_0/\nu\bigg),\cr
&\phi\bigg({\Psi\over\nu^\alpha}\bigg),{\Psi\over\nu},{\chi_0\over\sqrt{\nu}}, A_1,A_2|f|_{1,1,2,\Omega^t}\bigg)\cr
&\quad+c(\nu|\nabla\varphi(0)|_{2,1}^2+|\rot\psi(0)|_{2,1,}^2),\cr}
\label{4.2}
\end{equation}
where $\phi(0)=0$.

\noindent
To show (\ref{4.1}) we take constant $A$ so large that
\begin{equation}
\phi_{20}(\exp(t^{1/2}\Phi_*){c_0\over\nu},0,0,0,A_1,A_2,|f|_{1,1,2,\Omega^t})+ c\Phi_0^2(0)<A.
\label{4.3}
\end{equation}
Then for $\nu$ sufficiently large there exists a constant $A$ such that
\begin{equation}\eqal{
&\phi_{20}(\exp(t^{1/2}\Phi_*)\bigg(t^{1/2}{A\over\nu}+{c_0\over\nu}\bigg),\phi \bigg({A\over\nu^\alpha}\bigg),{A\over\nu},{A\over\sqrt{\nu}},\cr
&A_1,A_2,|f|_{1,1,2,\Omega^t})+c\Phi_0^2(0)\le A.\cr}
\label{4.4}
\end{equation}
Hence (\ref{4.1}) holds. This concludes the proof.
\end{proof}

\begin{corollary}\label{c4.2}
Let the assumptions of Lemma \ref{l4.1} hold. Then (\ref{4.4}) implies
\begin{equation}\eqal{
&|\nabla\varphi(t)|_{2,1}^2+{1\over\nu}|\rot\psi(t)|_{2,1}^2+\mu |\nabla\varphi|_{3,1,2,\Omega^t}^2+\nu|\nabla\varphi|_{3,1,2,\Omega^t}^2+ {\mu\over\nu}|\rot\psi|_{3,1,2,\Omega^t}^2\cr
&\le{1\over\nu}\phi(A)+|\nabla\varphi(0)|_{2,1}^2+{1\over\nu} |\rot\psi(0)|_{2,1}^2.\cr}
\label{4.5}
-\end{equation}
From (\ref{4.5}) and estimates for $\eta$ (see Lemmas \ref{2.9}, \ref{2.10}) we have
\begin{equation}
|\eta|_{2,1,\infty,\Omega^t}\le{1\over\nu}\phi(A)
\label{4.6}
\end{equation}
where
$$
|\eta(0)|_{2,1}\le c_0/\nu
$$
and
\begin{equation}
|\nabla\varphi(t)|_{2,1}^2+\nu|\nabla\varphi|_{3,1,2,\Omega^t}^2\le{1\over\nu} \phi(A)+|\nabla\varphi(0)|_{2,1}^2+{1\over\nu}|\rot\psi(0)|_{2,1}^2,
\label{4.7}
\end{equation}
so quantities $\eta$ and $\varphi$ can be made in these norms as small as we want for sufficiently large $\nu$.
\end{corollary}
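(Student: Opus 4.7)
The plan is to exploit the uniform bound $\chi_1(t)+\chi_2(t)+\Psi(t)\le A$ from Lemma~\ref{l4.1} by substituting it into the a priori estimate (\ref{3.83}), and then appeal to Lemma~\ref{l2.7} for the $\eta$-estimate. Inequality (\ref{3.83}) is already set up for this: every term on its right-hand side, apart from the initial-data contribution, is an increasing function $\phi_{19}$ of quantities of the form $\Psi/\nu$, $\chi_0/\sqrt{\nu}$, $D_1$, $D_2$, $A_1$, $|\eta|_{2,1,\infty,\Omega^t}$, $|\eta|_{2,1,2,\Omega^t}$, each of which is controlled via Lemma~\ref{l4.1} and the bounds (\ref{2.93})--(\ref{2.96}) solely in terms of $A$, the fixed data $A_1,A_2,|f|_{1,1,2,\Omega^t}$, and the small quantity $c_0/\nu$ coming from $|\eta(0)|_{2,1}$.

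First I would insert these bounds into (\ref{3.83}) to collapse $\phi_{19}$ to a single increasing function $\phi(A)$, obtaining
$$
\nu|\nabla\varphi(t)|_{2,1}^2+|\rot\psi(t)|_{2,1}^2+\mu\bigl(\nu|\nabla\varphi|_{3,1,2,\Omega^t}^2+|\rot\psi|_{3,1,2,\Omega^t}^2\bigr)+\nu^2|\nabla\varphi|_{3,1,2,\Omega^t}^2\le\phi(A)+c\bigl(\nu|\nabla\varphi(0)|_{2,1}^2+|\rot\psi(0)|_{2,1}^2\bigr).
$$
Dividing by $\nu$ rearranges the five left-hand-side terms exactly into $|\nabla\varphi(t)|_{2,1}^2$, $(1/\nu)|\rot\psi(t)|_{2,1}^2$, $\mu|\nabla\varphi|_{3,1,2,\Omega^t}^2$, $(\mu/\nu)|\rot\psi|_{3,1,2,\Omega^t}^2$, $\nu|\nabla\varphi|_{3,1,2,\Omega^t}^2$, and the right-hand side becomes $\phi(A)/\nu+|\nabla\varphi(0)|_{2,1}^2+(1/\nu)|\rot\psi(0)|_{2,1}^2$, proving (\ref{4.5}). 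The bound (\ref{4.7}) is then nothing but (\ref{4.5}) with the nonnegative $\rot\psi$-contributions discarded.

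For the density estimate (\ref{4.6}), I would apply the transport bound (\ref{2.94}) from Lemma~\ref{l2.7}, namely
$$
|\eta(t)|_{2,1}\le\exp\bigl[ct^{1/2}(\Psi/\nu+\Phi_2)\bigr]\bigl[t^{1/2}\Psi/\nu+|\eta(0)|_{2,1}\bigr].
$$
Inserting $\Psi\le A$, $\Phi_2\le A$ (from $\chi_2\le A$), $|\eta(0)|_{2,1}\le c_0/\nu$, and exploiting that for $\nu$ sufficiently large the prefactor and the bracketed term are both controlled by an increasing function of $A$ divided by $\nu$, the supremum over $t\in[0,T]$ yields (\ref{4.6}).

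The main obstacle is verifying that the various composite ``data'' terms packaged inside $\phi_{19}$ actually collapse to a finite $\phi(A)$ of order $O(1)$, so that the $1/\nu$ factor obtained by division really produces smallness. In particular one must keep the exponential prefactors of the type $\exp(\sqrt{t}\Phi_*)$ appearing through (\ref{3.79}) uniformly bounded on $[0,T]$. This is exactly what the smallness $\Psi/\nu\le A/\nu$ (together with $|\eta(0)|_{2,1}\le c_0/\nu$) buys us once $\nu$ is taken large, and it is also the mechanism that forces the restriction $T$ proportional to $\nu$ in Lemma~\ref{l4.1}; this interaction between the size of $\nu$ and the time interval is the one delicate bookkeeping point in the argument.
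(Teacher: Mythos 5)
Your overall route is the natural one, and since the paper gives no explicit proof of Corollary~\ref{c4.2} it is essentially the intended one. Two remarks are in order.

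First, (\ref{4.5}) can be read off even more directly, without passing through (\ref{3.83}). From Notation~\ref{n2.13} one has $|\nabla\varphi(t)|_{2,1}^2\le\chi_1^2/\nu$, $|\rot\psi(t)|_{2,1}^2\le\chi_1^2$, $|\nabla\varphi|_{3,1,2,\Omega^t}^2\le\Psi^2/\nu^2$, and $|\rot\psi|_{3,1,2,\Omega^t}^2\le\chi_2^2$, so the five terms on the left of (\ref{4.5}) are bounded by
$$
\frac{\chi_1^2}{\nu}+\frac{\chi_1^2}{\nu}+\frac{\mu\Psi^2}{\nu^2}+\frac{\Psi^2}{\nu}+\frac{\mu\chi_2^2}{\nu}\le\frac{cA^2}{\nu},
$$
and the initial-data terms on the right of (\ref{4.5}) are nonnegative, hence free. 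Your detour through (\ref{3.83}) is correct too, but it forces you to also check that $\phi_{19}$ collapses to a function of $A$ alone, which is extra work.

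Second, and more substantively, your closing paragraph mislocates the source of the \emph{delicate bookkeeping}. You claim that keeping $\exp(\sqrt{t}\,\Phi_*)$ uniformly bounded ``is exactly what the smallness $\Psi/\nu\le A/\nu$ (together with $|\eta(0)|_{2,1}\le c_0/\nu$) buys us,'' but the smallness of $\Psi/\nu$ controls only the $\nabla\varphi$ half of
$$
\Phi_*^2(t)=|\nabla\varphi|_{3,1,2,\Omega^t}^2+|\rot\psi|_{3,1,2,\Omega^t}^2,
$$
namely $|\nabla\varphi|_{3,1,2,\Omega^t}^2\le\Psi^2/\nu^2\le A^2/\nu^2$. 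The $\rot\psi$ half satisfies only $|\rot\psi|_{3,1,2,\Omega^t}^2\le\chi_2^2\le A^2$, i.e.\ it is bounded but \emph{not} small, so $\Phi_*$ is $O(A)$, not $O(A/\nu)$. Consequently the prefactor in (\ref{2.94}) can be as large as $\exp(cA\sqrt{t})$; if $t$ is allowed to range up to a $T$ genuinely proportional to $\nu$, this diverges as $\nu\to\infty$, and the product $\exp(cA\sqrt{t})\big(\sqrt{t}\,\Psi/\nu+c_0/\nu\big)$ is not of the form $\phi(A)/\nu$. This tension is already present in the paper's own (\ref{4.4}) and in the claim that $T\sim\nu$; the estimate (\ref{4.6}) holds cleanly when $T$ grows no faster than, say, a power of $\log\nu$, but not for $T$ literally proportional to $\nu$ unless some additional mechanism keeps $\Phi_*$ itself small. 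You should therefore not assert that the $\Psi/\nu$-smallness closes this gap; either restrict the time interval accordingly, or flag the gap as genuinely open and inherited from Lemma~\ref{l4.1}.
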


\begin{theorem}\label{t4.3}
Let the assumptions of Lemma \ref{l4.1} hold.
Then there exists a solution to problem (\ref{1.7})--(\ref{1.9}) such that $\psi,\varphi\in L_\infty(0,t;\Gamma_1^2(\Omega))\cap L_2(0,t;\Gamma_1^3(\Omega))$, $t\le T$ as long as estimate (\ref{4.1}) holds.
\end{theorem}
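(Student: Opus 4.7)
The plan is to combine a standard local existence result for the compressible Navier--Stokes system in the periodic box with the a priori bound \eqref{4.1} via the usual continuation in time argument. First I would invoke an existing local-in-time existence theorem (for example the one that can be extracted from \cite{V,VZ,Z1,Z2}) guaranteeing, under the regularity assumptions on the data $(\eta(0),v(0))$ and on $f$ stated in Lemma \ref{l4.1}, the existence of a local regular solution $(\eta,v)$, $v=\nabla\varphi+\rot\psi+G$, on some time interval $[0,T_0]$, with $\eta\in L_\infty(0,T_0;\Gamma_1^2(\Omega))$, $\nabla\varphi,\rot\psi\in L_\infty(0,T_0;\Gamma_1^2(\Omega))\cap L_2(0,T_0;\Gamma_1^3(\Omega))$, and $v_t\in L_\infty(0,T_0;L_2(\Omega))\cap L_2(0,T_0;H^1(\Omega))$. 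On this interval the solution has enough regularity that every integration by parts performed in Sections \ref{s2}--\ref{s3} is justified, so that the differential inequalities of Lemmas \ref{l3.1}--\ref{l3.5} are not merely formal but actual identities. Consequently the a priori bound \eqref{4.1} is a genuine estimate on $[0,T_0]$.

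Next I would set up the continuation argument. Let $T_*\in(0,T]$ be the maximal existence time of the regular solution, i.e.\ the supremum of times $t$ such that the solution extends regularly up to $t$ with the regularity class above. Suppose for contradiction that $T_*<T$. By Lemma \ref{l4.1} and Corollary \ref{c4.2}, on $[0,T_*)$ we have
\begin{equation*}
\chi_1(t)+\chi_2(t)+\Psi(t)\le A,\qquad |\eta(t)|_{2,1}\le \tfrac{1}{\nu}\phi(A),
\end{equation*}
together with the bound \eqref{4.5} on $\nabla\varphi$, $\rot\psi$ and \eqref{2.1} on the energy. Combined with the bounds for $v_t$ in Lemma \ref{l2.8} and for $|v|_6$ in Remark \ref{r2.4}, this produces uniform (independent of $t<T_*$) bounds on $(\eta,v,v_t)$ in the regularity spaces required for the hypotheses of the local existence theorem to be applied again, this time with initial data $(\eta(T_*^-),v(T_*^-))$ obtained as the strong limit at $t\uparrow T_*$. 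Thus the solution extends to a strictly larger interval $[0,T_*+\delta]$ with $\delta>0$, contradicting the maximality of $T_*$. Hence $T_*\ge T$ and the solution exists on $[0,T]$ with the claimed regularity.

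The main obstacle in making this rigorous is the usual one for such weakly compressible problems: one must verify that the a priori estimate genuinely applies to the local solution (not just to hypothetical smoother solutions) and that the limiting initial datum $(\eta(T_*^-),v(T_*^-))$ lies in the same class as the original data so that the local theorem can be re-applied. The first point is handled, as the author notes below Theorem~\ref{t4.3} and in the introduction, by the fact that the estimates of Sections \ref{s2}--\ref{s3} are built up from integration by parts and H\"older--interpolation inequalities that are valid for the local regularity class, with the smallness factor $1/\nu^{\alpha}$ absorbing the nonlinear feedback. The second point follows from the time-continuity inherent in the spaces $L_\infty(0,T_*;\Gamma_1^2(\Omega))$ (modulo standard refinement to $C([0,T_*];\Gamma_1^2(\Omega))$ by interpolation with $L_2(0,T_*;\Gamma_1^3(\Omega))$ and the equation for $\eta_t,v_t$), so that the trace at $t=T_*^-$ is well-defined and has the required norm bounded by $A$. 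A fully explicit proof, as the author remarks, could be carried out via a successive approximation scheme on $[0,T]$, but the extension by continuation suffices and is substantially shorter. The restriction $T\le c\nu$ enters precisely because the constant $A$ in \eqref{4.4} is chosen to close the inequality \eqref{4.2}, and this closure is only possible for $\nu$ large enough relative to $t$, which limits the interval on which \eqref{4.1} holds.
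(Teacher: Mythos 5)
Your proposal is correct and follows essentially the same route as the paper: the paper's own proof of Theorem \ref{t4.3} is a two-sentence statement that a local solution exists for small $T$ and that estimate (\ref{4.1}) allows its extension up to $t=T$. You have simply spelled out the continuation argument (maximal time $T_*$, contradiction, trace at $T_*^-$) that the author leaves implicit, consistent with the author's own remark in the introduction that this proof is ``understandable but not explicit'' and that a fully explicit version would require successive approximations.
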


\begin{proof}
For $T$ sufficiently small there exists a solution to problem (\ref{1.7})--(\ref{1.9}) in the above spaces. Having estimate (\ref{4.1}) for $t\le T$ the local solution can be extended in time up to time $t=T$. This ends the proof.
\end{proof}

\section{Global estimate and existence}\label{s5}

In Section \ref{s4} we proved long time estimate for solutions to problem (\ref{1.7})--(\ref{1.9}), where the estimate time is proportional to $\nu$. Since $\nu$ is finite we have only finite time estimate. Hence there exist finite time solutions. Therefore, to prove global existence we need additional differential inequality. This is derived in this Section.

\begin{lemma}\label{l5.1}
Assume that $\varphi$, $\psi$, $\eta$ are solutions to problem (\ref{3.1})--(\ref{3.3}). Let the assumptions of Corollary \ref{c4.2} hold. Then
\begin{equation}\eqal{
&{d\over dt}\bigg(|\nabla\varphi|_2^2+{1\over\nu}|\rot\psi|_2^2\bigg)+{\mu\over a}|\nabla^2\varphi|_2^2+{\nu\over a}|\Delta\varphi|_2^2+{\mu\over a\nu}|\nabla\rot\psi|_2^2\cr
&\le{c\over\nu}[|\eta|_2^2+|\eta|_3^2|v_{,t}|_2^2+|\Delta\varphi|_3^2|v|_2^2+ |v|_3^2|\nabla v|_2^2+ |f|_2^2].\cr}
\label{5.1}
\end{equation}
\end{lemma}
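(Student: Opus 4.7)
The plan is to repeat almost verbatim the energy argument of Lemma \ref{l3.1}, but to keep the quantities $|v|_2$, $|v|_3$, $|\nabla v|_2$, $|v_{,t}|_2$, $|\Delta\varphi|_3$ on the right-hand side instead of substituting the long-time a priori bounds $A_1,D_1,D_2$. This is what distinguishes the present lemma, which must serve as a self-contained differential inequality for the global step-by-step in time argument of Section \ref{s5}.

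First I would test equation (\ref{3.2}) against $\nabla\varphi$ and integrate over $\Omega$, obtaining the identity (\ref{3.5}). Each of the five terms on the right is handled exactly as in the proof of Lemma \ref{l3.1}: an integration by parts in the pressure term produces $\varepsilon|\Delta\varphi|_2^2+c/\varepsilon|\eta|_2^2$; the $\eta v_t$ term yields $\varepsilon|\nabla\varphi|_6^2+c/\varepsilon|\eta|_3^2|v_t|_2^2$; the convective term $\int(a+\eta)v\cdot\nabla v\cdot\nabla\varphi\,dx$ is split into the pure part and the $\eta$-perturbation, integrated by parts to transfer a derivative away from $v$, then bounded by $\varepsilon(|\nabla^2\varphi|_2^2+|\nabla\varphi|_6^2)+c/\varepsilon(|\Delta\varphi|_3^2|v|_2^2+|v|_3^2|\nabla v|_2^2)$ plus similar terms containing a harmless factor of $|\eta|_\infty\le a/2$; the pressure-nonlinear term is absorbed into $\varepsilon|\nabla\varphi|_6^2+c/\varepsilon|\eta|_3^2|\nabla\eta|_2^2$ (and eventually into $|\eta|_2^2$ once we note that $|\eta|_3|\nabla\eta|_2\le c|\eta|_2^{\alpha}\|\eta\|_2^{1-\alpha}$); and the forcing term splits as in Lemma \ref{l3.1} into $\varepsilon|\nabla\varphi|_2^2+c/\varepsilon(|f_g|_2^2+|\eta|_3^2|f|_2^2)$, all of which fit into the claimed $|f|_2^2$ and $|\eta|_{2,3}$ factors.

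Next I test (\ref{3.2}) against $\rot\psi$ and integrate; this yields (\ref{3.7}). The same integration-by-parts pattern produces $\varepsilon\|\rot\psi\|_1^2+c/\varepsilon$ times the same right-hand-side quantities, but now with no $\nu\nabla\Delta\varphi$ contribution (since $\int\nabla\Delta\varphi\cdot\rot\psi\,dx=0$), so the $\rot\psi$ inequality comes out with viscosity coefficient $\mu$ only. I then multiply the $\rot\psi$-inequality by $1/\nu$ and add it to the $\nabla\varphi$-inequality; the choices $\varepsilon$ small force the boundary dissipation $\mu|\nabla^2\varphi|_2^2+\nu|\Delta\varphi|_2^2+(\mu/\nu)|\nabla\rot\psi|_2^2$ to survive on the left, while the right is dominated by $(c/\nu)[|\eta|_2^2+|\eta|_3^2|v_t|_2^2+|\Delta\varphi|_3^2|v|_2^2+|v|_3^2|\nabla v|_2^2+|f|_2^2]$, which is exactly (\ref{5.1}) after dividing by $a$ (and using $|\eta|_\infty\le a/2$ to normalize constants).

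The main obstacle, and the reason the statement is phrased in the present form, is to ensure that every dangerous factor carried over from the convective and $\eta$-nonlinear terms can be charged either to the dissipation on the left or to one of the five allowed quantities on the right without invoking $A_1, D_1, D_2$. The sensitive step is the convective integral $-\int\eta v\cdot\nabla v\cdot\nabla\varphi\,dx$: after one integration by parts it produces three terms in which I must distribute derivatives so that the $v$-factors land in $|v|_3|\nabla v|_2$ (or $|\Delta\varphi|_3|v|_2$ through the decomposition $\nabla v=\nabla^2\varphi+\nabla\rot\psi$) and the $\eta$-factors land under $|\eta|_\infty\le a/2$, so they drop. Once this bookkeeping is done cleanly, the remainder of the proof is a straightforward consequence of the Sobolev embeddings $|\nabla\varphi|_6\le c|\nabla^2\varphi|_2$, $|\rot\psi|_6\le c|\nabla\rot\psi|_2$ and absorption with small $\varepsilon$.
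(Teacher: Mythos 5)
Your proposal follows the paper's proof in essentially every respect: test \eqref{3.2} against $\nabla\varphi$ (giving \eqref{5.2}), then against $\rot\psi$ (giving \eqref{5.4}), bound each term with Young/H\"older/Sobolev, multiply the $\rot\psi$-inequality by $1/\nu$, and add. The one detail worth flagging is your treatment of the $\eta$-perturbation of the convective term, $-\int_\Omega\eta\, v\cdot\nabla v\cdot\nabla\varphi\,dx$. You say you will integrate by parts so that ``the $\eta$-factors land under $|\eta|_\infty\le a/2$.'' That bookkeeping fails for one of the three resulting terms: transferring a derivative off $\nabla v$ necessarily produces $\int_\Omega\nabla\eta\cdot v\,v\cdot\nabla\varphi\,dx$, which carries a $\nabla\eta$ factor that is \emph{not} controlled by $|\eta|_\infty$ and does not appear on the right-hand side of \eqref{5.1}. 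The paper avoids this entirely: in Lemma~\ref{l5.1} (unlike Lemma~\ref{l3.1}) it does \emph{not} integrate by parts in $I_2$, but H\"olders directly,
\[
\Bigl|\int_\Omega\eta\,v\cdot\nabla v\cdot\nabla\varphi\,dx\Bigr|
\le|\eta|_\infty|v|_3|\nabla v|_2|\nabla\varphi|_6
\le\varepsilon|\nabla\varphi|_6^2+\tfrac{c}{\varepsilon}\,|\eta|_\infty^2|v|_3^2|\nabla v|_2^2,
\]
after which $|\eta|_\infty\le a/2$ reduces the factor to a constant. This is both simpler and necessary to land exactly in the allowed quantities $|v|_3^2|\nabla v|_2^2$, etc. You should also state explicitly the step that collapses the intermediate output \eqref{5.3}--\eqref{5.5} into \eqref{5.1}: the term $|v|_6^2|v|_3^2$ is folded into $|v|_3^2|\nabla v|_2^2$ via the Sobolev embedding $|v|_6\le c\|v\|_1$, and $|\eta|_3^2|\nabla\eta|_2^2$ (and the $|\eta|_3^2|f|_2^2$ contribution) is absorbed using the smallness $\|\eta\|_2\le c/\nu$ from Corollary~\ref{c4.2}; your remark that $|\eta|_3|\nabla\eta|_2\le c|\eta|_2^\alpha\|\eta\|_2^{1-\alpha}$ is not a valid interpolation inequality as written and should be replaced by this observation. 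With those corrections the argument matches the paper's.
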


\begin{proof}
Multiplying (\ref{3.2}) by $\nabla\varphi$ and integrating over $\Omega$ yields
\begin{equation}\eqal{
&{a\over2}{d\over dt}|\nabla\varphi|_2^2+\mu|\nabla^2\varphi|_2^2+\nu |\Delta\varphi|_2^2=-a_0\intop_\Omega\nabla\eta\cdot\nabla\varphi dx- \intop_\Omega\eta v_{,t}\cdot\nabla\varphi dx\cr
&\quad-\intop_\Omega(a+\eta)v\cdot\nabla v\cdot\nabla\varphi dx+\intop_\Omega[p_\varrho(a)-p_\varrho(a+\eta)]\nabla\eta\cdot\nabla\varphi dx\cr
&\quad+\intop_\Omega(a+\eta)f\cdot\nabla\varphi dx.\cr}
\label{5.2}
\end{equation}
Integrating by parts the first term on the r.h.s. is bounded by
$$
\varepsilon|\Delta\varphi|_2^2+c/\varepsilon|\eta|_2^2,
$$
the second by
$$
\varepsilon|\nabla\varphi|_6^2+c/\varepsilon|\eta|_3^2|v_{,t}|_2^2
$$
the third term on the r.h.s. of (\ref{5.2}) is expressed in the form
$$
-a\intop_\Omega v\cdot\nabla v\cdot\nabla\varphi dx-\intop_\Omega\eta v\cdot\nabla v\cdot\nabla\varphi dx\equiv I_1+I_2.
$$
integrating by parts in $I_1$ yields
$$
I_1=a\intop_\Omega\Delta\varphi v\cdot\nabla\varphi dx+\intop_\Omega v\cdot v\cdot\nabla\nabla\varphi dx\equiv I_{11}+I_{12},
$$
where
$$\eqal{
&|I_{11}|\le\varepsilon|\nabla\varphi|_6^2+c/\varepsilon|\Delta\varphi|_3^2 |v|_2^2,\cr
&|I_{12}|\le\varepsilon|\nabla^2\varphi|_2^2+c/\varepsilon|v|_6^2|v|_3^2.\cr}
$$
Next we examine $I_2$. Then we have
$$
|I_2|\le\varepsilon|\nabla\varphi|_6^2+c/\varepsilon|\eta|_\infty^2|v|_3^2 |\nabla v|_2^2.
$$
The fourth term on the r.h.s. of (\ref{5.2}) is estimated by
$$
\varepsilon|\nabla\varphi|_6^2+c/\varepsilon|\eta|_3^2|\nabla\eta|_2^2.
$$
Finally, the last term on the r.h.s. of (\ref{5.2}) equals
$$
I_3=a\intop_\Omega f_g\cdot\nabla\varphi dx+\intop_\Omega\eta f\cdot\nabla\varphi dx.
$$
Hence
$$
|I_3|\le\varepsilon|\nabla\varphi|_2^2+c/\varepsilon|f_g|_2^2+\varepsilon |\nabla\varphi|_6^2+c/\varepsilon|\eta|_3^2|f|_2^2.
$$
Employing the above estimates in (\ref{5.2}) and assuming that $\varepsilon$ is sufficiently small yields
\begin{equation}\eqal{
&a{d\over dt}|\nabla\varphi|_2^2+\mu|\nabla^2\varphi|_2^2+\nu|\Delta\varphi|_2^2 \le{c\over\nu}[|\eta|_2^2+|\eta|_3^2|v_{,t}|_2^2+|\Delta\varphi|_3^2|v|_2^2\cr
&\quad+|v|_6^2|v|_3^2+|\eta|_\infty^2|v|_3^2|\nabla v|_2^2+|\eta|_3^2 |\nabla\eta|_2^2+|f_g|_2^2+|\eta|_3^2|f|_2^2].\cr}
\label{5.3}
\end{equation}
Multiplying (\ref{3.2}) by $\rot\psi$ and integrating the result over $\Omega$ implies
\begin{equation}\eqal{
&{a\over2}{d\over dt}|\rot\psi|_2^2+\mu|\nabla\rot\psi|_2^2=-\intop_\Omega\eta v_{,t}\cdot\rot\psi dx-\intop_\Omega(a+\eta)v\cdot\nabla v\cdot\rot\psi dx\cr
&\quad+\intop_\Omega(a+\eta)f\cdot\rot\psi dx.\cr}
\label{5.4}
\end{equation}
We estimate the first term on the r.h.s. of (\ref{5.4}) by
$$
\varepsilon|\rot\psi|_6^2+c/\varepsilon|\eta|_3^2|v_{,t}|_2^2.
$$
The second term on the r.h.s. of (\ref{5.4}) is expressed in the form
$$
-a\intop_\Omega v\cdot\nabla v\cdot\rot\psi dx-\intop_\Omega\eta v\cdot\nabla v\cdot\rot\psi dx\equiv J_1+J_2,
$$
where
$$\eqal{
&|J_1|\le\varepsilon|\rot\psi|_6^2+c/\varepsilon|v|_3^2|\nabla v|_2^2,\cr
&|J_2|\le\varepsilon|\rot\psi|_6^2+c/\varepsilon|\eta|_\infty^2|v|_3^2|\nabla v|_2^2.\cr}
$$
Finally, the last term on the r.h.s. of (\ref{5.4}) takes the form
$$
a\intop_\Omega f_r\cdot\rot\psi dx+\intop_\Omega\eta f\cdot\rot\psi dx\equiv J_3.
$$
Hence,
$$
|J_3|\le\varepsilon|\rot\psi|_2^2+c/\varepsilon|f_r|_2^2+\varepsilon |\rot\psi|_6^2+c/\varepsilon|\eta|_3^2|f|_2^2.
$$
Employing the above estimates in (\ref{5.4}) and using that $\varepsilon$ is sufficiently small we derive the inequality
\begin{equation}\eqal{
&{a\over2}{d\over dt}|\rot\psi|_2^2+\mu|\nabla\rot\psi|_2^2\le c[|\eta|_3^2|v_{,t}|_2^2+|v|_3^2|\nabla v|_2^2+|\eta|_\infty^2|v|_3^2|\nabla v|_2^2\cr
&\quad+|f_r|_2^2+|\eta|_3^2|f|_2^2].\cr}
\label{5.5}
\end{equation}
Multiplying (\ref{5.5}) by $1/\nu$, adding to (\ref{5.3}) and using that $|\eta|\le a/2$ we obtain (\ref{5.1}). This concludes the proof.
\end{proof}

\begin{lemma}\label{l5.2}
Assume that $\varphi$, $\psi$, $\eta$ are solutions to problem (\ref{3.1})--(\ref{3.3}). Use (\ref{3.2}) in the form (\ref{3.11}). Let the assumptions of Corollary \ref{c4.2} hold.\\
Then
\begin{equation}\eqal{
&{d\over dt}\bigg(|\nabla\varphi_{,t}|_2^2+{1\over\nu}|\rot\psi_{,t}|_2^2\bigg)+ {\mu\over a}|\nabla^2\varphi_{,t}|_2^2+{\mu\over a}|\Delta\varphi_{,t}|_2^2+{\mu\over a\nu}|\nabla\rot\psi_{,t}|_2^2\cr
&\le{c\over\nu}[|\eta_{,t}|_2^2+|\eta|_{2,1}^2(|\nabla\rot\psi|_3^2+ |\Delta\varphi|_3^2)\cr
&\quad+|\Delta\varphi|_3^2|v_{,t}|_2^2+|v_{,t}|_3^2|v|_6^2+|f_t|_2^2]+ c\nu|\eta|_{2,1}^2|\Delta\varphi|_3^2.\cr}
\label{5.6}
\end{equation}
\end{lemma}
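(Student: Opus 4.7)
\medskip

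\noindent\textbf{Proof proposal for Lemma \ref{l5.2}.} The plan is to mimic the structure of Lemma \ref{l3.2}, differentiating equation (\ref{3.13}) in $t$ and testing separately against $\nabla\varphi_t$ and $\rot\psi_t$, but retaining only the terms that actually appear on the right-hand side of (\ref{5.6}). Because Corollary \ref{c4.2} already gives us control of $|v|_6$, $|v_t|_2$, $\nabla\varphi$ and $\rot\psi$ in the relevant norms, many of the auxiliary factors that were kept symbolic in Lemma \ref{l3.2} can now be replaced by $D_1$, $D_2$, $A_1$, etc., and the resulting inequality will look simpler.

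First I would differentiate (\ref{3.13}) in $t$, multiply by $\nabla\varphi_t$ and integrate over $\Omega$. This produces
$$
\tfrac12\tfrac{d}{dt}|\nabla\varphi_t|_2^2+\tfrac{\mu}{a}|\nabla^2\varphi_t|_2^2+\tfrac{\nu}{a}|\Delta\varphi_t|_2^2
$$
on the left, plus six right-hand contributions, each originating from one summand of (\ref{3.13}). I would then repeat the procedure with $\rot\psi_t$ in place of $\nabla\varphi_t$, obtaining the analogous identity with $\tfrac12\tfrac{d}{dt}|\rot\psi_t|_2^2+\tfrac{\mu}{a}|\nabla\rot\psi_t|_2^2$ on the left. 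Multiplying the second identity by $1/\nu$ and adding gives the differential structure on the left of (\ref{5.6}); the gradient-only term $\tfrac{\nu}{a}|\Delta\varphi_t|_2^2$ comes entirely from the $\nabla\varphi_t$ test, since testing against $\rot\psi_t$ produces an integral of a divergence-free field against $\nabla\Delta\varphi_t$, which vanishes up to terms we keep anyway.

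For the right-hand side, the routine terms are handled by H\"older, integration by parts and $\varepsilon$-Young, using $|\eta|\le a/2$ to treat $1/(a+\eta)$ as a bounded factor. In particular the term coming from $(\tfrac{a_0}{a+\eta}\nabla\eta)_t$ yields $|\eta_t|_2^2$ and $\nabla\eta_t$ contributions, the convective term $(v\cdot\nabla v)_t$ yields $|\Delta\varphi|_3^2|v_t|_2^2+|v_t|_3^2|v|_6^2$, and the forcing gives $|f_t|_2^2$. The two dangerous summands are $-\tfrac{\mu}{a}(\tfrac{\eta}{a+\eta}\Delta v)_t$ and $-\tfrac{\nu}{a}(\tfrac{\eta}{a+\eta}\nabla\Delta\varphi)_t$: after expanding by the product rule, the pieces carrying $\tfrac{\eta}{a+\eta}$ multiplying the top-order test function, namely $\tfrac{\mu}{a}\int\tfrac{\eta}{a+\eta}|\nabla^2\varphi_t|^2dx$, $\tfrac{\mu}{a}\int\tfrac{\eta}{a+\eta}|\nabla\rot\psi_t|^2dx$, and $\tfrac{\nu}{a}\int\tfrac{\eta}{a+\eta}|\Delta\varphi_t|^2dx$, are absorbed into the left-hand viscous terms exactly as in the computation below (\ref{3.40}) and (\ref{3.42}), using $\tfrac{a}{a+\eta}\ge\tfrac23$ for $|\eta|\le a/2$.

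The main obstacle, and the source of the lone $c\nu|\eta|_{2,1}^2|\Delta\varphi|_3^2$ term on the right of (\ref{5.6}), is the remainder piece of the $-\tfrac{\nu}{a}(\tfrac{\eta}{a+\eta}\nabla\Delta\varphi)_t\cdot\nabla\varphi_t$ integral. After integrating by parts to move one derivative off $\nabla\Delta\varphi$, one gets terms of the shape $\tfrac{\nu}{a}\int(\tfrac{\eta}{a+\eta})_\eta\eta_t\Delta\varphi\cdot\Delta\varphi_t\,dx$ and $\tfrac{\nu}{a}\int(\tfrac{\eta}{a+\eta})_\eta\nabla\eta\,\Delta\varphi_t\cdot\nabla\varphi_t\,dx$, together with the analogues produced by the $\rot\psi_t$ test. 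Each of these I would split by $\varepsilon$-Young so that a factor $\nu\varepsilon|\Delta\varphi_t|_2^2$ (or $\nu\varepsilon|\nabla\rot\psi_t|_2^2$ after $1/\nu$-weighting) is absorbed, and the complementary factor is bounded by $c\nu|\eta|_{2,1}^2|\Delta\varphi|_3^2$; the extra $\nu$ originates from the $\nu$ that is already present in the coefficient. All other $\nu$-weighted remainders from these two summands are either absorbed directly or dominated by $c|\eta|_{2,1}^2(|\nabla\rot\psi|_3^2+|\Delta\varphi|_3^2)/\nu$, which produces the middle term on the right of (\ref{5.6}). Collecting everything yields precisely (\ref{5.6}).
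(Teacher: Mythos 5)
Your proposal reproduces the paper's own argument: the author likewise differentiates (3.13) in $t$, tests against $\nabla\varphi_t$ and against $\rot\psi_t$ to obtain (5.8) and (5.10), absorbs the $\tfrac{\eta}{a+\eta}$-weighted top-order integrals into the viscous terms via $|\eta|\le a/2$, uses Corollary \ref{c4.2} to absorb the $\nu$-weighted $\|\eta\|_2^2$-remnants into the left-hand side, and adds $\tfrac1\nu\times$(5.10) to (5.8); the residual $c\nu|\eta|_{2,1}^2|\Delta\varphi|_3^2$ term arises from the $-\tfrac{\nu}{a}\left(\tfrac{\eta}{a+\eta}\nabla\Delta\varphi\right)_{,t}$ integrals exactly as you describe. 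One small point worth recording: the coefficient of $|\Delta\varphi_{,t}|_2^2$ in the statement of (5.6) should read $\tfrac{\nu}{a}$ rather than $\tfrac{\mu}{a}$ (compare with (5.11)), which is what both your argument and the paper's actually deliver.
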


\begin{proof}
Differentiate (\ref{3.13}) with respect to $t$, multiply by $\nabla\varphi_{,t}$ and integrate over $\Omega$. Then we have
\begin{equation}\eqal{
&{1\over2}{d\over dt}|\nabla\varphi_{,t}|_2^2+{\mu\over a}|\nabla^2\varphi_{,t}|_2^2+{\nu\over a}|\Delta\varphi_{,t}|_2^2=-\intop_\Omega \bigg({a_0\over a+\eta}\nabla\eta\bigg)_{,t}\cdot\nabla\varphi_{,t}dx\cr
&\quad-{\mu\over a}\intop_\Omega\bigg({\eta\over a+\eta}\Delta v\bigg)_{,t}\cdot \nabla\varphi_{,t}dx-{\nu\over a}\intop_\Omega\bigg({\eta\over a+\eta} \nabla\Delta\varphi\bigg)_{,t}\cdot\nabla\varphi_{,t}dx\cr
&\quad-\intop_\Omega(v\cdot\nabla v)_{,t}\cdot\nabla\varphi_{,t}dx+\intop_\Omega \bigg[{1\over a+\eta}(p_\varrho(a)-p_\varrho(a+\eta))\nabla\eta\bigg]_{,t}\cdot \nabla\varphi_{,t}dx\cr
&\quad+\intop_\Omega f_{g,t}\cdot\nabla\varphi_tdx.\cr}
\label{5.7}
\end{equation}
The first term on the r.h.s. is bounded by
$$\eqal{
&c\intop_\Omega(|\nabla\eta_{,t}|\,|\nabla\varphi_{,t}|+|\eta_{,t}|\, |\nabla\eta|\nabla\varphi_{,t}|)dx\cr
&\le\varepsilon|\nabla^2\varphi_{,t}|_2^2+c/\varepsilon |\eta_{,t}|_2^2+\varepsilon|\nabla\varphi_{,t}|_6^2+c/\varepsilon |\eta_{,t}|_3^2|\nabla\eta|_2^2.\cr}
$$
The second term on the r.h.s. of (\ref{5.7}) equals
$$
-{\mu\over a}\intop_\Omega\bigg({\eta\over a+\eta}\Delta\rot\psi\bigg)_{,t}\cdot \nabla\varphi_{,t}dx-{\mu\over a}\intop_\Omega\bigg({\eta\over a+\eta} \Delta\nabla\varphi\bigg)_{,t}\cdot\nabla\varphi_{,t}dx\equiv I_1+I_2.
$$
Using estimates of $I_1$ and $I_2$ in the proof of Lemma \ref{l3.2} we have
$$
|I_1|\le\varepsilon\|\nabla\varphi_{,t}\|_1^2+c/\varepsilon(|\eta|_{2,1}^4 |\nabla\rot\psi|_2^2+|\eta|_{2,1}^2|\nabla\rot\psi|_3^2+\|\eta\|_2^2 |\nabla\rot\psi_{,t}|_2^2)
$$
and
$$\eqal{
&|I_2|\le\varepsilon\|\nabla\varphi_{,t}\|_1^2+c/\varepsilon(|\eta|_{2,1}^4 |\nabla^2\varphi|_2^2+|\eta|_{2,1}^2|\nabla^2\varphi|_3^2+\|\eta\|_2^2 |\nabla\varphi_{,t}|_2^2)\cr
&\quad+{\mu\over a}\intop_\Omega{\eta\over a+\eta} |\nabla^2\varphi_{,t}|^2dx,\cr}
$$
where the last term is absorbed by the second term on the l.h.s. of (\ref{5.7}). Consider the third term on the r.h.s. of (\ref{5.7}). Repeating the proof of estimate of $I_3$ in the proof of Lemma \ref{l3.2} we obtain
$$\eqal{
&|I_3|\le\nu\varepsilon\|\nabla\varphi_{,t}\|_1^2+{\nu c\over\varepsilon} (|\eta|_{2,1}^4|\Delta\varphi|_2^2+|\eta|_{2,1}^2|\Delta\varphi|_3^2+ \|\eta\|_2^2|\nabla\varphi_t|_3^2)\cr
&\quad+{\nu\over a}\intop_\Omega{\eta\over a+\eta} |\Delta\varphi_{,t}|^2dx,\cr}
$$
where the last term is absorbed by the last term on the l.h.s. of (\ref{5.7}).

\noindent
We write the fourth term on the r.h.s. of (\ref{5.7}) in the form
$$\eqal{
I_4&=\intop_\Omega v_{,t}\cdot\nabla v\cdot\nabla\varphi_{,t}dx+\intop_\Omega v\cdot\nabla v_{,t}\cdot\nabla\varphi_{,t}dx=-\intop_\Omega\Delta\varphi_{,t}v \cdot\nabla\varphi_{,t}dx\cr
&\quad-\intop_\Omega v_{,t}v\cdot\nabla^2\varphi_{,t}dx-\intop_\Omega\Delta\varphi v_{,t}\cdot\nabla\varphi_{,t}dx-\intop_\Omega vv_{,t}\cdot\nabla^2\varphi_{,t}dx.\cr}
$$
Hence, we have
$$\eqal{
|I_4|&\le\varepsilon|\nabla\varphi_{,t}|_6^2+c/\varepsilon (|\Delta\varphi_{,t}|_2^2|v|_3^2+|\Delta\varphi|_3^2|v_{,t}|_2^2)+\varepsilon |\nabla^2\varphi_{,t}|_2^2\cr
&\quad+c/\varepsilon|v_{,t}|_3^2|v|_6^2.\cr}
$$
We estimate the fifth term on the r.h.s. of (\ref{5.7}) by
$$
|I_5|\le c\intop_\Omega(|\eta_{,t}|\,|\nabla\eta|+|\eta|\,|\nabla\eta_t|) |\nabla\varphi_{,t}|dx\le\varepsilon|\nabla\varphi_{,t}|_6^2+c/\varepsilon |\eta|_{2,1}^4.
$$
Finally, we estimate the last term on the r.h.s. of (\ref{5.7}) by
$$
\varepsilon|\nabla\varphi_{,t}|_6^2+c/\varepsilon|f_{g,t}|_2^2.
$$
Employing the above estimates in (\ref{5.7}) and assuming that $\varepsilon$ is sufficiently small we have
\begin{equation}\eqal{
&{d\over dt}|\nabla\varphi_{,t}|_2^2+{\mu\over a}\|\nabla\varphi_{,t}\|_1^2+ {\nu\over a}|\Delta\varphi_{,t}|_2^2\le {c\over\nu}[|\eta_{,t}|_2^2+|\eta|_{2,1}^4\cr
&\quad+|\eta|_{2,1}^4(|\nabla\rot\psi|_2^2+|\Delta\varphi|_2^2)+|\eta|_{2,1}^2(|\nabla\rot\psi|_3^2+|\Delta\varphi|_3^2)\cr
&\quad+\|\eta\|_2^2 (|\nabla\rot\psi_{,t}|_2^2+|\nabla^2\varphi_{,t}|_2^2)+|\Delta\varphi_{,t}|_2^2 |v|_3^2\cr
&\quad+|\Delta\varphi|_3^2|v_{,t}|_2^2+|v_{,t}|_3^2|v|_6^2+|f_t|_2^2]+c\nu (|\eta|_{2,1}^4|\Delta\varphi|_2^2+|\eta|_{2,1}^2|\Delta\varphi|_3^2\cr
&\quad+\|\eta\|_2^2|\nabla\varphi_{,t}|_3^2).\cr}
\label{5.8}
\end{equation}
Differentiate (\ref{3.13}) with respect to $t$, multiply by $\rot\psi_{,t}$ and integrate over $\Omega$. Then we have
\begin{equation}\eqal{
&{1\over2}{d\over dt}|\rot\psi_{,t}|_2^2+{\mu\over a}|\nabla\rot\psi_{,t}|_2^2= -{\mu\over a}\intop_\Omega\bigg({\eta\over a+\eta}\Delta v\bigg)_{,t}\cdot \rot\psi_{,t}dx\cr
&\quad-{\nu\over a}\intop_\Omega\bigg({\eta\over a+\eta}\Delta\nabla\varphi\bigg)_{,t}\cdot\rot\psi_{,t}dx- \intop_\Omega(v\cdot\nabla v)_{,t}\cdot\rot\psi_{,t}dx\cr
&\quad+\intop_\Omega f_{r,t}\cdot\rot\psi_{,t}dx.\cr}
\label{5.9}
\end{equation}
Now we examine the particular terms from the r.h.s. of (\ref{5.9}). Looking for the estimate of the first term on the r.h.s. of (\ref{3.18}) we see that the first term on the r.h.s. of (\ref{5.9}) is estimated by
$$\eqal{
&\varepsilon\|\rot\psi_{,t}\|_1^2+c/\varepsilon[|\eta|_{2,1}^4 (|\nabla\rot\psi|_2^2+|\Delta\varphi|_2^2)+|\eta|_{2,1}^2(|\nabla\rot\psi|_3^2+ |\Delta\varphi|_3^2)\cr
&\quad+\|\eta\|_2^2(|\rot\psi_{,t}|_3^2+|\Delta\varphi_{,t}|_3^2)]+{\mu\over a} \intop_\Omega{\eta\over a+\eta}|\nabla\rot\psi_{,t}|^2dx,\cr}
$$
where we used Corollary \ref{c4.2} and the last term is absorbed by the second term on the l.h.s. of (\ref{5.9}).

\noindent
Looking for the estimate of the second term on the r.h.s. of (\ref{3.18}) (term $I_3$) we see that the second term on the r.h.s. of (\ref{5.9}) is bounded by
$$
\varepsilon|\rot\psi_{,t}|_6^2+{c\nu^2\over\varepsilon}(|\eta|_{2,1}^4 |\Delta\varphi|_2^2+|\nabla\eta_{,t}|_2^2|\Delta\varphi|_3^2+\|\eta\|_2^2 |\Delta\varphi_t|_2^2).
$$
The third term on the r.h.s. of (\ref{5.9}) equals
$$
J=-\intop_\Omega(v\cdot\nabla v_t+v_t\cdot\nabla v)\cdot\rot\psi_{,t}dx\equiv J_1+J_2.
$$
First we examine $J_1$. We write it in the form
$$
J_1=-\intop_\Omega v\cdot\nabla\rot\psi_{,t}\cdot\rot\psi_{,t}dx-\intop_\Omega v\cdot\nabla\nabla\varphi_{,t}\cdot\rot\psi_{,t}dx\equiv J_{11}+J_{12},
$$
where
$$
J_{11}=-{1\over2}\intop_\Omega v\cdot\nabla|\rot\psi_{,t}|^2dx={1\over2} \intop_\Omega\Delta\varphi|\rot\psi_{,t}|^2dx
$$
so
$$
|J_{11}|\le\varepsilon|\rot\psi_{,t}|_6^2+c/\varepsilon|\Delta\varphi|_3^2 |\rot\psi_{,t}|_2^2.
$$
Integrating by parts in $J_{12}$ yields
$$J_{12}=\intop_\Omega\nabla v\cdot\nabla\varphi_{,t}\cdot\rot\psi_{,t}dx
$$
so
$$
|J_{12}|\le\varepsilon|\rot\psi_{,t}|_6^2+c/\varepsilon|\nabla v|_3^2 |\nabla\varphi_{,t}|_2^2.
$$
Next, we examine $J_2$. Integration by parts implies
$$
J_2=\intop_\Omega\Delta\varphi_{,t}v\cdot\rot\psi_{,t}dx+\intop_\Omega v_{,t}v\cdot\nabla\rot\psi_{,t}dx.
$$
Hence, we have
$$
|J_2|\le\varepsilon\|\rot\psi_{,t}\|_1^2+c/\varepsilon(|\Delta\varphi_{,t}|_2^2 |v|_3^2+|v_{,t}|_3^2|v|_6^2).
$$
Finally, the last term on the r.h.s. of (\ref{5.9}) is bounded by
$$
\varepsilon|\rot\psi_{,t}|_2^2+c/\varepsilon|f_{r,t}|_2^2.
$$
Employing the above estimates in (\ref{5.9}) and using that $\varepsilon$ is sufficiently small we derive the inequality
\begin{equation}\eqal{
&{d\over dt}|\rot\psi_{,t}|_2^2+{\mu\over a}|\nabla\rot\psi_{,t}|_2^2\le c[|\eta|_{2,1}^2(|\eta|_{2,1}^2+1)|\nabla\rot\psi|_3^2\cr
&\quad+(\|\eta\|_2^2+|\Delta\varphi|_3^2)|\rot\psi_{,t}|_3^2+|\eta|_{2,1}^4 |\Delta\varphi|_3^2+|\nabla\eta|_3^2|\Delta\varphi_{,t}|_2^2+|\nabla v|_2^2 |\nabla\varphi_{,t}|_3^2\cr
&\quad+|v_{,t}|_3^2|v|_6^2+|f_t|_2^2]+c\nu^2[|\eta|_{2,1}^2(|\eta|_{2,1}^2+1) |\Delta\varphi|_3^2+\|\eta\|_2^2|\Delta\varphi_{,t}|_2^2].\cr}
\label{5.10}
\end{equation}
Multiplying (\ref{5.10}) by $1/\nu$, adding to (\ref{5.8}) and using that $c(\|\eta\|_2^2+|\Delta\varphi|_3^2)<{\mu\over a}$ in view of Corollary \ref{c4.2} we obtain the inequality
\begin{equation}\eqal{
&{d\over dt}\bigg(|\nabla\varphi_{,t}|_2^2+{1\over\nu}|\rot\psi_{,t}|_2^2\bigg)+ {\mu\over a}|\nabla^2\varphi_{,t}|_2^2+{\nu\over a}|\Delta\varphi_{,t}|_2^2+ {\mu\over a\nu}|\nabla\rot\psi_{,t}|_2^2\cr
&\le{c\over\nu}[|\eta_{,t}|_2^2+|\eta|_{2,1}^2(|\Delta\varphi|_2^2+ |\nabla\rot\psi|_3^2+|\Delta\varphi|_3^2)\cr
&\quad+|\Delta\varphi|_3^2|v_t|_2^2+ |v_t|_3^2|v|_6^2+|f_t|_2^2]+c\nu|\eta|_{2,1}^2|\Delta\varphi|_3^2,\cr}
\label{5.11}
\end{equation}
where we used that $c\|\eta\|_2^2<1$, $c\|v(t)\|_1^2<c^2$.
Then (\ref{5.11}) implies inequality (\ref{5.6}). This concludes the proof.
\end{proof}

\begin{remark}\label{r5.3}
From (\ref{5.1}) and (\ref{5.6}) under the restrictions
$$
c\|v(t)\|_1^2<c^2,\quad c(\|\eta\|_2^2+|\Delta\varphi|_3^2)<{\mu\over a},\quad c\|\eta\|_2^2<1
$$
which holds in view of Corollary \ref{c4.2} we obtain the inequality
\begin{equation}\eqal{
&{d\over dt}\bigg(|\nabla\varphi|_2^2+|\nabla\varphi_{,t}|_2^2+ {1\over\nu}|\rot\psi|_2^2+{1\over\nu}|\rot\psi|_2^2+ {1\over\nu}|\rot\psi_{,t}|_2^2\bigg)\cr
&\quad+{\mu\over a}(|\nabla^2\varphi|_2^2+ |\nabla^2\varphi_{,t}|_2^2)+{\nu\over a}(|\Delta\varphi|_2^2+|\Delta\varphi_{,t}|_2^2)\cr
&\quad+{\mu\over a\nu}(|\nabla\rot\psi|_2^2+|\nabla\rot\psi_{,t}|_2^2)\cr
&\le{c\over\nu}\Big[|\eta|_3^2|v_{,t}|_2^2+|\eta|_2^2+|\eta_t|_2^2+|\eta|_{2,1}^2 (|\nabla\rot\psi|_3^2+|\Delta\varphi|_3^2)\cr
&\quad+|\Delta\varphi|_3^2|v_{,t}|_2^2+ |v|_3^2|\nabla v|_2^2+|v_{,t}|_3^2 |v|_6^2+|f|_2^2+|f_t|_2^2\Big]\cr
&\quad+c\nu|\eta|_{2,1}^2|\Delta\varphi|_3^2.\cr}
\label{5.12}
\end{equation}
\end{remark}

\begin{lemma}\label{l5.4}
Assume that $\varphi$, $\psi$, $\eta$ are solutions to problem (\ref{3.1})--(\ref{3.3}). Let the assumptions of Corollary \ref{c4.2} hold. Then
\begin{equation}\eqal{
&{d\over dt}\bigg(|\nabla\varphi_{,x}|_2^2+{1\over\nu}|\rot\psi_{,x}|_2^2\bigg)+ {\mu\over a}|\nabla^2\varphi_{,x}|_2^2+{\nu\over a}|\Delta\varphi_{,x}|_2^2+ {\mu\over\nu a}|\nabla\rot\psi_{,x}|_2^2\cr
&\le{c\over\nu}[|\eta|_\infty^2|v_{,t}|_2^2+|\nabla\eta|_2^2+|v|_6^2|\nabla v|_3^2+|f|_2^2].\cr}
\label{5.13}
\end{equation}
\end{lemma}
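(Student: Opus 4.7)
The strategy mirrors that of Lemma \ref{l3.3}, differentiating the momentum equation (\ref{3.2}) once in space and testing separately against $\nabla\varphi_{,x}$ and $\rot\psi_{,x}$. The only substantive difference is that the smallness provided by Corollary~\ref{c4.2} (in particular $\|\eta\|_2$ small and $|\Delta\varphi|_3$ small) allows one to absorb most of the terms that were carried on the right-hand side of (\ref{3.23}), leaving only the four terms in (\ref{5.13}).

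First I would differentiate (\ref{3.2}) with respect to $x$, multiply by $\nabla\varphi_{,x}$, and integrate over $\Omega$ to obtain
\begin{equation*}
\frac{a}{2}\frac{d}{dt}|\nabla\varphi_{,x}|_2^2+\mu|\nabla^2\varphi_{,x}|_2^2+\nu|\Delta\varphi_{,x}|_2^2=\sum_{k=1}^{5}J_k,
\end{equation*}
where $J_1,\dots,J_5$ come from differentiating the five source terms $\eta v_{,t}$, $a_0\nabla\eta$, $(a+\eta)v\cdot\nabla v$, $(p_\varrho(a)-p_\varrho(a+\eta))\nabla\eta$, $(a+\eta)f$. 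Then I repeat the computation testing against $\rot\psi_{,x}$, multiply that identity by $1/\nu$, and add.

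For the individual terms I would integrate by parts to shift one spatial derivative off $\nabla\varphi_{,x}$ (or $\rot\psi_{,x}$) when convenient, then apply H\"older and Young's inequalities. The routine bounds are
\begin{equation*}
|J_1|\le\varepsilon\|\nabla\varphi_{,x}\|_1^2+\frac{c}{\varepsilon}|\eta|_\infty^2|v_{,t}|_2^2,\qquad
|J_2|\le\varepsilon\|\nabla\varphi_{,x}\|_1^2+\frac{c}{\varepsilon}|\nabla\eta|_2^2,
\end{equation*}
while $J_3$ splits as $a\int v\cdot\nabla v\cdot \nabla\varphi_{,xx}\,dx+\int(\eta v\cdot\nabla v)_{,x}\cdot\nabla\varphi_{,x}\,dx$; after integrating by parts once more the leading piece is bounded by $\varepsilon\|\nabla\varphi_{,x}\|_1^2+c\varepsilon^{-1}|v|_6^2|\nabla v|_3^2$ and the $\eta$-weighted remainder is controlled by $\|\eta\|_2^2$ times the same norms of $v$. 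Terms $J_4$ and $J_5$ give the usual $|\nabla\eta|_2^2$ and $|f|_2^2$ contributions (plus $\eta$-weighted copies). Collecting and choosing $\varepsilon$ sufficiently small absorbs the $\|\nabla\varphi_{,x}\|_1^2$ and analogous $\rot\psi_{,x}$ pieces into the dissipation on the left.

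The main obstacle, as in Lemma \ref{l3.3}, is controlling the $\eta$-weighted cubic terms in $v$ coming from $(\eta v\cdot\nabla v)_{,x}$ and the mixed $\rot\psi$-tests of $\eta\Delta v$, $\eta\nabla\Delta\varphi$, which formally produce factors $\|\eta\|_2^2 D_1^2|\nabla v|_3^2$ and $\nu\|\eta\|_2^2|\Delta\varphi|_3^2|\nabla\varphi_{,x}|_3^2$. This is precisely where Corollary~\ref{c4.2} enters: on the existence interval $[0,T]$ the quantities $\|\eta\|_{2,\infty,\Omega^t}$ and $|\Delta\varphi|_{3,\infty,\Omega^t}$ are of order $1/\nu$ (or smaller), so $c\|\eta\|_2^2<1$ and $c|\Delta\varphi|_3^2<\mu/a$, and these coefficients can be moved to the left-hand side. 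What survives are exactly the four terms in the bracket of (\ref{5.13}), which yields (\ref{5.13}) after dividing by $a$ and relabelling constants.
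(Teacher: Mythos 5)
Your overall strategy is the same as the paper's: differentiate (\ref{3.2}) in $x$, test against $\nabla\varphi_{,x}$ and against $\rot\psi_{,x}$, integrate by parts once to move a derivative off the test function, apply H\"older and Young, multiply the $\rot\psi$ identity by $1/\nu$ and add. Your estimates for the five source terms are also the ones that appear in the paper (yielding (\ref{5.15})--(\ref{5.18})), so the core of the argument is right.

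However, the ``main obstacle'' you identify is not present in this lemma, and the role you ascribe to Corollary~\ref{c4.2} is overstated. You claim there are mixed $\rot\psi_{,x}$-tests of $\eta\Delta v$ and $\eta\nabla\Delta\varphi$ producing factors like $\nu\|\eta\|_2^2|\Delta\varphi|_3^2|\nabla\varphi_{,x}|_3^2$. Those terms arise only when the momentum equation is used in the divided form (\ref{3.13}), where the viscosity operators acquire the factor $\eta/(a+\eta)$. Lemma~\ref{l5.4} (like Lemma~\ref{l3.3}) works directly with (\ref{3.2}), where $-\mu\Delta v$ and $-\nu\nabla\Delta\varphi$ carry constant coefficients; after differentiating in $x$ and testing against $\rot\psi_{,x}$, the term $-\nu\int\nabla\Delta\varphi_{,x}\cdot\rot\psi_{,x}\,dx$ vanishes by orthogonality (a gradient against a divergence-free field) and $-\mu\int\Delta v_{,x}\cdot\rot\psi_{,x}\,dx$ contributes exactly $\mu|\nabla\rot\psi_{,x}|_2^2$. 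There is no $\eta$-weighted copy of the viscous terms to absorb. Consequently the final step requires nothing more than the basic bound $|\eta|_\infty\le a/2$ to replace $(1+|\eta|_\infty^2)$ by a constant in (\ref{5.18}); the smallness of $\|\eta\|_{2,\infty,\Omega^t}$ or $|\Delta\varphi|_{3,\infty,\Omega^t}$ from Corollary~\ref{c4.2} is not needed for the absorption you describe. (Those smallness estimates do play the role you describe, but in Lemmas~\ref{l5.2} and \ref{l5.6}, which really do use the divided form (\ref{3.13}).) Your write-up would otherwise compile into a correct proof, but as it stands it mischaracterizes where the difficulty lies and which hypotheses are doing the work.
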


\begin{proof}
Differentiate (\ref{3.2}) with respect to $x$, multiply by $\nabla\varphi_x$ and integrate over $\Omega$. Then we get
\begin{equation}\eqal{
&{a\over2}{d\over dt}|\nabla\varphi_{,x}|_2^2+\mu|\nabla^2\varphi_{,x}|_2^2+\nu |\Delta\varphi_{,x}|_2^2=-\intop_\Omega(\eta v_{,t})_{,x}\cdot\nabla\varphi_{,x}dx\cr
&\quad-a_0\intop_\Omega\nabla\eta_{,x}\cdot\nabla\varphi_{,x}dx+\intop_\Omega [(a+\eta)v\cdot\nabla v]_{,x}\cdot\nabla\varphi_{,x}dx\cr
&\quad+\intop_\Omega[(p_\varrho(a)-p_\varrho(a+\eta))\nabla\eta]_{,x}\cdot \nabla\varphi_{,x}dx+\intop_\Omega[(a+\eta)f]_{,x}\cdot\nabla\varphi_{,x}dx.\cr}
\label{5.14}
\end{equation}
After integration by parts in the first term on the r.h.s. of (\ref{5.14}) we bound it by
$$
\varepsilon|\nabla\varphi_{,xx}|_2^2+c/\varepsilon|\eta|_\infty^2|v_{,t}|_2^2,
$$
the second term is bounded by
$$
\varepsilon|\nabla\varphi_{,xx}|_2^2+c/\varepsilon|\nabla\eta|_2^2.
$$
After integration by parts the third term on the r.h.s. of (\ref{5.14}) is estimated by
$$
\varepsilon|\nabla\varphi_{,xx}|_2^2+c/\varepsilon(1+|\eta|_\infty^2) |v|_6^2|\nabla v|_3^2
$$
and the fourth term by
$$
\varepsilon|\nabla\varphi_{,xx}|_2^2+c/\varepsilon|\eta|_\infty^2|\nabla\eta|_2^2.
$$
Finally, the last term on the r.h.s. of (\ref{5.14}) equals
$$
I_1=-a\intop_\Omega f_g\cdot\nabla\varphi_{,xx}dx-\intop_\Omega\eta f\nabla\varphi_{,xx}dx
$$
which is estimated by
$$
|I_1|\le\varepsilon|\nabla\varphi_{,xx}|_2^2+c/\varepsilon(|f_g|_2^2+ |\eta|_\infty^2|f|_2^2).
$$
Employing the above estimates in (\ref{5.14}) and assuming that $\varepsilon$ is sufficiently small we obtain the inequality
\begin{equation}\eqal{
&{d\over dt}|\nabla\varphi_{,x}|_2^2+{\mu\over a}|\nabla^2\varphi_{,x}|_2^2+{\nu\over a}|\Delta\varphi_{,x}|_2^2\le{c\over\nu}[|\eta|_\infty^2|v_{,t}|_2^2+ |\nabla\eta|_2^2\cr
&\quad+|v|_6^2|\nabla v|_3^2+|\eta|_\infty^2|\nabla\eta|_2^2+(1+|\eta|_\infty^2) |f|_2^2].\cr}
\label{5.15}
\end{equation}
Differentiate (\ref{3.2}) with respect to $x$, multiply by $\rot\psi_{,x}$ and integrate over $\Omega$. Then we derive
\begin{equation}\eqal{
&{a\over2}{d\over dt}|\rot\psi_{,x}|_2^2+\mu|\nabla\rot\psi_{,x}|_2^2= -\intop_\Omega(\eta v_{,t})_{,x}\cdot\rot\psi_{,x}dx\cr
&\quad+\intop_\Omega[(a+\eta)v\cdot\nabla v]_{,x}\cdot\rot\psi_{,x}dx+ \intop_\Omega[(a+\eta)f]_{,x}\cdot\rot\psi_{,x}dx.\cr}
\label{5.16}
\end{equation}
Integrating by parts in the terms from the r.h.s. of the above inequality we obtain
\begin{equation}\eqal{
&{d\over dt}|\rot\psi_{,x}|_2^2+{\mu\over a}|\rot\psi_{,x}|_2^2\le c[|\eta|_\infty^2|v_{,t}|_2^2+(1+|\eta|_\infty^2)|v|_6^2|\nabla v|_3^2\cr
&\quad+(1+|\eta|_\infty^2)|f|_2^2].\cr}
\label{5.17}
\end{equation}
Multiplying (\ref{5.17}) by $1/\nu$ and adding to (\ref{5.16}) yields
\begin{equation}\eqal{
&{d\over dt}\bigg(|\nabla\varphi_{,x}|_2^2+{1\over\nu}|\rot\psi_{,x}|_2^2\bigg)+ {\mu\over a}|\nabla^2\varphi_{,x}|_2^2+{\nu\over a}|\Delta\varphi_{,x}|_2^2+{\mu\over\nu a}|\nabla\rot\psi_{,x}|_2^2\cr
&\le{c\over\nu}[|\eta|_\infty^2|v_t|_2^2+(1+|\eta|_\infty^2)|v|_6^2|\nabla v|_3^2+(1+|\eta|_\infty^2)|\nabla\eta|_2^2\cr
&\quad+(1+|\eta|_\infty^2)|f|_2^2].\cr}
\label{5.18}
\end{equation}
Using that $|\eta|_\infty\le a/2$ the above inequality implies (\ref{5.13}). This concludes the proof.
\end{proof}

\begin{remark}\label{r5.5}
We use the restriction introduced in Remark \ref{r5.3}. 
Then (\ref{5.12}) and (\ref{5.13}) imply the inequality
\begin{equation}\eqal{
&{d\over dt}\bigg(\|\nabla\varphi\|_1^2+|\nabla\varphi_t|_2^2+{1\over\nu} \|\rot\psi\|_1^2+{1\over\nu}|\rot\psi_{,t}|_2^2\bigg)\cr
&\quad+{\mu\over a}(\|\nabla\varphi\|_2^2+\|\nabla\varphi_{,t}\|_1^2)+{\nu\over a}(\|\Delta\varphi\|_1^2+|\Delta\varphi_{,t}|_2^2)\cr
&\quad+{\mu\over a\nu}(\|\rot\psi\|_2^2+|\rot\psi_{,t}|_2^2)\cr
&\le{c\over\nu}[|\eta|_\infty^2|v_{,t}|_2^2+|\eta|_{1,1}^2+|v|_3^2|\nabla v|_2^2+|v_{,t}|_3^2|v|_6^2+|v|_6^2|\nabla v|_3^2\cr
&\quad+|f|_2^2+|f_t|_2^2].\cr}
\label{5.19}
\end{equation}
\end{remark}

\begin{lemma}\label{l5.6}
Assume that $\varphi$, $\psi$, $\eta$ are solutions to problem (\ref{3.1})--(\ref{3.3}) and Corollary \ref{c4.2} holds. Then
\begin{equation}\eqal{
&{d\over dt}\bigg(|\nabla\varphi_{,xt}|_2^2+{1\over\nu}|\rot\psi_{,xt}|_2^2\bigg) +{\mu\over a}|\nabla^2\varphi_{,xt}|_2^2+{\nu\over a}|\Delta\varphi_{,xt}|_2^2+ {\mu\over\nu a}|\nabla\rot\psi_{,xt}|_2^2\cr
&\le\nu\varepsilon|\nabla\varphi_{,xxx}|_2^2+{c\nu\over\varepsilon} (\|\eta_{,t}\|_1^2|\nabla\varphi_{,xx}|_2^2+\|\eta\|_2^2|\nabla\varphi_{,xt}|_2^2) +{c\over\nu}[|\eta_{,xt}|_2^2+|\eta|_{2,1}^4\cr
&\quad+\|\eta_t\|_1^2|\Delta v|_3^2+\|v\|_2^2\|v_{,t}\|_1^2+ |f_t|_2^2]+c\nu\|\eta_{,t}\|_1^2|\nabla\varphi_{,xx}|_2^2.\cr}
\label{5.20}
\end{equation}
\end{lemma}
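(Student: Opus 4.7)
The plan is to mirror the derivation of Lemma \ref{l3.2} (more precisely, its differentiated-in-$x$ counterpart Lemma \ref{l3.4}), but to exploit the a priori bounds furnished by Corollary \ref{c4.2}, namely the smallness of $\|\eta\|_{2,1,\infty,\Omega^t}$, $\|\nabla\varphi\|_{2,1,\infty,\Omega^t}$ and of the associated $L_2$-in-time norms, to collapse many of the terms which in Section \ref{s3} had to be kept explicitly. First I would differentiate equation (\ref{3.13}) with respect to $x$ and then $t$, test the result against $\nabla\varphi_{,xt}$ and integrate over $\Omega$, producing an energy identity of the same shape as (\ref{3.40}) but with each nonlinear source term now carrying at least one factor that is controlled by a small constant through Corollary \ref{c4.2}.

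Next I would go through the six nonlinear terms on the right-hand side in turn. The pressure-type term $(a_0\nabla\eta/(a+\eta))_{,xt}$ and the convective term $(v\cdot\nabla v)_{,xt}$ are handled by straightforward integration by parts, H\"older, and the Sobolev embedding $H^1\hookrightarrow L_6$, contributing $|\eta_{,xt}|_2^2$, $\|\eta\|_2^2$-weighted remainders and the term $\|v\|_2^2\|v_{,t}\|_1^2$ on the right of (\ref{5.20}); this is the same bookkeeping as in Lemma \ref{l3.4} but the coefficients multiplying $|\nabla\varphi_{,xt}|_2^2$ can now be absorbed into the left-hand side because Corollary \ref{c4.2} forces $\|\eta\|_2^2$ and $|\Delta\varphi|_3^2$ to be small. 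The terms of the form $({\eta\over a+\eta}\Delta v)_{,xt}$ and $({\nu\over a}{\eta\over a+\eta}\nabla\Delta\varphi)_{,xt}$ are the crucial ones: after integrating by parts they split into a principal part whose highest-order factor is $\nabla^2\varphi_{,xt}$ (respectively $\nabla\rot\psi_{,xt}$) multiplied by ${\eta\over a+\eta}$, and lower-order commutators. The principal part is absorbed on the left using the identity
\[
\frac{\mu}{a}\int_\Omega\bigl(1-\tfrac{\eta}{a+\eta}\bigr)|\nabla^2\varphi_{,xt}|^2\,dx
=\frac{\mu}{a}\int_\Omega\frac{a}{a+\eta}|\nabla^2\varphi_{,xt}|^2\,dx\ge\tfrac{2}{3}\tfrac{\mu}{a}|\nabla^2\varphi_{,xt}|_2^2,
\]
which uses $|\eta|\le a/2$ exactly as in the proof of Lemma \ref{l3.2}.

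The commutator pieces will produce $\|\eta_t\|_1^2|\Delta v|_3^2$ and $\|\eta\|_2^2|\nabla\varphi_{,xt}|_2^2$ contributions, together with a single genuinely large term of the form $c\nu\|\eta_t\|_1^2|\nabla\varphi_{,xx}|_2^2$ coming from the $\nu$-scaled term $({\nu\over a}{\eta\over a+\eta}\nabla\Delta\varphi)_{,xt}$; this is the source of the $\nu\varepsilon|\nabla\varphi_{,xxx}|_2^2$ together with $c\nu/\varepsilon$ remainders on the right of (\ref{5.20}), obtained by Young's inequality splitting $|\nabla^2\Delta\varphi|_3$ via the interpolation $|\nabla^2\Delta\varphi|_3\le|\nabla\varphi_{,xxx}|_2^{1/2}|\nabla\Delta\varphi|_2^{1/2}$. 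I would then repeat the same programme with the test function $\rot\psi_{,xt}$ to obtain the analogue of (\ref{3.43}), multiply that inequality by $1/\nu$ and add the two resulting inequalities, using Corollary \ref{c4.2} to hide all terms with coefficient below a fixed small constant into the viscous dissipation on the left.

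The main obstacle I expect is the bookkeeping around the two $\nu$-weighted terms: on the one hand the smallness from Corollary \ref{c4.2} must be strong enough to absorb the commutator $\|\eta\|_2^2(|\nabla^2\varphi_{,xt}|_2^2+\nu|\Delta\varphi_{,xt}|_2^2)$ into the left, and on the other hand the residual $c\nu\|\eta_t\|_1^2|\nabla\varphi_{,xx}|_2^2$ together with the $\nu\varepsilon|\nabla\varphi_{,xxx}|_2^2$ piece must remain on the right with compatible powers of $\nu$, since these are exactly the terms that are later closed by combining (\ref{5.20}) with the $\|\nabla\varphi_{,xx}\|_2^2$ estimate from Lemma \ref{l3.5} and the decay estimates of Section \ref{s5}. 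Once the algebra of Young's inequalities is arranged so that the small constants from Corollary \ref{c4.2} exactly beat the constants multiplying $|\nabla^2\varphi_{,xt}|_2^2$, $|\Delta\varphi_{,xt}|_2^2$ and $|\nabla\rot\psi_{,xt}|_2^2$ on the right, the inequality (\ref{5.20}) follows.
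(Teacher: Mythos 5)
Your proposal is correct and mirrors the paper's own proof: the paper also differentiates (\ref{3.13}) in $x$ and $t$, tests against $\nabla\varphi_{,xt}$ and $\rot\psi_{,xt}$ in turn, repeats the term-by-term estimates of Lemma \ref{l3.4} (including the absorption identity $\int_\Omega \frac{a}{a+\eta}|\nabla^2\varphi_{,xt}|^2\,dx\ge\frac{2}{3}|\nabla^2\varphi_{,xt}|_2^2$ and the Gagliardo--Nirenberg splitting $|\Delta\nabla\varphi|_3^2\le c|\Delta\nabla\varphi_x|_2|\Delta\nabla\varphi|_2$ for the $\nu$-weighted commutator), and then uses the smallness from Corollary \ref{c4.2} to absorb the $\|\eta\|_2^2$-weighted quantities into the dissipation before multiplying the $\rot\psi$-inequality by $1/\nu$ and adding.
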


\begin{proof}
Differentiate (\ref{3.11}) with respect to $x$ and $t$, multiply by $\nabla\varphi_{,xt}$ and integrate over $\Omega$. Then we have
\begin{equation}\eqal{
&{1\over2}{d\over dt}|\nabla\varphi_{,xt}|_2^2+{\mu\over a}|\nabla^2\varphi_{,xt}|_2^2+{\nu\over a}|\Delta\varphi_{,xt}|_2^2=-\intop_\Omega\bigg({a_0\over a+\eta}\nabla\eta\bigg)_{,xt}\cdot\nabla\varphi_{,xt}dx\cr
&\quad-{\mu\over a}\intop_\Omega\bigg({\eta\over a+\eta}\Delta v\bigg)_{,xt}\cdot\nabla\varphi_{,xt}dx-{\nu\over a}\intop_\Omega\bigg({\eta\over a+\eta}\nabla\Delta\varphi\bigg)_{,xt}\cdot\nabla\varphi_{,xt}dx\cr
&\quad+\intop_\Omega(v\cdot\nabla v)_{,xt}\cdot\nabla\varphi_{,xt}dx+\intop_\Omega\bigg[{1\over a+\eta}(p_\varrho(a)-p_\varrho(a+\eta))\nabla\eta\bigg]_{,xt}\!\!\cdot\nabla\varphi_{,xt}dx\cr
&\quad+\intop_\Omega f_{g,xt}\cdot\nabla\varphi_{,xt}dx.\cr}
\label{5.21}
\end{equation}
Now, we examine the particular terms from the r.h.s. of (\ref{5.21}). We repeat the proof of Lemma \ref{l3.4}. Let $I_1$ be the first term on the r.h.s. of (\ref{5.21}). Integrating by parts we estimate it by
$$
|I_1|\le\varepsilon|\nabla\varphi_{,xxt}|_2^2+c/\varepsilon(|\eta_{,t}|_4^2 |\nabla\eta|_4^2+|\nabla\eta_t|_2^2).
$$
Let $I_2$ be the second term on the r.h.s. of (\ref{5.21}). In view of Corollary \ref{c4.2} we can assume that $\|\eta\|_2\le1$. Then from the proof of Lemma \ref{l3.4} we have
$$\eqal{
|I_2|&\le\varepsilon(|\nabla\rot\psi_{,xt}|_2^2+\|\nabla\varphi_{,xt}\|_1^2)+ c/\varepsilon[\|\eta_t\|_1^2|\Delta v|_3^2+\|\eta\|_2^2(|v_{,xt}|_3^2\cr
&\quad+|\varphi_{,xt}|_2^2+|\nabla\varphi_{,xt}|_2^2)]+{\mu\over a}\intop_\Omega {\eta\over a+\eta}|\nabla^2\varphi_{,xt}|^2dx,\cr}
$$
where the last term is absorbed by the second term on the l.h.s. of (\ref{5.21}).

\noindent
Let $I_3$ be the third term on the r.h.s. of (\ref{5.21}). Then, from the proof of Lemma \ref{l3.4}, we have
$$\eqal{
|I_3|&\le\nu\varepsilon(|\nabla\varphi_{,xxt}|_2^2+|\nabla\varphi_{,xxx}|_2^2)+ {c\nu\over\varepsilon}(\|\eta_t\|_1^2|\nabla\varphi_{,xx}|_2^2+\|\eta\|_2^2 |\nabla\varphi_{,xt}|_2^2)\cr
&\quad+{\nu\over a}\intop_\Omega{\eta\over a+\eta}|\Delta\varphi_{,xt}|^2dx,\cr}
$$
where the last term is absorbed by the third term on the l.h.s. of (\ref{5.21}) and Corollary \ref{c4.2} is used.

\noindent
Consider the fourth term on the r.h.s. of (\ref{5.21}). We can express it in the form
$$
I_4=-\intop_\Omega v_{,t}\cdot\nabla v\cdot\nabla\varphi_{,xt}dx-\intop_\Omega v\cdot\nabla v_{,t}\cdot\nabla\varphi_{,xxt}dx\equiv I_4^1+I_4^2,
$$
where
$$\eqal{
&|I_4^1|\le\varepsilon|\nabla\varphi_{,xt}|_2^2+c/\varepsilon|v_{,t}|_6^2 |v_{,x}|_3^2,\cr
&|I_4^2|\le\varepsilon|\nabla\varphi_{,xxt}|_2^2+c/\varepsilon|v|_\infty^2 |v_{,xt}|_2^2.\cr}
$$
The fifth term on the l.h.s. of (\ref{5.21}) is estimated by
$$
|I_5|\le\varepsilon|\nabla\varphi_{,xxt}|_2^2+c/\varepsilon|\eta|_{2,1}^4 (1+|\eta|_{2,1}^2).
$$
Finally, the last term on the r.h.s. of (\ref{5.21}) is bounded by
$$
|I_6|\le\varepsilon|\nabla\varphi_{,xxt}|_2^2+c/\varepsilon|f_{,t}|_2^2.
$$
Employing the above estimates in (\ref{5.21}), assuming that $\varepsilon$ is sufficiently small and that $|\eta|_{2,1}\le1$ in view of Corollary \ref{c4.2}, we obtain
\begin{equation}\eqal{
&{d\over dt}|\nabla\varphi_{,xt}|_2^2+{\mu\over a}|\nabla^2\varphi_{,xt}|_2^2+{\nu\over a}|\Delta\varphi_{,xt}|_2^2\le {c\over\nu}[|\eta_{,xt}|_2^2+|\eta|_{2,1}^4\cr
&\quad+\|\eta_t\|_1^2|\Delta v|_3^2+\|\eta\|_2^2(|v_{,xt}|_3^2+\|\varphi_{,xt}\|_1^2)\cr
&\quad+\|v\|_2^2\|v_{,t}\|_1^2+|f_t|_2^2]+\nu\varepsilon |\nabla\varphi_{,xxx}|_2^2+{c\nu\over\varepsilon}(\|\eta_{,t}\|_1^2 |\nabla\varphi_{,xx}|_2^2\cr
&\quad+\|\eta\|_2^2|\nabla\varphi_{,xt}|_2^2).\cr}
\label{5.22}
\end{equation}
Differentiate (\ref{3.13}) with respect to $x$ and $t$, multiply the result by $\rot\psi_{,xt}$ and integrate over $\Omega$. Then we have
\begin{equation}\eqal{
&{1\over2}{d\over dt}|\rot\psi_{,xt}|_2^2+{\mu\over a}|\nabla\rot\psi_{,xt}|_2^2 =-{\mu\over a}\intop_\Omega\bigg({\eta\over a+\eta}\Delta v\bigg)_{,xt}\cdot \rot\psi_{,xt}dx\cr
&\quad-{\nu\over a}\intop_\Omega\bigg({\eta\over a+\eta}\Delta\nabla\varphi\bigg)_{,xt}\cdot\rot\psi_{,xt}dx+\intop_\Omega(v\cdot \nabla v)_{,xt}\cdot\rot\psi_{,xt}dx\cr
&\quad+\intop_\Omega f_{,xt}\cdot\rot\psi_{,xt}dx.\cr}
\label{5.23}
\end{equation}
Let $I_1$ be the first term on the r.h.s. of (\ref{5.23}). Repeating the proof of estimate of $I_1$ from the proof of Lemma \ref{l3.4} we have
$$\eqal{
|I_1|&\le\varepsilon(|\rot\psi_{,xxt}|_2^2+|\nabla\varphi_{,xxt}|_2^2)+ c/\varepsilon(|\eta_t|_6^2|\Delta v|_3^2+\|\eta\|_2^4|\rot\psi_{,xt}|_2^2)\cr
&\quad+{\mu\over a}\intop_\Omega{\eta\over a+\eta}|\nabla\rot\psi_{,xt}|^2dx,\cr}
$$
where the last term is absorbed by the last term on the l.h.s. of (\ref{5.23}).

\noindent
Let $I_2$ be the second term on the r.h.s. of (\ref{5.23}). Then from the proof of Lemma \ref{l3.4} we have
$$\eqal{
|I_2|&\le\varepsilon|\rot\psi_{,xxt}|_2^2+{c\nu^2\over\varepsilon}|\eta_t|_6^2 |\Delta\nabla\varphi|_3^2+\varepsilon_1\nu^2|\nabla\varphi_{,xxt}|_2^2\cr
&\quad+c/\varepsilon_1\|\eta\|_2^4|\rot\psi_{,xt}|_2^2.\cr}
$$
Next, we examine the third term on the r.h.s. ov (\ref{5.23}). We express it in the form
$$
I_3=-\intop_\Omega v_{,t}\cdot\nabla v\cdot\rot\psi_{,xxt}dx-\intop_\Omega v\cdot\nabla v_{,t}\cdot\rot\psi_{,xxt}dx\equiv I_3^1+I_3^2,
$$
where
$$\eqal{
&|I_3^1|\le\varepsilon|\rot\psi_{,xxt}|_2^2+c/\varepsilon|v_{,t}|_6^2|\nabla v|_3^2,\cr
&|I_3^2|\le\varepsilon|\rot\psi_{,xxt}|_2^2+c/\varepsilon|v|_\infty^2 |v_{,xt}|_2^2.\cr}
$$
Finally, the last term on the r.h.s. of (\ref{5.23}) is bounded by
$$
\varepsilon|\rot\psi_{,xxt}|_2^2+c/\varepsilon|f_t|_2^2.
$$
Employing the above estimates in (\ref{5.23}) and assuming that $\varepsilon$ is sufficiently small we derive the inequality
\begin{equation}\eqal{
&{d\over dt}|\rot\psi_{,xt}|_2^2+{\mu\over a}|\nabla\rot\psi_{,xt}|_2^2\le \varepsilon|\nabla\varphi_{,xxt}|_2^2+c/\varepsilon(|\eta_{,t}|_6^2|\Delta v|_3^2\cr
&\quad+\|\eta\|_2^4|\rot\psi_{,xt}|_2^2)+\varepsilon_1\nu^2 |\nabla\varphi_{,xxt}|_2^2+c/\varepsilon_1\|\eta\|_2^4|\rot\psi_{,xt}|_2^2+c\nu^2 |\eta_t|_6^2|\Delta\nabla\varphi|_2^2\cr
&\quad+c\|v_t\|_1^2\|v\|_2^2+c|f_t|_2^2.\cr}
\label{5.24}
\end{equation}
Multiplying (\ref{5.24}) by $1/\nu$, adding to (\ref{5.22}) and choosing $\varepsilon$ and $\varepsilon_1$ small we obtain
\begin{equation}\eqal{
&{d\over dt}\bigg(|\nabla\varphi_{,xt}|_2^2+{1\over\nu}|\rot\psi_{,xt}|_2^2\bigg)+ {\mu\over a}|\nabla^2\varphi_{,xt}|_2^2 +{\nu\over a}|\Delta\varphi_{,xt}|_2^2\cr
&\quad+{\mu\over a\nu}|\nabla\rot\psi_{,xt}|_2^2\le\nu\varepsilon |\nabla\varphi_{,xxx}|_2^2\cr &\quad+{c\nu\over\varepsilon}(\|\eta_{,t}\|_1^2 |\nabla\varphi_{,xx}|_2^2+\|\eta\|_2^2|\nabla\varphi_{,xt}|_2^2)\cr
&\quad+{c\over\nu}\Big[|\eta_{,xt}|_2^2+|\eta|_{2,1}^4+\|\eta_{,t}\|_1^2|\Delta v|_3^2+\|\eta\|_2^2|v_{,xt}|_3^2 +|f_t|_2^2\Big]\cr
&\quad+c\nu\|\eta_{,t}\|_1^2|\nabla\varphi_{,xx}|_2^2,\cr}
\label{5.25}
\end{equation}
where terms $\|\eta\|_2^2\|\nabla\varphi_{,xt}\|_1^2$, $\|\eta\|_2^4|\rot\psi_{,xt}|_2^2$ are absorbed by the l.h.s. terms in view of Corollary \ref{c4.2}. Hence we derive (\ref{5.20}) from (\ref{5.25}). This concludes the proof.
\end{proof}

\begin{lemma}\label{l5.7}
Assume that $\varphi$, $\psi$, $\eta$ are solutions to problem (\ref{3.1})--(\ref{3.3}). Let the assumptions of Corollary \ref{c4.2} hold. Then
\begin{equation}\eqal{
&{d\over dt}\bigg(|\nabla\varphi_{,xx}|_2^2+{1\over\nu}|\rot\psi_{,xx}|_2^2\bigg)+ {\mu\over a}|\nabla^2\varphi_{,xx}|_2^2+{\nu\over a}|\Delta\varphi_{,xx}|_2^2+ {\mu\over\nu a}|\nabla\rot\psi_{,xx}|_2^2\cr
&\le{c\over\nu}[\|\eta\|_2^2\|v_{,t}\|_1^2+(1+\|\eta\|_2^2)\|v\|_2^4+ \|f\|_1^2].\cr}
\label{5.26}
\end{equation}
\end{lemma}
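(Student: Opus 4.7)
The plan mirrors the derivation of Lemma 3.5, but now exploits Corollary 4.2 to absorb most of the nonlinear remainders into the dissipation on the left-hand side. Specifically, I would differentiate the momentum equation (3.2) twice in $x$, test first with $\nabla\varphi_{,xx}$ and then with $\rot\psi_{,xx}$, integrate over $\Omega$, and add the resulting identities after multiplying the $\rot\psi$-test by $1/\nu$. This produces the same identity as (3.60)$+$(3.68), namely
\begin{equation*}
\tfrac{a}{2}\tfrac{d}{dt}\bigl(|\nabla\varphi_{,xx}|_2^2+\tfrac{1}{\nu}|\rot\psi_{,xx}|_2^2\bigr)+\mu|\nabla^2\varphi_{,xx}|_2^2+\nu|\Delta\varphi_{,xx}|_2^2+\tfrac{\mu}{\nu}|\nabla\rot\psi_{,xx}|_2^2 = \sum_k R_k,
\end{equation*}
where the $R_k$ are the second-order $x$-derivatives of $\eta v_{,t}$, $a_0\nabla\eta$, $(a+\eta)v\cdot\nabla v$, $(p_\varrho(a)-p_\varrho(a+\eta))\nabla\eta$ and $(a+\eta)f$ integrated against $\nabla\varphi_{,xx}$ or $\rot\psi_{,xx}$.

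Each $R_k$ is then handled exactly as in the proof of Lemma 3.5, using integration by parts to shift one derivative onto the highest-order test function and controlling the result by H\"older followed by Young with a small parameter $\varepsilon$, so that the principal part $\varepsilon(|\nabla^2\varphi_{,xx}|_2^2+\tfrac{1}{\nu}|\nabla\rot\psi_{,xx}|_2^2+\nu|\Delta\varphi_{,xx}|_2^2)$ is absorbed by the left-hand side. The $\eta v_{,t}$ terms produce the contribution $\tfrac{c}{\nu}\|\eta\|_2^2\|v_{,t}\|_1^2$; the convective terms $(a+\eta)v\cdot\nabla v$ yield, after interpolation of $|v_{,xx}|_3$, $|\rot\psi_{,xx}|_3$ and $|\nabla\varphi_{,xx}|_3$ via
$|u_{,xx}|_3^2\le\varepsilon|u_{,xxx}|_2^2+c(1/\varepsilon)\|u\|^6_*\,|u|_2^2$
(as in (3.76)), terms of the form $\tfrac{c}{\nu}(1+\|\eta\|_2^2)\|v\|_2^4$ plus prefactors involving $\|\eta\|_2^2$, $\|\nabla\varphi\|_2^2$, $|\Delta\varphi|_3^2$, $D_1^2$, $|\nabla\varphi|_6^2$ multiplying $\|\nabla\varphi_{,x}\|_1^2$, $|\rot\psi_{,xx}|_3^2$, etc.; the pressure-perturbation term gives $\tfrac{c}{\nu}\|\eta\|_2^4\le\tfrac{c}{\nu}\|\eta\|_2^2$ (using $\|\eta\|_2\le 1$); and the forcing contributes $\tfrac{c}{\nu}(1+\|\eta\|_2^2)\|f\|_1^2$.

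The crucial simplification over Lemma 3.5 comes from Corollary 4.2: under the long-time estimate, $\|\eta\|_{2,\infty,\Omega^t}$, $\|\nabla\varphi\|_{2,\infty,\Omega^t}$, $|\Delta\varphi|_3$ and (via (2.82) together with smallness of $|\eta|_3$ and $|f|_{18/7,6,\Omega^t}$) $D_1$ are all controlled by $\phi(A)/\nu^{\alpha}$ with $\alpha>0$. Consequently the coefficients $\phi_{13},\ldots,\phi_{17}$ appearing in (3.76) can be taken as small as we wish by choosing $\nu$ large, so every term of the form $\phi_{i}\cdot(|\nabla\varphi_{,xx}|_3^2+|\rot\psi_{,xx}|_3^2+|\nabla\varphi_{,xxx}|_2^2+\|\nabla\varphi_{,x}\|_1^2)$ is absorbed into the dissipation on the left-hand side. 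What survives of the convective and pressure terms is only $\tfrac{c}{\nu}(1+\|\eta\|_2^2)\|v\|_2^4$, which bounds the residual products $|v|_6^2\|v\|_2^2$, $\|v\|_2^2|v_{,x}|_2^2$, etc., via Sobolev embedding $H^2\hookrightarrow W^{1}_6\cap L^\infty$.

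The main obstacle is bookkeeping: one must verify that \emph{every} nonlinear coefficient that is not proportional to $\|\eta\|_2^2\|v_{,t}\|_1^2$, $(1+\|\eta\|_2^2)\|v\|_2^4$, or $\|f\|_1^2$ contains a factor among $\|\eta\|_2$, $\|\nabla\varphi\|_2$, $|\Delta\varphi|_3$ or $D_1$, hence becomes admissible to move to the left-hand side for $\nu$ large. Once this case analysis is complete, choosing $\varepsilon$ small and collecting the surviving terms yields (5.26). This concludes the proof.
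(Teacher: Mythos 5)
Your sketch takes a genuinely different (and more complicated) route than the paper, and it contains a step that fails. The paper does \emph{not} revisit the Lemma~3.5 machinery at all: after differentiating (3.2) twice in $x$ and testing, it integrates by parts \emph{once} to move a single derivative onto the test function ($\nabla\varphi_{,xxx}$ resp.\ $\rot\psi_{,xxx}$), and then bounds $(v\cdot\nabla v)_{,x}$ directly by H\"older combined with the Sobolev embeddings $H^2(\Omega)\hookrightarrow L_\infty(\Omega)\cap W^1_6(\Omega)$, so that every convective contribution collapses immediately to $c\|v\|_2^4$. No separation of $\nabla\varphi$ from $\rot\psi$, no $\phi_{13},\dots,\phi_{17}$, no interpolation $|u_{,xx}|_3^2\le\varepsilon|u_{,xxx}|_2^2+c\alpha^6|u|_2^2$ is used. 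This crude $\|v\|_2^4$ bound is exactly what is wanted for the Gronwall estimate (5.47).

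In your route there are two concrete problems. First, your claim that $D_1\le\phi(A)/\nu^\alpha$ is false: $D_1$ (see (2.82)) contains the term $c|\varrho_0|_\infty^{1/6}|v_0|_6$, and $v_0$ carries the rotational part $\rot\psi_0$ which is not assumed small; hence $|v|_6$, and with it $\phi_{13},\dots,\phi_{17}$, remain $O(1)$ uniformly in $\nu$, so the absorption ``because the $\phi_i$ can be made small'' does not apply. Second, even if you instead rely on the $1/\nu$ prefactor present after dividing (3.76) by $\nu$ and apply the Young interpolation to push the $\rot\psi_{,xx}$-terms into the dissipation $\tfrac{\mu}{\nu}|\rot\psi_{,xxx}|_2^2$, the surviving residual is the $\phi_{16}|v|_2^2$-type term, which after unravelling is comparable to $\tfrac{c}{\nu}(1+\|\eta\|_2^2)^6(|v|_6^2+\|\nabla\varphi\|_2^2)^2|v|_2^2$ --- degree six in $\|v\|_2$, not the degree four appearing on the right of (5.26). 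That discrepancy is not cosmetic: $G^2$ in (5.46)--(5.47) would then contain a $\|v\|_2^6$ instead of $\|v\|_2^4$, which would upset the smallness condition (5.49) and the decay estimate (5.51)--(5.53). The fix is to drop the Lemma-3.5 interpolation framework entirely and use the $L_\infty$-embedding bound as the paper does.
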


\begin{proof}
Differentiating (\ref{3.2}) twice with respect to $x$, multiplying by $\nabla\varphi_{,xx}$ and integrating over $\Omega$ yields
\begin{equation}\eqal{
&{a\over2}{d\over dt}|\nabla\varphi_{,xx}|_2^2+\mu|\nabla^2\varphi_{,xx}|_2^2+ \nu|\Delta\varphi_{,xx}|_2^2=-\intop_\Omega[\eta v_{,t}]_{,xx}\cdot\nabla \varphi_{,xx}dx\cr
&\quad-a_0\intop_\Omega\nabla\eta_{,xx}\cdot\nabla\varphi_{xx}dx-\intop_\Omega [(a+\eta)v\cdot\nabla v]_{,xx}\cdot\nabla\varphi_{,xx}dx\cr
&\quad+\intop_\Omega[(p_\varrho(a)-p_\varrho(a+\eta))\nabla\eta]_{,xx}\cdot \nabla\varphi_{,xx}dx+\intop_\Omega[(a+\eta)f]_{,xx}\cdot\nabla\varphi_{xx}dx.\cr}
\label{5.27}
\end{equation}
The first term on the r.h.s. of (\ref{5.27}) is bounded by
$$
\varepsilon|\nabla\varphi_{,xxx}|_2^2+c/\varepsilon\|\eta\|_2^2\|v_{,t}\|_1^2,
$$
the second by
$$
\varepsilon|\nabla\varphi_{,xxx}|_2^2+c/\varepsilon|\nabla\eta_{,x}|_2^2.
$$
We write the third term in the form
$$
a\intop_\Omega(v\cdot\nabla v)_{,x}\cdot\nabla\varphi_{,xxx}dx+\intop_\Omega (\eta v\cdot\nabla v)_{,x}\cdot\nabla\varphi_{,xxx}dx\equiv I_1+I_2,
$$
where
$$
|I_1|\le\varepsilon|\nabla\varphi_{,xxx}|_2^2+c/\varepsilon(|v_{,x}|_3^2 |v_{,x}|_6^2+|v|_\infty^2|v_{,xx}|_2^2)
$$
and
$$\eqal{
|I_2|&\le\varepsilon|\nabla\varphi_{,xxx}|_2^2+c/\varepsilon(|\eta_{,x}|_6^2 |v|_\infty^2|v_{,x}|_3^2+|\eta|_\infty^2|v_{,x}|_3^2|v_{,x}|_6^2\cr
&\quad+|\eta|_\infty^2|v|_\infty^2|v_{,xx}|_2^2).\cr}
$$
The fourth term is estimated by
$$
\varepsilon|\nabla\varphi_{,xxx}|_2^2+c/\varepsilon\|\eta\|_2^4.
$$
Finally, the last term is bounded by
$$
\varepsilon|\nabla\varphi_{,xxx}|_2^2+c/\varepsilon[|f_{g,x}|_2^2+|\eta_{,x}|_6^2 |f|_3^2+|\eta|_\infty^2|f_{,x}|_2^2].
$$
Utilizing the estimates in (\ref{5.27}) yields
\begin{equation}\eqal{
&{d\over dt}|\nabla\varphi_{,xx}|_2^2+{\mu\over a}|\nabla^2\varphi_{,xx}|_2^2+ {\nu\over a}|\Delta\varphi_{,xx}|_2^2\le {c\over\nu}[\|\eta\|_2^2\|v_{,t}\|_1^2\cr
&\quad+\|\eta\|_2^2+\|\eta\|_2^4+|\eta_{,x}|_6^2|v|_\infty^2|v_{,x}|_3^2+ (1+|\eta|_\infty^2)(|v_{,x}|_3^2|v_{,x}|_6^2+|v|_\infty^2|v_{,xx}|_2^2)\cr
&\quad+\|f\|_1^2].\cr}
\label{5.28}
\end{equation}
Differentiating (\ref{3.2}) twice with respect to $x$, multiplying by $\rot\psi_{,xx}$ and integrating over $\Omega$ implies
\begin{equation}\eqal{
&{1\over2}{d\over dt}|\rot\psi_{,xx}|_2^2+{\mu\over a}|\nabla\rot\psi_{,xx}|_2^2 =-\intop_\Omega[\eta v_{,t}]_{,xx}\cdot\rot\psi_{,xx}dx\cr
&\quad-\intop_\Omega[(a+\eta)v\cdot\nabla v]_{,xx}\cdot\rot\psi_{,xx}dx+ \intop_\Omega[(a+\eta)f]_{,xx}\cdot\rot\psi_{,xx}dx\cr}
\label{5.29}
\end{equation}
The first term on the r.h.s. is bounded by
$$
\varepsilon|\rot\psi_{,xxx}|_2^2+c/\varepsilon\|\eta\|_2^2\|v_t\|_1^2
$$
We express the second term on the r.h.s. of (\ref{5.29}) in the form
$$
a\intop_\Omega(v\cdot\nabla v)_{,x}\cdot\rot\psi_{,xxx}dx+\intop_\Omega(\eta v\cdot\nabla v)_{,x}\cdot\rot\psi_{,xxx}dx\equiv J_1+J_2,
$$
where
$$\eqal{
|J_1|&\le\varepsilon|\rot\psi_{,xx}|_2^2+c/\varepsilon(|v_{,x}|_3^2|v_{,x}|_6^2+ |v|_\infty^2|v_{,xx}|_2^2),\cr
|J_2|&\le\varepsilon|\rot\psi_{,xxx}|_2^2+c/\varepsilon(|\eta_{,x}|_6^2|v|_6^2 |v_{,x}|_6^2+|\eta|_\infty^2|v_{,x}|_3^2|v_{,x}|_6^2\cr
&\quad+|\eta|_\infty^2|v|_\infty^2|v_{,xx}|_2^2).\cr}
$$
Finally, the last term on the r.h.s. of (\ref{5.29}) is estimated by
$$
\varepsilon|\rot\psi_{,xxx}|_2^2+c/\varepsilon(|f_{r,x}|_2^2+|\eta_{,x}|_6^2 |f|_3^2+|\eta|_\infty^2|f_{,x}|_2^2).
$$
Utilizing the estimates in (\ref{5.29}) and choosing that $\varepsilon$ is sufficiently small we have
\begin{equation}
{d\over dt}|\rot\psi_{,xx}|_2^2+{\mu\over a}|\nabla\rot\psi_{,xx}|_2^2\le c[\|\eta\|_2^2\|v_{,t}\|_1^2+\|v\|_2^4(1+\|\eta\|_2^2)+\|f\|_1^2].
\label{5.30}
\end{equation}
Multiply (\ref{5.30}) by $1/\nu$ and add to (\ref{5.28}). Then we obtain (\ref{5.26}). This concludes the proof.
\end{proof}

\begin{theorem}\label{t5.8}
Let $\varphi$, $\psi$, $\eta$ be solutions to problem (\ref{3.1})--(\ref{3.3}). Let the assumptions of Corollary \ref{c4.2} hold. Assume that there exists such time $T$ that
\begin{equation}
-{\mu\over a}T+c\intop_0^T\|v(t)\|_2^2dt<0.
\label{5.31}
\end{equation}
Assume that there exists such relation between $T$, $\intop_0^T\|v(t)\|_2^2dt$, $\eta(t)$, $f(t)$, $|\nabla\varphi(0)|_{2,1}+{1\over\sqrt{\nu}}|\rot\psi(0)|_{2,1}$ that
\begin{equation}\eqal{
&\exp\bigg(c\intop_0^T\|v(t)\|_2^2dt\bigg){c\over\nu}\intop_0^T(|\eta|_{2,1}^2+ |f|_{2,1}^2)dt\cr
&\quad+\exp\bigg(-{\mu\over a}T+c\intop_0^T\|v(t)\|_2^2\bigg)X^2(0)\le X^2(0),\cr}
\label{5.32}
\end{equation}
where $X^2(t)=\nu|\nabla\varphi(t)|_{2,1}^2+|\rot\psi(t)|_{2,1}^2+\|\eta(t)\|_2^2$. 
Then for any $k\in\N_0$ we have
\begin{equation}
X^2(kT)\le X^2(0).
\label{5.33}
\end{equation}
Moreover, assuming that $|\eta(0)|_{2,1}\le c_0/\nu$, $\|f_g(t)\|_1^2\le f_0^2e^{-\alpha t}$, $c_0$, $f_0$, $\alpha$ constants there exists a global solution $(\varphi,\psi,\eta)$ to problem (\ref{3.1})--(\ref{3.3}) such that in any time interval $[kT,(k+1)T]$ it is a solution described by Corollary \ref{c4.2}.
\end{theorem}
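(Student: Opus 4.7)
The strategy is to combine the differential inequalities from Lemmas \ref{l5.1}, \ref{l5.2}, \ref{l5.4}, \ref{l5.6}, \ref{l5.7} into a single master inequality for
$$
Y^2(t) \equiv \nu|\nabla\varphi(t)|_{2,1}^2 + |\rot\psi(t)|_{2,1}^2,
$$
coupled with the transport estimate for $\|\eta(t)\|_2^2$ obtained from (\ref{1.9}) as in Lemma \ref{l2.7}. Multiplying each of (\ref{5.1}), (\ref{5.6}), (\ref{5.13}), (\ref{5.20}), (\ref{5.26}) by suitable powers of $\nu$ and adding, and then absorbing the terms $\varepsilon_1\nu |\nabla\varphi_{,xxx}|_2^2$ and the terms multiplied by $\|\eta\|_2^2$ or $|\eta|_{2,1}^2$ into the left-hand side dissipation (this is legitimate by Corollary \ref{c4.2}, which guarantees that $\|\eta\|_{2,\infty,\Omega^t}$ and $|\eta|_{2,1,\infty,\Omega^t}$ are as small as we wish for $\nu$ large), I expect to obtain a differential inequality of the form
\begin{equation}
\frac{d}{dt} Y^2 + \frac{\mu}{a}\bigl(\nu|\nabla\varphi|_{3,1}^2 + |\rot\psi|_{3,1}^2\bigr) \le c\|v\|_2^2\, Y^2 + \frac{c}{\nu}\bigl(|\eta|_{2,1}^2 + |f|_{2,1}^2\bigr).
\label{eq:master}
\end{equation}
Using the Poincar\'e-type bound $Y^2 \le c(\nu|\nabla\varphi|_{3,1}^2 + |\rot\psi|_{3,1}^2)$ (valid since all quantities have zero mean under the periodic boundary conditions), the dissipation term dominates $\frac{\mu}{a}Y^2$, so (\ref{eq:master}) becomes
\begin{equation}
\frac{d}{dt} Y^2 + \frac{\mu}{a} Y^2 \le c\|v\|_2^2\, Y^2 + \frac{c}{\nu}\bigl(|\eta|_{2,1}^2 + |f|_{2,1}^2\bigr).
\label{eq:master2}
\end{equation}

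Next I would integrate (\ref{eq:master2}) on an interval $[kT,(k+1)T]$ by multiplying through by the integrating factor $\exp\bigl(\frac{\mu}{a}t - c\int_{kT}^{t}\|v(s)\|_2^2\,ds\bigr)$, obtaining
\begin{equation*}
Y^2((k+1)T) \le \exp\Bigl(-\tfrac{\mu}{a}T + c\!\!\int_{kT}^{(k+1)T}\!\!\|v\|_2^2\,dt\Bigr) Y^2(kT) + \exp\Bigl(c\!\!\int_{kT}^{(k+1)T}\!\!\|v\|_2^2\,dt\Bigr)\frac{c}{\nu}\!\!\int_{kT}^{(k+1)T}\!\!(|\eta|_{2,1}^2 + |f|_{2,1}^2)\,dt.
\end{equation*}
A parallel argument for $\|\eta\|_2^2$ based on (\ref{2.78}) together with the estimate $\|\eta(t)\|_2 \le \exp\bigl(c\int |\nabla\varphi|_{3,1}\bigr)\bigl(c\int|\nabla\varphi|_{3,1}+\|\eta(kT)\|_2\bigr)$ gives a similar recursion for the $\eta$-part of $X^2$. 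Adding these recursions and invoking (\ref{5.31})--(\ref{5.32}) on each window (here the exponential decay of $\|f_g(t)\|_1$ is used to guarantee that the forcing contribution on the window $[kT,(k+1)T]$ stays controlled by $X^2(0)$) I would obtain by induction on $k$ the desired bound (\ref{5.33}).

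The final step is existence: granted the uniform estimate (\ref{5.33}), the hypotheses of Corollary \ref{c4.2} (in particular $|\eta(kT)|_{2,1}\le c_0/\nu$ and $\nu^{1/2}|\nabla\varphi(kT)|_{2,1}\le\text{const}$) are preserved from one window to the next, so Theorem \ref{t4.3} produces a solution on $[kT,(k+1)T]$ with the regularity of Corollary \ref{c4.2}, and concatenation in $k\in\mathbb{N}_0$ yields the global solution.

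The hard part I expect to be the derivation of (\ref{eq:master}): several of the cross-terms appearing in Lemmas \ref{l5.2}, \ref{l5.6}, \ref{l5.7} carry a factor of $\nu$ in front (e.g.\ $c\nu|\eta|_{2,1}^2|\Delta\varphi|_3^2$ and $c\nu\|\eta_{,t}\|_1^2|\nabla\varphi_{,xx}|_2^2$), and these must be absorbed using the interpolation
$$
\nu|\Delta\varphi|_3^2 \le \varepsilon \nu|\nabla\Delta\varphi|_2^2 + c/\varepsilon\, \nu|\nabla\varphi|_2^2
$$
together with the fact that the coefficient $|\eta|_{2,1}^2$ can be made $\le c/\nu^2$ via (\ref{4.6}), so that the product becomes $O(1/\nu)$ and is swallowed by the $\frac{\mu}{a}$-dissipation on the left. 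Verifying that every such product survives this absorption is the delicate book-keeping step; the exponential decay $\|f_g(t)\|_1 \le f_0 e^{-\alpha t/2}$ is then precisely what makes the forcing contribution in (\ref{5.32}) summable and compatible with $X^2(0)$.
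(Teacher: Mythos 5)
The structure of your proposed argument matches the paper's up to a point: you correctly identify that Lemmas \ref{l5.1}, \ref{l5.2}, \ref{l5.4}, \ref{l5.6}, \ref{l5.7} are summed (with suitable $\nu$-weights) to produce a single dissipative inequality for $Y^2=\nu|\nabla\varphi|_{2,1}^2+|\rot\psi|_{2,1}^2$, and that Corollary \ref{c4.2} is what licenses absorbing the $\nu$-weighted cross-terms into the left-hand dissipation. This is exactly the chain (\ref{5.34})--(\ref{5.36}) of the paper, and the integrating-factor argument on each window $[kT,(k+1)T]$ is also the same.

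However, there is a genuine gap in the treatment of the density. You propose to close the $\eta$-part of $X^2$ using the pure transport estimate of Lemma \ref{l2.7} / inequality (\ref{2.78}). That estimate is of Gronwall type,
$\|\eta(t)\|_2 \le \exp\big(c\intop_{kT}^{t}(\|\nabla\varphi\|_3+\|\rot\psi\|_3)\,dt'\big)\big(c\intop_{kT}^{t}\|\nabla\varphi\|_3\,dt'+\|\eta(kT)\|_2\big)$,
and contains no damping at all: its kernel is $\ge 1$, so at best it preserves $\|\eta(kT)\|_2$ from window to window and in general lets it grow. It cannot produce the contractive factor $\exp(-a_*T/2)$ that is needed for the recursion (\ref{5.33}). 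The paper's proof does something structurally different: it derives a new equation (\ref{5.38}) for $\nabla\eta$ by combining the continuity equation (\ref{3.1}) with the momentum equation (\ref{3.2}), and the decisive feature of (\ref{5.38}) is the damping term $a_0\nabla\eta$ on the left, inherited from the pressure gradient $a_0\nabla\eta$ in the momentum balance. Energy estimates on (\ref{5.38}) and its first $x$-derivative give (\ref{5.41}), (\ref{5.43}), (\ref{5.44}) with $a_0\|\eta\|_2^2$ on the left, and only after that can one combine with (\ref{5.36}) into the crucial inequality (\ref{5.47}), $\frac{d}{dt}X_2^2+a_*X_2^2\le cG^2X_2^2+K^2$, with $a_*=\min\{a_0,\mu/a\}$. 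The damping of $\|\eta\|_2^2$ is thus acoustic in origin, not parabolic, and it is exactly the ingredient your proposal is missing; without it the $\eta$-contribution does not decay and $X^2(kT)\le X^2(0)$ fails.

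A secondary but worth-noting discrepancy: the coefficient in your master inequality (\ref{eq:master}) is $\|v\|_2^2$, but in the paper's (\ref{5.46}) the Gronwall weight $G^2$ contains $|v|_{3,1}^2+|\Delta\varphi|_\infty^2+\|v\|_2^4+\|f\|_1^2$; after absorbing $\nu^2|\nabla\varphi|_{3,1}^2$ on the left via (\ref{5.45}), the quartic $\|v\|_2^4$ term survives. This does not change the shape of the argument but it does affect whether the smallness assumption (\ref{5.49}) is verifiable from Corollary \ref{c4.2}, which you should track when you actually carry out the book-keeping.
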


\begin{proof}
Adding (\ref{5.20}) and (\ref{5.26}) we have
\begin{equation}\eqal{
&{d\over dt}\bigg(|\nabla\varphi_{,x}|_2^2+|\nabla\varphi_{,xx}|_2^2+{1\over\nu} |\rot\psi_{,xt}|_2^2+{1\over\nu}|\rot\psi_{,xx}|_2^2\bigg)\cr
&\quad+{\mu\over a}(|\nabla^2\varphi_{,xt}|_2^2+|\nabla^2\varphi_{,xx}|_2^2)+ {\nu\over a}(|\Delta\varphi_{,xt}|_2^2+|\Delta\varphi_{,xx}|_2^2)\cr
&\quad+{\mu\over\nu a}(|\nabla\rot\psi_{,xt}|_2^2+|\nabla\rot\psi_{,xx}|_2^2)\cr
&\le c\nu(\|\eta_t\|_1^2|\nabla\varphi_{,xx}|_2^2+\|\eta\|_2^2|\nabla\varphi_{,xt}|_2^2)\cr
&\quad+c\nu\|\eta_t\|_1^2|\Delta\nabla\varphi|_2^2+{c\over\nu}[|\eta|_{2,1}^2+ \|\eta_t\|_1^2|\Delta v|_3^2+\|\eta\|_2^2\|v_t\|_1^2\cr
&\quad+(1+\|\eta\|_2^2)(\|v\|_2^2\|v_{,t}\|_1^2+\|v\|_2^4)+\|f\|_1^2+ |f_t|_2^2].\cr}
\label{5.34}
\end{equation}
In view of Corollary \ref{c4.2} we obtain from (\ref{5.34}) the inequality
\begin{equation}\eqal{
&{d\over dt}\bigg(|\nabla\varphi_{,xt}|_2^2+|\nabla\varphi_{,xx}|_2^2+{1\over\nu} |\rot\psi_{,xt}|_2^2+{1\over\nu}|\rot\psi_{,xx}|_2^2\bigg)\cr
&\quad+{\mu\over a}(|\nabla^2\varphi_{,xt}|_2^2+|\nabla^2\varphi_{,xx}|_2^2)+ {\nu\over a}(|\Delta\varphi_{,xt}|_2^2+|\Delta\varphi_{,xx}|_2^2)\cr
&\quad+{\mu\over\nu a}(|\nabla\rot\psi_{,xt}|_2^2+|\nabla\rot\psi_{,xx}|_2^2)\cr
&\le{c\over\nu}[|\eta|_{2,1}^2+\|\eta_t\|_1^2|\Delta v|_3^2+\|\eta\|_2^2 \|v_{,t}\|_1^2+(1+\|\eta\|_2^2)(\|v\|_2^2\|v_t\|_1^2\cr
&\quad+\|v\|_2^4)+\|f\|_1^2+|f_t|_2^2].\cr}
\label{5.35}
\end{equation}
From (\ref{5.19}), (\ref{5.35}) and Corollary \ref{c4.2} we have
\begin{equation}\eqal{
&{d\over dt}\bigg(|\nabla\varphi|_{2,1}^2+{1\over\nu}|\rot\psi|_{2,1}^2\bigg)+ {\mu\over a}|\nabla\varphi|_{3,1}^2+{\nu\over a}|\nabla\varphi|_{3,1}^2+{1\over a\nu}|\rot\psi|_{3,1}^2\cr
&\le{c\over\nu}[|\eta|_{2,1}^2+\|v\|_2^4+\|v\|_2^2\|v_t\|_1^2+|f|_{1,1}^2].\cr}
\label{5.36}
\end{equation}
Introduce the quantity
\begin{equation}
X_1^2=\nu|\nabla\varphi|_{2,1}^2+|\rot\psi|_{2,1}^2
\label{5.37}
\end{equation}
We also need a similar differential inequality for $\eta$. From (\ref{3.1}) and (\ref{3.2}) we have
\begin{equation}\eqal{
&\nabla\eta_t+a_0\nabla\eta=-\nabla(v\cdot\nabla\eta)-a\nabla\Delta\varphi- \nabla(\eta\Delta\varphi)-(a+\eta)v_t\cr
&\quad+\mu\Delta v+\nu\nabla\Delta\varphi-(a+\eta)v\cdot\nabla v+(p_\varrho(a)-p_\varrho(a+\eta))\nabla\eta\cr
&\quad+(a+\eta)f.\cr}
\label{5.38}
\end{equation}
Multiplying (\ref{5.38}) by $\nabla\eta$ and integrating over $\Omega$ yields
\begin{equation}\eqal{
&{1\over2}{d\over dt}|\nabla\eta|_2^2+a_0|\nabla\eta|_2^2=-\intop_\Omega\nabla (v\cdot\nabla\eta)\cdot\nabla\eta dx\cr
&\quad+\varepsilon|\nabla\eta|_2^2+c/\varepsilon[|\nabla\Delta\varphi|_2^2+ |\nabla\varphi_t|_2^2+|v_t|_2^2|\eta|_\infty^2+|\Delta\nabla\varphi|_2^2+\nu^2 |\nabla\Delta\varphi|_2^2\cr
&\quad+|v|_6^2X_1^2+|v|_6^2|\nabla v|_3^2|\eta|_\infty^2+|f_g|_2^2+ |f|_2^2|\eta|_\infty^2]+c|\eta|_\infty^2|\nabla\eta|_2^2.\cr}
\label{5.39}
\end{equation}
In view of Corollary \ref{c4.2} $|\eta|_\infty$ is bounded and small. Then (\ref{5.39}) takes the form
\begin{equation}\eqal{
&{d\over dt}|\nabla\eta|_2^2+a_0|\nabla\eta|_2^2\le-\intop_\Omega\nabla (v\cdot\nabla\eta)\cdot\nabla\eta dx+c[|\nabla\Delta\varphi|_2^2+|\nabla\varphi_t|_2^2\cr
&\quad+\nu^2|\nabla\Delta\varphi|_2^2+|f_g|_2^2]+c [|v_t|_2^2+|v|_6^2|\nabla v|_3^2+|f|_2^2]\|\eta\|_2^2\cr
&\quad+c|v|_6^2X_1^2.\cr}
\label{5.40}
\end{equation}
Consider the first term on the r.h.s. of (\ref{5.40}). Performing differentiation it takes the form
$$
I=-\intop_\Omega v\cdot\nabla\nabla\eta\nabla\eta dx-\intop_\Omega\nabla v\cdot\nabla\eta\nabla\eta dx\equiv I_1+I_2,
$$
where
$$
I_1=-{1\over2}\intop_\Omega v\cdot\nabla|\nabla\eta|^2dx={1\over2}\intop_\Omega \Delta\varphi|\nabla\eta|^2dx.
$$
Hence
$$
|I_1|\le\varepsilon|\nabla\eta|_2^2+c/\varepsilon|\Delta\varphi|_\infty^2 |\nabla\eta|_2^2.
$$
Similarly, we have
$$
|I_2|\le\varepsilon|\nabla\eta|_2^2+c/\varepsilon|\nabla v|_\infty^2 |\nabla\eta|_2^2.
$$
Employing the estimates in (\ref{5.40}) and using that $|v|_2\le c$ in view of the energy estimate (see Lemma \ref{l2.1})we have
\begin{equation}\eqal{
&{d\over dt}|\nabla\eta|_2^2+a_0|\nabla\eta|_2^2\le c[|\nabla v|_\infty^2+|\Delta\varphi|_\infty^2+|v_t|_2^2+|v|_6^2|\nabla v|_3^2\cr
&\quad+|f|_2^2] \|\eta\|_2^2+c[|\nabla\Delta\varphi|_2^2+ |\nabla\varphi_t|_2^2+\nu^2|\nabla\Delta\varphi|_2^2+|f_g|_2^2]+ c|v|_6^2X_1^2.\cr}
\label{5.41}
\end{equation}
Differentiate (\ref{5.38}) with respect to $x$, multiply by $\nabla\eta_x$ and integrate over $\Omega$. Then we have
\begin{equation}\eqal{
&{1\over2}{d\over dt}|\nabla\eta_x|_2^2+a_0|\nabla\eta_x|_2^2=-\intop_\Omega (\nabla(v\cdot\nabla\eta))_{,x}\cdot\nabla\eta_xdx\cr
&\quad+c[|\nabla\Delta\varphi_{,x}|_2^2+\|\nabla\varphi_t\|_1^2+\|v_t\|_1^2 \|\eta\|_2^2+\nu^2|\nabla\Delta\varphi_{,x}|_2^2\cr
&\quad+|[(a+\eta)v\cdot\nabla v]_{,x}|^2]+\intop_\Omega [(p_\varrho(a)-p_\varrho(a+\eta))\nabla\eta]_{,x}\cdot\nabla\eta_xdx\cr
&\quad+\intop_\Omega[(a+\eta)f]_{,x}\cdot\nabla\eta_{,x}dx.\cr}
\label{5.42}
\end{equation}
Carrying out differentiations in the first term on the r.h.s. of (\ref{5.42}) yields
$$
I_1=-\intop_\Omega(v\cdot\nabla\nabla\eta_x+\nabla v\cdot\nabla\eta_x+v_x\cdot\nabla\nabla\eta+\nabla v_x\cdot\nabla\eta)\cdot\nabla\eta_xdx.
$$
Hence
$$\eqal{
|I_1|&\le\varepsilon|\nabla\eta_x|_2^2+c/\varepsilon[|\nabla v|_\infty^2|\nabla\eta_x|_2^2+|v_{,x}|_\infty^2|\nabla^2\eta|_2^2+|\nabla v_x|_3^2|\nabla\eta|_6^2\cr
&\quad+|\Delta\varphi|_\infty^2|\nabla\eta_x|_2^2]\le\varepsilon |\nabla\eta_x|_2^2+c/\varepsilon(|\nabla v|_\infty^2+|\nabla v_x|_3^2+|\Delta\varphi|_\infty^2)\|\nabla\eta\|_1^2.\cr}
$$
Consider the fifth term under the square bracket on the r.h.s. of (\ref{5.42}). Then we obtain
$$\eqal{
I_3&=\intop_\Omega[(a+\eta)v\cdot\nabla v]_{,x}^2dx\le a\intop_\Omega(v\cdot\nabla v)_{,x}^2dx+\intop_\Omega(\eta v\cdot\nabla v)_{,x}^2dx\cr
&\le a\intop_\Omega|\nabla v|^4dx+a\intop_\Omega|v|^2|\nabla v_x|^2dx+\intop_\Omega|\eta_x|^2|v|^2|\nabla v|^2dx\cr
&\quad+\intop_\Omega|\eta|^2|\nabla v|^4dx+\intop_\Omega|\eta|^2|v|^2|\nabla v_x|^2dx\equiv\sum_{i=1}^5I_{3i}.\cr}
$$
Continuing, we have
$$\eqal{
&I_{31}\le c|\nabla v|_6^2|\nabla v|_3^2\le c\|v\|_2^4\le c\|v\|_2^2X_1^2\cr
&I_{32}\le c|v|_\infty^2|\nabla v_x|_2^2\le c|v|_\infty^2\|v\|_2^2\le c|v|_\infty^2X_1^2\cr
&I_{33}\le c|\eta_x|_6^2|v|_\infty^2|\nabla v|_3^2\le\|\nabla\eta\|_1^2|v|_\infty^2\|v\|_2^2,\cr
&I_{34}+I_{35}\le|\eta|_\infty^2(\|v\|_2^4+|v|_\infty^2\|v\|_2^2)\le c\|v\|_2^4\|\eta\|_2^2\cr}
$$
Hence
$$
I_3\le c\|v\|_2^2X_1^2+c\|v\|_2^4\|\eta\|_2^2.
$$
The last but one term on the r.h.s. of (\ref{5.42}) is bounded by
$$\eqal{
I_4&\le c\intop_\Omega|\eta|\,|\nabla\eta_x|^2dx+c\intop_\Omega|\nabla\eta|^2 |\nabla\eta_x|dx\cr
&\le\varepsilon|\nabla\eta_x|_2^2+c/\varepsilon|\eta|_\infty^2|\nabla\eta_x|_2^2+ c|\nabla\eta|_4^4.\cr}
$$
Finally, the last term on the r.h.s. of (\ref{5.42}) is estimated by
$$
I_5\le\varepsilon|\nabla\eta_x|_2^2+c/\varepsilon(|f_{g,x}|_2^2+|\eta_x|_4^2 |f|_4^2+|\eta|_\infty^2|f_{,x}|_2^2).
$$
Employing the above estimates in (\ref{5.42}) and assuming that $\varepsilon$ is sufficiently small implies
\begin{equation}\eqal{
&{d\over dt}|\nabla\eta_x|_2^2+a_0|\nabla\eta_x|_2^2\le c[|\nabla v|_\infty^2+|\nabla v_x|_3^2+|\Delta\varphi|_\infty^2+\|v_t\|_1^2+\|v\|_2^4\cr
&\quad+\|f\|_1^2]\|\eta\|_2^2+c(\|\nabla\varphi_t\|_1^2+ |\Delta\nabla\varphi_x|_2^2+\nu^2\|\Delta\nabla\varphi\|_1^2+|f_{g,x}|^2)\cr
&\quad+c\|v\|_2^2X_1^2,\cr}
\label{5.43}
\end{equation}
where the terms $|\eta|_\infty^2|\nabla\eta_x|_2^2+|\nabla\eta|_4^4$ are absorbed by the second term on the l.h.s. in view of Corollary \ref{c4.2}.

\noindent
From (\ref{5.41}) and (\ref{5.43}) we obtain (where we used that $\intop_\Omega\eta dx=0$)
\begin{equation}\eqal{
&{d\over dt}\|\eta\|_2^2+a_0\|\eta\|_2^2\le c[|\nabla v|_\infty^2+|v_{,xx}|_3^2+|\Delta\varphi|_\infty^2+\|v_t\|_1^2+\|v\|_2^4\cr
&\quad+\|f\|_1^2]\|\eta\|_2^2+c(\|\nabla\varphi_t\|_1^2+ \|\nabla\varphi_{,xx}\|_1^2+\nu^2\|\nabla\varphi_{,xx}\|_1^2+\|f_g\|_1^2)\cr
&\quad+c\|v\|_2^2X_1^2\cr}
\label{5.44}
\end{equation}
Using definition of $X_1$ (see (\ref{5.37})) we write (\ref{5.36}) in the form
\begin{equation}
{d\over dt}X_1^2+{\mu\over a}X_1^2+\nu^2|\nabla\varphi|_{3,1}^2\le c\|v\|_2^2X_1^2+c(|\eta|_{2,1}^2+|f|_{1,1}^2).
\label{5.45}
\end{equation}
Introduce the notation
$$
X_2^2=X_1^2+\|\eta\|_2^2,\quad a_*=\min\bigg\{a_0,{\mu\over a}\bigg\}.
$$
Then (\ref{5.44}) and (\ref{5.45}) imply the inequality after appropriate summing
\begin{equation}\eqal{
&{d\over dt}X_2^2+a_*X_2^2+\nu^2|\nabla\varphi|_{3,1}^2\le c(|v|_{3,1}^2+|\Delta\varphi|_\infty^2+\|v\|_2^4+\|f\|_1^2)X_2^2\cr
&\quad+c(|\eta|_{2,1}^2+\|f_g\|_1^2).\cr}
\label{5.46}
\end{equation}
Let
$$\eqal{
&G^2(t)=|v(t)|_{3,1}^2+|\Delta\varphi(t)|_\infty^2+\|v(t)\|_2^4+\|f(t)\|_1^2,\cr
&K^2(t)=|\eta|_{2,1}^2+\|f_g(t)\|_1^2.\cr}
$$
Then (\ref{5.46}) takes the following form because the integral $|\nabla\varphi|_{3,1}$ is absorbed by the last term on the l.h.s. of (\ref{5.46})
\begin{equation}
{d\over dt}X_2^2+a_*X_2^2\le cG^2X_2^2+K^2.
\label{5.47}
\end{equation}
Integrating (\ref{5.47}) from 0 to $t$ we have
\begin{equation}\eqal{
X_2^2(t)&\le\exp\bigg[c\intop_0^tG^2(t')dt'\bigg]\intop_0^tK^2dt'\cr
&\quad+\exp\bigg[-a_*t+c\intop_0^tG^2(t')dt'\bigg]X_2^2(0).\cr}
\label{5.48}
\end{equation}
Setting $t=T$ and assuming that
\begin{equation}
-{a_*\over2}T+c\intop_0^TG^2(t)dt\le 0
\label{5.49}
\end{equation}
we obtain from (\ref{5.48}) the inequality
\begin{equation}
X_2^2(T)\le\exp\bigg[c\intop_0^TG^2(t)dt\bigg]\intop_0^TK^2dt+\exp \bigg(-{a_*T\over2}\bigg)X_2^2(0).
\label{5.50}
\end{equation}
Consider (\ref{5.50}). In view of Corollary \ref{c4.2} we have that
\begin{equation}\eqal{
X_2^2(T)&\le\exp(\chi_1^2+\chi_2^2+\Psi^2)\bigg({\Psi^2\over\nu^2}+ T|\eta(0)|_{2,1}^2+e^{-{a_*\over2}T}\intop_0^T\|f_g(t)\|_1^2 e^{{a_*\over2}t}dt\bigg)\cr
&\quad+\exp\bigg(-{a_*\over2}T\bigg)X_2^2(0).\cr}
\label{5.51}
\end{equation}
Assuming that
\begin{equation}
|\eta(0)|_{2,1}\le{c_0\over\nu},\quad T\le\nu,\quad \|f_g(t)\|_1^2\le f_0^2e^{-\alpha t},\quad t\le T
\label{5.52}
\end{equation}
we obtain
$$
e^{-{a_*\over2}T}\intop_0^T\|f_g(t)\|_1^2e^{{a_*\over2}t}dt\le f_0^2\intop_0^Te^{-\alpha t+{a_*\over2}t}dt\le cf_0^2e^{-\alpha T}.
$$
Therefore, for $\nu$ sufficiently large we obtain from (\ref{5.51}) that
\begin{equation}
X_2^2(T)\le c/\nu
\label{5.53}
\end{equation}
Since $X_2^2(T)\ge|v(T)|_{2,1}^2+|\eta(T)|_{2,1}^2$ we can consider problem (\ref{3.1})--(\ref{3.3}) with small initial data at $t=T$. Let $X=X_2$.

\noindent
Hence, in view of \cite{BSZ, Z1, Z2, VZ} we have existence of global regular solutions to (\ref{3.1})--(\ref{3.3}) with small initial data at $t=T$ described by (\ref{5.53}).
\end{proof}

\noindent
The existence of solutions to problem (\ref{3.1})--(\ref{3.3}) in the time interval $[T,\infty)$ can be made by the step by step in time approach presented in \cite{Z3}.

\noindent
Therefore, we have

\begin{theorem}\label{t5.9}
Let the assumptions of Lemma \ref{l4.1} hold. Let $T$ be so large and estimate in Lemma \ref{l4.1} so appropriate that (\ref{5.49}) hold. Assume restrictions (\ref{5.52}). Assume that $\nu$ is sufficiently large. Then there exists a global solution to problem (\ref{3.1})--(\ref{3.3}) such that
$$
\eta\in L_\infty(T_1,T_2;\Gamma_1^2(\Omega)),\quad \nabla\varphi,\rot\psi\in L_\infty(T_1,T_2;\Gamma_1^2(\Omega))\cap L_2(T_1,T_2;\Gamma_1^3(\Omega)),
$$
where for $(T_1,T_2)\subset(0,T)$, the solution is described by Lemma \ref{l4.1} and for $T_1>T$, we have that there exists $T_0>0$ that $(T_1,T_2)=(T+kT_0,T+(k+1)T_0)$ for any $k\in\N_0$.
\end{theorem}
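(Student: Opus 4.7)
The plan is to split the time axis at $t=T$ and use two qualitatively different arguments on $[0,T]$ and $[T,\infty)$, with the transition provided by the decay estimate (\ref{5.53}) of Theorem \ref{t5.8}. On $[0,T]$, Lemma \ref{l4.1} combined with Corollary \ref{c4.2} already yields the existence of a regular solution with $\nabla\varphi,\rot\psi\in L_\infty(0,T;\Gamma_1^2(\Omega))\cap L_2(0,T;\Gamma_1^3(\Omega))$ and $\eta\in L_\infty(0,T;\Gamma_1^2(\Omega))$, the existence being obtained by time extension of the local solution (Theorem \ref{t4.3}). Thus the statement for $(T_1,T_2)\subset(0,T)$ requires nothing new.

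Second, I would isolate the quantity $X_2^2(t)=\nu|\nabla\varphi(t)|_{2,1}^2+|\rot\psi(t)|_{2,1}^2+\|\eta(t)\|_2^2$ and use the differential inequality (\ref{5.47}), where the dissipation rate $a_*=\min\{a_0,\mu/a\}$ competes with the "multiplicative" term $G^2(t)$ and the "additive" source $K^2(t)$. Under the smallness hypotheses (\ref{5.52}) and the time–integral restriction (\ref{5.49}) (which absorbs one half of the dissipation into the Gronwall exponent), the integrated form (\ref{5.51}) together with the exponential decay of $\|f_g\|_1$ and $|\eta(0)|_{2,1}\le c_0/\nu$ gives the crucial bound (\ref{5.53}): $X_2^2(T)\le c/\nu$. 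For $\nu$ sufficiently large this is smaller than any prescribed small-data threshold of the small-data theory of \cite{BSZ,VZ,Z1,Z2}.

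Third, at time $t=T$ the triple $(\eta(T),\nabla\varphi(T),\rot\psi(T))$ satisfies the smallness assumptions under which the existence method of \cite{BSZ,VZ,Z1,Z2} produces a regular solution on some interval $[T,T+T_0]$ (with $T_0>0$ depending only on the size of the data, not on $T$), and the corresponding estimate is of the same form as (\ref{5.32})–(\ref{5.33}); in particular it reproduces $X_2^2(T+T_0)\le X_2^2(T)$. Iterating the step-by-step in time construction of \cite{Z3}, each extension from $T+kT_0$ to $T+(k+1)T_0$ is legitimate provided the smallness threshold is preserved, and the recursive bound $X_2^2(T+(k+1)T_0)\le X_2^2(T+kT_0)\le c/\nu$ follows from (\ref{5.48})–(\ref{5.50}) applied on each successive interval. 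This yields the desired regularity in every slab $(T+kT_0,T+(k+1)T_0)$ and gluing gives the global solution.

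The main obstacle is the verification that the smallness propagates uniformly in $k$, i.e.\ that the balance (\ref{5.32}) between the amplification factor $\exp(c\int \|v\|_2^2\,dt)$ and the decay factor $\exp(-a_*T_0/2)$ can be maintained on every interval $[T+kT_0,T+(k+1)T_0]$. Because the dissipation rate $a_*$ is fixed and $\|v\|_2$ on each slab is controlled (via Corollary \ref{c4.2}) by a quantity proportional to $1/\nu$, one shows that for $\nu$ large enough the sum $-a_*T_0/2+c\int\|v\|_2^2\,dt$ is strictly negative on \emph{every} slab uniformly in $k$. Together with the geometric-series bound on the forcing contribution (which uses the exponential decay $\|f_g(t)\|_1\le f_0 e^{-\alpha t}$ in an essential way), this yields the $k$-independent estimate $X_2^2(T+kT_0)\le c/\nu$, closing the induction and giving the global solution.
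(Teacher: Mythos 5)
Your proposal reproduces the paper's own strategy: existence and the long-time estimate on $[0,T]$ from Lemma \ref{l4.1}, Corollary \ref{c4.2} and Theorem \ref{t4.3}, the decay $X_2^2(T)\le c/\nu$ of (\ref{5.53}) obtained from the differential inequality (\ref{5.47}) together with (\ref{5.49})--(\ref{5.52}), and the small-data continuation on $[T,\infty)$ via \cite{BSZ,VZ,Z1,Z2} combined with the step-by-step extension of \cite{Z3}. Note only a small imprecision: the $O(1/\nu)$ control of $\|v\|_2$ that you propagate across the slabs $[T+kT_0,T+(k+1)T_0]$ is supplied by bootstrapping $X_2^2\le c/\nu$ within the small-data theory, not by Corollary \ref{c4.2}, which governs $[0,T]$, where $\rot\psi$ is merely $O(1)$; the argument is unaffected once the reference is corrected.
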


\begin{remark}\label{r5.10}
In view of Lemma \ref{l4.1} the constant $A$ (see (\ref{4.1})) may depend on time with a growth less than $T$. Then (\ref{5.49}) may always hold.
\end{remark}

\bibliographystyle{amsplain}
\begin{thebibliography}{99}

\bibitem[BSZ]{BSZ} Burczak, J.; Shibata, Y.; Zaj\c{a}czkowski, W.M.: Local and global solutions near equilibria via the energy method.

\bibitem[MN1]{MN1} Matsumura, A.; Nishida, T.: The initial value problem for equations of motion of viscous and heat-conductive gases, J. Math. Kyoto Univ. 20 (1980), 67--104.

\bibitem[MN2]{MN2} Matsumura, A.; Nishida, T.: The initial boundary value problem for the equations of motion of compressible viscous and heat-conductive fluids, Proc. Japan Acad. Ser. A55 (1979), 337--342.

\bibitem[MN3]{MN3} Matsumura, A.; Nishida, T.: Initial boundary value problems for the equations of motion of compressible viscous and heat-conductive fluids, Commun. Math. Phys. 89 (1983), 445--464.

\bibitem[V]{V} Valli, A.: Periodic and stationary solutions for compressible Navier-Stokes equations via stability method, Ann. Sc. Norm. Super. Pisa (IV) 10 (1983), 607--647.

\bibitem[VZ]{VZ} Valli, A.; Zaj\c{a}czkowski, W.M.: Navier-Stokes equations for compressible fluids: global existence and qualitative properties of the solutions in the general case, Commun. Math. Phys. 103 (1986), 259--296.

\bibitem[Z1]{Z1} Zaj\c{a}czkowski, W.M.: On nonstationary motion of a compressible barotropic viscous fluid bounded by a free surface, Dissertationes Math. 324 (1993).

\bibitem[Z2]{Z2} Zaj\c{a}czkowski, W.M.: On nonstationary motion of a compressible barotropic viscous capillary fluid bounded by a free surface, SIAM J. Math. Anal. 25 (1994), 1--84.

\bibitem[Za]{Za} Zadrzy\'nska, E.: Free boundary problems for nonstationary Navier-Stokes equations, Dissertationes Math. 424 (2004), pp. 135.

\bibitem[LSU]{LSU} Ladyzhenskaya, O.A.; Solonnikov, V.A.; Uraltseva, N.N.: Linear and quasilinear equations of parabolic type, Nauka, Moscow 1967 (in Russian).

\bibitem[Z3]{Z3} Zaj\c{a}czkowski, W.M.: Some stability problem to the Navier-Stokes equations in the periodic case, JMAA.

\bibitem[BIN]{BIN} Besov, O.V.; Il'in, V.P.; Nikolskij, S.M.: Integral representation of functions and imbedding theorems, Moscow 1975 (in Russian).
\end {thebibliography}
\end{document}